\newcommand{\FI}{\mathrm{FI}}
\newcommand{\aF}{\mathbb{F}}
\newcommand{\St}{\mathrm{St}}
\newcommand{\VI}{\mathrm{VI}}
\newcommand{\ModVI}{\Mod_{\VI}}
\newcommand{\Modtors}{\Mod_{\VI}^{\tors}}
\newcommand{\VB}{\mathrm{VB}}
\newcommand{\VA}{\mathrm{VA}}
\newcommand{\VIC}{\mathrm{VIC}}
\newcommand{\newshift}{\bar{\Sigma}}
\newcommand{\newdelta}{\bar{\Delta}}
\newcommand{\gen}{\mathrm{gen}}
\newcommand{\dualI}{\check{\cI}}
\newcommand{\Obj}{\mathrm{Obj}}
\newcommand{\bmu}{\boldsymbol{\mu}}
\newcommand{\stacks}[1]{\cite[\href{http://stacks.math.columbia.edu/tag/#1}{Tag~#1}]{stacks}}
\DeclareMathOperator{\Ext}{Ext}
\newcommand{\coloneq}{\mathrel{\mathop:}\mkern-1.2mu=}
\title{$\VI$-modules in non-describing characteristic, part I}
\author{Rohit Nagpal}
\address{Department of Mathematics, University of Chicago, Chicago, IL}
\email{\href{mailto:nagpal@math.uchicago.edu}{nagpal@math.uchicago.edu}}
\urladdr{\url{http://math.uchicago.edu/~nagpal/}}
\subjclass[2010]{%
	13D45, 
	20C33, 
	20J05 
}
\begin{document}


\begin{abstract}
Let $\mathrm{VI}$ be the category of finite dimensional $\mathbb{F}_q$-vector spaces whose morphisms are injective linear maps, and let $\mathbf{k}$ be a noetherian ring. We study the category of functors from $\mathrm{VI}$ to $\mathbf{k}$-modules in the case when $q$ is invertible in $\mathbf{k}$. Our results include a structure theorem, finiteness of regularity, and a description of the Hilbert series.  These results are  crucial in the classification of smooth irreducible $\GL_{\infty}(\mathbb{F}_q)$-representations in non-describing characterisitic which is contained in Part II of this paper \cite{VI2}.
\end{abstract}

\maketitle

\tableofcontents

\section{Introduction} 
Fix a commutative noetherian  ring $\bk$.  Set $\aF = \mathbb{F}_q$, and let $\GL_n$ be the $n$th general linear group over $\aF$. Roughly speaking, the aim of this paper is to study the behavior of sequences, whose $n$th member is a $\bk[\GL_n]$-module, as $n$ approaches infinity (the ``generic case''). As $n$ varies, every prime appears as a divisor of the size of $\GL_n$. But surprisingly, it is possible to avoid most of the complications of  modular representation theory in the generic case after inverting just one prime, namely the characteristic of $\aF$.  We assume throughout that $q$ is invertible in $\bk$, and we call this the ``non-describing characteristic'' assumption.

 We obtain these sequences in the form of $\VI$-modules. A $\VI$-module $M$ is a functor \[M \colon \VI \to \Mod_{\bk},\] where $\VI$ is the category of finite dimensional $\aF$-vector spaces with injective linear maps. Clearly, $\GL_n = \Aut_{\VI}(\aF^n)$ acts on $M(\aF^n)$. Thus $M$ can be thought of as a sequence whose $n$th member is a $\bk[\GL_n]$-module. This sequence could be arbitrary if we do not impose any finiteness conditions on $M$. But there is a natural notion of ``finite generation'' in the category of $\VI$-modules. This paper analyzes finitely generated $\VI$-modules.  Here is a sample theorem that we prove (it extends \cite[Theorem~1.7]{gan-watterlond-VI} away from characteristic zero, and also improves some cases of \cite[Corollary~8.3.4]{catgb}):

\begin{theorem}[$q$-polynomiality of dimension]
	\label{intro-polynomiality}
	Assume that $\bk$ is a field in which $q$ is invertible. Let $M$ be a finitely generated $\VI$-module. Then there exists a polynomial $P$  such that $\dim_{\bk} M(\aF^n) = P(q^n)$ for large enough $n$.
\end{theorem}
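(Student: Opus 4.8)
The plan is to reduce, by a dévissage, to a single explicit computation. First, observe that the conclusion is insensitive to the shift functor $\Sigma M(V) = M(V \oplus \aF)$: since $\Sigma^a M(\aF^n) = M(\aF^{n+a})$, if $\dim_{\bk}\Sigma^a M(\aF^n)$ agrees with $Q(q^n)$ for $n \gg 0$, then $\dim_{\bk} M(\aF^m)$ agrees with $Q(q^{m-a}) = P(q^m)$ for $m \gg 0$, where $P(x) = Q(q^{-a}x)$. As $\Sigma$ preserves finite generation, it therefore suffices to prove the theorem for $\Sigma^a M$ with $a$ as large as convenient. Now I invoke the structure theorem of this paper: for $a$ large enough, $\Sigma^a M$ is \emph{semi-induced}, i.e.\ it admits a finite filtration $0 = F_0 \subseteq F_1 \subseteq \cdots \subseteq F_k = \Sigma^a M$ whose successive quotients are induced modules $F_j/F_{j-1} \cong I_{d_j}(U_j)$, where $I_d(U) = \bk[\Hom_{\VI}(\aF^d,-)] \otimes_{\bk[\GL_d]} U$ and each $U_j$ is a finitely generated $\bk[\GL_{d_j}]$-module. (Finitely generated torsion $\VI$-modules are the extreme case: such a $T$ has $\Sigma^a T = 0$ for $a \gg 0$, so $\dim_\bk T(\aF^n)$ is eventually $0$ and $P = 0$.)

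It then remains to compute $\dim_\bk I_d(U)(\aF^n)$. The right $\GL_d$-set $\Hom_{\VI}(\aF^d, \aF^n)$ of injective linear maps has trivial point stabilizers, and its set of orbits is the Grassmannian of $d$-dimensional subspaces of $\aF^n$, of cardinality the Gaussian binomial $\binom{n}{d}_q$; hence $\bk[\Hom_{\VI}(\aF^d,\aF^n)]$ is a free right $\bk[\GL_d]$-module of rank $\binom{n}{d}_q$, and
\[
\dim_\bk I_d(U)(\aF^n) = \binom{n}{d}_q \cdot \dim_\bk U .
\]
Since $\binom{n}{d}_q = \prod_{i=0}^{d-1}\frac{q^n - q^i}{q^d - q^i}$, this is a polynomial of degree $d$ in the quantity $q^n$ — equivalently $\sum_{n\ge 0}\binom{n}{d}_q t^n = t^d/\prod_{i=0}^{d}(1 - q^i t)$. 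Feeding this into the filtration of $\Sigma^a M$ and using that $N \mapsto \dim_\bk N(\aF^n)$ is additive on short exact sequences of $\VI$-modules (we are over a field), we obtain $\dim_\bk M(\aF^{n+a}) = \sum_j \binom{n}{d_j}_q \dim_\bk U_j$ for every $n \ge 0$, which is a polynomial in $q^{n+a}$; this proves the theorem. In Hilbert-series form the same computation gives $\sum_n \dim_\bk M(\aF^n)\, t^n = g(t)/\prod_{i=0}^{D}(1 - q^i t)$ with $D = \max_j d_j$ and some $g \in \bk[t]$.

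The main obstacle lies entirely in the input I have quoted: proving that high shifts of finitely generated $\VI$-modules are semi-induced (equivalently, that the regularity is finite) in non-describing characteristic — this is precisely where the hypothesis that $q$ is invertible in $\bk$ is used. In characteristic zero this is close to the representation-stability results of Gan--Watterlond \cite{gan-watterlond-VI}, because $\bk[\GL_d]$ is semisimple and induced modules are assembled from the simple ones, so everything is controlled by the classification of irreducibles. When $\bk[\GL_d]$ is not semisimple one must argue homologically instead — via the shift and derivative functors, a careful analysis of torsion submodules and local cohomology, and a self-injectivity/Koszul-type input for the relevant combinatorial category — which is the technical heart of the paper. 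Granting that, the deduction above is purely formal bookkeeping with $q$-binomial coefficients.
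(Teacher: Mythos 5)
Your proposal is correct and follows essentially the same route as the paper: shift $M$ until it becomes semi-induced via the shift theorem, filter the shift by induced modules, and compute $\dim_\bk \cI(V)(\aF^n)$ explicitly as a Gaussian binomial times $\dim_\bk V(\aF^d)$, exactly as in the paper's Lemma preceding Theorem~\ref{thm:q-polynomiality-of-dimension}. The extra detail you supply (the Grassmannian/orbit-count interpretation of the $q$-binomial and the Hilbert-series form) is a correct elaboration of the same computation, not a different method.
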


The result above is a consequence of our main structural result that we prove about finitely generated $\VI$-modules. Given a $\VI$-module $M$ and a vector space $X$, we can define a new $\VI$-module $\Sigma^X M$ by \[\Sigma^X M(Z) = M(X + Z).\] We call this new $\VI$-module the {\bf shift} of $M$ by $X$. Our main result roughly says that the shift of a finitely generated module by a vector space of large enough dimension has a very simple description. To make it precise, note that there is a natural restriction functor \[\ModVI \to  \prod_{n \ge 0} \Mod_{\bk[\GL_n]}. \] This functor admits a left adjoint $\cI$. We call a $\VI$-module {\bf induced} if it is of the form $\cI(W)$ for some $W$. A $\VI$-module that admits a finite filtration whose graded pieces are induced is called {\bf semi-induced}. We now state our main theorem.

\begin{theorem}[The shift theorem]
	\label{intro-shift-theorem}
	Assume that $q$ is invertible in $\bk$. Let $M$ be a finitely generated $\VI$-module. Then $\Sigma^X M$ is semi-induced if the dimension of $X$ is large enough.
\end{theorem}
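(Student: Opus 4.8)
The plan is to run the local-cohomology machinery familiar from the theory of $\FI$-modules. Introduce the maximal torsion submodule functor $\Gamma\colon\ModVI\to\ModVI$, which sends $M$ to the sum of all submodules that vanish on $\aF^n$ for $n\gg 0$, and write $H^i(M):=R^i\Gamma(M)$ for its derived functors. Throughout I use that finitely generated $\VI$-modules form a noetherian abelian category and that a finitely generated torsion $\VI$-module is bounded (vanishes on $\aF^n$ for $n$ large), which come from the Gröbner-theoretic results cited as \cite[Corollary~8.3.4]{catgb}; the hypothesis that $q$ is invertible in $\bk$ will be essential for the finiteness inputs below. The theorem then reduces to two assertions about a finitely generated $\VI$-module $M$. \textbf{(I) Characterization:} $M$ is semi-induced if and only if $H^i(M)=0$ for all $i\ge 0$. \textbf{(II) Finiteness:} each $H^i(M)$ is finitely generated (hence bounded), $H^i(M)=0$ for $i\gg 0$, and there are natural isomorphisms $H^i(\Sigma^X M)\cong\Sigma^X H^i(M)$ for all $i$ and all $X$. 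Granting these, the theorem is immediate: the finitely many bounded torsion modules $H^0(M),\dots,H^c(M)$ all vanish after shifting past their top degrees, so for $\dim X$ large $H^i(\Sigma^X M)\cong\Sigma^X H^i(M)=0$ for every $i$, whence $\Sigma^X M$ is semi-induced by (I).

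For (I), the ``only if'' direction reduces, via the long exact sequence for $\Gamma$ and a dévissage along the semi-induced filtration, to the single claim that an induced module $\cI(W)$ is torsion-free and $\Gamma$-acyclic: torsion-freeness is clear since $\cI(W)$ is nonzero in every sufficiently large degree, and the vanishing of the higher local cohomology of induced modules is the $\VI$-analogue of the corresponding $\FI$-fact and is the first place the non-describing hypothesis does real work. The ``if'' direction I would prove by induction on the generation degree: the derivative $\Delta M:=\operatorname{coker}(M\to\Sigma M)$ (where $\Sigma=\Sigma^{X_0}$, $\dim X_0=1$) is right exact and strictly lowers the generation degree (one checks this on the generating induced modules); and the long exact sequence of $0\to M\to\Sigma M\to\Delta M\to 0$ (torsion-freeness of $M$ makes the left map injective) shows $\Delta M$ is again $\Gamma$-acyclic, hence semi-induced by induction. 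One then upgrades this to $M$ itself being semi-induced using a splitting criterion — the natural map from a semi-induced module to its shift is split injective — whose proof again rests on the vanishing of the relevant torsion and local cohomology.

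I expect (II) to be the main obstacle. The compatibility $H^i(\Sigma^X M)\cong\Sigma^X H^i(M)$ is soft: $\Sigma^X$ is exact, commutes with $\Gamma$ on the nose, and carries injectives to $\Gamma$-acyclics (for instance because the left Kan extension along $X\oplus(-)\colon\VI\to\VI$ is exact), or one simply exhibits an explicit shift-complex computing $R\Gamma$ that visibly commutes with $\Sigma^X$. But the finite generation of the $H^i(M)$ and their vanishing for $i\gg 0$ — equivalently, finiteness of regularity and of the local cohomological dimension for $\VI$-modules in non-describing characteristic — does not follow formally from noetherianity. The natural route is a bootstrap through the derivative: the short exact sequences relating $M$, $\Sigma M$ and $\Delta M$ combined with $\Delta$ strictly lowering generation degree let one induct, provided one can uniformly bound the degrees in which the various $H^i$ live and control the number of nonzero $H^i$; this is precisely the circle of ideas that also yields the finite regularity and the $q$-polynomial Hilbert series advertised in the introduction, and it is here that the structure of $\bk[\GL_n(\aF_q)]$-modules in non-describing characteristic — how they assemble into induced $\VI$-modules when $q$ is invertible — has to be used in an essential way.
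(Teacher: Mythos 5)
Your high-level logical skeleton is right: you correctly identify the two key claims — the characterization of semi-induced modules as the $\Gamma$-acyclic ones (this is Theorem~\ref{thm:derived-saturated-are-semi-induced} in the paper), and the commutation of the shift with local cohomology — and you correctly observe that the theorem would follow from those together with the finiteness of the local cohomology. But there are two genuine problems with the route you sketch.

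First, your ``derivative'' is the wrong one. You take $\Delta M = \operatorname{coker}(M\to\Sigma M)$ and assert it ``strictly lowers the generation degree (one checks this on the generating induced modules).'' This check in fact fails: by Proposition~\ref{prop:shift-tensor}, $\Sigma^X \cI(d) = \bigoplus_{0\le k\le d} \cI(k)\otimes_{\bk[\GL_k]} D^d_k(X,\aF^k)$, and when $\dim X \geq 1$ the free module $D^d_d(X,\aF^d)$ is strictly larger than $\bk[\GL_d]$ (there are injections $\aF^d\hookrightarrow X\oplus\aF^d$ that miss $X$ but whose image is not $\aF^d$). So $\Delta^X \cI(d)$ has a nonzero induced component $\cI(d)\otimes(D^d_d(X,\aF^d)/\bk[\GL_d])$ generated in degree $d$, i.e.\ $t_0(\Delta^X\cI(d)) = d$. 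The correct operator, and the real content of the non-describing hypothesis, is $\newdelta M = \operatorname{coker}(M\to\newshift M)$ where $\newshift M = (\Sigma M)_{\bU}$ is obtained by taking coinvariants under the unipotent radical of a parabolic; this is Proposition~\ref{prop:categorical-derivation} ($\newshift$ is a categorical derivation, so $\newdelta\cI(V) = \cI(\newshift V)$, which is generated one degree lower) and Corollary~\ref{cor:generation-of-newdelta}. That coinvariant construction is exact and commutes with $\Gamma$ only because $q$ (hence $|\bU_X(Z)|$) is invertible — this is where the hypothesis earns its keep, not in the vague assembly principle you gesture at in your last sentence.

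Second, and more structurally, your claim (II) — finite generation of each $H^i(M)$ and vanishing for $i\gg0$ — is not an available input; in the paper those statements (Theorem~\ref{thm:finiteness-local-cohomology}) are \emph{consequences} of the shift theorem, derived from the complex $I^\bullet$ that the shift theorem constructs. You acknowledge that this is ``the main obstacle'' and wave at a bootstrap through the derivative, but no independent proof is given and the proposed deduction (derive the theorem from (II)) reverses the actual order of dependence. The paper avoids this circularity: it argues directly by induction on $t_0(M)$ that $\newshift^Y M$ is derived saturated for $\dim Y$ large, using (i) that $\newdelta$ strictly lowers $t_0$, (ii) that $\newshift$ commutes with $\Gamma$ (Proposition~\ref{prop:newshift-commutes-with-Gamma}, which needs the combinatorial identity Lemma~\ref{lem:combinatorial-identity}), and (iii) a lemma saying that a torsion module $N$ for which $\newshift^Y\newshift^{\ell_X}$ is injective for all $X$ must have $\newshift^Y N = 0$ — a fact that needs only that $N$ is torsion, not that it is finitely generated. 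That last lemma is exactly what makes it unnecessary to know in advance that the higher local cohomologies are bounded.
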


\subsection{Idea behind the shift theorem} The shift theorem is proven by induction on the degree of generation. To make the induction hypothesis work, we construct a ``categorical derivation'' in the monoidal category of Joyal and Street \cite{joyal-street}. To make it precise, let $\VB$ be the category of finite dimensional $\aF$-vector spaces with bijective linear maps. Joyal and Street considered a monoidal structure\footnote{It is shown in \cite{joyal-street} that this category is actually a braided monoidal category if $\bk$ is a field of characteristic zero. But we don't need the braiding, and so we don't need the characteristic zero assumption} $\otimes_{\VB}$ on $\Mod_{\VB}$ given by  \[ (M\otimes_{\VB} N)(Y)  = \bigoplus_{X \le Y} M(Y/X) \otimes_{\bk} N(X).  \]  We construct a categorical derivation $\newshift$ on $(\Mod_{\VB}, \otimes_{\VB})$. In other words, $\newshift$ satisfies \[\newshift (M \otimes N)  = (\newshift M \otimes N) \bigoplus (M \otimes \newshift N).\]  As pointed out to us by Steven Sam, there is an algebra object $\bA$ in $(\Mod_{\VB}, \otimes_{\VB})$ such that the category of $\VI$-module is equivalent to the category of $\bA$-modules. Under this equivalence, induced modules are $\bA$-modules of the form $\bA \otimes_{\VB} W$. Our categorical derivation shows that if we apply the cokernel of $\id \to \newshift$ to an induced module then we obtain another induced module of strictly smaller degree of generation. This is what makes our inductive proof work. But there is a caveat. Everything said and done in this paragraph so far  is true without any restrictions on the characteristic. On the other hand, the shift theorem is false if we drop the non-describing characteristic assumption. 

The category $\Mod_{\VI}$ naturally contains a localizing subcategory $\Mod_{\VI}^{\tors}$ whose members are called torsion $\VI$-modules. Given a $\VI$-module $M$, we denote the maximal torsion submodule of $M$ by $\Gamma(M)$. The functor $\Gamma$ is left exact, and its right derived functor is denoted $\rR \Gamma$. A crucial technical ingredient in our proof of the shift theorem is the following criterion for semi-induced modules. 

\begin{theorem}
	\label{intro-semi-induced-saturated}
	Assume that $q$ is invertible in $\bk$. Let $M$ be a finitely generated $\VI$-module. Then $M$ is semi-induced if and only if $\rR \Gamma(M) = 0$.
\end{theorem}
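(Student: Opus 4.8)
The plan is to prove the two implications by rather different means: ``semi-induced $\Rightarrow \rR\Gamma(M)=0$'' is a dévissage, while the converse is the substantive half and goes by induction on the generation degree. For the forward direction, recall that $\rR\Gamma$ is the right derived functor of the left exact functor $\Gamma$, so it sends short exact sequences to exact triangles. Hence if $M$ has a finite filtration with induced subquotients it suffices to prove $\rR\Gamma(\cI(W))=0$ for every $\VB$-module $W$. Since $\rR\Gamma$ commutes with filtered colimits (local noetherianity) we may take $W$ finitely generated and then supported in a single degree $m$, i.e.\ a $\bk[\GL_m]$-module. Torsion-freeness of $\cI_m(W)$ is immediate because $\Hom_{\VI}(\aF^m,-)$ takes values in free $\GL_m$-sets, so all its transition maps are split injections and $\Gamma(\cI_m(W))=0$. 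For the higher local cohomology I would use that $\rR\Gamma(N)=0$ is equivalent to $\rR\!\Hom_{\VI}(T,N)=0$ for every finitely generated torsion module $T$, filter $T$ by its simple torsion atoms $C_a(V)$ (supported in a single degree), and resolve each $C_a(V)$ by induced modules; applying $\Hom_{\VI}(-,\cI_m(W))$ then gives a Koszul-type complex in the $\cI_m(W)(\aF^b)$ which is acyclic precisely because $\Gamma$ vanishes on $\cI_m(W)$ and on all of its shifts $\Sigma^X\cI_m(W)$ (which are again induced, by the categorical derivation).

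For the converse, suppose $\rR\Gamma(M)=0$ and induct on the generation degree $d$ of $M$. If $d=0$ then $M$ is a quotient of $\cI_0(M(\aF^0))$; since $\Hom_{\VI}(\aF^0,-)$ is a single point, $\cI_0(M(\aF^0))$ is the constant functor $\underline{M(\aF^0)}$, so $M$ is a constant functor modulo an increasing chain of $\bk$-submodules of $M(\aF^0)$, and $\Gamma(M)=0$ forces that chain to stabilise everywhere; thus $M$ is constant, hence induced. For $d\ge 1$, transport the derivation $\newshift$ to $\ModVI$ through the Sam equivalence with $\bA$-modules: it is exact, carries induced modules to induced modules, preserves finitely generated torsion modules, and $\newdelta M$, the cokernel of the natural map $M\to\newshift M$, has generation degree $<d$ while the kernel of $M\to\newshift M$ is torsion, hence zero. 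Using that $\rR\Gamma$ commutes with the shift (so $\rR\Gamma(\newshift M)=\newshift\,\rR\Gamma(M)=0$) and applying $\rR\Gamma$ to $0\to M\to\newshift M\to\newdelta M\to 0$ gives $\rR\Gamma(\newdelta M)=0$, so by the inductive hypothesis $\newdelta M$ is semi-induced.

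The main obstacle is the final step: to reconstruct a \emph{finite} semi-induced filtration of $M$ itself from the semi-inducedness of $\newdelta M$, together with the extension $0\to M\to\newshift M\to\newdelta M\to 0$ and its shifts. Iterating this sequence exhibits $\Sigma^X M$, for $X$ of any fixed dimension, as an extension of a semi-induced module by $M$, but the naive constructions do not lower the generation degree of $\newshift M$ itself, so one cannot simply recurse. I expect to close the argument using the closure properties of the class of semi-induced modules --- an extension of a semi-induced module by an induced one is semi-induced, and the kernel of a surjection of semi-induced modules is semi-induced --- combined with a secondary induction (for instance on the length of a semi-induced resolution of $M$, whose finiteness comes from noetherianity together with finiteness of the local cohomological dimension) to make the bookkeeping terminate. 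This is also precisely where the non-describing characteristic hypothesis must enter, since the statement is false without it: concretely it is used through the freeness of the $\GL$-actions on the morphism sets of $\VI$ (equivalently, exactness of $\cI$) and through the finiteness results underlying the secondary induction.
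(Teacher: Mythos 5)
Your forward direction is in the right spirit (dévissage to induced modules, torsion-freeness via split transition maps), but the higher-$\rR^i\Gamma$ argument is only a vague sketch; the paper's route is cleaner: since $\Sigma^\ell\colon F\to\Sigma^X F$ is split injective for induced $F$ (Proposition~\ref{prop:split-injection}) and $\Sigma$ commutes with each $\rR^i\Gamma$ (Lemma~\ref{lem:Sigma-preserves-Gamma-acyclics}), the natural map $\rR^i\Gamma(F)\to\Sigma^X\rR^i\Gamma(F)$ is injective for every $X$, so $\rR^i\Gamma(F)$ is simultaneously torsion and torsion-free, hence zero.

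The genuine gap is in the converse, and it is the one you flagged. You correctly get $\rR\Gamma(\newdelta M)=0$ from $\rR\Gamma(M)=0$ (using exactness of $\newshift$, Proposition~\ref{prop:newshift-commutes-with-Gamma}, Proposition~\ref{prop:newshift-preserves-gamma-acyclics}, and that $\kappa(M)\subset\Gamma(M)=0$), and your inductive hypothesis then makes $\newdelta M$ semi-induced. But from $0\to M\to\newshift M\to\newdelta M\to 0$ you cannot conclude $M$ semi-induced: you would need $\newshift M$ to be semi-induced, which is precisely the Shift Theorem (Theorem~\ref{thm:shift-theorem}), and that theorem's proof invokes the result you are proving here. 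Your proposed fix, a ``secondary induction on the length of a semi-induced resolution,'' is circular for the same reason: the existence of a finite such resolution is Proposition~\ref{prop:induced-resolution-semi-induced}, which is what the converse direction ultimately produces, and the ``finiteness of local cohomological dimension'' you cite to bound the length is Theorem~\ref{thm:finiteness-local-cohomology}, itself downstream of the Shift Theorem.

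The ingredient you are missing, and which cannot be replaced by closure properties alone, is Lemma~\ref{lem:derived-saturated-submodule}: a derived-saturated submodule $M$ of a semi-induced $P$ has $t_0(M)\le t_0(P)$. The paper proves it by a contradiction that tracks a would-be generator of $M$ in degree $r>t_0(P)$ through $\newshift$ using the non-vanishing coinvariants Lemma~\ref{lem:non-vanishing-coinvariants} (a group-theoretic statement about $\Ind_H^G W$ with a normal subgroup $K$ acting trivially on $W$), then uses $\rH_0^{\VI}\newdelta=\newshift\rH_0^{\VI}$ and Lemma~\ref{lem:new-saturated} to contradict the inductive bound for $\newdelta M\subset\newdelta P$. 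With that bound in hand, one builds the finite induced resolution (Proposition~\ref{prop:induced-resolution-semi-induced}) by a secondary induction on $t_0(M)-r$ where $r$ is the bottom degree of $\rH_0^{\VI}(M)$ --- the key point being that the kernel of $F_0\to M$ is again derived-saturated, still has $t_0\le t_0(M)$, but has strictly larger bottom degree --- and then Corollary~\ref{cor:short-exact-sequence-semi-induced} (two-out-of-three for semi-induced) finishes. In short: your scaffolding around $\newshift$, $\newdelta$, and the commutation of $\newshift$ with $\Gamma$ is exactly right, but the argument does not close without the coinvariants lemma and the resulting generation-degree bound for derived-saturated submodules.
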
 That a semi-induced $M$ satisfies $\rR \Gamma(M) = 0$ is easy to prove, and doesn't require any assumptions on the characteristic. But the converse requires the non-describing characteristic assumption in two crucial and separate places: (1) $\newshift$ is exact, and (2) $\newshift$ commutes with $\Gamma$. The first one is immediate from our construction of $\newshift$, but the second one requires an interesting combinatorial identity (which appears in the proof of Lemma~\ref{lem:combinatorial-identity}).

The last ingredient of our proof is a recent  theorem proved independently by Putman-Sam \cite{putman-sam} and Sam-Snowden \cite{catgb}  which resolved a long-standing conjecture of Lannes and Schwartz.

\begin{theorem}[{\cite{putman-sam, catgb}}]
	\label{intro:noetherianity}
Suppose $\bk$ is an arbitrary noetherian ring (the non-describing characteristic assumption is not needed). Then the category of $\VI$-modules is locally noetherian.
\end{theorem}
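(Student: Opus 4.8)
The plan is to deduce this from the theory of quasi-Gr\"obner categories of \cite{catgb}. The first step is a formal reduction. Write $P_m \coloneq \bk[\Hom_{\VI}(\aF^m,-)]$ for the principal projective $\VI$-module, so that $P_m(\aF^n)$ is the free $\bk$-module on $\Hom_{\VI}(\aF^m,\aF^n)$. Since the class of finitely generated $\VI$-modules is closed under quotients and extensions, and every finitely generated $\VI$-module is a quotient of a finite direct sum of $P_m$'s (a choice of generators produces such a surjection by Yoneda), it suffices to show that each $P_m$ is a noetherian object, i.e.\ that every $\VI$-submodule of $P_m$ is finitely generated. The hypothesis that $\bk$ is noetherian enters here, because the eventual argument controls a submodule one ``degree'' at a time and needs the relevant $\bk$-modules to be noetherian.

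The obstruction to applying the Gr\"obner machinery directly to $\VI$ is that the automorphism groups $\GL_m = \Aut_{\VI}(\aF^m)$ are nontrivial, so $\VI$ is not ``directed'' and its poset of monomials misbehaves. To get around this I would introduce the auxiliary category $\mathcal{D}$ whose objects are the spaces $\aF^m$ with their standard ordered bases and whose morphisms $\aF^i \to \aF^j$ are the full column rank $j \times i$ matrices in reduced column echelon form, composed by matrix multiplication. One checks that this normal form is preserved under such products, so that $\mathcal{D}$ is genuinely a category; that it is directed (the identity is the only invertible reduced column echelon matrix, and the target dimension strictly increases along a nonidentity morphism); and that the evident faithful functor $\Phi\colon\mathcal{D}\to\VI$, the identity on objects, is essentially surjective. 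The crucial point is that $\Phi$ satisfies property (F): for the object $\aF^m$ one takes all the auxiliary objects to be $\aF^m$ and lets the comparison maps $\aF^m\to\aF^m$ run over the group $\GL_m$, which is \emph{finite} because $\aF$ is; any injection $\aF^m\hookrightarrow\aF^n$ becomes a $\mathcal{D}$-morphism after right multiplication by the unique element of $\GL_m$ that column-reduces it, and this produces the required factorization. Thus $\VI$ is quasi-Gr\"obner as soon as $\mathcal{D}$ is Gr\"obner.

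Verifying that $\mathcal{D}$ is Gr\"obner is the heart of the matter and the step I expect to be the main obstacle. It amounts to: (a) for each $m$, the set $\bigsqcup_n \Hom_{\mathcal{D}}(\aF^m,\aF^n)$, partially ordered by $f \preceq g$ iff $g = hf$ for some morphism $h$ of $\mathcal{D}$, is a well-partial-order; and (b) this order admits a compatible admissible well-order, which is routine (order first by target dimension, then by a term order on echelon matrices). For (a) one encodes a reduced column echelon injection by its sequence of pivot rows, a strictly increasing tuple of positive integers, together with the entries in the remaining rows, which are arbitrary elements of $\aF$; the order $\preceq$ then refines to a Higman-type embedding order on these encodings, and Higman's lemma yields the well-partial-order property. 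This step uses in an essential way that $\aF$ is finite: the alphabet recording matrix entries is $\aF$, which is a well-partial-order as a discrete set precisely when it is finite, and the same finiteness is what makes $\GL_m$ finite in property (F). Over an infinite field $\aF$ has infinite antichains and both this argument and the conclusion itself fail, consistent with the statement, which imposes no relation between $q$ and $\bk$ but does require $\aF$ to be a finite field.

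Granting (a) and (b), the transfer theorem of \cite{catgb} --- that a quasi-Gr\"obner category has locally noetherian module category over any noetherian coefficient ring --- shows that each $P_m$ is noetherian, completing the proof. I would also record the independent argument of Putman--Sam \cite{putman-sam}, which avoids Gr\"obner bases: it establishes a ``central stability'' (canonical resolution) statement for finitely generated $\VI$-modules and then analyzes the associated combinatorial poset directly, with the finiteness of $\aF$ again entering exactly at the combinatorial estimate. Either way, that well-partial-order estimate is the real content.
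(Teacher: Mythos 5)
The paper does not prove this theorem; it is an external input, cited to Putman--Sam \cite{putman-sam} and Sam--Snowden \cite{catgb}, and used as a black box. So there is no internal argument to compare against. Your sketch is (a version of) the Gr\"obner-theoretic route of \cite{catgb}, and its skeleton is sound. Reduced column echelon form is indeed closed under composition: if $f$ has pivot rows $r_1<\cdots<r_m$ and $h$ has pivot rows $t_1<\cdots<t_n$, a direct computation shows that $hf$ has pivot rows $t_{r_1}<\cdots<t_{r_m}$, and that for every row $l$ of $f$ the $t_l$-th row of $hf$ equals the $l$-th row of $f$. This makes $\mathcal{D}$ a directed category, and property~(F) holds because $\GL_m$ is finite, exactly as you say. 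The step that deserves more than a gesture is your claim that divisibility \emph{refines to} a Higman embedding order; in fact one can say precisely what the order is. Encode an RCEF injection $\aF^m\hookrightarrow\aF^n$ as a word of length $n$ over the finite alphabet $\{\mathrm{piv}_1,\dots,\mathrm{piv}_m\}\sqcup\aF^m$, recording for each row whether it is the $j$-th pivot or, if not, its row vector of entries. The composition formula above shows that $f$ divides $g$ (i.e.\ $g=hf$ for some $\mathcal{D}$-morphism $h$) \emph{if and only if} the word of $f$ occurs as a subword of the word of $g$ with letters matching exactly: one direction is the formula, and for the converse one builds $h$ by taking its pivot rows to be the image of the subword embedding and then solving a triangular linear system for the remaining free entries. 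Higman's lemma over this finite alphabet then yields the well-partial-order, and an admissible well-order may be taken to be target dimension followed by any term order on words. With these details made explicit your argument is complete, and the role of the finiteness of $\aF$ enters exactly where you indicate --- in the finiteness of $\GL_m$ for property~(F) and in the finiteness of the Higman alphabet.
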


We also need the following immediate corollary of this theorem, which provides us control over the torsion part of a module. 

\begin{corollary}[{\cite{putman-sam, catgb}}]
	\label{intro-cor-noeth}
	Suppose $\bk$ is an arbitrary noetherian ring (the non-describing characteristic assumption is not needed).  Let $M$ be a finitely generated $\VI$-module. Then $\Gamma(M)(X) = 0$ if the dimension of $X$ is large enough.
\end{corollary}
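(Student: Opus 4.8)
The plan is to deduce this immediately from Theorem~\ref{intro:noetherianity}, together with the elementary observation that a finitely generated torsion $\VI$-module is supported in only finitely many degrees; no hypothesis on the characteristic enters.

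First I would note that $\Gamma(M)$ is itself finitely generated. Indeed, by Theorem~\ref{intro:noetherianity} the category $\ModVI$ is locally noetherian, so the finitely generated module $M$ is a noetherian object; hence its submodule $\Gamma(M)$ is finitely generated. Next I would record the general fact that any finitely generated torsion $\VI$-module $N$ vanishes in large enough degree. Pick generators $x_1,\dots,x_r$ of $N$ with $x_i\in N(V_i)$. By the definition of the torsion subcategory $\Modtors$, each $x_i$ is annihilated in high degree: there is an integer $d_i$ such that the structure map $N(V_i)\to N(W)$ sends $x_i$ to $0$ for every injection $V_i\hookrightarrow W$ with $\dim W\ge d_i$ (any such injection factors through $V_i\hookrightarrow \aF^{d_i}\oplus V_i$). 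Since $N$ is generated by the $x_i$, for every $X$ the space $N(X)$ is the $\bk$-span of the images of the $x_i$ under all morphisms $V_i\to X$ in $\VI$; hence $N(X)=0$ as soon as $\dim X\ge \max_i d_i$. Applying this to $N=\Gamma(M)$ gives the corollary.

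I expect the only point requiring any care to be the translation between the abstract description of the localizing subcategory $\Modtors$ and the concrete ``each element dies in high degree'' statement used above; once that dictionary is in hand, the argument is purely formal — which is precisely why this is billed as an immediate corollary of Theorem~\ref{intro:noetherianity}.
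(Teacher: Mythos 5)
Your proof is correct and matches the paper's (implicit) argument: the paper treats this corollary as immediate from local noetherianity (it appears in the body only as the ``in particular'' clause of Theorem~\ref{thm:noetherianity}), and you have correctly supplied the two routine facts — that $\Gamma(M)$ is a submodule of a noetherian object and hence finitely generated, and that a finitely generated torsion $\VI$-module is supported in finitely many degrees. One tiny imprecision: an injection $V_i\hookrightarrow W$ literally factors through $V_i\hookrightarrow \aF^{d_i}\oplus V_i$ only when $\dim W\ge d_i+\dim V_i$, not $\dim W\ge d_i$; either adjust $d_i$ by $\dim V_i$, or observe instead that $\GL(W)$ acts transitively on injections $V_i\to W$ so one killing injection into a $d_i$-dimensional space forces every injection into any $W$ with $\dim W\ge d_i$ to kill $x_i$ — both fixes are trivial and the conclusion is unaffected.
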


 All these ingredients above allow us to show by induction on the degree of generation that $\newshift^n M$ is semi-induced if $n$ is large enough. The shift theorem then follows from it.

\subsection{Some consequences of the shift theorem}
To start with, Theorem~\ref{intro-polynomiality} is a consequence of the shift theorem simply because induced modules can be easily seen to satisfy $q$-polynomiality of dimension. If we drop the non-describing characteristic assumption, and assume that $\bk =\aF$, then $M(X) = X$ defines a finitely generated $\VI$-module. This implies that $q$-polynomiality fails in equal characteristic, and so the shift theorem must also fail. Below we list some more consequences.

\begin{theorem}[Finiteness of local cohomology]
	\label{intro-finiteness-of-cohomology}
Assume that $q$ is invertible in $\bk$.	Let $M$ be a finitely generated $\VI$-module. Then we have the following: \begin{enumerate}
		\item For each $i$, the module $\rR^i\Gamma(M)$ is finitely generated. In particular, $\rR^i\Gamma(M)(X) = 0$ if the dimension of $X$ is large enough.
		\item $\rR^i\Gamma(M) = 0$ for $i$ large enough.
	\end{enumerate}
\end{theorem}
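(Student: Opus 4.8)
The plan is to prove, by induction on $d\coloneqq\deg_{\gen}M$, the combined assertion that $\rR^i\Gamma(M)$ is finitely generated for every $i$ and that $\rR^i\Gamma(M)=0$ for all large $i$. This is precisely parts (a) and (b), and the remaining clause of (a) — that $\rR^i\Gamma(M)(X)=0$ once $\dim X$ is large — then follows from Corollary~\ref{intro-cor-noeth}, since $\rR^i\Gamma(M)$ is always a torsion module (it is a subquotient of $\Gamma$ applied to an injective, and torsion modules are closed under subquotients). The base case $d=-\infty$, i.e.\ $M=0$, is trivial.

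First I would reduce to the torsion-free case. The submodule $\Gamma(M)\subseteq M$ is finitely generated by Theorem~\ref{intro:noetherianity}, and every torsion module $T$ satisfies $\rR^i\Gamma(T)=0$ for $i>0$: in a locally noetherian category a torsion submodule of an injective is again injective, so $T$ admits an injective resolution consisting of torsion modules, on which $\Gamma$ restricts to the identity. Applying the long exact sequence of $\rR\Gamma$ to $0\to\Gamma(M)\to M\to\overline{M}\to 0$, with $\overline{M}$ torsion-free, therefore gives $\rR^0\Gamma(M)=\Gamma(M)$, which is finitely generated, and $\rR^i\Gamma(M)\cong\rR^i\Gamma(\overline{M})$ for $i\ge 1$; since $\deg_{\gen}\overline{M}\le d$, it suffices to prove the inductive assertion for $\overline{M}$. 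So I may assume $M$ is torsion-free, of degree $d\ge 0$.

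Next I would let $\newdelta$ denote the cokernel of the canonical transformation $\id\to\newshift$ and exploit that $\newdelta$ sends an induced module to an induced module of strictly smaller degree of generation, while $\newshift$ is exact, commutes with $\Gamma$, preserves finite generation, and does not increase the degree of generation. Since $\newshift$ commutes with $\Gamma$, each $\newshift^a M$ is torsion-free, so (the canonical map $\id\to\newshift$ having torsion kernel) the map $\newshift^a M\to\newshift^{a+1}M$ is injective with cokernel $\newdelta(\newshift^a M)$. Hence $M\hookrightarrow\newshift^n M$ is injective, and its cokernel $C$ carries a finite filtration with graded pieces $\newdelta(\newshift^a M)$, $0\le a<n$; applying the right-exact functor $\newdelta$ to a surjection onto $\newshift^a M$ from an induced module generated in degrees $\le d$ shows $\deg_{\gen}\newdelta(\newshift^a M)\le d-1$, so $C$ is finitely generated with $\deg_{\gen}C\le d-1$. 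Now I would choose $n$ large enough that $\newshift^n M$ is semi-induced — this is exactly what the proof of Theorem~\ref{intro-shift-theorem} supplies — so that $\rR\Gamma(\newshift^n M)=0$ by the easy implication of Theorem~\ref{intro-semi-induced-saturated}. The long exact sequence of $\rR\Gamma$ attached to $0\to M\to\newshift^n M\to C\to 0$ then yields $\rR^{i+1}\Gamma(M)\cong\rR^i\Gamma(C)$ for all $i\ge 0$ (and $\rR^0\Gamma(M)=0$). By the inductive hypothesis applied to $C$, each $\rR^i\Gamma(C)$ is finitely generated and vanishes for large $i$; hence the same holds for $\rR^i\Gamma(M)$, completing the induction.

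The real content being used here is the construction of the categorical derivation $\newshift$ and its basic properties — above all the shift theorem (that $\newshift^n M$ is eventually semi-induced) and the degree-dropping behaviour of $\newdelta$ on induced modules — together with the saturation criterion of Theorem~\ref{intro-semi-induced-saturated}; granting those, the deduction above is a routine dévissage. The points needing care are the torsion-free reduction, the bookkeeping that makes the recursion on $\deg_{\gen}$ well-founded (each step produces $C$ with $\deg_{\gen}C$ strictly smaller, and reducing $C$ mod torsion does not increase it), and the verification that the graded pieces of $\newshift^n M/M$ are the $\newdelta(\newshift^a M)$ — which is where it matters that $\newshift$ commutes with $\Gamma$, so that torsion-freeness propagates up the tower.
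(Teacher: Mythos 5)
Your proof is correct, and it reuses exactly the paper's key ingredients — the shift theorem (that $\newshift^n M$ is eventually semi-induced), the $\rR\Gamma$-acyclicity of semi-induced modules, the exactness of $\newshift$, commutation of $\newshift$ with $\Gamma$, the degree-dropping behaviour of $\newdelta$, and noetherianity — but organizes them into a direct induction instead of going through the paper's derived-category formulation. The paper (Theorem~\ref{thm:finiteness-local-cohomology} via Proposition~\ref{prop:D}) first builds the finite complex $I^{\bullet}$ of Theorem~\ref{thm:shift-theorem}(b), packages it as an exact triangle $T \to M \to F$ with $T$ torsion and $F$ a bounded complex of induced modules, and reads off $\rR\Gamma(M) \cong T$; finiteness then follows because $T$ is a finite complex of finitely generated torsion modules. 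Your argument unrolls that: you replace the triangle by the single short exact sequence $0 \to M \to \newshift^n M \to C \to 0$ for torsion-free $M$ and $n$ large, note $\rR\Gamma(\newshift^n M) = 0$, and use the long exact sequence to shift dimension, $\rR^{i+1}\Gamma(M) \cong \rR^i\Gamma(C)$, before applying the induction hypothesis to $C$ (which has strictly smaller degree of generation). This is in fact the same induction the paper carries out to \emph{construct} $I^{\bullet}$ in the proof of Theorem~\ref{thm:shift-theorem}(b); you are simply interleaving it with the long exact sequence for $\Gamma$ rather than first producing the complex and then extracting $\rR\Gamma$ afterward. The trade-off: the paper's packaging simultaneously controls $\rR\bS$ and gives the explicit bound $\rR^i\Gamma(M)=0$ for $i > \delta(M)+1$ in terms of the stable degree, while your version is more elementary and self-contained but gives a bound only in terms of $t_0$. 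Two small remarks: (i) you should invoke the full statement Theorem~\ref{thm:shift-theorem}(a), not Theorem~\ref{intro-shift-theorem}, since the introductory version only mentions $\Sigma$ and your argument crucially needs $\newshift$ (the functor $\Delta$, the cokernel of $\id\to\Sigma$, does \emph{not} drop the degree of generation, so substituting $\Sigma$ for $\newshift$ would break the induction); (ii) the observation that the cokernel $C$ equals $\newdelta^{\aF^n}M$ lets you cite Corollary~\ref{cor:generation-of-newdelta} directly for $t_0(C)<t_0(M)$ rather than re-deriving it via the filtration by $\newdelta(\newshift^a M)$, though both are fine.
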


The theorem above extends Corollary~\ref{intro-cor-noeth} to the higher derived functors of $\Gamma$. We use this theorem, and an argument similar to the one for $\FI$-modules as in \cite{regthm}, to bound the regularity. In particular, we  provide a bound on the regularity in terms of the degrees of the local cohomology.

\begin{theorem}[Finiteness of regularity]
	\label{intro:regularity}
Assume that $q$ is invertible in $\bk$.	Let $M$ be a finitely generated $\VI$-module. Then $M$ has finite Castelnuovo-Mumford regularity.
\end{theorem}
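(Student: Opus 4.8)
The plan is to run the dévissage used for $\FI$-modules in \cite{regthm}, with the two hard theorems already established feeding in: the shift theorem (Theorem~\ref{intro-shift-theorem}) and the finiteness of local cohomology (Theorem~\ref{intro-finiteness-of-cohomology}). Recall that regularity is measured by homological degrees: let $H_i$ be the $i$-th left derived functor of the functor $H_0$ of generators (so $H_0(M)$ is $M$ modulo the submodule induced from strictly smaller degrees), let $t_i(M)$ be the largest $n$ with $H_i(M)(\aF^n)\neq 0$ (and $-\infty$ if $H_i(M)=0$), and set $\mathrm{reg}(M)=\sup_{i\geq 1}\big(t_i(M)-i\big)$. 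Since $M$ is finitely generated, each of its syzygies is finitely generated by local noetherianity (Theorem~\ref{intro:noetherianity}), so every $H_i(M)$ is supported in only finitely many degrees and each $t_i(M)$ is finite; the whole point is to bound $t_i(M)-i$ uniformly in $i$. (The precise normalization of $\mathrm{reg}$ is immaterial for finiteness.) The argument is an induction on the generation degree of $M$, the case $M=0$ being vacuous.

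For the inductive step, first pass to a shift. By the shift theorem $\Sigma^a M$ is semi-induced once $a$ is large enough. Induced modules are exactly the free $\bA$-modules, hence acyclic for $H_i$ with $i\geq 1$; running the long exact sequences along a finite filtration, the same holds for every finitely generated semi-induced module, so $\mathrm{reg}(\Sigma^a M)\leq 0$. To make $a$ explicit one uses the semi-induced criterion (Theorem~\ref{intro-semi-induced-saturated}): since $\Sigma$ is exact and commutes with $\Gamma$, hence with $\rR\Gamma$, and each $\rR^i\Gamma(M)$ is bounded (Theorem~\ref{intro-finiteness-of-cohomology}), shifting past the top degree of $\bigoplus_i \rR^i\Gamma(M)$ annihilates $\rR\Gamma$, so one may take $a\approx 1+\max_i\deg\rR^i\Gamma(M)$.

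Now descend from $\Sigma^a M$ to $M$ one shift at a time. For each $j$ let $\iota_j\colon\Sigma^j M\to\Sigma^{j+1}M$ be the canonical map, with kernel $K_j$ and cokernel $C_j$. The kernel $K_j$ lies in the torsion submodule of $\Sigma^j M$, which is finitely generated, hence bounded by Corollary~\ref{intro-cor-noeth}; and a bounded torsion $\VI$-module $T$ has $\mathrm{reg}(T)\leq\deg T<\infty$ (induct on $\deg T$, peeling off the top graded piece, which is a single $\bk[\GL_b]$-module placed in degree $b$ and whose canonical resolution by induced modules has $t_i=b+i$). The cokernel $C_j$ is finitely generated with generation degree at most one less than that of $\Sigma^j M$, hence --- since $\Sigma$ does not raise the generation degree --- at most one less than that of $M$; this drop is exactly the statement, noted in the discussion of the categorical derivation, that $\mathrm{coker}(\mathrm{id}\to\Sigma)$ carries an induced module to an induced module of strictly smaller generation degree, transported to arbitrary $M$ by realizing $M$ as a quotient of an induced module of the same generation degree. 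Thus the inductive hypothesis gives $\mathrm{reg}(C_j)<\infty$. Factoring $\iota_j$ as a surjection onto its image followed by an inclusion and reading off the two resulting long exact sequences of homology yields
\[
  t_i(\Sigma^j M)\ \leq\ \max\big(t_i(K_j),\ t_i(\Sigma^{j+1}M),\ t_{i+1}(C_j)\big),
\]
so that $\mathrm{reg}(\Sigma^j M)\leq\max\big(\mathrm{reg}(K_j),\ \mathrm{reg}(\Sigma^{j+1}M),\ \mathrm{reg}(C_j)+1\big)$. Iterating from $j=a-1$ down to $j=0$ and inserting $\mathrm{reg}(\Sigma^a M)\leq 0$ bounds $\mathrm{reg}(M)$, and tracking the constants expresses the bound through the numbers $\deg\rR^i\Gamma(M)$.

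The conceptual engine is that $\mathrm{coker}(\mathrm{id}\to\Sigma)$ strictly lowers the generation degree, which is what makes the induction terminate. Beyond the routine bookkeeping, the point I expect to demand the most care is the quantitative half: controlling how many shifts are needed before a module becomes semi-induced, which is where Theorems~\ref{intro-semi-induced-saturated} and \ref{intro-finiteness-of-cohomology} are combined. Everything else transcribes the $\FI$-module argument of \cite{regthm}.
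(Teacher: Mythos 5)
Your plan is a shift‑descent dévissage, which is different from the route the paper actually takes (the paper works through the exact triangle $\rR\Gamma(M)\to M\to\rR\bS(M)$ from Proposition~\ref{prop:D}, bounds $t_i$ of the torsion side via the Steinberg complex of Lemma~\ref{lem:steinberg-complex}/Lemma~\ref{lem:regularity-of-A-plus}, and observes that the semi‑induced side contributes nothing for $i>0$). That difference would be fine, but there is a genuine error in your descent step.

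You write that $C_j=\coker(\Sigma^j M\to\Sigma^{j+1}M)$ has generation degree strictly smaller than that of $\Sigma^j M$, and attribute this to the ``categorical derivation'' discussion. The paper's statement is about $\newshift$, not $\Sigma$: it is $\coker(\id\to\newshift)=\newdelta$ that lowers the degree of generation (Corollary~\ref{cor:generation-of-newdelta}, via $\rH^{\VI}_0\newdelta=\newshift\rH^{\VI}_0$ in Proposition~\ref{prop:commuting-pairs}). For plain $\Sigma$ this is \emph{false} in the $\VI$ setting. Concretely, for $X$ one‑dimensional, $\Delta^X\cI(d)$ still contains the cokernel of $\cI(d)\to D^d_d(X,-)$, which by Lemma~\ref{lem:rank-decomposition}(b) is $\cI(d)\otimes_{\bk[\GL_d]}\bigl(D^d_d(X,\aF^d)/\bk[\GL_d]\bigr)$; this is nonzero (there are $X$‑rank $d$ maps into $X+\aF^d$ that are not conjugate to the inclusion), hence induced from degree $d$, not $d-1$. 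So $t_0(\Delta^X M)=t_0(M)$ in general, and your induction on generation degree does not terminate. (It does terminate for $\FI$, because there $\bU_X$ is trivial and $\Sigma=\newshift$; this is precisely why the paper has to introduce $\newshift^X M=(\Sigma^X M)_{\bU_X}$ and prove the combinatorial identity behind Proposition~\ref{prop:newshift-commutes-with-Gamma}.)

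The fix is mechanical in outline: replace every $\Sigma$ in the descent by $\newshift$. The shift theorem (Theorem~\ref{thm:shift-theorem}(a)) still gives that $\newshift^a M$ is semi‑induced for large $a$; the kernel of $\newshift^j M\to\newshift^{j+1}M$ is torsion by Proposition~\ref{prop:newshift-injectivity}, finitely generated, and hence bounded by Theorem~\ref{thm:noetherianity}; and $\newdelta$ now genuinely drops $t_0$ by Corollary~\ref{cor:generation-of-newdelta}. Your bound $\reg(T)\le\deg T$ for bounded torsion $T$ is Lemma~\ref{lem:regularity-of-A-plus}, proved via the Steinberg resolution of $\bk$ rather than a ``canonical resolution of $T$,'' but that gives exactly what you need. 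With these substitutions your argument goes through as an alternative to the paper's triangle‑based proof; as written, with $\Sigma$ in place of $\newshift$, the crucial degree drop fails and the proof is incorrect.
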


Gan and Watterlond have shown in \cite{gan-watterlond-VI} that, when $\bk$ is an algebraically closed field of characteristic zero, then any finitely generated $\VI$-module exhibits ``representation stability'', a phenomenon described by Church and Farb in \cite{rep-stability}. Representation stability for $\VI$-modules also follows from a recent result of Gadish \cite[Corollary~1.13]{nir}. We prove representation stability in a more systematic way. We believe that our method  can be used to write down a virtual specht stability statement away from characteristic zero as done for $\FI$-modules by Harman in \cite{virtual-stab}. In contrast to this, the methods in \cite{gan-watterlond-VI} or \cite{nir} use characteristic zero assumption in an essential way. Below, we only state a part of the result to avoid giving a full definition of representation stability here (for full definition, see \ref{sec:rep-stability}).

\begin{theorem}[{\cite[Theorem~1.6]{gan-watterlond-VI}}]
	\label{intro:rep-stability}
	Assume that $\bk$ is an algebraically closed field of characteristic zero. Let $M$ be a finitely generated $\VI$-module. Then the length of the $\bk[\GL_n]$-module $M(\aF^n)$ stabilizes in $n$.
\end{theorem}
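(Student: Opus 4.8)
The plan is to deduce Theorem~\ref{intro:rep-stability} from the shift theorem (Theorem~\ref{intro-shift-theorem}) together with Corollary~\ref{intro-cor-noeth}. The key observation is that, since $\bk$ is an algebraically closed field of characteristic zero (in particular of non-describing characteristic), the shift theorem applies: for $d$ large enough, $\Sigma^{\aF^d} M$ is semi-induced. So it suffices to prove that the length of $M(\aF^n)$ stabilizes in the two extreme cases, namely when $M$ is torsion and when $M$ is semi-induced, and then combine them. Indeed, for any finitely generated $M$ one has a finite filtration whose graded pieces are a torsion module, copies of shifts, and a semi-induced part; more directly, one can compare $M$ with $\Sigma^{\aF^d}M$ via the natural ``shift'' maps and control the difference by torsion modules, which vanish in large degree by Corollary~\ref{intro-cor-noeth}.

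First I would handle the semi-induced case. By the definition of semi-induced it is enough, by additivity of length in short exact sequences of $\bk[\GL_n]$-modules, to treat a single induced module $\cI(W)$, where $W = (W_0, W_1, \dots)$ with each $W_m$ a $\bk[\GL_m]$-module and only finitely many nonzero. Unwinding the left adjoint $\cI$ one gets
\begin{equation*}
	\cI(W)(\aF^n) = \bigoplus_{m \le n} \Ind_{\GL_m \ltimes U}^{\GL_n} \bigl( W_m \boxtimes \bk \bigr),
\end{equation*}
i.e.\ a sum of parabolic inductions of the $W_m$'s. The length of $\cI(W)(\aF^n)$ is therefore the sum over $m$ of the lengths of these parabolic inductions. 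Now I would invoke the decomposition theory of parabolically induced representations of $\GL_n(\aF_q)$ in characteristic zero: by Harish--Chandra/Howlett--Lehrer philosophy (or a direct count of the multiplicity of each irreducible via its cuspidal support), the number of irreducible constituents of $\Ind_{\GL_m \ltimes U}^{\GL_n}(V)$, for a fixed $\bk[\GL_m]$-module $V$, is eventually constant in $n$ --- concretely, once $n - m$ is large enough, adding one more ``trivial'' block to the Levi multiplies the relevant Hecke-algebra module dimension by a factor that stabilizes. Summing finitely many such stable quantities shows that the length of $\cI(W)(\aF^n)$ stabilizes, hence so does the length of any semi-induced module.

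Next I would handle the torsion case, which is immediate: if $M$ is torsion and finitely generated, then $M(\aF^n) = 0$ for $n$ large by Corollary~\ref{intro-cor-noeth}, so its length is eventually $0$, trivially stable. Finally, to pass from these two cases to a general finitely generated $M$, fix $d$ with $\Sigma^{\aF^d}M$ semi-induced. The unit-type map comparing $M$ with $\Sigma^{\aF^d}M$ (more precisely, iterating the natural map $M \to \newshift M$) has kernel and cokernel that are finitely generated torsion $\VI$-modules, so by Corollary~\ref{intro-cor-noeth} the $\bk[\GL_n]$-modules $M(\aF^n)$ and $(\Sigma^{\aF^d}M)(\aF^n)$ have the same length for $n \gg 0$; combined with the semi-induced case this gives stabilization of the length of $M(\aF^n)$.

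The main obstacle is the semi-induced (equivalently, induced) case: one must show that the number of irreducible $\bk[\GL_n]$-constituents of a parabolic induction $\Ind_{\GL_m \times \GL_{n-m}}^{\GL_n}(V \boxtimes \bk)$ stabilizes as $n \to \infty$. This is where algebraic closedness and characteristic zero enter essentially --- it rests on the semisimplicity of $\bk[\GL_n]$ and on the Harish--Chandra theory describing constituents of $\Ind$ in terms of modules over the ramified Hecke algebra attached to the cuspidal support, whose structure (and hence dimension count) stabilizes. I expect this to require the most care; everything else reduces to additivity of length and the noetherianity corollary.
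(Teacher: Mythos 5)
Your overall strategy --- reduce to the torsion and semi-induced cases and then combine --- is reasonable, and the torsion case (trivially zero for $n \gg 0$) and the semi-induced case (which, as you say, comes down to stability of constituent counts for parabolic inductions $\Ind_{P_{m,n-m}}^{\GL_n}(V\boxtimes\bk)$, i.e.\ Pieri's formula, which the paper cites as \cite[Lemma~2.8]{gan-watterlond-VI}) are both on the right track. But the step that glues them together is wrong.

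You assert that the natural map $M \to \Sigma^{\aF^d} M$ (or, iterated, $M \to \newshift^{\aF^d} M$) has torsion kernel \emph{and cokernel}, and hence that $M(\aF^n)$ and $(\Sigma^{\aF^d}M)(\aF^n)$ have the same length for $n \gg 0$. The kernel is indeed torsion (Lemma~\ref{lem:newshift-injectivity}), but the cokernel is emphatically not: Proposition~\ref{prop:categorical-derivation} gives $\newshift\cI(V) = \cI(V)\oplus\cI(\newshift V)$, so $\newdelta\cI(V) = \cI(\newshift V)$ is a nonzero induced module, not torsion. Concretely, for $M = \cI(1)$ one has $\dim M(\aF^n) = q^n-1$ while $\dim(\Sigma^{\aF}M)(\aF^n) = q^{n+1}-1$, and the lengths differ as well (the restriction of $M(\aF^{n+1})$ to $\GL_n$ picks up extra trivial summands); so the claim that the two lengths eventually coincide is false. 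The cokernel is generated in strictly smaller degree (Corollary~\ref{cor:generation-of-newdelta}), which is useful, but it contributes real length.

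The fix is exactly part (b) of the shift theorem (Theorem~\ref{thm:shift-theorem}): there is a finite complex $I^{\bullet}$ with $I^0 = M$, with $I^i$ semi-induced for $i > 0$, and with $\rH^i(I^{\bullet})$ finitely supported. For $n \gg 0$ the evaluated complex $I^{\bullet}(\aF^n)$ is exact, so by semisimplicity of $\bk[\GL_n]$ in characteristic zero the alternating sum of lengths vanishes and
\[
	\mathrm{length}\, M(\aF^n) \;=\; \sum_{i \ge 1} (-1)^{i+1}\, \mathrm{length}\, I^i(\aF^n),
\]
each summand of which stabilizes by the semi-induced case. This replaces your incorrect two-term comparison by a finite exact complex; the rest of your outline then goes through. (The paper in fact proves the stronger multiplicity statement (RS3), from which length stability is immediate.)
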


We also obtain the following new  theorem in characteristic zero.

\begin{theorem}[Finiteness of Injective dimension]
	Assume that $\bk$ is a field of characteristic zero. Then the following holds in $\ModVI$: \begin{enumerate}
		\item Every projective is injective.
		\item Every torsion-free injective is projective.
		\item Every finitely generated module has finite injective dimension.
	\end{enumerate}
\end{theorem}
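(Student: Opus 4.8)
The plan is to reduce every statement to the injectivity of the principal projectives $P_n\coloneq\bk[\Hom_{\VI}(\aF^n,-)]=\cI(\bk[\GL_n])$. In characteristic zero $\bk[\GL_n]$ is semisimple, so every $\bk[\GL_n]$-module is projective; hence each induced module $\cI(W)$ is a direct summand of a finite direct sum of copies of some $P_m$, the defining filtration of a semi-induced module splits (its graded pieces being projective), and every semi-induced module is a finite direct sum of induced modules --- in particular semi-induced modules are projective. Granting that $P_n$ is injective: an arbitrary projective is a summand of a direct sum of copies of the $P_n$, and direct sums and summands of injectives are injective since $\ModVI$ is locally noetherian (Theorem~\ref{intro:noetherianity}), which proves~(a); moreover every induced, hence every semi-induced, module is then injective. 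Part~(b) for finitely generated $N$ follows at once: a finitely generated torsion-free injective has $\rR\Gamma(N)=\Gamma(N)=0$ (injectives are $\Gamma$-acyclic and $N$ is torsion-free), so it is semi-induced by Theorem~\ref{intro-semi-induced-saturated} and therefore projective. Finally, the sequence $0\to\Gamma(M)\to M\to M/\Gamma(M)\to 0$ reduces part~(c) to finitely generated torsion and finitely generated torsion-free modules.

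To prove $P_n$ is injective it suffices, by local noetherianity, to show $\Ext^i_{\ModVI}(M,P_n)=0$ for all $i\ge 1$ and all finitely generated $M$; using $0\to\Gamma(M)\to M\to M/\Gamma(M)\to 0$ again I reduce to torsion $M$ and to torsion-free $M$. For torsion $M$: $P_n$ is semi-induced, so $\rR\Gamma(P_n)=0$ (the easy half of Theorem~\ref{intro-semi-induced-saturated}), and the recollement formalism for the torsion theory $(\Modtors,\ModVI,\ModVI/\Modtors)$ then gives $\rHom_{\ModVI}(M,P_n)=0$. For torsion-free $M$ I induct on the degree of generation; writing $\Sigma=\Sigma^{\aF}$ and $\newdelta M\coloneq\Sigma M/M$, the sequence $0\to M\to\Sigma M\to\newdelta M\to 0$ is exact (since $M$ is torsion-free, $M\to\Sigma M$ is injective), and by the shift theorem (Theorem~\ref{intro-shift-theorem}) $\Sigma^{\aF^{k_0}}M$ is semi-induced --- hence projective --- for some $k_0$, so $\Ext^{\ge 1}(\Sigma^{\aF^{k_0}}M,P_n)=0$. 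A descending induction from $\Sigma^{\aF^{k_0}}M$ to $\Sigma^{\aF^0}M=M$, using the exact sequences $0\to\Sigma^{\aF^{j-1}}M\to\Sigma^{\aF^{j}}M\to\newdelta(\Sigma^{\aF^{j-1}}M)\to 0$ for $1\le j\le k_0$ --- with $\Sigma$ preserving finite generation without raising the degree of generation and $\newdelta$ preserving finite generation while strictly lowering it, so that the torsion-free part of $\newdelta(\Sigma^{\aF^{j-1}}M)$ is covered by the inductive hypothesis and its torsion part by the previous case --- yields $\Ext^{\ge 1}(M,P_n)=0$. The identical induction, with $\mathrm{injdim}$ in place of $\Ext$-vanishing and using part~(a), proves finiteness of injective dimension for finitely generated torsion-free modules, disposing of that half of part~(c).

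For part~(b) in general, decompose a torsion-free injective as $I=\bigoplus_\alpha I_\alpha$ with $I_\alpha$ indecomposable injective (possible over the locally noetherian $\ModVI$); each $I_\alpha$ is torsion-free and equals the injective hull $E(M_\alpha)$ of some finitely generated torsion-free submodule $M_\alpha$. Now $M_\alpha$ sits in its saturation $RL(M_\alpha)$ ($L\colon\ModVI\to\ModVI/\Modtors$ the localization, $R$ its right adjoint) as an essential extension with torsion cokernel $\rR^1\Gamma(M_\alpha)$, which is finitely generated by the finiteness of local cohomology (Theorem~\ref{intro-finiteness-of-cohomology}); so $RL(M_\alpha)$ is finitely generated, and in characteristic zero it is injective, whence $E(M_\alpha)=RL(M_\alpha)$ is a finitely generated torsion-free injective, hence projective by the finitely generated case, and $I$ is a direct sum of projectives. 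For the torsion half of part~(c), let $T$ be finitely generated torsion, let $N$ be the largest integer with $T(\aF^N)\ne 0$ (finite by Corollary~\ref{intro-cor-noeth}), and induct on $N$. With $\cK$ the right adjoint of restriction $\Res\colon\ModVI\to\prod_m\Mod_{\bk[\GL_m]}$: the unit $T\to\cK(\Res T)$ is monic (restriction is faithful), $\cK(\Res T)$ is injective (restriction is exact and $\Res T$ is injective in the semisimple category $\prod_m\Mod_{\bk[\GL_m]}$), and one computes that $\cK(\Res T)$ is finitely generated, supported in degrees $\le N$, with the unit an isomorphism in degree $N$; hence the cokernel of the unit is finitely generated torsion, supported in degrees $\le N-1$, and $\mathrm{injdim}\,T<\infty$ by induction.

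The step I expect to be the real obstacle is the characteristic-zero input that $\ModVI/\Modtors$ is semisimple --- equivalently, that the saturation of a finitely generated torsion-free module is its injective hull; this is what makes ``$RL(M_\alpha)$ is injective'' and ``$E(M_\alpha)=RL(M_\alpha)$'' work above. I would derive it from representation stability in characteristic zero (Theorem~\ref{intro:rep-stability} and the finer statements it rests on), which identifies the simple objects of $\ModVI/\Modtors$ with the stable families of $\GL_n$-irreducibles and shows there are no extensions among them. The remaining inputs --- the recollement formalism, the direct-sum decomposition of injectives over a locally noetherian category, and the elementary properties of $\Sigma$ and $\newdelta$ ($\Sigma$ exact, both preserving finite generation, $\newdelta$ lowering the degree of generation, $\mathrm{id}\to\Sigma$ injective on torsion-free modules) --- are routine.
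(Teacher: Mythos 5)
Your proposal handles parts (a) and (c) along routes that differ in organization from the paper's but are essentially sound.  For (a) the paper proceeds more directly: it first shows $\Ext^1(Q,\cI(W))=0$ for all finitely generated $Q$ by producing a splitting of any injection $\cI(W)\hookrightarrow M$ via $\Sigma^{\ell}$ into a semi-induced shift, then invokes Baer's criterion; your inductive $\Ext^{\ge 1}$-vanishing with a torsion/torsion-free split is more elaborate but works.  For the torsion half of (c) your $\cK=\dualI$ coinduction argument is essentially identical to the paper's lemma.  These parts are fine.

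Part (b) has a genuine gap, and it is exactly the one you flag.  Your reduction requires that $RL(M_\alpha)=\bS(M_\alpha)$ be injective for finitely generated torsion-free $M_\alpha$, i.e., that $\bS(M_\alpha)$ be derived saturated.  A hypercohomology spectral sequence for $\rR\Gamma\circ\rR\bS$ (together with $\rR\Gamma\rR\bS=0$ and $\rR^{i}\bS\cong\rR^{i+1}\Gamma$ for $i\ge 1$) shows that while $\rR^{0}\Gamma(\bS M)=\rR^{1}\Gamma(\bS M)=0$ always hold, one has $\rR^{i}\Gamma(\bS M)\cong\rR^{i}\Gamma(M)$ for $i\ge 2$.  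So injectivity of $\bS(M_\alpha)$ is equivalent to the vanishing of $\rR^{i}\Gamma(M_\alpha)$ for $i\ge 2$, i.e., to $\rR^{i}\bS=0$ for $i\ge 1$, which is in turn equivalent to $\ModVI/\Modtors$ being semisimple.  This paper proves only finiteness results for $\rR^{i}\Gamma$ (Theorem~\ref{intro-finiteness-of-cohomology}), not vanishing past degree one, and it explicitly defers the structure of the generic category to \cite{VI2}, where even there the stated result is Krull dimension zero rather than semisimplicity.  Representation stability (Theorem~\ref{intro:rep-stability}) gives stabilization of lengths and multiplicities of the $\GL_n$-irreducibles in $M(\aF^n)$, but that is a statement about graded pieces and does not by itself rule out non-split generic extensions, so the derivation you sketch is not sufficient.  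The paper's proof of (b) sidesteps all of this: it embeds a torsion-free injective $I$ into a direct sum $J$ of induced modules, splits off $I$, observes that each $I_{\preceq d}$ is then a direct summand of $J_{\preceq d}$, hence injective, torsion-free and generated in degrees $\le d$, so derived saturated, and applies Theorem~\ref{intro-semi-induced-saturated} to conclude $I_{\preceq d}$ is induced, finishing with $I=\varinjlim_d I_{\preceq d}$.  You should replace the saturation-is-injective step by this splitting argument, or else supply an independent proof that $\rR^{i}\Gamma(M)=0$ for $i\ge 2$ when $\bk$ has characteristic zero.
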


Along the way, we classify all indecomposable injectives in characteristic zero, and we also classify indecomposable torsion injectives when $\bk$ is an arbitrary noetherian ring.

\subsection{Relations to other works}
Recently, Kuhn in \cite{kuhn-VA} has analyzed a similar but simpler (of lower krull dimension) category of $\VA$-modules, where $\VA$ is the category of finite dimensional $\aF$ vector spaces. 

\begin{theorem}[{\cite[Theorem~1.1]{kuhn-VA}}]
	In the non-describing characteristic, $\Mod_{\VA}$ is equivalent to the product category $\prod_{n \ge 0} \Mod_{\bk[\GL_n]}$. In particular, if $\bk$ is a field then $\Mod_{\VA}$ is of krull dimension zero.
\end{theorem}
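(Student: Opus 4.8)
The plan is to build mutually inverse equivalences $\Phi\colon\Mod_{\VA}\to\prod_{n\ge 0}\Mod_{\bk[\GL_n]}$ and $\Psi$ in the opposite direction, by exhibiting a canonical, rank-indexed block decomposition of $\Mod_{\VA}$. The one place the hypothesis ``$q$ invertible in $\bk$'' is genuinely needed is a \emph{splitting lemma}: for every $\VA$-module $M$, each injection $j\colon U\hookrightarrow W$ of $\aF$-vector spaces induces a split monomorphism $M(j)$ and each surjection $p\colon W\twoheadrightarrow U$ a split epimorphism $M(p)$, with \emph{canonical} splittings. Indeed the retractions of $j$ form a torsor under $\Hom_{\aF}(W/U,U)$ and the sections of $p$ a torsor under $\Hom_{\aF}(U,\ker p)$, both groups of $q$-power order; since $q$ is invertible one averages the corresponding elements of the linearized category algebra to obtain a canonical retraction $\bar r_j$ with $\bar r_j j=\id$ and a canonical section $\bar s_p$ with $p\bar s_p=\id$, so that $M(\bar r_j)M(j)=\id$ and $M(p)M(\bar s_p)=\id$. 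Using the image factorization $f=jp$ available in $\VA$, every $M(f)$ becomes a canonical composite of a split epimorphism with a split monomorphism, and $e_p:=\bar s_p p$ is an idempotent in $\bk[\operatorname{End}_{\VA}(W)]$ whose image under $M$ is a projector onto a canonical copy of $M(U)$ inside $M(W)$.

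Next I would set up the \emph{rank filtration}: for $M\in\Mod_{\VA}$ let $W_rM$ be the subfunctor with $W_rM(V)=\sum_{U\le V,\ \dim U\le r}\operatorname{im}\bigl(M(U\hookrightarrow V)\bigr)$. This is an exhaustive increasing filtration with $W_{-1}M=0$ and $W_rM(V)=M(V)$ once $\dim V\le r$. Using the splitting lemma one checks that the $r$-th graded piece $Q_rM:=W_rM/W_{r-1}M$ vanishes in degrees below $r$, is generated in degree $r$, and that $Q_rM(\aF^r)$ is a module over $\bk[\operatorname{End}_{\VA}(\aF^r)]=\bk[\mathrm M_r(\aF)]$ on which every non-invertible matrix acts by zero (such a matrix factors as an injection after a surjection through a space of dimension $<r$, so $M$ of it lands in $W_{r-1}$). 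Since the $\bk$-span of the non-invertible matrices is a two-sided ideal of $\bk[\mathrm M_r(\aF)]$ with quotient $\bk[\GL_r]$, such a module is exactly a $\bk[\GL_r]$-module. Thus the graded pieces of an arbitrary $\VA$-module are governed by $\bk[\GL_r]$-modules, and everything reduces to showing that this filtration splits canonically.

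That splitting is the step I expect to be the main obstacle. Concretely I would produce, for each $n$, a complete orthogonal system of idempotents $\varepsilon^{(n)}_0,\dots,\varepsilon^{(n)}_n\in\bk[\mathrm M_n(\aF)]$, natural in $n$, such that $M(\varepsilon^{(n)}_r)$ projects $M(\aF^n)$ onto its ``new in rank $r$'' summand; these should arise by inclusion--exclusion over the lattice of subspaces of $\aF^n$ (whose Möbius function is $\mu(0,U)=(-1)^{\dim U}q^{\binom{\dim U}{2}}$) applied to the idempotents $e_p$ of the splitting lemma. The content is the combinatorial identity checking that the resulting elements are orthogonal idempotents summing to $1$; equivalently, that the natural surjection $\bk[\Hom_{\VI}(\aF^r,V)]\otimes_{\bk[\GL_r]}Q_rM(\aF^r)\to Q_rM(V)$ is an isomorphism for all $V$. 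This is exactly what fails when $\operatorname{char}\bk$ divides $q$, as one already sees from the $\VA$-module $V\mapsto V$ over $\aF$, which is why the hypothesis is essential here.

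Granting this, the system $(\varepsilon^{(n)}_r)_n$ defines an idempotent endofunctor $\pi_r$ of $\Mod_{\VA}$ with $\pi_r\pi_s=0$ for $r\ne s$, with $M=\bigoplus_r\pi_rM$ (a locally finite sum), and with $\pi_rM\cong Q_rM$. One then sets $\Phi(M)_n:=Q_nM(\aF^n)$ and, in the other direction, $\Psi\bigl((V_n)_n\bigr)(W):=\bigoplus_n\bk[\Hom_{\VI}(\aF^n,W)]\otimes_{\bk[\GL_n]}V_n$, where a map $W\to W'$ acts on the degree-$n$ summand by postcomposition, sending $\iota\otimes v$ to $(f\iota)\otimes v$ when $f\iota$ is still injective and to $0$ otherwise (this is readily checked to be functorial). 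The previous paragraphs then give $\Phi\Psi\cong\id$ formally and $\Psi\Phi\cong\id$ via the block decomposition. For the final assertion, when $\bk$ is a field each $\bk[\GL_n]$ is a finite-dimensional $\bk$-algebra, so $\Mod_{\bk[\GL_n]}$ has Krull dimension $0$, and the Krull dimension of the product category is the supremum of these, hence $0$.
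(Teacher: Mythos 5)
This is one of the results the paper \emph{cites} (it is \cite[Theorem~1.1]{kuhn-VA}) rather than proves, so there is no in-paper argument to compare against; I'll therefore assess the sketch on its own terms, noting that the high-level strategy (canonical splittings via averaging, the rank filtration, and a block decomposition of $\bk[M_n(\aF)]$) is indeed the right circle of ideas and is close in spirit to Kuhn's.

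The worked-out parts are fine: retractions of an injection form a $\Hom_{\aF}(W/U,U)$-torsor and sections of a surjection a $\Hom_{\aF}(U,\ker p)$-torsor, both of $q$-power order, so averaging gives canonical one-sided inverses in $\bk[\VA]$; $e_p=\bar s_p p$ is then an idempotent (it depends only on $\ker p$); the rank filtration $W_rM$ is exhaustive with the stated vanishing, and non-invertible elements of $\bk[M_r(\aF)]$ kill $Q_rM(\aF^r)$ because they factor through a space of dimension $<r$. The $\Psi$ construction is functorial (if $f\iota$ is not injective neither is $gf\iota$), and $\Phi\Psi\cong\id$ is a genuine computation, correctly done. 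The counterexample $V\mapsto V$ over $\bk=\aF$ is also the right one.

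The gap, which you flag yourself, is the whole theorem: you never establish the splitting. Three specific concerns. First, ``Möbius inversion applied to the $e_p$'' is a direction, not an argument: the $e_p$'s are not pairwise commuting, there is no identity of the form $e_K=\sum_{\text{smaller}}\varepsilon_{\bullet}$ to invert, and you do not verify that the resulting elements are idempotent, orthogonal, or sum to $1$. Second, and more importantly, for the category $\Mod_{\bk[M_n(\aF)]}$ to decompose you need the $\varepsilon^{(n)}_r$ to be \emph{central} in $\bk[M_n(\aF)]$, and for $\Mod_{\VA}$ to decompose you further need the family $(\varepsilon^{(n)}_r)_n$ to be natural with respect to \emph{all} $\VA$-morphisms (both injections and surjections); neither centrality nor naturality is addressed, and neither is obvious from the proposed formula. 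What is really being invoked here is the structure theorem that, when $q$ is invertible, $\bk[M_n(\aF)]$ is a product of matrix rings over the $\bk[\GL_r]$ ($0\le r\le n$), compatibly in $n$ --- that is the theorem, not a lemma en route to it. Third, the asserted equivalence between ``the $\varepsilon^{(n)}_r$ exist'' and ``the natural surjection $\bk[\Hom_{\VI}(\aF^r,V)]\otimes_{\bk[\GL_r]}Q_rM(\aF^r)\to Q_rM(V)$ is an isomorphism'' is not justified: the latter only identifies the associated graded of the rank filtration with an induced module, while the former asserts the filtration splits \emph{functorially in $M$}; passing from one to the other needs an argument (e.g.\ that these induced $\VA$-modules are projective in $\Mod_{\VA}$, which again reduces to the missing idempotent statement). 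The Krull-dimension deduction at the end is fine once the equivalence is granted.
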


It is a folklore that one recovers the representation theory of the symmetric groups from the representation theory of the finite general linear group over $\mathbb{F}_q$ by setting $q=1$. We observe a similar phenomenon between $\FI$-modules and $\VI$-modules: all the results we have for $\VI$-modules in the non-describing characteristic are true for $\FI$-modules in all characteristic ($\FI$-modules encode sequences of representations of the symmetric groups; see \cite{fimodules}). In other words, the proofs for the results on $\FI$-modules are degenerate cases of the proofs for the corresponding results on $\VI$-modules in the non-describing characteristic. But we point out that (1) many of our ideas are copied from the corresponding ideas on $\FI$-modules, and (2) we know a lot more about $\FI$-modules, for example, all the questions that we  pose below have been solved for $\FI$-modules. We have tried to summarize  throughout the text where each crucial idea has been borrowed from, but here is a list of references that contain analogs of our results -- \cite{church-bounding, castelnuovo-regularity, fimodules, fi-noeth, djament, djament-vespa, li, li-ramos, nagpal, regthm, ramos, symc1}.

A higher dimension category of similar representation theoretic nature whose structure is well understood is the category of $\FI_d$-modules; see \cite{symd}, \cite{fidreg}.

\subsection{Further comments and questions}  Theorem~\ref{intro:rep-stability} implies that every finitely generated object in the category \[\ModVI^{\gen} \coloneq \ModVI/\ModVI^{\tors}\] of generic $\VI$-modules is of finite length, that is, the krull dimension of $\ModVI^{\gen}$ is zero. In a subsequent paper \cite{VI2}, we shall prove that the same holds in the non-describing characteristic (where $\bk$ is still assumed to be a field) by providing a complete set of irreducibles of the generic category. Determining Krull dimension in equal characteristic ($\bk = \aF$) is related to an old open problem called the strong artinian conjecture \cite{powell, powell2}.

Sam and Snowden have proven that the categories of  torsion and the generic $\FI$-modules are equivalent in characteristic zero \cite[Theorem~3.2.1]{symc1}, and such a phenomenon seem to appear in some other categories as well (for example, see \cite{infrank} and \cite{sym2noeth} for the category of $\Sym(\Sym^2)$-modules). We have the following question along the same lines:

\begin{question}
	Assume that $\bk$ is of characteristic zero. Is there an equivalence of categories  $\ModVI^{\tors} \cong \ModVI^{\gen}$.
\end{question}

\begin{remark}
	After the release of the first draft of this paper, Gan, Li and Xi have positively answered the question above; see \cite{GLX}.  We note that they used the shift theorem (Theorem~\ref{intro-shift-theorem}) non-trivially; see \cite[Lemma~4.1]{GLX}.
\end{remark}

Our result provides bounds on the Castelnuovo-Mumford regularity in terms of the local cohomology. But we have not been able to bound local cohomology in terms of the degrees of generation and relation. An analogous question for $\FI$-modules has already been answered (\cite[Theorem~A]{castelnuovo-regularity}); also see   \cite{church-bounding}, \cite{li}, and \cite[Theorem~E]{li-ramos} for more results on this. We also note that, in characteristic zero, Miller and Wilson have provided bounds on the higher syzygies for a similar category called $\VIC$-modules; see \cite[Theorem~2.26]{quant-rep-VIC}.

\begin{question}
	\label{ques:gamma-is-finitely-supported}
	Let $M$ be a $\VI$-module generated in degrees $\le t_0$ and whose syzygies are generated in degrees $\le t_1$. Is there a number $n$ depending only on $t_0$ and $t_1$ such that $\Gamma(M)(X) = 0$ for every vector space $X$ of dimension larger than $n$. 
\end{question}

\begin{remark}
	After the release of the first draft of this paper, Gan and Li have positively answered the question above; see \cite{GL}.  We note that they used the shift theorem (Theorem~\ref{intro-shift-theorem}) non-trivially. Along the way, they also made all the bounds in the current paper explicit in terms of $t_0$ and $t_1$; see \cite[Theorem~1.1]{GL}. Bounds in the current paper are in terms of degrees of the local cohomology groups.
\end{remark}

The question below is a $\VI$-module analog of \cite[Conjecture~1.3]{li-ramos} which has been resolved for $\FI$-modules in \cite{regthm}.

\begin{question}
	\label{ques:regularity}
	Is it true that the Castelnuovo-Mumford regularity of a $\VI$-module is exactly $\max_{i} (\deg \rR^i \Gamma(M) + i)$ where $i$ varies over the finitely many values for which $ \rR^i \Gamma(M) $ is nonzero? 
\end{question}

\subsection{Outline of the paper}
In \S \ref{sec:overview}, we provide an overview of $\VI$-modules. In particular, we sketch an equivalence between $\ModVI$ and the module category of an algebra object $\bA$ in the monoidal category of Joyal and Street, and we recall some formalism of local cohomology and saturation from \cite{symd}. In \S \ref{sec:semi-induced}, we prove some formal properties of induced and semi-induced modules that we need. These properties are formal in the sense that they have nothing much to do with $\VI$-modules and are true (with appropriate definitions) in several other categories (for example, $\Mod_{\FI}$, $\Mod_{\FI_d}$ or $\Mod_{\VIC}$). We decided to include a short section and collect these formal results at one place. The meat of the paper is contained in \S \ref{sec:shift} where we prove the shift theorem. The last section (\S \ref{sec:consequences}) contains all the consequences of the shift theorem.

\subsection{Acknowledgments} We would like to thank Steven Sam for pointing out connections between the $\VI$-modules and the category of Joyal and Street \cite{joyal-street}. This helped us obtain a clean proof of Proposition~\ref{prop:categorical-derivation}. We thank Nate Harman for useful conversations on virtual specht stability. We thank Nir Gadish, Liping Li, Peter Patzt, Eric Ramos, Steven Sam, Andrew Snowden and an anonymous referee for pointing out several errors and suggesting numerous improvements on the first draft. 

We thank Geoffrey Powell for pointing out an error  in the published version of this paper and allowing us to use his argument to fix the error (see errata after Corollary~\ref{cor:short-exact-sequence-semi-induced}).

\section{Overview of $\VI$-modules} 
\label{sec:overview}
\subsubsection*{Notation} We work over a unital commutative ring $\bk$. For a non-negatively graded $\bk$-module $M$, we define $\deg M$ to be the least integer $n \ge -1$ such that  $M_k =0$ for $k > n$, and $\deg M = \infty$ if no such $n$ exists. 

We fix a finite field $\aF$ of cardinality $q$, and assume that all vector spaces are over $\aF$. For a vector space $X$, we denote the group of automorphisms of $X$ by $\Aut(X)$ or $\GL(X)$. When the dimension of $X$ is $n$, we also denote these groups by $\GL_n$. We denote the trivial vector space by $0$, and we shall simply write $X \preceq Y$ whenever $\dim_{\aF} X \le \dim_{\aF} Y$. 

\subsection{The monoidal category of Joyal and Street}  We denote, by $\VB$, the category of finite dimensional vector spaces with isomorphisms. A $\VB$-module is a functor from $\VB$ to $\Mod_{\bk}$. $\VB$-modules form a category $\Mod_{\VB}$ which is naturally equivalent to the product category $\prod_{n \ge 0} \Mod_{\bk[\GL_n]}$.  In particular, a $\VB$-module is naturally a non-negatively graded $\bk$-module. We denote, by $V_d$, the $\VB$-module satisfying \begin{align*}
  V_d(X)= \begin{cases} 
 V(X) &\mbox{if} \dim_{\aF} X = d\\
 0 &\mbox{if} \dim_{\aF} X \neq d.
\end{cases}
\end{align*} If $V = V_d$, we say that $V$ is supported in degree $d$. Given $\VB$-modules $M, N$ we define an external product $\otimes_{\VB}$ by \[ (M\otimes_{\VB} N)(Y)  = \bigoplus_{X \le Y} M(Y/X) \otimes_{\bk} N(X).  \] Then $\otimes_{\VB}$ turns $\Mod_{\VB}$ into a monoidal category; see \cite[\S 2]{joyal-street}.

\subsection{The algebra $\bA$} Let $\bA$ be the $\VB$-module such that $\bA_n = \bk$ is the trivial representation of $\GL_n$ for each $n$. We have a map $\bA \otimes_{\VB} \bA \to \bA$ given  by \[ a \otimes b \in \bA(Y/X) \otimes_{\bk} \bA(X) \mapsto ab \in \bA(Y). \] This turns $\bA$ into an algebra object in the monoidal category $(\Mod_{\VB}, \otimes_{\VB} )$. We denote the category of $\bA$-modules by $\Mod_{\bA}$. The $\VB$-module $\bk = \bA/\bA_+$ is naturally an $\bA$-module. As usual, the {\bf degree of generation} of an $\bA$-module $M$ is defined to be $\deg \bk \otimes_{\bA} M$. We shall denote $\deg \Tor_i^{\bA}(\bk, M)$ by $t_i(M)$, and so the degree of generation of $M$ is $t_0(M)$. We say that an $\bA$-module is {\bf presented in finite degrees} if $t_0(M)$ and $t_1(M)$ are finite.

\subsection{Definition of a $\VI$-module}
We denote, by $\VI$, the category of finite dimensional vector spaces with injective linear maps. A $\VI$-module is a functor from $\VI$ to $\Mod_{\bk}$. We denote the category of $\VI$-modules by $\ModVI$. Let $M$ be a $\VI$-module. A $\VI$ morphism $f \colon X \to Y$ induces a map $ M(X) \to M(Y)$ which we denote by $f_{\star}$.  The $\VI$-module $M$ restricts to a $\VB$-module and admits a natural map $\bA \otimes_{\VB} M \to M$ given by \[ a \otimes b \in  \bA(Y/X) \otimes_{\bk} M(X) \mapsto a \iota_{\star}(b) \in  M(Y)  \] where $\iota \colon X \to Y$ is the inclusion. Conversely, if $M$ is an $\bA$-module and $f \colon X \to Y$ is a $\VI$-morphism, then we have a map $f_{\star} \colon M(X) \to M(Y)$ given by the composite \[ M(X) \to M(f(X))  \xrightarrow{1 \otimes -} A(Y/f(X)) \otimes_{\bk} M(f(X))  \to M(Y)   \] where the first map comes from $\VB$-module structure on $M$ and the last map comes from $\bA$-module structure on $M$. It is easy to see that the above discussion describes an equivalence of categories:

\begin{proposition}
$\ModVI$ is equivalent to $\Mod_{\bA}$. 
\end{proposition}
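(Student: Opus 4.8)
The plan is to make precise the construction already sketched in the paragraph preceding the statement, and check that the two assignments $M \mapsto M$ (restrict to $\VB$, plus the action map $\bA \otimes_{\VB} M \to M$) and the reverse assignment $M \mapsto M$ (upgrade a $\VB$-module with $\bA$-action to a functor on $\VI$ by defining $f_\star$ via the displayed composite) are mutually inverse equivalences. First I would define the two functors on objects: given a $\VI$-module $M$, restriction along $\VB \hookrightarrow \VI$ gives a $\VB$-module, and the natural transformation $\bA \otimes_{\VB} M \to M$ displayed above turns it into an $\bA$-module; one must check this map satisfies the associativity and unit axioms for an action, which amounts to the identity $a\,\iota_\star(b\,\iota'_\star(c)) = (ab)\,(\iota\iota')_\star(c)$ inside $M$ coming from functoriality of $M$ on compositions of inclusions, together with the $\GL$-equivariance of $\iota_\star$. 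Conversely, given an $\bA$-module $M$, define $f_\star \colon M(X) \to M(Y)$ for a $\VI$-morphism $f$ by the composite $M(X) \xrightarrow{\sim} M(f(X)) \to \bA(Y/f(X)) \otimes_\bk M(f(X)) \to M(Y)$, where the first arrow is the $\VB$-structure applied to the isomorphism $X \xrightarrow{\sim} f(X)$, the second is $1 \otimes -$, and the third is the $\bA$-action.

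Next I would verify functoriality of this assignment, i.e. that $g_\star f_\star = (gf)_\star$ for composable $\VI$-morphisms $f \colon X \to Y$, $g \colon Y \to Z$, and that $(\mathrm{id}_X)_\star = \mathrm{id}_{M(X)}$. The unit statement is immediate since $\bA(0) = \bk$ acts as identity. The composition law is the key bookkeeping step: one unwinds both sides and uses the associativity of the $\bA$-action together with the multiplication $\bA(Z/g(Y)) \otimes \bA(g(Y)/gf(X)) \to \bA(Z/gf(X))$, which sends $1 \otimes 1 \mapsto 1$ because $\bA_n$ is the trivial representation; the quotient spaces match up because $g(Y)/gf(X) \cong Y/f(X)$ canonically. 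This gives a $\VI$-module, and one checks it restricts to the original $\VB$-module and $\bA$-action, so the two constructions are inverse to each other on objects.

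Finally I would check that both constructions are functorial in $M$ — a morphism of $\VI$-modules restricts to a morphism of $\VB$-modules commuting with the $\bA$-action, and conversely a morphism of $\bA$-modules commutes with the $f_\star$ built from the composite, since each stage of the composite is natural — and that the two functors compose to the respective identities up to natural isomorphism (in fact on the nose, by the object-level computation). I expect the main obstacle, such as it is, to be purely notational: carefully tracking the canonical identifications among the quotient vector spaces $Y/f(X)$, $g(Y)/gf(X)$, $Z/gf(X)$ and the induced maps on $\bA$ when verifying $g_\star f_\star = (gf)_\star$, and making sure the isomorphism part of the composite (the $\VB$-action on $X \xrightarrow{\sim} f(X)$) interacts correctly with the action part. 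None of this needs the noetherian or non-describing-characteristic hypotheses; it is a formal Morita-type equivalence, and the proof is essentially the verification that the data of a functor out of $\VI$ is equivalent to the data of a $\VB$-module together with a compatible action of the ``insertion'' algebra $\bA$.
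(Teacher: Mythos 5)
Your proposal is correct and takes the same route as the paper, which offers no proof beyond the sentence ``it is easy to see that the above discussion describes an equivalence of categories''; you have simply fleshed out that sketch (verifying that the two displayed constructions are well-defined, functorial, and mutually inverse, with the main bookkeeping being the canonical identifications of quotient spaces and the associativity of the $\bA$-multiplication). Your observation that the argument is purely formal and requires neither the noetherian nor the non-describing-characteristic hypotheses is also consistent with the paper.
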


We shall not distinguish between $\VI$-modules and $\bA$-modules. In particular, notions like degree of generation makes sense for $\VI$-modules. We explain degree of generation from the $\VI$ perspective now. Given a $\VB$-module $V$, we can upgrade it to a $\VI$-module by declaring that all $\VI$-morphisms that are not isomorphisms acts on $V$ by $0$. This defines a functor $\Psi^{\uparrow} \colon \Mod_{\VB} \to \ModVI$. We define $\rH_0^{\VI}$ to be the left adjoint to $\Psi^{\uparrow}$.  Let $M$ be a $\VI$-module. Denote the smallest $\VI$-submodule containing $M(Y)$ for $Y \prec X$ by $M_{\prec X}$. Then $\rH_0^{\VI}(M)$ is given explicitly by \[ \rH_0^{\VI}(M)(X) = (M/M_{\prec X})(X). \] The functor $\rH_0^{\VI}$ (called {\bf $\VI$-homology}) is same as the functor $\Tor_0^{\bA}(\bk, - )  = \bk \otimes_{\bA} - $ under the equivalence above. We shall use the notation $\rH_i^{\VI}(-)$ instead of $\Tor_i^{\bA}(\bk, -)$. Here are some basic results on $\VI$-homology.

\begin{proposition}
	\label{prop:homology-filtration}
	We have $\rH^{\VI}_0(M_{\prec d}) = \rH^{\VI}_0(M)_{< d}$. In particular, if $n < m$ then the natural map $\rH^{\VI}_0(M_{\prec n})  \to \rH^{\VI}_0(M_{\prec m})$ is just the inclusion map $\rH^{\VI}_0(M)_{< n} \to \rH^{\VI}_0(M)_{<m}$.
\end{proposition}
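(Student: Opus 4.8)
The plan is to unwind the definitions and compute $\rH^{\VI}_0(M_{\prec d})$ on an arbitrary vector space $X$, treating separately the cases $\dim X < d$ and $\dim X \ge d$. Recall that $M_{\prec d}$ is the $\VI$-submodule of $M$ generated by the values $M(Y)$ with $\dim Y < d$; concretely, $M_{\prec d}(X) = M(X)$ when $\dim X < d$, while for $\dim X \ge d$ the space $M_{\prec d}(X)$ is the $\bk$-span of the images $f_\star(M(Y))$ over all $\VI$-morphisms $f \colon Y \to X$ with $\dim Y < d$. Likewise, for any $\VI$-module $N$ we have $\rH^{\VI}_0(N)(X) = N(X)/N_{\prec X}(X)$, where $N_{\prec X}(X)$ is the span of the images of morphisms into $X$ from spaces of strictly smaller dimension.

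First I would dispose of the case $\dim X \ge d$: each generator $f_\star(m)$ of $M_{\prec d}(X)$ has $\dim Y < d \le \dim X$, hence comes from a space of dimension strictly less than $\dim X$, and since $M$ and $M_{\prec d}$ agree there, this element already lies in $(M_{\prec d})_{\prec X}(X)$; therefore $\rH^{\VI}_0(M_{\prec d})(X) = 0$, matching $(\rH^{\VI}_0(M)_{<d})(X) = 0$. Next, for $\dim X < d$: because $M_{\prec d}$ coincides with $M$ on every space of dimension $< d$, and in particular on every space of dimension $\le \dim X$, we get simultaneously $M_{\prec d}(X) = M(X)$ and $(M_{\prec d})_{\prec X}(X) = M_{\prec X}(X)$ — both subspaces of $M(X)$ are the span of the images of the same morphisms from spaces of dimension $< \dim X$. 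Hence $\rH^{\VI}_0(M_{\prec d})(X) = M(X)/M_{\prec X}(X) = \rH^{\VI}_0(M)(X)$, which proves the first assertion. For the ``in particular'', the inclusion $M_{\prec n} \hookrightarrow M_{\prec m}$ (valid as $n < m$) induces the stated map, and I would identify it by combining the first assertion with the naturality in $N$ of the unit map $N \to \Psi^{\uparrow}\rH^{\VI}_0(N)$, whose value at $X$ is the quotient $N(X) \twoheadrightarrow \rH^{\VI}_0(N)(X)$: for $\dim X < n$ both quotients are $M(X) \twoheadrightarrow M(X)/M_{\prec X}(X)$, so the induced map is the identity, while for $\dim X \ge n$ its source vanishes; thus the induced map is exactly the tautological inclusion $\rH^{\VI}_0(M)_{<n} \hookrightarrow \rH^{\VI}_0(M)_{<m}$.

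There is nothing deep here, so the ``main obstacle'' is really just a bookkeeping point: the operation ``submodule generated by the values in dimensions below a threshold'' is being formed inside two different ambient modules ($M_{\prec d}$ versus $M$, and $M_{\prec n}$ versus $M_{\prec m}$), and one must check that these yield the same subspace of $M(X)$ in the relevant dimension range. This is immediate once one observes that the two ambient modules literally coincide below dimension $d$ (respectively $m$), so I would flag that observation as the only step needing care, after which the proposition falls out by a direct unwinding of definitions.
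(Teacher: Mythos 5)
The paper states this proposition without proof, treating it as a routine consequence of the definitions. Your argument is exactly the expected direct verification and is correct: the key observation in both halves is the one you flag at the end, namely that $M_{\prec d}$ and $M$ literally agree on all spaces of dimension $< d$, so the submodule generated below dimension $\dim X$ inside $M_{\prec d}$ coincides with the corresponding submodule inside $M$ whenever $\dim X < d$, while for $\dim X \ge d$ every generator of $M_{\prec d}(X)$ already comes from a strictly smaller space and so dies in the quotient. The "in particular" part likewise follows by applying these identifications degree by degree along the inclusion $M_{\prec n} \hookrightarrow M_{\prec m}$, exactly as you describe.
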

 
\begin{proposition}
\label{prop:nakayama}
Let $M$ be a $\VI$-module, and $f \colon M \to N$ be a morphism of $\VI$-modules. Then we have the following \begin{enumerate}
	\item $\rH_0^{\VI}(M) = 0$ if and only if $M=0$.
	\item $\rH_0^{\VI}(f)$ is an epimorphism if and only if $f$ is an epimorphism.
	\item Suppose $t_0(M) \le d$ and $N(X) = 0$ for $X \prec \aF^d$. Then $\rH^{\VI}_0(f) = 0$ if and only if $f=0$.
\end{enumerate} 
\end{proposition}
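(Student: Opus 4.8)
The plan is to derive all three parts from the explicit formula $\rH_0^{\VI}(M)(X) = (M/M_{\prec X})(X)$ together with the fact that $\rH_0^{\VI}$, being a left adjoint, is right exact (in particular preserves cokernels). For part (a), one direction is trivial. For the converse I would argue contrapositively: if $M \neq 0$, let $n$ be least with $M(\aF^n) \neq 0$; then $M_{\prec \aF^n}$, being the smallest submodule containing the zero modules $M(Y)$ for $Y \prec \aF^n$, is itself the zero submodule, so $\rH_0^{\VI}(M)(\aF^n) = M(\aF^n) \neq 0$.

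Part (b) is then formal. Writing $C = \operatorname{coker} f$, right exactness gives $\rH_0^{\VI}(C) = \operatorname{coker} \rH_0^{\VI}(f)$, and $f$ is an epimorphism iff $C = 0$, iff $\rH_0^{\VI}(C) = 0$ (by part (a)), iff $\rH_0^{\VI}(f)$ is an epimorphism.

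Part (c) is the one with content; the forward implication is trivial, so suppose $\rH_0^{\VI}(f) = 0$. The first step I would take is to convert the numerical hypothesis $t_0(M) \le d$ into the statement that $M$ is generated in degrees $\le d$. Let $M' \subseteq M$ be the submodule generated by $\bigoplus_{\dim Y \le d} M(Y)$; then $\rH_0^{\VI}(M/M')$ is simultaneously a quotient of $\rH_0^{\VI}(M)$ (apply right exactness to $M \twoheadrightarrow M/M'$), hence concentrated in degrees $\le d$, and in each degree $Y$ with $\dim Y \le d$ a quotient of $(M/M')(Y) = 0$, hence concentrated in degrees $> d$; so $\rH_0^{\VI}(M/M') = 0$ and $M = M'$ by part (a). Granting this, it suffices to check $f_Y = 0$ for $\dim Y \le d$, since any element of any $M(X)$ is a sum of elements $g_\star(v)$ with $v \in M(Y)$, $\dim Y \le d$, $g \in \Hom_{\VI}(Y,X)$, and $f_X(g_\star v) = g_\star(f_Y v)$. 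If $\dim Y < d$, then $N(Y) = 0$ forces $f_Y = 0$. If $\dim Y = d$, then $N_{\prec Y}$ is generated by the zero modules $N(Z)$, $Z \prec Y$, hence is zero, so the canonical quotient map $N(Y) \to \rH_0^{\VI}(N)(Y) = N(Y)$ is the identity; a short diagram chase — using that $f$ carries $M_{\prec Y}$ into $N_{\prec Y}$ — then identifies $f_Y$ with the composite $M(Y) \twoheadrightarrow \rH_0^{\VI}(M)(Y) \xrightarrow{\rH_0^{\VI}(f)_Y} N(Y)$, which is zero by hypothesis. Hence $f = 0$.

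I do not anticipate a genuine obstacle: this is a Nakayama-type lemma and the argument is essentially bookkeeping with the definitions of $M_{\prec X}$ and $\rH_0^{\VI}$. The one point deserving care is the reduction in part (c) from the numerical hypothesis $t_0(M) \le d$ to ``$M$ is generated in degrees $\le d$'', which is why I would prove that first, feeding it the already-established part (a); everything else is formal.
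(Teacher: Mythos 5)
Your proof is correct and takes essentially the same route as the paper: the crux in part (c) is the observation that vanishing of $N$ in degrees $<d$ forces $f$ to factor through the projection $M(Y)\to\rH^{\VI}_0(M)(Y)$ in degree $d$, while right exactness of $\rH^{\VI}_0$ controls degrees $>d$. The only packaging difference is that you insert an explicit lemma that $t_0(M)\le d$ implies $M$ is generated in degrees $\le d$ (this is in fact Proposition~\ref{prop:generation-presentation}(a), proved later in the paper by the same argument) and then check $f_Y=0$ directly for $\dim Y\le d$, whereas the paper more economically reduces to showing $\rH^{\VI}_0(\im f)=0$ degree by degree and invokes part (a) once at the end; both amount to the same use of right exactness.
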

\begin{proof} Part (a) is just the Nakayama lemma, and (b) follows from (a) and the right exactness of $\rH_0^{\VI}$. For part (c), suppose $\rH^{\VI}_0(f) = 0$. By part (a), it suffices to show that $\rH^{\VI}_0(\im f) =0$. First suppose $X$ is a vector space of dimension at most $d$. Since $N(Y) = 0$ for all $Y \prec X$, the map $f(X) \colon M(X) \to N(X)$ factors through the projection $M(X) \to \rH^{\VI}_0(M)(X)$ and $N(X)$ is naturally isomorphic to $\rH^{\VI}_0(N)(X)$. This shows that \[\rH^{\VI}_0(\im f)(X) = (\im \rH^{\VI}_0(f))(X) = 0. \] Next suppose $X$ is a vector space of dimension bigger than $d$. Since $M \to \im f$ is a surjection and $\rH^{\VI}_0$ is right exact we see that $t_0(M) \le d \Longrightarrow \rH^{\VI}_0(\im f)(X) = 0$. Thus $\rH^{\VI}_0(\im f) =0$, completing the proof.
\end{proof}

\subsection{Local cohomology and saturation}
\label{subsec:local-cohomology-preliminaries}
Let $M$ be a $\VI$-module. We say that an element $x \in M(X)$ is {\bf torsion} if there exists an injective linear map $f \colon X \to Y$ such that $f_{\star}(x) =0$. A $\VI$-module is  torsion if it consists entirely of torsion elements. We denote the maximal torsion submodule of $M$ by $\Gamma(M)$, the $i$th right derived functor of $\Gamma$ by $\rR^i \Gamma$, and the degree of $\rR^i \Gamma(M)$ by $h_i(M)$. Let $\Modtors$ be the category of torsion $\VI$-modules.  It is easy to see that $\Modtors \subset \ModVI$ is a localizing subcategory. Let  $\rT \colon \ModVI \to \ModVI/\Modtors$ be the corresponding localization functor and $\rS$ be its right adjoint (the section functor).  We define {\bf saturation} of $M$ to be the composition $\bS(M) = \rS\rT(M)$. We denote the $i$th right derived functor of $\bS$ by $\rR^i \bS$.

We refer the readers to \cite[\S 4]{symd} where the formalism of local cohomology and saturation is discussed in quite generality. This formalism needed an assumption which in our case is the following: \begin{itemize}
\item[(*)] Injective objects of $\Modtors$ remain injective in $\ModVI$. 
\end{itemize} 

We note here that both $\Modtors$ and $\ModVI$ are Grothendieck abelian categories, and so both contain enough injectives.

\begin{lemma}
	\label{lem:injective-hull-new}
	The assumption (*), as above, holds. In particular, the injective hull (as $\VI$-modules) of a torsion module is torsion.
\end{lemma}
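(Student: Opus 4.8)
The plan is to prove the lemma by showing that if $I$ is an injective torsion $\VI$-module, then its injective hull $E(I)$ in $\ModVI$ is torsion, which forces $E(I) = I$ and hence $I$ is injective as a $\VI$-module. Equivalently, it suffices to show: if $I \hookrightarrow M$ is an essential extension of $\VI$-modules with $I$ torsion, then $M$ is torsion. Suppose not; then $M/\Gamma(M)$ is a nonzero torsion-free module and, by essentiality, $\Gamma(M) \cap (I) \ne 0$ is not the issue — rather the point is that $I \subseteq \Gamma(M)$ since $I$ is torsion, so $M/\Gamma(M)$ is a quotient of $M/I$. We must rule out $M \ne \Gamma(M)$.

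The key input I would use is Corollary~\ref{intro-cor-noeth}: for a finitely generated $\VI$-module the torsion part is supported in bounded degrees. First I would reduce to the case where $I$ is a finitely generated torsion module: an arbitrary torsion injective is a direct sum (or at least built from) finitely generated torsion injectives, and the injective hull of a direct sum decomposes compatibly, so the general case follows from the finitely generated case together with the fact that $\ModVI$ is locally noetherian (Theorem~\ref{intro:noetherianity}). Now if $I$ is finitely generated torsion, then by Corollary~\ref{intro-cor-noeth} there is an $N$ with $I(X) = 0$ for $\dim X > N$. Let $M \supseteq I$ be an essential extension. Pick any nonzero $x \in M(X)$; by essentiality the submodule generated by $x$ meets $I$ nontrivially, so there is a $\VI$-morphism $f\colon X \to Y$ with $f_\star(x) \in I(Y)$ nonzero, which forces $\dim Y \le N$, hence $\dim X \le N$ as well. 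Thus $M(X) = 0$ for $\dim X > N$, and any $\VI$-module supported in bounded degrees is torsion (every element can be pushed into degree $> N$ where the module vanishes). Therefore $M$ is torsion.

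The second sentence of the lemma — that the injective hull of a torsion module is torsion — is then just the observation that the injective hull of $I$ (torsion, not necessarily injective) is the injective hull of $E_{\text{tors}}(I)$ computed inside $\Modtors$, which is torsion, and by (*) this torsion injective is already injective in $\ModVI$, hence equals the $\ModVI$-injective hull. So the two assertions are essentially equivalent once the boundedness argument is in place.

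The main obstacle will be the reduction to the finitely generated case: a priori an injective torsion module need not be a direct sum of indecomposables in a way that plays well with injective hulls, so I would need to argue carefully using local noetherianity — for instance, by checking essentiality degree-by-degree and using that each $M(X)$ is the union of images of finitely generated submodules, each of whose torsion (hence each of whose relevant essential extension) is controlled by Corollary~\ref{intro-cor-noeth} applied with a bound depending only on the degree of generation. Concretely, I would show directly that any essential extension $M$ of any torsion module $I$ (without finite generation) is torsion: given $0 \ne x \in M(X)$, the cyclic submodule $\langle x\rangle$ is finitely generated, its intersection with $I$ is a nonzero finitely generated torsion module, and essentiality inside $\langle x \rangle$ together with the bounded support of that torsion intersection again forces $x$ to be torsion. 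That bypasses any decomposition of $I$ and is the cleanest route.
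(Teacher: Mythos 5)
Your final, ``cleanest route'' argument is correct, and it takes a genuinely different path from the paper's. The paper proves assumption (*) by a one-line citation to \cite[Proposition~4.18]{symd} (Sam--Snowden's general criterion), combined with local noetherianity; you instead give a self-contained proof that every essential extension (in $\ModVI$) of a torsion module is torsion, which yields (*) by the standard argument that the $\ModVI$-injective hull of an injective object $I$ of $\Modtors$ is then torsion, so the splitting $I \hookrightarrow E(I)$ in $\Modtors$ forces $E(I) = I$. Your direct argument is sound: given $0 \neq x \in M(X)$, the cyclic submodule $\langle x\rangle$ is finitely generated; by local noetherianity (Theorem~\ref{intro:noetherianity}) its torsion part is finitely generated, hence bounded in degree, say $\leq N$, by Corollary~\ref{intro-cor-noeth}; the nonzero submodule $T := \langle x\rangle \cap I$ is essential in $\langle x\rangle$ and contained in degrees $\leq N$; and any nonzero $y \in \langle x\rangle(Y)$ with $\dim Y > N$ would generate a submodule supported in degrees $> N$ meeting $T$ trivially, contradicting essentiality. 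Thus $\langle x\rangle$ is supported in degrees $\leq N$, hence torsion, hence $x$ is torsion. The trade-off is that the paper's version is shorter but opaque (you have to unpack the cited proposition), whereas yours is longer but transparent and uses only Corollary~\ref{intro-cor-noeth} plus elementary facts about essential extensions. Two small remarks: your first attempted reduction via ``decomposing $I$ as a direct sum of finitely generated pieces compatibly with injective hulls'' is indeed shaky (injective hulls do not respect arbitrary direct sums), and you were right to discard it; and your direct argument also immediately yields the second sentence of the lemma, since the injective hull is itself an essential extension, so you do not really need to route through (*) again as in your penultimate paragraph.
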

\begin{proof}
The first assertion follows immediately from Theorem~\ref{intro:noetherianity} and \cite[Proposition~4.18]{symd}. Now suppose that $M$ is a torsion $\VI$-module. Then we can embed $M$ into an injective object $I$ in $\ModVI^{\tors}$. By (*), $I$ is injective in $\ModVI$, and so $I$ contains the injective hull of $M$. The second assertion is immediate from this.
\end{proof}


\begin{lemma} \label{lem:torsion-injective}
If $I$ is injective in $\ModVI$, then $\Gamma(I)$ is also injective in $\ModVI$. In particular, if $M$ is a torsion $\VI$-module, then $\rR^i \Gamma(M) = 0$ for $i>0$.
\end{lemma}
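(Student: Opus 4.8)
The plan is to show that $\Gamma(I)$ is injective in $\ModVI$ by exhibiting it as a direct summand of $I$, and then deduce the vanishing of higher local cohomology of a torsion module from this together with the fact that torsion modules embed into torsion injectives.

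First I would recall from Lemma~\ref{lem:injective-hull-new} that the assumption (*) holds: injectives of $\Modtors$ remain injective in $\ModVI$. Now let $I$ be injective in $\ModVI$. The submodule $\Gamma(I)$ is a torsion $\VI$-module, so by Lemma~\ref{lem:injective-hull-new} its injective hull $E$ (formed in $\ModVI$) is itself torsion, hence $E = \Gamma(E)$. Since $\Gamma(I) \hookrightarrow I$ and $I$ is injective, the inclusion $\Gamma(I) \hookrightarrow E$ extends to a map $E \to I$; because $\Gamma(I) \hookrightarrow E$ is essential, this map is injective, so we may regard $E$ as a submodule of $I$ containing $\Gamma(I)$. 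But $E$ is torsion and $\Gamma(I)$ is the \emph{maximal} torsion submodule of $I$, so $E = \Gamma(I)$. Thus $\Gamma(I)$ is its own injective hull, in particular it is injective in $\ModVI$.

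Next I would deduce the second assertion. Let $M$ be a torsion $\VI$-module. Choose an injective resolution $M \to I^{\bullet}$ in $\ModVI$. Applying $\Gamma$ and taking cohomology computes $\rR^i\Gamma(M)$. I claim this complex $\Gamma(I^{\bullet})$ is itself an injective resolution of $M$: each $\Gamma(I^j)$ is injective in $\ModVI$ by the first part, and $M \to \Gamma(I^{\bullet})$ is a resolution because the inclusion $\Gamma(I^{\bullet}) \hookrightarrow I^{\bullet}$ is a quasi-isomorphism in positive degrees away from $M$ — more carefully, since $M = \Gamma(M)$ is torsion and $M \hookrightarrow I^0$ lands in $\Gamma(I^0)$, and since $\Gamma$ is left exact one checks the kernels and cokernels of the differentials of $\Gamma(I^{\bullet})$ agree with $M$ in degree $0$ and vanish in positive degrees; this last point uses that $I^{\bullet}$ being exact in positive degrees, together with $\Gamma$ preserving kernels, forces $\Gamma(I^{\bullet})$ exact in positive degrees by a diagram chase on torsion elements. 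Alternatively, and more cleanly, since $M$ is torsion we have $\Gamma(M) = M$, and one can compute $\rR\Gamma(M)$ by an injective resolution taken \emph{inside} $\Modtors$; such a resolution remains injective in $\ModVI$ by (*), hence computes $\rR\Gamma$ there as well, and $\Gamma$ is the identity on it, so $\rR^i\Gamma(M) = 0$ for $i > 0$.

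The main obstacle, I expect, is being careful about which category injective resolutions are taken in and invoking (*) at the right moment: the whole point is that a resolution of $M$ by objects injective in $\Modtors$ is \emph{also} a resolution by objects injective in $\ModVI$, so it computes $\rR^i\Gamma$ either way, and on such a resolution $\Gamma$ acts as the identity. Once that bookkeeping is in place the vanishing is immediate. The first assertion's only subtlety is the essentiality argument showing the extension $E \to I$ is injective, which is the standard fact that a map out of an essential extension restricting to an embedding on the sub is itself an embedding.
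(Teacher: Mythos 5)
Your argument matches the paper's in essence: for the first assertion you use injectivity of $I$ to extend $\Gamma(I)\hookrightarrow I$ along the injective hull $E$, note that $E$ is torsion by Lemma~\ref{lem:injective-hull-new}, and then use maximality of $\Gamma(I)$ to conclude $E=\Gamma(I)$; the paper states exactly this (slightly more tersely, as ``$I$ contains the injective hull of $\Gamma(I)$''). For the second assertion, your ``alternatively, and more cleanly'' paragraph is the paper's argument: take an injective resolution of $M$ inside $\Modtors$, observe it remains an injective resolution in $\ModVI$ by (*), and use that $\Gamma$ is the identity on torsion modules.

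One caution about your first attempt at the second assertion: the claim that $\Gamma(I^{\bullet})$ is a resolution, for an \emph{arbitrary} injective resolution $I^{\bullet}$ of $M$ in $\ModVI$, is precisely the statement you are trying to prove, so it cannot be established by a generic ``diagram chase.'' Left exactness of $\Gamma$ gives $\ker(\Gamma d^{i})=\Gamma(\ker d^{i})=\Gamma(\im d^{i-1})$, but the needed inclusion $\Gamma(\im d^{i-1})\subseteq \im(\Gamma d^{i-1})$ asks that every torsion element in the image lift to a torsion preimage, which is not automatic. So you should discard that first sketch and keep only the clean version built from a $\Modtors$-injective resolution, which is both correct and what the paper does.
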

\begin{proof} 
Since $I$ is injective and contains $\Gamma(I)$, it follows that $I$ contains the injective hull of $\Gamma(I)$. By the previous lemma and the maximality of $\Gamma(I)$, we conclude that $\Gamma(I)$ is its own injective hull. This proves the first assertion.

 The first assertion implies that if $M$ is a torsion module then it admits an injective resolution $M \to I^{\bullet}$ such that each $I^i$ is torsion. Since $\Gamma$ is identity on torsion modules, we see that $\Gamma(I^{\bullet}) = I^{\bullet}$. The second assertion follows.
\end{proof}

\begin{corollary}
	\label{cor:torsion-have-no-saturation}
	Let $T$ be an object of the right derived category  $\rD^{+}(\ModVI)$ which can be represented by a complex of torsion $\VI$-modules. Then $\rR \Gamma (T) \cong T$, and $\rR \bS (T) = 0$.
\end{corollary}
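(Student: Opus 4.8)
I want to prove Corollary~\ref{cor:torsion-have-no-saturation}: if $T \in \rD^{+}(\ModVI)$ is quasi-isomorphic to a complex of torsion modules, then $\rR\Gamma(T) \cong T$ and $\rR\bS(T) = 0$. The strategy is to reduce the statement about complexes to the statements about individual torsion modules that we already have from Lemma~\ref{lem:torsion-injective}, using a standard homological argument (either a hyper-derived-functor spectral sequence, or a ``way-out'' / truncation induction).

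First I would address $\rR\Gamma(T) \cong T$. Since $T$ is represented by a complex $M^{\bullet}$ of torsion modules, and $\Gamma$ restricted to $\Modtors$ is the identity functor, it suffices to show that $\rR\Gamma$ applied to this complex is computed by the complex itself. The cleanest way: by Lemma~\ref{lem:torsion-injective}, every torsion module $M$ has an injective resolution by \emph{torsion} injective modules (take the injective hull in $\Modtors$, which by Lemma~\ref{lem:injective-hull-new} and assumption~(*) is injective in $\ModVI$, and iterate); on such a resolution $\Gamma$ acts as the identity. Assembling these into a Cartan–Eilenberg resolution of $M^{\bullet}$, the double complex computing $\rR\Gamma(M^{\bullet})$ consists of torsion injectives, $\Gamma$ is the identity on it, and so the total complex of $\Gamma$ applied to it is quasi-isomorphic to $M^{\bullet}$ itself. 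Hence $\rR\Gamma(T) \cong T$ in $\rD^{+}(\ModVI)$. Alternatively one can run the hyperderived functor spectral sequence $E_2^{p,q} = \rR^p\Gamma(\rH^q(M^{\bullet})) \Rightarrow \rH^{p+q}(\rR\Gamma(M^{\bullet}))$ and note that each cohomology module $\rH^q(M^{\bullet})$ is torsion (torsion modules form an abelian, in fact localizing, subcategory), so by Lemma~\ref{lem:torsion-injective} the spectral sequence collapses to $E_2^{0,q} = \rH^q(M^{\bullet})$, giving the result.

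For $\rR\bS(T) = 0$: recall $\bS = \rS\rT$ where $\rT \colon \ModVI \to \ModVI/\Modtors$ is the localization. The key point is that $\rT$ kills torsion modules, so $\rR\rT$ kills complexes of torsion modules — more precisely, $\rT$ is exact (localization functors are exact), so $\rR\rT(T)$ is computed termwise, and each term $\rT(M^i) = 0$ since $M^i$ is torsion; hence $\rR\rT(T) = 0$ in $\rD^{+}(\ModVI/\Modtors)$. Therefore $\rR\bS(T) = \rR\rS(\rR\rT(T)) = \rR\rS(0) = 0$. (One should be a little careful that $\rR\bS$ is genuinely the composite $\rR\rS \circ \rR\rT$; this is the content of the general formalism in \cite[\S 4]{symd} under assumption~(*), which holds by Lemma~\ref{lem:injective-hull-new}, so I would simply invoke it. Alternatively: $\rS$ preserves injectives since it is right adjoint to the exact functor $\rT$, so the composite-functor spectral sequence applies.)

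**Main obstacle.** The genuinely content-bearing inputs — that torsion injectives stay injective in $\ModVI$, that torsion modules have torsion injective resolutions, and that the abstract saturation formalism of \cite{symd} applies — are all already established in Lemmas~\ref{lem:injective-hull-new} and \ref{lem:torsion-injective}. So the ``hard part'' here is purely bookkeeping: making sure the passage from modules to bounded-below complexes is handled correctly (Cartan–Eilenberg resolutions, or the hyperderived spectral sequence, or a truncation induction on the amplitude of $M^{\bullet}$ combined with the five lemma on distinguished triangles $\tau_{\le n}M^{\bullet} \to M^{\bullet} \to \tau_{\ge n+1}M^{\bullet} \to {}$). I expect the truncation induction to be the slickest: the $\rD^{+}$ hypothesis gives a bound below, each truncation stage reduces to a shift of a single torsion module (or of a module obtained from $M^{\bullet}$, still torsion), and the two desired isomorphisms are preserved under the formation of cones. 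No step should present real difficulty.
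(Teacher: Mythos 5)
Your proof is correct, and it matches the approach the paper implicitly intends: the paper gives no proof block for this Corollary precisely because it drops out of Lemma~\ref{lem:torsion-injective} and the observation (made in that lemma's proof) that a torsion module admits an injective resolution by torsion injectives, assembled into a Cartan--Eilenberg resolution of the complex.

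Two small remarks. First, the parenthetical you offer as backup for $\rR\bS = \rR\rS \circ \rR\rT$ is aimed in the wrong direction: for the Grothendieck composite-functor spectral sequence applied to $\bS = \rS\circ\rT$ one needs the \emph{first} functor, $\rT$, to carry injectives of $\ModVI$ to $\rS$-acyclics of $\ModVI/\Modtors$ (which is true because a Serre quotient functor admitting a right adjoint preserves injectives), whereas ``$\rS$ preserves injectives'' is the opposite adjunction fact and does not by itself give the spectral sequence. You do the right thing by deferring to the formalism of \cite[\S 4]{symd}, but the stated reason would not hold up on its own. Second, the detour through $\rR\rT$ is unnecessary: once you have a Cartan--Eilenberg resolution $I^{\bullet,\bullet}$ of $T$ by torsion injectives (which are injective in $\ModVI$ by Lemma~\ref{lem:injective-hull-new}), you can compute $\rR\bS(T)$ directly as the total complex of $\bS(I^{\bullet,\bullet})$, and $\bS(I^{p,q}) = \rS(\rT(I^{p,q})) = \rS(0) = 0$ because each $I^{p,q}$ is torsion. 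This yields $\rR\bS(T)=0$ from the very same resolution that gives $\rR\Gamma(T)\cong T$, with no appeal to the composite-functor theorem at all.
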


We now state a result from \cite{symd} that we need.

\begin{proposition}[{\cite[Proposition~4.6]{symd}}]
	\label{prop:triangle}
	Let $M \in \rD^{+}(\ModVI)$. Then we have an exact triangle  \[\rR \Gamma(M) \to M \to \rR \bS(M) \to \] where the first two maps are the canonical ones.
\end{proposition}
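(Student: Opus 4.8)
The plan is to reduce the statement to a degreewise assertion about injective objects and then verify that assertion using Lemma~\ref{lem:torsion-injective}. The inclusion $\Gamma \hookrightarrow \id$ and the unit $u\colon \id \to \bS = \rS\rT$ of the adjunction $(\rT,\rS)$ produce, for every $M \in \rD^{+}(\ModVI)$, a pair of morphisms $\rR\Gamma(M) \to M \to \rR\bS(M)$, and these are by construction the canonical ones. Choosing an injective resolution $M \xrightarrow{\ \sim\ } I^{\bullet}$ in $\ModVI$, the object $\rR\Gamma(M)$ is represented by $\Gamma(I^{\bullet})$, the object $\rR\bS(M)$ by $\bS(I^{\bullet})$, and the two canonical maps by the evident maps of complexes $\Gamma(I^{\bullet}) \to I^{\bullet} \xrightarrow{u} \bS(I^{\bullet})$. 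Hence it suffices to show that for every injective $I$ the sequence \[ 0 \longrightarrow \Gamma(I) \longrightarrow I \xrightarrow{\ u_I\ } \bS(I) \longrightarrow 0 \] is exact; the resulting degreewise short exact sequence of complexes then yields the asserted distinguished triangle, with the first two maps being the canonical ones.

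Two preliminary facts hold with no hypotheses at all. First, $\rS(X)$ is torsion-free for every object $X$ of $\ModVI/\Modtors$: a torsion subobject $T \subseteq \rS(X)$ corresponds under the adjunction to a map $\rT(T) = 0 \to X$, so the inclusion $T \hookrightarrow \rS(X)$ is zero and $T = 0$. Second, $\ker(u_M) = \Gamma(M)$ for every $M$: since $\rS$ is fully faithful, $\rT(u_M)$ is an isomorphism, so $\ker(u_M)$ is torsion and hence contained in $\Gamma(M)$, while conversely the composite $\Gamma(M) \hookrightarrow M \xrightarrow{u_M} \bS(M)$ is a map from a torsion module to the torsion-free module $\bS(M)$, hence vanishes. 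This already gives exactness of the displayed sequence on the left, for any $M$ and in particular for an injective $I$.

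It remains to prove that $u_I$ is an epimorphism when $I$ is injective. By Lemma~\ref{lem:torsion-injective}, $\Gamma(I)$ is injective, so the inclusion $\Gamma(I) \hookrightarrow I$ splits and $I \cong \Gamma(I) \oplus J$ with $J$ torsion-free and, being a summand of $I$, injective. Since $\bS(\Gamma(I)) = \rS\rT(\Gamma(I)) = \rS(0) = 0$, the map $u_I$ is identified with $u_J\colon J \to \bS(J)$, which is injective by the previous paragraph. Its cokernel $C$ satisfies $\rT(C) = 0$ (apply the exact functor $\rT$ and use that $\rT(u_J)$ is an isomorphism), so $C$ is torsion; but the short exact sequence $0 \to J \to \bS(J) \to C \to 0$ splits because $J$ is injective, so $C$ is a direct summand of the torsion-free module $\bS(J)$, hence torsion-free, hence $C = 0$. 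Thus $u_I$ induces the required isomorphism $I/\Gamma(I) \xrightarrow{\ \sim\ } \bS(I)$, completing the proof.

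The argument is essentially formal; the only genuine input is Lemma~\ref{lem:torsion-injective} (which in turn rests on assumption~(*) and Theorem~\ref{intro:noetherianity}). I expect the point needing the most care to be bookkeeping: keeping straight which facts are unconditional — torsion-freeness of the image of $\rS$ and the identification $\ker(u) = \Gamma$ — versus which genuinely use injectivity of $\Gamma(I)$, namely the two splittings invoked to deduce surjectivity of $u_I$; and checking that passing to an injective resolution really does represent the canonical maps (as opposed to something differing by a sign or homotopy), which is routine but should be stated.
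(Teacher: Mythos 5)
Your proof is correct. The paper itself gives no argument for this proposition, simply citing \cite[Proposition~4.6]{symd}; what you have written is the standard proof in that formalism, and it supplies exactly the content the paper delegates. In particular you correctly identify the two unconditional facts (torsion-freeness of $\rS$, and $\ker(u) = \Gamma$), you correctly isolate Lemma~\ref{lem:torsion-injective} — hence hypothesis~(*) — as the single genuine input, and the surjectivity step (split off the injective $\Gamma(I)$, then argue that $\coker(u_J)$ is simultaneously torsion and a direct summand of the torsion-free $\bS(J)$) is exactly right. The one tiny thing worth saying aloud, which you use silently, is that $\bS = \rS\rT$ is additive (a right adjoint composed with an exact functor), so $\bS(\Gamma(I)\oplus J) = \bS(J)$ and the reduction from $u_I$ to $u_J$ is legitimate.
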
 

We call a $\VI$-module $M$ {\bf derived saturated} if $M \to \rR \bS(M)$ is an isomorphism in $\rD^{+}(\ModVI)$, or equivalently $\rR \Gamma(M) = 0$ (see the proposition above).

\section{Induced and semi-induced $\VI$-modules}
\label{sec:semi-induced}

The aim of this section is to prove some formal properties of induced and semi-induced modules. The restriction map $\Psi_{\downarrow} \colon \ModVI \to \Mod_{\VB}$ admits a left adjoint $\Mod_{\VB} \to \ModVI$ denoted $\cI$, which is exact. By definition of $\cI$, we have the adjunction \begin{align}
\Hom_{\ModVI}(\cI(V), M) = \Hom_{\Mod_{\VB}}(V, M). \tag{*}
\end{align} We call $\VI$-modules of the form $\cI(V)$ {\bf induced}. If $V$ is supported in degree $d$ we say that $\cI(V)$ is induced from degree $d$. Moreover, when $V_d$ is a $\VB$-module isomorphic to $\bk[\Hom_{\VB}(\aF^d, -) ]$ then we denote $\cI(V)$ by simply $\cI(d)$. By Yoneda lemma, we have $\cI(d) = \bk[\Hom_{\VI}(\aF^d, -)]$. We have the following alternative descriptions for $\cI(V)$: \begin{align*}
\cI(V) & = \bA \otimes_{\VB} V, \\
\cI(V) &= \bigoplus_{d \ge 0} \cI(d) \otimes_{\bk[\Aut(\aF^d)]}  V(\aF^d).
\end{align*}

\begin{proposition} 
\label{prop:HI}	
The composite functor $\rH^{\VI}_0 \cI$ is naturally isomorphic to the identity functor on $\VB$-modules. The counit $\cI \Psi_{\downarrow} \to \id$ is an epimorphism on any $\VI$-module.
\end{proposition}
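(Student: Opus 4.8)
The plan is to establish the two assertions separately, using the defining adjunction (*) and the explicit description $\cI(V) = \bA \otimes_{\VB} V$.

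For the first assertion, I would argue that $\rH_0^{\VI}$ is left adjoint to $\Psi^{\uparrow}$ and $\cI$ is left adjoint to $\Psi_{\downarrow}$, so $\rH_0^{\VI} \cI$ is left adjoint to $\Psi_{\downarrow} \Psi^{\uparrow}$. But $\Psi_{\downarrow} \Psi^{\uparrow}$ is the identity functor on $\Mod_{\VB}$: applying $\Psi^{\uparrow}$ upgrades a $\VB$-module to a $\VI$-module on which all non-isomorphisms act by zero, and then $\Psi_{\downarrow}$ simply forgets down to the underlying $\VB$-module, recovering the original. Since left adjoints are unique up to natural isomorphism, $\rH_0^{\VI} \cI \cong \id$. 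Alternatively, and perhaps more transparently, one can compute directly: $\rH_0^{\VI} \cI(V) = \bk \otimes_{\bA} (\bA \otimes_{\VB} V) = \bk \otimes_{\VB} V = V$, using $\bk = \bA/\bA_+$ and that $\otimes_{\VB}$ is (right) exact with $\bk$ acting as a unit-like object on the $\VB$ side. I would present the adjointness argument as the main line since it is cleanest, and perhaps remark on the tensor computation.

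For the second assertion, I want to show the counit $\epsilon_M \colon \cI \Psi_{\downarrow}(M) \to M$ is an epimorphism for every $\VI$-module $M$. By Proposition~\ref{prop:nakayama}(b), it suffices to check that $\rH_0^{\VI}(\epsilon_M)$ is an epimorphism. Now the composite $\Psi_{\downarrow}(M) \xrightarrow{\Psi_{\downarrow}(\text{unit})} \Psi_{\downarrow}\cI\Psi_{\downarrow}(M) \xrightarrow{\Psi_{\downarrow}(\epsilon_M)} \Psi_{\downarrow}(M)$ is the identity by the triangle identities for the adjunction $(\cI, \Psi_{\downarrow})$. Applying $\rH_0^{\VI}$ and using the first assertion to identify $\rH_0^{\VI}\cI \cong \id$, one checks that $\rH_0^{\VI}(\epsilon_M)$ is, under the natural identifications, the identity map on $\Psi_{\downarrow}(M)$ — in particular surjective. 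Hence $\epsilon_M$ is surjective. One should be slightly careful tracking the natural isomorphism $\rH_0^{\VI}\cI \cong \id$ through this diagram, but this is routine.

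The main obstacle, such as it is, is purely bookkeeping: making sure the identification $\rH_0^{\VI}\cI(V) \cong V$ from the first part is the same one that appears when one evaluates $\rH_0^{\VI}$ on the counit, so that the triangle identity genuinely forces surjectivity. There is no real mathematical difficulty here; both claims are formal consequences of the adjunctions $(\rH_0^{\VI}, \Psi^{\uparrow})$ and $(\cI, \Psi_{\downarrow})$ together with $\Psi_{\downarrow}\Psi^{\uparrow} = \id$, and invoking Proposition~\ref{prop:nakayama} to pass from a statement about $\VI$-homology to a statement about the module itself.
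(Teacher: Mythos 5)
For the first assertion you reproduce both of the paper's arguments: the adjointness argument ($\rH_0^{\VI}\cI$ is left adjoint to $\Psi_{\downarrow}\Psi^{\uparrow} = \id$) and the tensor computation $\bk\otimes_{\bA}(\bA\otimes_{\VB}V) = \bk\otimes_{\VB}V\cong V$; that part is fine and matches the paper.

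For the second assertion your route is more roundabout than the paper's and contains a slip. You claim that, after the identification $\rH_0^{\VI}\cI\cong\id$, the map $\rH_0^{\VI}(\epsilon_M)$ ``is the identity map on $\Psi_{\downarrow}(M)$.'' That cannot be right: its target is $\rH_0^{\VI}(M)$, which is in general a strict quotient of $\Psi_{\downarrow}(M)$ (recall $\rH_0^{\VI}(M)(X) = (M/M_{\prec X})(X)$). What is actually true is that under the natural isomorphism $\rH_0^{\VI}\cI\Psi_{\downarrow}(M)\cong\Psi_{\downarrow}(M)$, the map $\rH_0^{\VI}(\epsilon_M)$ becomes the canonical projection $\Psi_{\downarrow}(M)\twoheadrightarrow\rH_0^{\VI}(M)$, which is surjective by construction — so the conclusion survives, but the justification needs rewording. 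Also note that ``applying $\rH_0^{\VI}$'' to the triangle $\Psi_{\downarrow}(M)\to\Psi_{\downarrow}\cI\Psi_{\downarrow}(M)\to\Psi_{\downarrow}(M)$ does not parse literally, since that triangle lives in $\Mod_{\VB}$ and $\rH_0^{\VI}$ is defined on $\ModVI$.

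There is a shorter path that makes the Nakayama detour unnecessary. The triangle identity already gives that $\Psi_{\downarrow}(\epsilon_M)$ is split epic, hence epic; since $\Psi_{\downarrow}$ is restriction (exact and faithful), it reflects epimorphisms, so $\epsilon_M$ is epic. This is exactly what the paper invokes, packaged as the general categorical fact that the counit of an adjunction is pointwise epic if and only if the right adjoint is faithful, and $\Psi_{\downarrow}$ is trivially faithful.
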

\begin{proof} The first assertion is clear because composing $\bk \otimes_{\bA} - $ with $\bA \otimes_{\VB} -$ yields $\bk \otimes_{\VB} -$, which is naturally isomorphic to the identity functor. Alternatively, by adjointness of $\cI$ and $\rH^{\VI}_0$, we have \[\Hom_{\Mod_{\VB}}(\rH^{\VI}_0 \cI(M), N) = \Hom_{\Mod_{\VB}}(M, \Psi_{\downarrow}\Psi^{\uparrow}N) = \Hom_{\Mod_{\VB}}(M, N),\] and so the result follows by the uniqueness of left adjoints. For the second assertion, it suffices to check that $\Psi_{\downarrow}$ is faithful, which is trivial.
\end{proof}

A useful thing to note is that if $M$ is a $\VI$-module and $f \colon V \to M$ is a map of $\VB$-modules then the image of the corresponding map $g \colon \cI(V) \to M$ is the smallest $\VI$-submodule of $M$ containing the image of $f$. In particular, if $V(X) \to M(X)$ is surjective then $g(X)$ is surjective.

\begin{proposition}
\label{prop:projectives}
 $\cI(V)$ is a projective $\VI$-module if and only if $V$ is a projective $\VB$-module. All projective $\VI$-modules are of the form $\cI(V)$.
\end{proposition}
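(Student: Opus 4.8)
The plan is to exploit the adjunction $(\cI, \Psi_{\downarrow})$ together with the fact that $\Psi_{\downarrow}$ is exact, so that $\cI$ preserves projectives in one direction, and to use Proposition~\ref{prop:HI} for the converse. For the first assertion: if $V$ is a projective $\VB$-module, then since $\VB$-modules are just sequences of $\bk[\GL_n]$-modules, $V$ is a summand of a (possibly infinite) direct sum of modules of the form $\bk[\GL_n]$; because $\cI$ is a left adjoint it commutes with direct sums and with passing to summands, so it suffices to see that each $\cI(d) = \bk[\Hom_{\VI}(\aF^d,-)]$ is a projective $\VI$-module. That is immediate from the Yoneda lemma: $\Hom_{\ModVI}(\cI(d), -) = (-)(\aF^d)$ is the evaluation functor, which is exact. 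Alternatively, and more cleanly, for any $\VI$-module $V$ that is projective over $\VB$ we have $\Hom_{\ModVI}(\cI(V), -) = \Hom_{\Mod_{\VB}}(V, \Psi_{\downarrow}(-))$ by the adjunction~(*), and this is a composite of the exact functor $\Psi_{\downarrow}$ with the exact functor $\Hom_{\Mod_{\VB}}(V, -)$; hence $\cI(V)$ is projective.

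For the converse direction of the first assertion, suppose $\cI(V)$ is projective as a $\VI$-module. Choose a surjection $W \twoheadrightarrow V$ of $\VB$-modules with $W$ projective; applying the exact functor $\cI$ gives a surjection $\cI(W) \twoheadrightarrow \cI(V)$, which splits since $\cI(V)$ is projective. Now apply $\rH^{\VI}_0$: by Proposition~\ref{prop:HI}, $\rH^{\VI}_0\cI \cong \id$, so we obtain a split surjection $W \twoheadrightarrow V$ of $\VB$-modules. Therefore $V$ is a summand of the projective $\VB$-module $W$, hence projective. (One must be slightly careful that $\rH^{\VI}_0$ applied to a split surjection remains a split surjection, but this holds for any additive functor, since it preserves the identity-retraction relation.)

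For the second assertion, let $P$ be an arbitrary projective $\VI$-module. By Proposition~\ref{prop:HI} the counit $\cI\Psi_{\downarrow}(P) \to P$ is an epimorphism. Since $P$ is projective, this epimorphism splits, so $P$ is a direct summand of $\cI(\Psi_{\downarrow}P)$. It therefore remains to produce an \emph{honest} isomorphism $P \cong \cI(V)$ for some $\VB$-module $V$, not merely to exhibit $P$ as a summand of an induced module. Here I would argue: write $V = \rH^{\VI}_0(P)$, which is a projective $\VB$-module (since the splitting above realizes $V = \Psi_{\downarrow}P$ as... — actually one shows $\rH^{\VI}_0(P)$ is $\VB$-projective because $P$ is a summand of $\cI(\Psi_\downarrow P)$, so $\rH^{\VI}_0(P)$ is a summand of $\rH^{\VI}_0\cI(\Psi_\downarrow P) \cong \Psi_\downarrow P$, and a summand of a $\VB$-projective is $\VB$-projective — wait, $\Psi_\downarrow P$ need not be projective). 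Let me instead use the standard argument: lift the identity of $V = \rH^{\VI}_0(P)$ to a map $\cI(V) \to P$ of $\VI$-modules (using the adjunction, a map $\cI(V) \to P$ corresponds to a map $V \to \Psi_\downarrow P$, and we can take a $\VB$-splitting of $\Psi_\downarrow P \to \rH^{\VI}_0(P) = V$). This map $\varphi\colon \cI(V) \to P$ induces an isomorphism on $\rH^{\VI}_0$ by construction, hence is an epimorphism by Proposition~\ref{prop:nakayama}(b). Since $P$ is projective, $\varphi$ splits; write $\cI(V) \cong P \oplus K$. Applying $\rH^{\VI}_0$ and using that $\varphi$ is an iso on $\rH^{\VI}_0$ forces $\rH^{\VI}_0(K) = 0$, whence $K = 0$ by Proposition~\ref{prop:nakayama}(a). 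Therefore $\varphi$ is an isomorphism and $P \cong \cI(V)$.

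\textbf{Main obstacle.} The genuinely non-formal point is the last one: upgrading ``$P$ is a summand of an induced module'' to ``$P$ is itself induced.'' The key mechanism is that a map of induced modules inducing an isomorphism on $\VI$-homology must be an isomorphism, which in turn rests on the Nakayama-type Proposition~\ref{prop:nakayama} together with the explicit splitting of $\Psi_\downarrow P \twoheadrightarrow \rH^{\VI}_0(P)$ as $\VB$-modules (legitimate because $\Mod_{\VB}$ is semisimple-enough in the relevant sense — it is a product of module categories over group rings, but crucially we only need that the surjection of $\VB$-modules from $\Psi_\downarrow P$ onto $\rH^{\VI}_0(P)$ can be chosen compatibly, which follows because $\rH^{\VI}_0(P) = P/P_{\prec X}$ termwise and one picks set-theoretic lifts degree by degree, equivariantly — in fact since $\cI\Psi_\downarrow P \to P$ splits, $\Psi_\downarrow P$ surjects onto $V$ with the composite $V \to \Psi_\downarrow P \to V$ the identity automatically). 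Once that splitting is in hand, everything else is the formal nonsense of projective covers: $\varphi$ is a surjection by right-exactness of $\rH^{\VI}_0$ and Proposition~\ref{prop:nakayama}(b), it splits by projectivity of $P$, and the complementary summand dies under $\rH^{\VI}_0$, hence vanishes by Proposition~\ref{prop:nakayama}(a).
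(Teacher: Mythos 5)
Your proof is correct and takes essentially the same approach as the paper: both directions of the first assertion follow from the adjunctions with exact functors (the paper invokes Weibel~2.3.10, you unwind it by hand), and for the second assertion you build the same comparison map $\cI(\rH^{\VI}_0(P)) \to P$ from a section of the counit, then use the Nakayama-type Proposition~\ref{prop:nakayama} twice (surjectivity, then vanishing of the kernel/complement) exactly as the paper does.
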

\begin{proof} Each of  $\cI$ and $\rH^{\VI}_0$ is left adjoint to an exact functor ($\Psi_{\downarrow}$ and $\Psi^{\uparrow}$ respectively), so both of them preserve projectives (\cite[Proposition~2.3.10]{weibel}). Since $\rH^{\VI}_0 \cI = \id$ (Proposition~\ref{prop:HI}), we conclude that $\cI(V)$ is projective if and only if $V$ is projective.
	
For the second assertion, let $P$ be a projective $\VI$-module. By Proposition~\ref{prop:HI}, there is a natural surjection $\phi \colon \cI \Psi_{\downarrow}(P) \to P $, and since $P$ is projective it admits a section $s$. Let $\psi \colon \cI \rH^{\VI}_0 (P) \to P$ be the map given by $\psi = \phi \circ \cI \rH^{\VI}_0 (s)$.  It suffices to show that $\psi$ is an isomorphism. By Proposition~\ref{prop:HI}, we have \[\rH^{\VI}_0 (\psi) \cong \rH^{\VI}_0 (\phi \circ s) = \rH^{\VI}_0(\id) =\id.\] Thus, by Proposition~\ref{prop:nakayama}, $\psi$ is surjective. Since $P$ is projective we have a short exact sequence \[ 0 \to \rH^{\VI}_0(\ker \psi) \to \rH^{\VI}_0(\cI \rH^{\VI}_0 (P))  \xrightarrow{\rH^{\VI}_0 (\psi)  \cong \id}  \rH^{\VI}_0 (P) \to 0.  \] In particular, $\rH^{\VI}_0(\ker \psi)=0$. Thus, by Proposition~\ref{prop:nakayama}, we conclude that $\psi$ is an isomorphism. This completes the proof.
\end{proof}

\begin{corollary}
	$\ModVI$ has enough projectives.
\end{corollary}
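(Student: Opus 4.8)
The plan is to deduce this immediately from Proposition~\ref{prop:projectives} together with the fact that $\Mod_{\VB}$ has enough projectives. First I would recall why $\Mod_{\VB}$ has enough projectives: since $\Mod_{\VB} \simeq \prod_{n \ge 0} \Mod_{\bk[\GL_n]}$ and each $\bk[\GL_n]$-module admits a surjection from a free $\bk[\GL_n]$-module, every $\VB$-module $V$ admits a surjection $P \twoheadrightarrow V$ with $P$ projective (take the product of free covers in each degree). Concretely, one can take $P = \bigoplus_{d \ge 0} \bk[\GL_d]^{\oplus S_d}$ supported appropriately, where $S_d$ indexes a generating set of $V(\aF^d)$ as a $\bk[\GL_d]$-module.

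Next, given an arbitrary $\VI$-module $M$, I would apply the restriction functor $\Psi_{\downarrow}$ to obtain the $\VB$-module $\Psi_{\downarrow}(M)$, choose a projective $\VB$-module surjection $p \colon P \twoheadrightarrow \Psi_{\downarrow}(M)$ as above, and then form the composite
\[
\cI(P) \xrightarrow{\cI(p)} \cI\Psi_{\downarrow}(M) \xrightarrow{\text{counit}} M.
\]
Here $\cI(p)$ is surjective because $\cI$ is exact (hence right exact), and the counit $\cI\Psi_{\downarrow}(M) \to M$ is an epimorphism by the second assertion of Proposition~\ref{prop:HI}. Therefore the composite is an epimorphism of $\VI$-modules. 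Finally, $\cI(P)$ is a projective $\VI$-module by Proposition~\ref{prop:projectives}, since $P$ is projective in $\Mod_{\VB}$. This exhibits a surjection onto $M$ from a projective, so $\ModVI$ has enough projectives.

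There is essentially no obstacle here; the corollary is a formal consequence of the two ingredients (enough projectives downstairs, and the projectivity/surjectivity statements in Propositions~\ref{prop:HI} and~\ref{prop:projectives}). If anything needs care, it is just making sure the chosen surjection $P \twoheadrightarrow \Psi_{\downarrow}(M)$ genuinely has projective source in $\Mod_{\VB}$ — but this is immediate from the product decomposition of $\Mod_{\VB}$ and the fact that module categories over a ring have enough projectives.
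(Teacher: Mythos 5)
Your proof is correct and follows the same argument as the paper: choose a projective $\VB$-module surjection onto $\Psi_{\downarrow}(M)$, apply $\cI$, and compose with the counit, invoking Propositions~\ref{prop:HI} and~\ref{prop:projectives}. The extra details you supply about why $\Mod_{\VB}$ has enough projectives are fine but not needed beyond the observation that it is a product of module categories.
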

\begin{proof} Clearly, $\Mod_{\VB} \cong \prod_{n \ge 0} \Mod_{\bk[\GL_n]}$ has enough projectives. Now let $M$ be a $\VI$-module and let $P \to \Psi_{\downarrow}(M)$ be a surjection from a projective $\VB$-module $P$. Then, the composite $\cI(P) \to \cI \Psi_{\downarrow}(M) \to M$ is a surjection (Proposition~\ref{prop:HI}) and $\cI(P)$ is projective (Proposition~\ref{prop:projectives}), completing the proof.
\end{proof}

\begin{proposition}
	\label{prop:induced-homology-acyclic}
 $\rH^{\VI}_i (\cI(V)) = 0$ for $i > 0$ and is isomorphic to 	$V$ for $i=0$. In particular, $t_0(\cI(V)) = \deg V$, and $\cI(V)$ is presented in finite degrees if and only if $\deg(V) < \infty$.
\end{proposition}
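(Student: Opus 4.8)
The plan is to compute $\rH^{\VI}_\bullet(\cI(V))$ from a projective resolution that is itself built by applying $\cI$. Since $\Mod_{\VB} \cong \prod_{n \ge 0} \Mod_{\bk[\GL_n]}$ has enough projectives, choose a projective resolution $P_\bullet \to V$ in $\Mod_{\VB}$. By Proposition~\ref{prop:projectives}, each $\cI(P_i)$ is a projective $\VI$-module; and because $\cI$ is exact, the augmented complex $\cI(P_\bullet) \to \cI(V)$ is exact, hence is a projective resolution of $\cI(V)$ in $\ModVI$.

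Consequently $\rH^{\VI}_i(\cI(V))$ is the $i$th homology of the complex $\rH^{\VI}_0(\cI(P_\bullet))$. By the first part of Proposition~\ref{prop:HI}, the composite $\rH^{\VI}_0 \circ \cI$ is naturally isomorphic to the identity functor on $\Mod_{\VB}$, and applying this natural isomorphism in each homological degree identifies the complex $\rH^{\VI}_0(\cI(P_\bullet))$ with $P_\bullet$ itself. Since $P_\bullet \to V$ is a resolution, its homology is $V$ in degree $0$ and vanishes in positive degrees, which proves the first assertion.

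For the ``in particular'' clause: unwinding definitions, $t_0(\cI(V)) = \deg \rH^{\VI}_0(\cI(V)) = \deg V$ using $\rH^{\VI}_0(\cI(V)) \cong V$, and $t_1(\cI(V)) = \deg \rH^{\VI}_1(\cI(V)) = \deg 0 = -1$; thus $\cI(V)$ is presented in finite degrees (that is, $t_0$ and $t_1$ are both finite) precisely when $\deg V < \infty$. There is no genuinely hard step here: the entire content lies in the two facts already established, namely that $\cI$ is exact and preserves projectives (Proposition~\ref{prop:projectives}) and that $\rH^{\VI}_0 \cI \cong \id$ (Proposition~\ref{prop:HI}). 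The only point requiring a little care is that $\rH^{\VI}_0$ by itself is merely right exact, so one cannot apply it directly to $V$; the device is exactly that the \emph{composite} $\rH^{\VI}_0 \cI$ is exact, so it carries a resolution of $V$ to a resolution, and the higher $\VI$-homology of $\cI(V)$ therefore measures the (vanishing) higher derived functors of the identity.
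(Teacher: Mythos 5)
Your proof is correct and is essentially identical to the paper's: apply $\cI$ to a $\VB$-projective resolution of $V$ to get a $\VI$-projective resolution of $\cI(V)$, then apply $\rH_0^{\VI}$ and use $\rH_0^{\VI}\cI \cong \id$. The extra paragraph unwinding the ``in particular'' clause is fine but not a different route.
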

\begin{proof}
Let $P_{\bullet} \to V$ be a projective resolution of $V$ as a $\VB$-module. Then $\cI(P_{\bullet})$ is a projective resolution of $\cI(V)$ (Proposition~\ref{prop:projectives}). The assertion now follows by applying $\rH_0^{\VI}(-)$ and noting that $\rH_0^{\VI} \cI = \id$ (Proposition~\ref{prop:HI}).
\end{proof}

\begin{proposition}
	\label{prop:d-equivalence}
	Let $\cI(U), \cI(V)$ be $\VI$-modules induced from $d$. Then $\rH^{\VI}_0$ induces an isomorphism \[\Hom_{\ModVI}(\cI(U),  \cI(V))  \to \Hom_{\Mod_{\VB}}(U,V),\] whose inverse is given by $\cI$.  
\end{proposition}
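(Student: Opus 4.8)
The plan is to exhibit $\rH^{\VI}_0$ and $\cI$ as mutually inverse maps between the two Hom-groups by combining two facts that are already available: that $\cI$ provides a section of $\rH^{\VI}_0$ on morphisms (Proposition~\ref{prop:HI}), and that $\rH^{\VI}_0$ is \emph{injective} on morphisms out of $\cI(U)$ (a Nakayama-type argument via Proposition~\ref{prop:nakayama}(c)). An additive map that is injective and admits a section is automatically an isomorphism with that section as inverse, which is exactly the assertion.

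First I would record the section. Since $\rH^{\VI}_0\cI = \id$ (Proposition~\ref{prop:HI}), applying this to morphisms gives $\rH^{\VI}_0(\cI(g)) = g$ for every $\VB$-map $g\colon U\to V$; hence the additive map $\cI\colon \Hom_{\Mod_{\VB}}(U,V)\to \Hom_{\ModVI}(\cI(U),\cI(V))$ is a section of the additive map $\rH^{\VI}_0\colon \Hom_{\ModVI}(\cI(U),\cI(V))\to \Hom_{\Mod_{\VB}}(\rH^{\VI}_0\cI(U),\rH^{\VI}_0\cI(V)) = \Hom_{\Mod_{\VB}}(U,V)$. In particular $\rH^{\VI}_0$ is surjective on these Hom-groups. (If one wishes to be scrupulous that Proposition~\ref{prop:HI} only supplies a natural \emph{isomorphism} $\rH^{\VI}_0\cI \cong \id$ rather than a literal equality, one first conjugates $\rH^{\VI}_0$ by the canonical isomorphisms $\rH^{\VI}_0\cI(U)\cong U$ and $\rH^{\VI}_0\cI(V)\cong V$; naturality then yields the displayed identity $\rH^{\VI}_0(\cI(g))=g$ verbatim.)

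Next I would prove that $\rH^{\VI}_0$ is injective on $\Hom_{\ModVI}(\cI(U),\cI(V))$. We may assume $U\neq 0$, so $\deg U = d$ and Proposition~\ref{prop:induced-homology-acyclic} gives $t_0(\cI(U)) = d$. From the formula $\cI(V)(X) = \bigoplus_{Y\le X}\bA(X/Y)\otimes_{\bk} V(Y)$ and the fact that $V$ is supported in degree $d$, we get $\cI(V)(X) = 0$ whenever $\dim_{\aF} X < d$, i.e. for $X\prec\aF^d$. Thus the hypotheses of Proposition~\ref{prop:nakayama}(c) hold with $M=\cI(U)$ and $N=\cI(V)$, and we conclude that for $f\colon \cI(U)\to\cI(V)$ one has $\rH^{\VI}_0(f)=0$ if and only if $f=0$; since $\rH^{\VI}_0$ is additive this is precisely the injectivity of $\rH^{\VI}_0$ on the Hom-group.

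Finally I would combine the two: given $f\in\Hom_{\ModVI}(\cI(U),\cI(V))$, the section identity yields $\rH^{\VI}_0\bigl(\cI(\rH^{\VI}_0(f))\bigr) = \rH^{\VI}_0(f)$, hence $\rH^{\VI}_0\bigl(\cI(\rH^{\VI}_0(f))-f\bigr)=0$, and injectivity forces $\cI(\rH^{\VI}_0(f)) = f$. Together with $\rH^{\VI}_0\circ\cI=\id$ from the first step, this shows $\rH^{\VI}_0$ and $\cI$ are inverse bijections, completing the proof. I do not anticipate a genuine obstacle here; the only point demanding care is verifying the two hypotheses of Proposition~\ref{prop:nakayama}(c) — in particular that $\cI(V)$ really does vanish in every degree below $d$ — and the bookkeeping around whether Proposition~\ref{prop:HI} is being used as an equality or merely a natural isomorphism. (As an alternative route one could instead start from the adjunction $\Hom_{\ModVI}(\cI(U),\cI(V))\cong\Hom_{\Mod_{\VB}}(U,\Psi_{\downarrow}\cI(V))$, compute $\Psi_{\downarrow}\cI(V)$ in degree $d$ to be $V(\aF^d)$, and use that $U$ is concentrated in degree $d$; but identifying the resulting isomorphism with the one induced by $\rH^{\VI}_0$ then requires exactly the same bookkeeping, so the argument above seems cleaner.)
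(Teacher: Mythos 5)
Your proof is correct and is essentially the paper's own argument: both use Proposition~\ref{prop:HI} to get $\rH^{\VI}_0\cI=\id$ as a section and then invoke Proposition~\ref{prop:nakayama}(c) to conclude that $f-\cI\rH^{\VI}_0(f)=0$. The only difference is that you spell out the verification of the hypotheses of Proposition~\ref{prop:nakayama}(c) (that $t_0(\cI(U))\le d$ and $\cI(V)$ vanishes below degree $d$), which the paper leaves implicit.
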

\begin{proof}
 By Proposition~\ref{prop:HI}, $\rH^{\VI}_0 \cI = \id$. Conversely, suppose $f \in \Hom_{\ModVI}(\cI(U),  \cI(V))$. Then, again by Proposition~\ref{prop:HI}, $\rH^{\VI}_0(f - \cI \rH^{\VI}_0(f)) = 0$. Thus, by Proposition~\ref{prop:nakayama} (3), we conclude that $f - \cI \rH^{\VI}_0(f) = 0$, completing the proof.
\end{proof}

\begin{proposition} 
	\label{prop:induced-from-d}
	Kernel and cokernel of a map of $\VI$-modules induced from $d$ are induced from $d$. An extension of $\VI$-modules induced from $d$ is induced from $d$. 
\end{proposition}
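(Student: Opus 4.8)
The plan is to treat the kernel/cokernel assertion and the extension assertion separately, in each case transporting the problem into $\Mod_{\VB}$ along the functors $\cI$ and $\rH^{\VI}_0$.

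For kernels and cokernels I would argue as follows. Given a map $f \colon \cI(U) \to \cI(V)$ of $\VI$-modules induced from $d$, Proposition~\ref{prop:d-equivalence} identifies it with $\cI(g)$, where $g = \rH^{\VI}_0(f) \colon U \to V$. Since $\cI$ is exact it commutes with the formation of kernels and cokernels, so $\ker f = \cI(\ker g)$ and $\operatorname{coker} f = \cI(\operatorname{coker} g)$, the kernel and cokernel of $g$ being taken in $\Mod_{\VB}$. As $U$ and $V$ are supported in degree $d$, so are the subobject $\ker g$ and the quotient object $\operatorname{coker} g$; hence $\ker f$ and $\operatorname{coker} f$ are induced from $d$.

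The extension statement carries the real content, and here is how I would approach it. Suppose $0 \to \cI(U) \to M \to \cI(V) \to 0$ is exact with $U$ and $V$ supported in degree $d$. First I would feed this sequence into the long exact sequence in $\VI$-homology; by Proposition~\ref{prop:induced-homology-acyclic} we have $\rH^{\VI}_1(\cI(U)) = \rH^{\VI}_1(\cI(V)) = 0$ and $\rH^{\VI}_0(\cI(U)) = U$, $\rH^{\VI}_0(\cI(V)) = V$, so the long exact sequence gives $\rH^{\VI}_1(M) = 0$ together with a short exact sequence $0 \to U \to \rH^{\VI}_0(M) \to V \to 0$; in particular $\rH^{\VI}_0(M)$ is supported in degree $d$. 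Also $M(X) = 0$ for $\dim X < d$, so $M_{\prec d} = 0$, and hence the degree-$d$ part of $\rH^{\VI}_0(M)$ is exactly $M(\aF^d)$; this gives a canonical identification $\rH^{\VI}_0(M) = V_d(M)$, where $V_d(M)$ denotes the $\VB$-module supported in degree $d$ with value $M(\aF^d)$ there. Next I would take the evident inclusion of $\VB$-modules $V_d(M) \hookrightarrow \Psi_\downarrow(M)$ and pass, via the adjunction $(*)$, to a map $\gamma \colon \cI(V_d(M)) \to M$. Since $\cI(V_d(M))$ is manifestly induced from $d$, it suffices to prove that $\gamma$ is an isomorphism.

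To see $\gamma$ is an isomorphism, I would note first that $\rH^{\VI}_0(\gamma) \colon \rH^{\VI}_0(\cI(V_d(M))) = V_d(M) \to \rH^{\VI}_0(M) = V_d(M)$ is the identity, since both sides are supported in degree $d$ and $\gamma$ is $\mathrm{id}_{M(\aF^d)}$ in degree $d$ by construction; hence $\gamma$ is surjective by Proposition~\ref{prop:nakayama}(b). Writing $K = \ker \gamma$ and running $0 \to K \to \cI(V_d(M)) \to M \to 0$ through the long exact sequence, exactness forces the map $\rH^{\VI}_0(K) \to \rH^{\VI}_0(\cI(V_d(M)))$ to land in $\ker \rH^{\VI}_0(\gamma) = 0$, hence to vanish, so $\rH^{\VI}_0(K)$ is a quotient of $\rH^{\VI}_1(M) = 0$; thus $\rH^{\VI}_0(K) = 0$ and $K = 0$ by Proposition~\ref{prop:nakayama}(a). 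I expect the only point requiring care to be the bookkeeping around the identification $\rH^{\VI}_0(M) = V_d(M)$ and the check that $\gamma$ induces the identity on it; everything else is a formal diagram chase, and, consistently with the ``formal'' nature of this section, no invertibility assumption on $q$ is used.
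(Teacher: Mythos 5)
Your kernel/cokernel argument coincides exactly with the paper's: transport the map through $\rH^{\VI}_0$ and $\cI$ using Proposition~\ref{prop:d-equivalence}, then use exactness of $\cI$.

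For the extension, however, you take a genuinely different route. The paper applies the horseshoe lemma to degree-$d$ projective resolutions $P_\bullet \to U$, $Q_\bullet \to V$ in $\Mod_{\VB}$, producing a projective resolution $\cI(P_\bullet + Q_\bullet) \to M$ with all terms induced from $d$, and then invokes the already-proved kernel/cokernel statement to conclude $M$ is induced from $d$. You instead show directly via the long exact sequence that $\rH^{\VI}_1(M) = 0$ and that $\rH^{\VI}_0(M)$ is concentrated in degree $d$, and then run the Nakayama-style argument (surjectivity of $\gamma \colon \cI(\rH^{\VI}_0(M)) \to M$, then killing $\ker \gamma$ using $\rH^{\VI}_1(M) = 0$). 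This is correct, and it is essentially the proof of Lemma~\ref{lem:ramos-single-new}, which appears slightly later in the paper; you have effectively rediscovered it and applied it inline. The paper's horseshoe-lemma route is more economical here because it reuses the first assertion rather than building new homological machinery, whereas your route is self-contained and makes the homological characterization ``$\rH^{\VI}_1 = 0$ plus $\rH^{\VI}_0$ concentrated in degree $d$ implies induced from $d$'' explicit, which the paper extracts as a standalone lemma a few lines later. Both proofs are correct, both are purely formal, and you are right that neither uses the non-describing characteristic hypothesis.

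One small bookkeeping point you flagged as requiring care: the identification $\rH^{\VI}_0(M) \cong V_d(M)$ does need the observation that the natural surjection $M(\aF^d) \to \rH^{\VI}_0(M)(\aF^d)$ is an isomorphism because $M_{\prec \aF^d} = 0$; you state this correctly, and the compatibility of $\gamma(\aF^d)$ with $\mathrm{id}_{M(\aF^d)}$ follows from the unit of the adjunction being the identity in degree $d$. So there is no gap.
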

\begin{proof} Let $f \colon \cI(U) \to \cI(V)$ be a map of $\VI$-modules. Then by the previous proposition, there is a $g \colon U \to V$ such that $f = \cI(g)$. Since $\cI$ is exact, we have $\ker f = \cI(\ker g)$ and $\coker f = \cI(\coker g)$, proving the first assertion. For the second assertion, let $M$ be an extension of $\cI(U)$ and $\cI(V)$. Let $P_{\bullet} \to U$ and $Q_{\bullet} \to V$ be  projective resolutions of $U$ and $V$ such that $P_i$ and $Q_i$ are all supported in degree $d$. By the horseshoe lemma and Proposition~\ref{prop:projectives}, $\cI(P_{\bullet}+Q_{\bullet})$ is a projective resolution of $M$. By the first assertion, $M$ is induced from $d$.
\end{proof}

\begin{proposition}
	\label{prop:induced-generated-in-d}
	Let $\cI(W)$ be a module induced from $d$. And let $M$ be a submodule of $\cI(W)$ generated in degrees $\le d$. Then $M$ is isomorphic to $\cI(M_d)$. In particular, $M$ is induced from $d$. 
\end{proposition}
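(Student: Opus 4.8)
The plan is to exhibit $M$ as the image of an induced monomorphism $\cI(M_d) \hookrightarrow \cI(W)$, so that $M \cong \cI(M_d)$ falls out of the exactness of $\cI$.

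First I would unwind the shape of $\cI(W)$ in low degrees. Since $W$ is supported in degree $d$, the direct sum formula $\cI(W) = \bigoplus_e \cI(e)\otimes_{\bk[\GL_e]} W(\aF^e)$ collapses to $\cI(W) = \cI(d)\otimes_{\bk[\GL_d]} W(\aF^d)$, and $\cI(d)(X) = \bk[\Hom_{\VI}(\aF^d,X)]$ vanishes when $X \prec \aF^d$ and equals $\bk[\GL_d]$ when $X = \aF^d$. Hence $\cI(W)(X) = 0$ for $X \prec \aF^d$ and $\cI(W)(\aF^d) = W(\aF^d)$. Since $M \subseteq \cI(W)$, this gives $M(X) = 0$ for $X \prec \aF^d$. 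I would then observe that, combined with the hypothesis that $M$ is generated in degrees $\le d$, this forces $M$ to be generated by $M(\aF^d)$: the smallest $\VI$-submodule of $M$ containing $M(\aF^d)$ contains $M(Y)$ for every $d$-dimensional $Y$ (apply an isomorphism $\aF^d \xrightarrow{\sim} Y$), hence contains all the generators, hence is all of $M$.

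Next, the inclusion $M \hookrightarrow \cI(W)$ evaluated at $\aF^d$ is a $\bk[\GL_d]$-linear injection $M(\aF^d) \hookrightarrow \cI(W)(\aF^d) = W(\aF^d)$, which is the same datum as a monomorphism $j \colon M_d \hookrightarrow W$ of $\VB$-modules (both sides supported in degree $d$). Applying the exact functor $\cI$, the map $\cI(j)\colon \cI(M_d) \to \cI(W)$ is a monomorphism, so $\cI(M_d) \cong \im\cI(j)$. It then remains to identify $\im\cI(j)$ with $M$. By the remark following Proposition~\ref{prop:HI}, $\im\cI(j)$ is the smallest $\VI$-submodule of $\cI(W)$ containing $M(\aF^d)$ (the image of $j$ in degree $d$, viewed inside $\cI(W)(\aF^d)$). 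That smallest submodule is contained in $M$ (since $M$ is one such submodule) and contains $M$ (since $M(\aF^d)$ generates $M$, by the previous paragraph), so it equals $M$. Therefore $M \cong \cI(M_d)$; in particular $M$ is induced from $d$.

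I do not anticipate a genuine obstacle: the argument is purely formal, resting only on the exactness of $\cI$, the explicit value of $\cI(W)$ in degrees $\preceq \aF^d$, and the characterization (the remark after Proposition~\ref{prop:HI}) of the image of a map out of an induced module. The one point requiring a little care is that the hypothesis ``generated in degrees $\le d$'' must first be upgraded, using the vanishing $M(X)=0$ for $X \prec \aF^d$, to ``generated by $M(\aF^d)$'' — this is exactly what makes the final identification $\im\cI(j) = M$ go through.
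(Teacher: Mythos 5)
Your proof is correct and reaches the same conclusion by essentially the same route as the paper: identify $M$ with the image of the map $\cI(M_d)\to\cI(W)$ arising from the degree-$d$ inclusion $M_d\hookrightarrow W$, and use exactness of $\cI$ to get injectivity. The only organizational difference is that the paper first builds the surjection $\cI(M_d)\twoheadrightarrow M$ via Nakayama (Proposition~\ref{prop:nakayama}) and then proves its kernel vanishes by applying Proposition~\ref{prop:d-equivalence} to compute $\ker g = \cI(\ker \rH^{\VI}_0(g)) = 0$, whereas you obtain the monomorphism $\cI(j)\colon\cI(M_d)\hookrightarrow\cI(W)$ directly from exactness of $\cI$ and then pin down its image as $M$ using the remark after Proposition~\ref{prop:HI}; your version is marginally more elementary in that it sidesteps the $\rH^{\VI}_0$-equivalence lemma, and the one place you correctly flagged as needing care (upgrading ``generated in degrees $\le d$'' to ``generated by $M(\aF^d)$'' via the vanishing of $\cI(W)$ below degree $d$) is exactly the same step the paper takes implicitly when it writes $\rH^{\VI}_0(M) = M_d$.
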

\begin{proof}  Since $M$ is generated in degree $d$ and $M_k \subset \cI(W)_k = 0$ for $k<d$, we have $\rH^{\VI}_0(M) = M_d$. It follows that the natural map $f \colon \cI(M_d) \to M$ is a surjection (Proposition~\ref{prop:nakayama}). Composing it with the inclusion $M \to \cI(W)$, we obtain a map $g \colon \cI(M_d) \to \cI(W)$.  By construction, $\rH^{\VI}_0(g)$ is the natural inclusion $M_d \to W$. Thus by the Proposition~\ref{prop:d-equivalence}, we have \[\ker(g) = \ker(\cI \rH^{\VI}_0(g)) = \cI (\ker(\rH^{\VI}_0(g)) )= \cI(0) = 0.  \] This implies that  $f$ is injective, completing the proof.
\end{proof}

\begin{proposition} 
	\label{prop:generation-presentation}
	Let $M$ be a $\VI$-module. Then \begin{enumerate}
	\item $M$ is generated in degrees $\le d$ if and only if it admits a surjection $ \cI(V) \to M$ with $\deg V \le d$.  
	\item $M$ is presented in finite degrees if and only if there is an exact sequence \[ \cI(W) \to \cI(V) \to M \to 0 \] such that $\deg V, \deg W < \infty$.
\end{enumerate} 
\end{proposition}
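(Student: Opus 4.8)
The plan is to deduce everything formally from three facts already in hand: $\rH^{\VI}_0\cI=\id$ together with the surjectivity of the counit $\cI\Psi_{\downarrow}\to\id$ (Proposition~\ref{prop:HI}); the Nakayama package for $\rH^{\VI}_0$ (Proposition~\ref{prop:nakayama}); and the acyclicity $\rH^{\VI}_i(\cI(V))=0$ for $i>0$ with $\rH^{\VI}_0(\cI(V))=V$ (Proposition~\ref{prop:induced-homology-acyclic}), combined with the long exact sequence for $\rH^{\VI}_{\bullet}=\Tor^{\bA}_{\bullet}(\bk,-)$ and the right exactness of $\rH^{\VI}_0$.

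For part (a), the backward direction is immediate: a surjection $\cI(V)\to M$ induces, by right exactness of $\rH^{\VI}_0$, a surjection $V=\rH^{\VI}_0(\cI(V))\to\rH^{\VI}_0(M)$, so $\deg\rH^{\VI}_0(M)\le\deg V\le d$. For the forward direction I would let $V$ be the $\VB$-module obtained from $\Psi_{\downarrow}(M)$ by setting its terms in degrees $>d$ equal to zero, so that $\deg V\le d$, and let $g\colon\cI(V)\to M$ be the $\VI$-map adjoint to the inclusion $V\hookrightarrow\Psi_{\downarrow}(M)$. As recalled in \S\ref{sec:semi-induced}, the image $N$ of $g$ is the smallest $\VI$-submodule of $M$ containing the image of $V\hookrightarrow\Psi_{\downarrow}(M)$, i.e.\ the submodule generated by the $M(\aF^e)$ with $e\le d$; in particular $N(X)=M(X)$ whenever $\dim X\le d$, so $M/N$ vanishes in degrees $\le d$ and hence so does $\rH^{\VI}_0(M/N)$. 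On the other hand the surjection $M\twoheadrightarrow M/N$ forces $\deg\rH^{\VI}_0(M/N)\le\deg\rH^{\VI}_0(M)\le d$. Therefore $\rH^{\VI}_0(M/N)=0$, so $M/N=0$ by Proposition~\ref{prop:nakayama}(a), i.e.\ $g$ is surjective.

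For part (b), note first that $\cI(V)$ is presented in finite degrees exactly when $\deg V<\infty$ (Proposition~\ref{prop:induced-homology-acyclic}). For the backward direction, given an exact sequence $\cI(W)\to\cI(V)\to M\to0$ with $\deg V,\deg W<\infty$, part (a) gives $t_0(M)\le\deg V<\infty$; writing $K$ for the image of $\cI(W)\to\cI(V)$, which equals $\ker(\cI(V)\to M)$, the surjection $\cI(W)\twoheadrightarrow K$ gives $\deg\rH^{\VI}_0(K)\le\deg W$, and the long exact sequence together with $\rH^{\VI}_1(\cI(V))=0$ yields an injection $\rH^{\VI}_1(M)\hookrightarrow\rH^{\VI}_0(K)$, so $t_1(M)\le\deg W<\infty$. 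For the forward direction, assume $t_0(M),t_1(M)<\infty$; part (a) furnishes a surjection $\cI(V)\to M$ with $\deg V<\infty$, and with $K$ its kernel the long exact sequence gives a left exact sequence $0\to\rH^{\VI}_1(M)\to\rH^{\VI}_0(K)\to V$, whence $\deg\rH^{\VI}_0(K)\le\max(t_1(M),\deg V)<\infty$. Applying part (a) to $K$ yields a surjection $\cI(W)\to K$ with $\deg W<\infty$; composing it with $K\hookrightarrow\cI(V)$ produces $\cI(W)\to\cI(V)\to M\to0$, which is exact because $\cI(W)\to\cI(V)$ has image $K=\ker(\cI(V)\to M)$.

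Neither half is genuinely hard, and most of the work is bookkeeping with the long exact sequence for $\rH^{\VI}_{\bullet}$ and the acyclicity of induced modules. The one place that needs a small argument rather than pure diagram chasing is the surjectivity of the truncation map $g$ in the forward direction of (a): there one combines the vanishing of $M/N$ in low degrees with the degree bound on $\rH^{\VI}_0(M/N)$ coming from $M\twoheadrightarrow M/N$, and then invokes Nakayama. That is the step I expect to be the main (mild) obstacle; everything else is formal.
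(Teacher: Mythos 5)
Your proof is correct and takes essentially the same route as the paper: part (a) via truncation of $\Psi_{\downarrow}(M)$ to degrees $\le d$ plus Nakayama, and part (b) via the long exact sequence for $\rH^{\VI}_{\bullet}$ together with acyclicity of induced modules. The only cosmetic difference is in part (a), where you deduce surjectivity of $g\colon \cI(V)\to M$ by showing $\rH^{\VI}_0(M/N)=0$ and invoking Nakayama (a), whereas the paper applies Nakayama (b) directly to the induced surjection $V\twoheadrightarrow\rH^{\VI}_0(M)$; the two are interchangeable.
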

\begin{proof}
Proof of (a). Suppose there is a surjection $\cI(V) \to M$. Since $\rH^{\VI}_0$ is right exact, we have a surjection $V \to \rH^{\VI}_0(M)$. This shows that $\deg V \le d \Longrightarrow t_0(M) \le d$. Conversely, suppose $t_0 \le d$. Let $V$ be the $\VB$-module with $\deg V \le d$ satisfying $V(X) = M(X)$ for $\dim X \le d$. By construction, we have a surjection $V \to \rH^{\VI}_0(M)$. By Nakayama lemma, the natural map $\cI(V) \to M$ is a surjection, completing the proof.

Proof of (b). First suppose $M$ is presented in finite degrees. Then by part (a), there is a surjection $f \colon \cI(V) \to M$ with $\deg V < \infty$. It suffices to show that the kernel of $f$ is generated in finite degrees. But this follows from the long exact sequence corresponding to $\rH^{\VI}_0$. Conversely, if there is an exact sequence \[ \cI(W) \to \cI(V) \to M \to 0 \] such that $\deg V, \deg W < \infty$. Then by part (a), $M$ and the kernel of $ \cI(V) \to M$ are generated in finite degrees. Again, the long exact sequence corresponding to $\rH^{\VI}_0$ finishes the proof (see Proposition~\ref{prop:induced-homology-acyclic}).
\end{proof}

\subsection{Semi-induced modules}

We call a module {\bf semi-induced} if it admits a finite filtration whose graded pieces (successive quotients) are induced modules that are generated in finite degrees.

\begin{lemma} 
	\label{lem:ramos-single-new}
	Suppose $\rH^{\VI}_1(Q) =0$ and  assume that $\rH^{\VI}_0(Q)$ is concentrated in degree $d$. Then $Q$ is induced from $d$. In particular, $Q$ is homology acyclic.
\end{lemma}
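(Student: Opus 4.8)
The idea is to compare $Q$ with an induced module via a natural map, and then use the homology hypotheses together with Nakayama's lemma to show this map is an isomorphism. Let $V = \rH^{\VI}_0(Q)$, which by hypothesis is concentrated in degree $d$; so $V$ is a single $\bk[\GL_d]$-module sitting in degree $d$. First I would choose a projective resolution $P_\bullet \to V$ of $V$ as a $\VB$-module with every $P_i$ supported in degree $d$ (possible since $\Mod_{\VB}$ splits as a product over $n$, so we can resolve in the degree-$d$ factor alone). Applying the exact functor $\cI$ gives a complex $\cI(P_\bullet) \to \cI(V)$ which, by Proposition~\ref{prop:projectives}, is a projective resolution of $\cI(V)$; in particular $\cI(V)$ is induced from $d$.

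The key step is to produce a map $\cI(V) \to Q$ and show it is an isomorphism. Since $V = \rH^{\VI}_0(Q)$ and $Q_k = 0$ for $k < d$ (which follows because $\rH^{\VI}_0(Q)$ is concentrated in degree $d$, so $Q$ is generated in degree $d$ and has nothing below), the natural surjection $V \to \rH^{\VI}_0(Q)$ is an isomorphism, and by Proposition~\ref{prop:nakayama}(b) the induced map $g \colon \cI(V) \to Q$ is surjective. Let $K = \ker g$, so we have a short exact sequence $0 \to K \to \cI(V) \to Q \to 0$. The associated long exact sequence in $\VI$-homology reads
\[
\rH^{\VI}_1(\cI(V)) \to \rH^{\VI}_1(Q) \to \rH^{\VI}_0(K) \to \rH^{\VI}_0(\cI(V)) \to \rH^{\VI}_0(Q) \to 0.
\]
By Proposition~\ref{prop:induced-homology-acyclic}, $\rH^{\VI}_1(\cI(V)) = 0$, and by hypothesis $\rH^{\VI}_1(Q) = 0$; moreover $\rH^{\VI}_0(\cI(V)) = V \to \rH^{\VI}_0(Q) = V$ is the identity, hence injective. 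Therefore $\rH^{\VI}_0(K) = 0$, and so by Proposition~\ref{prop:nakayama}(a), $K = 0$. Thus $g$ is an isomorphism and $Q \cong \cI(V)$ is induced from $d$.

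Finally, homology acyclicity is immediate: $Q \cong \cI(V)$, and Proposition~\ref{prop:induced-homology-acyclic} gives $\rH^{\VI}_i(\cI(V)) = 0$ for all $i > 0$. I do not anticipate a serious obstacle here; the only point requiring a little care is justifying that $\rH^{\VI}_0(Q)$ concentrated in degree $d$ forces $Q_k = 0$ for $k < d$ — this is because $\rH^{\VI}_0(Q)(X) = (Q/Q_{\prec X})(X)$ surjects onto nothing is not quite the argument; rather, if $Q_k \neq 0$ for some minimal $k < d$ then $Q_k = (Q/Q_{\prec k})_k$ would be a nonzero piece of $\rH^{\VI}_0(Q)$ in degree $k < d$, a contradiction. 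With that observation in hand the rest is a routine diagram chase.
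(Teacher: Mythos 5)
Your proof is correct and follows essentially the same route as the paper: identify $\rH^{\VI}_0(Q)$ with $Q_d$ using the hypothesis, get the natural surjection $\cI(\rH^{\VI}_0(Q)) \to Q$, and kill its kernel by Nakayama using $\rH^{\VI}_1(Q)=0$. The paper compresses the long-exact-sequence argument into a single line, but the content is the same; your careful remark that $Q_k=0$ for $k<d$ is exactly what the paper means by ``$Q_d = \rH^{\VI}_0(Q)$.''
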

\begin{proof}
	By the assumption, $Q_d = \rH^{\VI}_0(Q)$. This implies that there is a natural surjection $\phi \colon M \coloneq \cI(\rH^{\VI}_0(Q)) \to Q$ which induces an isomorphism $\rH^{\VI}_0(M) \to \rH^{\VI}_0(Q)$. By the assumption that $\rH^{\VI}_1(Q) =0$ and the nakayama lemma, we see that the kernel of $\phi$ is trivial. This shows that $Q$ is induced from $d$. The statement that $Q$ is homology acyclic follows from Proposition~\ref{prop:induced-homology-acyclic}. 
\end{proof}

The proof of the following proposition is motivated by a very similar theorem of Ramos for $\FI$-modules \cite[Theorem~B]{ramos}.

\begin{proposition}
	\label{prop:ramos-new} Let $M$ be a module generated in finite degrees. Then $M$ is homology acyclic if and only if $M$ is semi-induced. More generally, if $\rH^{\VI}_1(M) = 0$ then the graded pieces (successive quotients $Q^i \coloneq M_{\preceq i}/M_{\prec i}$) of the natural filtration  \[0 \subset M_{\preceq 0} \subset \ldots \subset M_{\preceq d} =M\] are induced (more precisely, $Q^i$ is induced from $i$).
\end{proposition}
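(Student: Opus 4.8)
The plan is to prove the more general statement first, since the biconditional in the first sentence follows from it together with Lemma~\ref{lem:ramos-single-new} and Proposition~\ref{prop:induced-homology-acyclic}. So assume $M$ is generated in finite degrees, say $t_0(M) \le d$, and $\rH^{\VI}_1(M) = 0$. Consider the natural filtration $0 \subset M_{\preceq 0} \subset \cdots \subset M_{\preceq d} = M$, which is finite because $M$ is generated in degrees $\le d$. I will show by induction on $i$ that each graded piece $Q^i = M_{\preceq i}/M_{\prec i}$ is induced from $i$, and along the way that each $M_{\preceq i}$ satisfies $\rH^{\VI}_1(M_{\preceq i}) = 0$ so that the induction can proceed. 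The base case $i=0$ and the step are essentially the same computation, so I would organize it as a single inductive argument.

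\medskip

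The key step is the following: for each $i$, consider the short exact sequence $0 \to M_{\prec i} \to M_{\preceq i} \to Q^i \to 0$. First I note that $Q^i$ is concentrated in degree $i$ by Proposition~\ref{prop:homology-filtration} (more precisely, $Q^i_k = 0$ for $k < i$ since $M_{\prec i}$ agrees with $M_{\preceq i}$ below degree $i$, and $Q^i_k = 0$ for $k > i$ because $M_{\preceq i}$ is generated in degrees $\le i$), and that $\rH^{\VI}_0(Q^i) = Q^i_i$. By Lemma~\ref{lem:ramos-single-new}, it therefore suffices to prove $\rH^{\VI}_1(Q^i) = 0$; this will simultaneously give that $Q^i$ is induced from $i$ and homology acyclic. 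To get there I would run an upward induction on $i$. Suppose $\rH^{\VI}_1(M_{\prec i}) = 0$ already (for $i = 0$ this module is $0$, so this is vacuous; in general $M_{\prec i} = M_{\preceq i-1}$ and this is part of the inductive hypothesis). The long exact sequence in $\VI$-homology attached to $0 \to M_{\prec i} \to M_{\preceq i} \to Q^i \to 0$ reads
\[
\rH^{\VI}_2(Q^i) \to \rH^{\VI}_1(M_{\prec i}) \to \rH^{\VI}_1(M_{\preceq i}) \to \rH^{\VI}_1(Q^i) \to \rH^{\VI}_0(M_{\prec i}) \to \rH^{\VI}_0(M_{\preceq i}).
\]
The last map is injective: $\rH^{\VI}_0(M_{\prec i}) = \rH^{\VI}_0(M)_{<i}$ includes into $\rH^{\VI}_0(M)_{<i+1} = \rH^{\VI}_0(M_{\preceq i})$ by Proposition~\ref{prop:homology-filtration}. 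Hence $\rH^{\VI}_1(Q^i)$ injects into $\rH^{\VI}_1(M_{\preceq i})$; combined with $\rH^{\VI}_1(M_{\prec i}) = 0$ the sequence also gives a surjection $\rH^{\VI}_1(M_{\preceq i}) \twoheadrightarrow \rH^{\VI}_1(Q^i)$, so these two are isomorphic. So it remains to show $\rH^{\VI}_1(M_{\preceq i}) = 0$.

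\medskip

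For the final module $i = d$ this is exactly the hypothesis $\rH^{\VI}_1(M) = 0$, so the top graded piece $Q^d$ is handled. For the lower pieces I would instead induct \emph{downward}: consider $0 \to M_{\preceq i} \to M_{\preceq i+1} \to Q^{i+1} \to 0$ and use that $Q^{i+1}$ is induced (once we have shown it), hence homology acyclic, so $\rH^{\VI}_2(Q^{i+1}) = 0$ and the long exact sequence gives an isomorphism $\rH^{\VI}_1(M_{\preceq i}) \cong \rH^{\VI}_1(M_{\preceq i+1})$. Starting from $\rH^{\VI}_1(M_{\preceq d}) = \rH^{\VI}_1(M) = 0$ this propagates downward to give $\rH^{\VI}_1(M_{\preceq i}) = 0$ for all $i$. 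The two inductions interlock: knowing $\rH^{\VI}_1(M_{\preceq i+1}) = 0$ and $Q^{i+1}$ induced gives $\rH^{\VI}_1(M_{\preceq i}) = 0$, which by the upward-sequence argument gives $\rH^{\VI}_1(Q^i) = 0$, which by Lemma~\ref{lem:ramos-single-new} gives $Q^i$ induced from $i$; then descend. I expect the main subtlety — the one point to be careful about — is making the logic of this descending/ascending bookkeeping watertight, in particular verifying that $M_{\prec i}$ and $M_{\preceq i-1}$ really coincide and that the filtration is exhaustive and finite, which hinges on $M$ being generated in degrees $\le d$; the homological algebra itself is just chasing the two long exact sequences. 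Finally, the first sentence of the proposition follows: if $M$ is semi-induced it is homology acyclic by repeatedly applying Proposition~\ref{prop:induced-homology-acyclic} to the filtration and the long exact sequence, and conversely if $M$ is homology acyclic and generated in finite degrees then $\rH^{\VI}_1(M) = 0$, so by the above each $Q^i$ is induced from $i$, exhibiting $M$ as semi-induced.
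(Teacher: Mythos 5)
Your proof takes essentially the same approach as the paper's: establish $\rH^{\VI}_1(Q^d)=0$ from the long exact sequence together with the injectivity of $\rH^{\VI}_0(M_{\prec d})\to\rH^{\VI}_0(M_{\preceq d})$ coming from Proposition~\ref{prop:homology-filtration}, invoke Lemma~\ref{lem:ramos-single-new} to conclude $Q^d$ is induced (hence acyclic), use that to propagate $\rH^{\VI}_1(M_{\preceq i})=0$ downward, and repeat; the paper packages the same computation as a strong induction on $t_0(M)$ rather than a downward induction on $i$, but the mechanism is identical. One small overclaim to be aware of: $Q^i$ itself is generally \emph{not} concentrated in degree $i$ (for instance, if $M=\cI(0)$ is the constant module then $Q^0=M$ has nonzero components in every degree); your reasoning ``$Q^i_k=0$ for $k>i$ because $M_{\preceq i}$ is generated in degrees $\le i$'' conflates \emph{generated in} with \emph{supported in} degrees $\le i$. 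What is actually true, and is all that Lemma~\ref{lem:ramos-single-new} requires, is that $\rH^{\VI}_0(Q^i)$ is concentrated in degree $i$; this follows from $Q^i$ vanishing below degree $i$ and being generated in degree $i$, so the error is harmless.
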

\begin{proof}
	By Proposition~\ref{prop:induced-homology-acyclic}, if $M$ is semi-induced then it satisfies $\rH^{\VI}_i(M) =0$ for $i>0$, and is thus acyclic. The reverse inclusion follows from the second assertion which we now prove by induction on $d \coloneq t_0(M)$. Note that $\rH^{\VI}_0(Q^i)$ is concentrated in degree $i$, and  $\rH^{\VI}_0(M_{\prec d})$ injects into $\rH^{\VI}_0(M_{\preceq d})$ (Proposition~\ref{prop:homology-filtration}). Thus applying $\rH^{\VI}_0(-)$ to the exact sequence \[0 \to M_{\prec d} \to M \to Q^d \to 0\] shows that $\rH^{\VI}_1(Q^d) = 0$. By Lemma~\ref{lem:ramos-single-new}, $Q^d$ is induced from $d$, and hence acyclic. Thus	$\rH^{\VI}_1(M_{\prec d}) = 0$. The rest follows by induction.
\end{proof}

\begin{corollary}
	\label{cor:good-filtration-new}
	Suppose $M$ is semi-induced module generated in degree $\le d$. Then the graded pieces (successive quotients $Q^i \coloneq M_{\preceq i}/M_{\prec i}$) of the natural filtration  \[0 \subset M_{\preceq 0} \subset \ldots \subset M_{\preceq d} =M\] are induced (more precisely, $Q^i$ is induced from $i$). 
\end{corollary}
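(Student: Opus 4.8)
The plan is to deduce this directly from Proposition~\ref{prop:ramos-new}, of which it is essentially the special case where the degree of generation is prescribed. First I would observe that a semi-induced module $M$ is homology acyclic: this is the ``only if'' direction of Proposition~\ref{prop:ramos-new} (alternatively, it follows from Proposition~\ref{prop:induced-homology-acyclic} by inducting along a finite filtration of $M$ with induced graded pieces and using the long exact sequence in $\VI$-homology). In particular $\rH^{\VI}_1(M) = 0$.

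Next I would pin down the length of the canonical filtration. Since $M$ is generated in degrees $\le d$, we have $t_0(M) \le d$, so $M$ contains $M(Y)$ in its $\VI$-submodule generated by the spaces $Y$ with $\dim Y \le d$; concretely $M_{\prec e} = M$ for every $e > d$, hence $M_{\preceq d} = M$ and the natural filtration $0 \subset M_{\preceq 0} \subset \cdots \subset M_{\preceq d}$ genuinely terminates at $M$.

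Finally I would apply the ``more generally'' clause of Proposition~\ref{prop:ramos-new}: because $\rH^{\VI}_1(M) = 0$, each successive quotient $Q^i = M_{\preceq i}/M_{\prec i}$ is induced from $i$, which is exactly the assertion.

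I do not anticipate any genuine obstacle here: all of the substance sits in Proposition~\ref{prop:ramos-new}, and this corollary merely records the consequence in the form that will be convenient later, the only thing to verify being the bookkeeping that ``generated in degrees $\le d$'' makes the canonical filtration stop at stage $d$. The one point worth a sentence of care is that the notion of ``semi-induced'' used in the hypothesis (a \emph{finite} filtration with induced graded pieces generated in \emph{finite} degrees) is precisely the one for which Proposition~\ref{prop:ramos-new} gives homology acyclicity, so no strengthening of that input is needed.
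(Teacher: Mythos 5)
Your argument is correct and is precisely the implicit one the paper intends: the corollary is stated immediately after Proposition~\ref{prop:ramos-new} with no separate proof because, as you note, semi-induced implies $\rH^{\VI}_1(M)=0$ and the ``more generally'' clause of that proposition then gives exactly the stated conclusion, with $M_{\preceq d}=M$ following from the hypothesis on the degree of generation.
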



%
%
%
\section{The shift theorem}
\label{sec:shift}

The aim of this section is to prove our main result -- the shift theorem.

\subsection{The shift and the difference functors I} \label{subsec:shift-difference-1}
The category of $\aF$-vector spaces (and in particular, $\VI$) has a symmetric monoidal structure $+$ given by the direct sum of vector spaces.  It allows us to define a shift functor $\tau^X$ on $\aF$-vector spaces (or on $\VI$) by \[\tau^X(Z) = X + Z.\] Moreover, for any $\aF$-linear map $\ell \colon X \to Y$, we have a natural transformation $\tau^{\ell} \colon \tau^X \to \tau^Y$ given by $\tau^{\ell}(Z) = \ell + \id_Z$.

We say that a morphism $f \colon \aF^d \to X + Z$ is of {\bf $X$-rank $k$} if the dimension of $(X + \im f)/X$ is $k$ (clearly, $X$-rank of $f$ is at most $d$). In other words, $k$ is the least integer such that there are $\VI$-morphisms $g \colon \aF^d \to X + \aF^k$  and $h \colon \aF^k \to Z$ satisfying $f = \tau^X(h) g$. We call any decomposition of the form $f = \tau^X(h) g$ as above, an {\bf $(X,k)$-decomposition} of $f$. The following lemma is immediate from basic linear algebra.

\begin{lemma}
Let $\tau^X(h_1) g_1 = \tau^X(h_2) g_2$ are two $(X,k)$-decompositions of $f \colon \aF^d \to X + Z$. Then there is a unique $\sigma \in \GL_k$ such that $g_2 = \tau^X(\sigma)g_1$ and $h_2 = h_1 \sigma^{-1}$.
\end{lemma}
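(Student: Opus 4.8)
The idea is to make the hidden $\GL_k$-torsor structure visible by passing to the ``$Z$-part'' of $f$ via the canonical projections, and then cancel injective maps. Let $\pi_Z \colon X + Z \to Z$ be the projection with kernel $X$, and let $\pi \colon X + \aF^k \to \aF^k$ be the analogous projection with kernel $X$. The only thing to observe is the compatibility $\pi_Z \circ \tau^X(h) = h \circ \pi$ for any $\aF$-linear $h \colon \aF^k \to Z$ (both sides vanish on $X$ and equal $h$ on $\aF^k$). Applied to either $(X,k)$-decomposition $f = \tau^X(h_i) g_i$ this gives
\[ \pi_Z \circ f \;=\; h_i \circ \pi \circ g_i \qquad (i = 1, 2). \]

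First I would use this to pin down the two pieces separately. Since $\pi_Z$ has kernel $X$, the image of $\pi_Z \circ f$ is the image of $\im f$ in $(X+Z)/X \cong Z$, which has dimension $\dim\big((X+\im f)/X\big) = k$ by the definition of $X$-rank. As $h_i$ is injective, the displayed identity forces $\pi \circ g_i \colon \aF^d \to \aF^k$ to be \emph{surjective} and $h_i \colon \aF^k \to Z$ to be an \emph{isomorphism} onto the common $k$-dimensional subspace $L \coloneq \pi_Z(\im f) = \im h_1 = \im h_2$. Now set $\sigma \coloneq h_2^{-1} h_1 \in \GL_k$, using the two isomorphisms $h_i \colon \aF^k \xrightarrow{\sim} L$. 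Then $h_2 \sigma = h_1$, i.e.\ $h_2 = h_1 \sigma^{-1}$, which is the second required identity. Substituting $h_1 = h_2 \sigma$ into $f = \tau^X(h_1) g_1 = \tau^X(h_2)\,\tau^X(\sigma)\, g_1$ and comparing with $f = \tau^X(h_2) g_2$, I cancel the injective map $\tau^X(h_2) = \id_X + h_2$ to get $g_2 = \tau^X(\sigma) g_1$, the first required identity.

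For uniqueness, if $\sigma'$ also satisfies $g_2 = \tau^X(\sigma') g_1$, then $\tau^X(\sigma) g_1 = \tau^X(\sigma') g_1$; composing with $\pi$ and using surjectivity of $\pi \circ g_1$ yields $\sigma = \sigma'$ (alternatively, $h_1 \sigma^{-1} = h_1 \sigma'^{-1}$ with $h_1$ injective gives the same). I do not expect any real obstacle here: as the statement indicates, this is elementary linear algebra, and the only genuine point is to choose the canonical projections $\pi_Z$, $\pi$ so that the $\GL_k$-ambiguity is localized in $L$ — after that everything reduces to cancelling injective maps and using that $\pi \circ g_i$ is onto.
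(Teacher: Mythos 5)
Your proof is correct and is precisely the kind of elementary linear-algebra argument the paper has in mind when it declares the lemma ``immediate'' (the paper supplies no proof). The key observations — that $\pi_Z \circ \tau^X(h) = h \circ \pi$, that the $X$-rank condition forces $\pi\circ g_i$ to be onto and $\im h_1 = \im h_2 = \pi_Z(\im f)$, and that one can then cancel the injective map $\tau^X(h_2)$ — are exactly what is needed, and the uniqueness follows cleanly from the surjectivity of $\pi\circ g_1$ (or the injectivity of $h_1$).
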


Let $D^d_k(X, Z)$ be the free $\bk$-module on morphisms $f \colon \aF^d \to X + Z$ of $X$-rank $k$. Then $D^d_k(X, Z)$ is a $\VI$-module in both of the arguments $X$ and $Z$, and has a natural action of $\GL_d$ on the right.

\begin{lemma} 
\label{lem:rank-decomposition}	
	We have the following: \begin{enumerate}
		\item $D^d_k(X, \aF^k)$ is a free $\bk[\GL_k]$-module.
		\item $D^d_k(X, Z) = \bk[\Hom_{\VI}(\aF^k, Z)] \otimes_{\bk[\GL_k]} D^d_k(X, \aF^k)$.
		\item Given a $\VI$-morphism $\ell \colon X \to Y$, the natural map \[\ell_{\star} \colon D^d_k(X, Z)  \to D^d_k(Y, Z) \]  given by $f \mapsto \tau^{Z}(\ell)f$ is a split injection of $\VI$-modules in the variable $Z$.
	\end{enumerate}
\end{lemma}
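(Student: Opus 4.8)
The plan is to analyze the combinatorial structure of $X$-rank-$k$ morphisms $f\colon \aF^d \to X+Z$ using the $(X,k)$-decomposition machinery and the uniqueness lemma. For part (a), I would first fix a standard $\VI$-morphism $j\colon \aF^k \to X+\aF^k$, namely the inclusion into the second summand, and observe that every $(X,k)$-decomposition of a given $f$ uses some $h\colon \aF^k \to \aF^k$ which, because the target $Z=\aF^k$ has dimension exactly $k$, must be an element of $\GL_k$; so after absorbing it we may assume $h=\id$. Then the uniqueness lemma says that the remaining freedom is exactly a $\GL_k$-action. Concretely, I would define a map from $\GL_k \times \{g\colon \aF^d \to X+\aF^k \text{ of } X\text{-rank } k\}/\!\sim$ (where $g_1 \sim g_2$ iff $g_2 = \tau^X(\sigma)g_1$ for some $\sigma$) to the set of $X$-rank-$k$ morphisms $\aF^d \to X+\aF^k$, and check via the uniqueness lemma that the right $\GL_k$-action is free — because a fixed point would force $\sigma = \id$. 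Hence $D^d_k(X,\aF^k)$ is a free $\bk[\GL_k]$-module on a set of orbit representatives.

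For part (b), I would exhibit the natural $\VI$-module map (in the variable $Z$)
\[
\bk[\Hom_{\VI}(\aF^k, Z)] \otimes_{\bk[\GL_k]} D^d_k(X, \aF^k) \longrightarrow D^d_k(X,Z),
\qquad \varphi \otimes f \longmapsto \tau^X(\varphi)\circ f ,
\]
where on the right I use that an $X$-rank-$k$ morphism $\aF^d \to X+\aF^k$ composed (in the $Z$-slot) with an injection $\varphi\colon \aF^k \to Z$ stays of $X$-rank $k$ (the image of $f$ already surjects onto a rank-$k$ quotient of $X+\aF^k$, and $\varphi$ is injective). Well-definedness over $\bk[\GL_k]$ is the identity $\tau^X(\varphi\sigma^{-1})\circ \tau^X(\sigma)g = \tau^X(\varphi)g$. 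To see it is an isomorphism, note that by the definition of $X$-rank every $f\colon\aF^d\to X+Z$ of $X$-rank $k$ factors as $\tau^X(h)g$ with $h\colon\aF^k\to Z$ an injection and $g$ of $X$-rank $k$; this gives surjectivity. Injectivity follows from part (a) together with the uniqueness lemma: two preimages $\varphi_1\otimes f_1$ and $\varphi_2\otimes f_2$ mapping to the same element must, by uniqueness of the $(X,k)$-decomposition up to $\GL_k$, satisfy $f_2 = \tau^X(\sigma)f_1$ and $\varphi_2 = \varphi_1\sigma^{-1}$, i.e. they are identified in the tensor product over $\bk[\GL_k]$. (Alternatively: both sides are $\VI$-modules in $Z$, both are generated in degree $k$, and $D^d_k(X,-)$ has no torsion in degrees $<k$, so it suffices to check the map is an isomorphism in degree $k$, which is the content of part (a) and the well-definedness computation.)

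For part (c), given $\ell\colon X\to Y$, the map $\ell_\star\colon f \mapsto \tau^Z(\ell)\circ f$ sends a morphism $\aF^d\to X+Z$ to one $\aF^d\to Y+Z$; I would first check it preserves $X$-rank in the sense that it carries $X$-rank-$k$ morphisms to $Y$-rank-$k$ morphisms (since $(Y+\im(\tau^Z(\ell)f))/Y \cong (X+\im f)/X$ because $\ell$ is injective), so it is a well-defined map $D^d_k(X,Z)\to D^d_k(Y,Z)$, visibly a morphism of $\VI$-modules in $Z$ and $\bk[\GL_d]$-equivariant on the right. To produce a splitting, I would use a choice of $\aF$-linear retraction $r\colon Y\to X$ with $r\ell = \id_X$ (available since $\ell$ is an injection of $\aF$-vector spaces), and define a partial inverse on basis elements by $g\mapsto \tau^Z(r)\circ g$ when $\tau^Z(r)\circ g$ is still injective of the correct $X$-rank, and $g\mapsto 0$ otherwise; one checks $\tau^Z(r)\circ\tau^Z(\ell)\circ f = f$, so this is a left inverse of $\ell_\star$, and that it is $\bk$-linear and a map of $\VI$-modules in $Z$.

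The main obstacle I expect is part (c): the naive retraction $\tau^Z(r)$ need not send $\VI$-morphisms to $\VI$-morphisms (it can fail to be injective), so the splitting has to be defined by cases on the basis, and one must verify carefully that sending the ``bad'' basis elements to $0$ still yields a $\VI$-module map in $Z$ — i.e. that the subset of basis elements $g$ with $\tau^Z(r)\circ g$ injective of $X$-rank $k$ is closed under postcomposition (in the $Z$-variable) by $\VI$-morphisms, which follows because postcomposing with an injection cannot destroy injectivity or lower the rank. Granting this, the splitting computations are routine, and parts (a) and (b) are essentially bookkeeping around the uniqueness lemma already stated.
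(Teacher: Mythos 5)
Your proposal is correct and follows essentially the same approach as the paper: parts (a) and (b) fall out of the uniqueness lemma for $(X,k)$-decompositions, and part (c) is proved by choosing an $\aF$-linear retraction of $\ell$ and defining the splitting on basis elements by $g\mapsto \tau^Z(r)g$ when that composite is injective and $0$ otherwise, exactly as in the paper. Your extra verification that the ``good''/``bad'' partition of basis elements is preserved by postcomposition with $\VI$-morphisms (since $\tau^{Z'}(r)\tau^Y(\phi)g=\tau^X(\phi)\tau^Z(r)g$ and $\tau^X(\phi)$ is injective) is the point the paper glosses over with ``clearly functorial in $Z$''.
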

\begin{proof}
	The first two parts are immediate from the previous lemma.  Since $\ell \colon X \to Y$ is an injection, it admits an $\aF$-linear section $s \colon Y \to X$ (which may not be an injection). This defines a map $\psi \colon D^d_k(Y, Z) \to D^d_k(X, Z) $ given by \[\ f \mapsto \begin{cases}
	\tau^{Z}(s) f &\mbox{if } \tau^{Z}(s) f \mbox{ is injective}\\
	0 &\mbox{if } \tau^{Z}(s) f \mbox{ is not injective.}
	\end{cases} \]  This map is clearly functorial in $Z$ and is a section to $\ell_{\star}$, finishing the proof.
\end{proof}

The functor $\tau^X$ induces an exact functor $\Sigma^X$, which we again call the {\bf shift functor}, on $\ModVI$ given by $(\Sigma^X M) (Y) = M(\tau^X (Y)) = M(X + Y)$. An element $\phi \in \Aut(Y)$ acts on $(\Sigma^X M)(Y) = M(X+Y)$ where the action is induced by $\tau^X(\phi)$. Similarly, there is an action of $\Aut(X)$ on $\Sigma^X M(Y)$. 

\begin{proposition}
	\label{prop:shift-tensor}
	We have the following: \begin{enumerate}
		\item $\Sigma^X \cI(d) = \bigoplus_{0 \le k \le d} \cI(k) \otimes_{\bk[\GL_k]}D^d_k(X, \aF^k)$. 
		
		\item $ \Sigma^X \cI(W)=\Sigma^X \cI(d)\otimes_{\bk[\GL_d]}W= \bigoplus_{0 \le k \le d } \cI(k) \otimes_{\bk[\GL_k]}D^d_k(X, \aF^k) \otimes_{\bk[\GL_d]} W$.
	\end{enumerate} where  $W$ is any $\bk[\GL_d]$-module. In particular, shift of an induced module is induced, and shift of a projective $\VI$-module is projective.
\end{proposition}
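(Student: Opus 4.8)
The plan is to compute $\Sigma^X\cI(d)$ directly from the Yoneda description $\cI(d)(Z) = \bk[\Hom_{\VI}(\aF^d, Z)]$ and then deduce the general case by taking colimits in the coefficient module. First I would write
\[
(\Sigma^X\cI(d))(Z) = \cI(d)(X+Z) = \bk[\Hom_{\VI}(\aF^d, X+Z)].
\]
Now decompose the set $\Hom_{\VI}(\aF^d, X+Z)$ according to $X$-rank: every morphism $f\colon \aF^d \to X+Z$ has a well-defined $X$-rank $k \in \{0, 1, \dots, d\}$, so as $\bk$-modules
\[
\bk[\Hom_{\VI}(\aF^d, X+Z)] = \bigoplus_{0 \le k \le d} D^d_k(X, Z),
\]
and this decomposition is natural in $Z$ (composing with a $\VI$-morphism $Z \to Z'$ cannot decrease $X$-rank, but actually it preserves $X$-rank since $X + \im(f) / X$ is unchanged when we postcompose $f$ with an injection that is the identity on $X$ — more precisely, $\tau^X$ of a $\VI$-morphism on $Z$ fixes the $X$-part). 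So $\Sigma^X\cI(d) = \bigoplus_k D^d_k(X, -)$ as $\VI$-modules in $Z$. Then I invoke Lemma~\ref{lem:rank-decomposition}(b): $D^d_k(X, Z) = \bk[\Hom_{\VI}(\aF^k, Z)] \otimes_{\bk[\GL_k]} D^d_k(X, \aF^k) = \cI(k)(Z) \otimes_{\bk[\GL_k]} D^d_k(X, \aF^k)$, which gives part (a).

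For part (b), recall from the excerpt that $\cI(W) = \cI(d) \otimes_{\bk[\GL_d]} W$ for $W$ a $\bk[\GL_d]$-module (this is the summand $d$ of the formula $\cI(V) = \bigoplus_{d}\cI(d)\otimes_{\bk[\GL_d]}V(\aF^d)$, specialized to $V$ supported in degree $d$). Since $\Sigma^X = -\circ\tau^X$ is computed pointwise and $\tau^X$ does not touch the right $\GL_d$-action (that action comes from the source $\aF^d$ of the morphisms, not from $Z$), the functor $\Sigma^X$ commutes with the colimit $-\otimes_{\bk[\GL_d]}W$; equivalently $\Sigma^X$ is additive and commutes with arbitrary direct sums and cokernels, hence with tensoring over a group algebra. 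Thus $\Sigma^X\cI(W) = (\Sigma^X\cI(d))\otimes_{\bk[\GL_d]}W$, and plugging in part (a) and rearranging the (commuting) tensor factors yields the displayed formula. One small bookkeeping point to verify is that the $\GL_k$-action used in part (a) and the $\GL_d$-action used to form $\cI(W)$ act on different slots of $D^d_k(X,\aF^k)$ and hence commute, so the iterated tensor product is unambiguous — this is exactly the content of the Lemma preceding Lemma~\ref{lem:rank-decomposition}.

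For the final two sentences: "shift of an induced module is induced" is immediate from part (b), since the right-hand side is a direct sum of modules of the form $\cI(k)\otimes_{\bk[\GL_k]}(\text{some }\bk[\GL_k]\text{-module})$, i.e. $\cI$ of a $\VB$-module supported in degree $k$, and a finite direct sum of induced modules is induced. For "shift of a projective is projective": by Proposition~\ref{prop:projectives} every projective $\VI$-module is $\cI(V)$ with $V$ a projective $\VB$-module, i.e. $V(\aF^k)$ is a projective $\bk[\GL_k]$-module for each $k$; by part (b) (extended additively over the degrees appearing in $V$) the shift is a direct sum of terms $\cI(k)\otimes_{\bk[\GL_k]} D^d_k(X,\aF^k)\otimes_{\bk[\GL_d]}V(\aF^d)$, and by Lemma~\ref{lem:rank-decomposition}(a) the module $D^d_k(X,\aF^k)$ is free over $\bk[\GL_k]$, so each such term is $\cI$ of a projective $\VB$-module, hence projective by Proposition~\ref{prop:projectives}. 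I expect the main obstacle to be purely organizational rather than mathematical: making the $X$-rank decomposition genuinely natural in $Z$ and keeping straight which group acts on which tensor slot; the combinatorial Lemma on $(X,k)$-decompositions does all the real work, so once that is cited correctly the rest is bookkeeping.
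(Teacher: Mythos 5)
Your proof is correct and follows essentially the same route as the paper: decompose $\Sigma^X\cI(d)(Z)=\bk[\Hom_{\VI}(\aF^d,X+Z)]$ by $X$-rank, observe naturality in $Z$, and then cite Lemma~\ref{lem:rank-decomposition}(b); part~(b) and the ``in particular'' statements follow by the same tensoring and freeness (Lemma~\ref{lem:rank-decomposition}(a)) arguments the paper leaves implicit. You have merely spelled out the bookkeeping (naturality of the rank decomposition, commutativity of $\Sigma^X$ with $-\otimes_{\bk[\GL_d]}W$, and the bimodule argument for projectivity) that the paper compresses into ``the rest follows from the previous lemma.''
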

\begin{proof} Since every $\VI$-morphism $f \colon \aF^d \to X + Z$ is of $X$-rank $k$ at most $d$, we  have an isomorphism $\Sigma^X \cI(d)(Z)  = \bigoplus_{0 \le k \le d } D^d_k(X, Z)$. This isomorphism is clearly functorial in $Z$. The rest follows from the previous lemma.
\end{proof}

\begin{corollary}
	\label{cor:shift-of-semi-induced-and-presentation}
	The shift of an induced (semi-induced) $\cC$-module is induced (respectively semi-induced). The category of modules generated (presented) in finite degrees is stable under shift. In particular, $t_0(\Sigma^X M) \le t_0(M)$.
\end{corollary}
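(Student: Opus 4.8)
The plan is to deduce all four assertions from the explicit decomposition of $\Sigma^X\cI(W)$ in Proposition~\ref{prop:shift-tensor}, the exactness of $\Sigma^X$, and the homological facts about induced modules recorded in Proposition~\ref{prop:induced-homology-acyclic} and Proposition~\ref{prop:generation-presentation}.

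First I would handle a single induced module. Proposition~\ref{prop:shift-tensor}(b) writes $\Sigma^X\cI(W)$, for a $\bk[\GL_d]$-module $W$, as the finite direct sum of the modules $\cI(k)\otimes_{\bk[\GL_k]}\bigl(D^d_k(X,\aF^k)\otimes_{\bk[\GL_d]}W\bigr)$ over $0\le k\le d$, each of which is induced from degree $k$; hence $\Sigma^X\cI(W)$ is induced, and since only $k\le d$ occur it is in fact induced from degrees $\le d$, so $t_0(\Sigma^X\cI(W))\le d$ by Proposition~\ref{prop:induced-homology-acyclic}, and it is presented in finite degrees when $\deg W<\infty$. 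Decomposing a general induced module $\cI(V)$ with $\deg V\le d$ as $\bigoplus_{e\le d}\cI(e)\otimes_{\bk[\GL_e]}V(\aF^e)$ and using that $\Sigma^X$ commutes with direct sums, the same conclusions follow for $\cI(V)$.

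Next I would treat semi-induced $M$: a finite filtration of $M$ whose graded pieces are induced modules generated in finite degrees is carried by the exact functor $\Sigma^X$ to a finite filtration of $\Sigma^X M$ whose graded pieces are shifts of such modules, hence — by the previous paragraph — again induced and generated in finite degrees; so $\Sigma^X M$ is semi-induced. For the generation and presentation statements I would start from a presentation of $M$. If $M$ is generated in degrees $\le d$, pick a surjection $\cI(V)\to M$ with $\deg V\le d$ (Proposition~\ref{prop:generation-presentation}(a)); applying $\Sigma^X$ gives a surjection $\Sigma^X\cI(V)\to\Sigma^X M$, and right exactness of $\rH_0^{\VI}$ together with the first paragraph yields $t_0(\Sigma^X M)\le t_0(\Sigma^X\cI(V))\le d$, so taking $d=t_0(M)$ proves the last clause. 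If moreover $M$ is presented in finite degrees, choose an exact sequence $\cI(W)\to\cI(V)\to M\to 0$ with $\deg V,\deg W<\infty$ (Proposition~\ref{prop:generation-presentation}(b)); applying $\Sigma^X$ and rewriting $\Sigma^X\cI(V)$ and $\Sigma^X\cI(W)$ as induced modules of finite degree (first paragraph) produces an exact sequence of the shape demanded by Proposition~\ref{prop:generation-presentation}(b), so $\Sigma^X M$ is presented in finite degrees.

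I do not expect a genuine obstacle here; the only point to keep an eye on is that shifting an induced module never raises the degree of generation, and that is exactly what the index range $0\le k\le d$ in Proposition~\ref{prop:shift-tensor} guarantees.
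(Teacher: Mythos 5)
Your argument is correct and follows the same route as the paper: apply the explicit decomposition in Proposition~\ref{prop:shift-tensor} to see that the shift of an induced module is induced from degrees no larger than before, then propagate this through filtrations (using exactness of $\Sigma^X$) and through presentations (using Proposition~\ref{prop:generation-presentation}). You have simply spelled out the details that the paper's two-sentence proof leaves implicit.
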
 
\begin{proof} Exactness of the shift functor  and the previous proposition yields the first assertion. The second assertion follows from Proposition~\ref{prop:generation-presentation} and the previous proposition.
\end{proof}

Suppose $\ell \in\Hom_{\VI}(X, Y)$, and $\tau^{\ell} \colon \tau^X \to \tau^Y$ be the corresponding natural transformation. If $M$ is a $\VI$-module, then $\tau^\ell$ naturally induces a map  $\Sigma^{\ell} \colon \Sigma^X M \to \Sigma^Y M$ which is functorial in $M$. We denote the cokernel of this map by $\Delta^{\ell} M$. When $X = 0$, we simply denote this cokernel by $\Delta^Y$, or simply $\Delta$ if we also have $\dim_{\aF} Y =1$ .  


\begin{proposition}
	\label{prop:split-injection} Let $W$ be a $\VB$-module. Then  $\Sigma^{\ell} \colon \Sigma^X \cI(W) \to \Sigma^Y \cI(W)$ is split injective  and $\Delta^{\ell} \cI(W)$ is an induced module. 
\end{proposition}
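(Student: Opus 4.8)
The plan is to reduce the statement to the computation of $\Sigma^X\cI(W)$ already given in Proposition~\ref{prop:shift-tensor}, and then track the map $\Sigma^\ell$ through that decomposition. First I would reduce to the case $W = V_d$ supported in a single degree (equivalently, by the direct-sum formula $\cI(V) = \bigoplus_d \cI(d)\otimes_{\bk[\GL_d]}V(\aF^d)$, to the case $W$ is a $\bk[\GL_d]$-module concentrated in degree $d$), since both $\Sigma^\bullet$ and $\Delta^\ell$ commute with direct sums and with the functor $-\otimes_{\bk[\GL_d]}W$; a split injection remains split injective after applying $-\otimes_{\bk[\GL_d]}W$, and its cokernel is the corresponding cokernel tensored with $W$. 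So it suffices to prove that $\Sigma^\ell\colon \Sigma^X\cI(d)\to\Sigma^Y\cI(d)$ is split injective with induced cokernel.

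Next, using part (a) of Proposition~\ref{prop:shift-tensor}, I would identify $\Sigma^X\cI(d) = \bigoplus_{0\le k\le d}\cI(k)\otimes_{\bk[\GL_k]}D^d_k(X,\aF^k)$, and similarly for $Y$. The point is that the natural transformation $\tau^\ell\colon\tau^X\to\tau^Y$, $Z\mapsto \ell+\id_Z$, induces on $\Sigma^X\cI(d)(Z)=\bigoplus_k D^d_k(X,Z)$ precisely the map $f\mapsto \tau^Z(\ell)f$ — i.e. $\ell_\star$ in the notation of Lemma~\ref{lem:rank-decomposition} — summand by summand (this respects the $X$-rank filtration because $\ell$ is injective, so the $X$-rank of $f$ equals the $Y$-rank of $\tau^Z(\ell)f$). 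By part (c) of Lemma~\ref{lem:rank-decomposition}, each $\ell_\star\colon D^d_k(X,Z)\to D^d_k(Y,Z)$ is a split injection of $\VI$-modules in $Z$, with an explicit section built from an $\aF$-linear section $s$ of $\ell$. Applying $\cI(k)\otimes_{\bk[\GL_k]}-$ (which is exact and preserves split injections) and summing over $k$ shows $\Sigma^\ell$ is split injective. The cokernel is $\bigoplus_{0\le k\le d}\cI(k)\otimes_{\bk[\GL_k]}\bigl(D^d_k(Y,\aF^k)/\ell_\star D^d_k(X,\aF^k)\bigr)$, which is a direct sum of modules of the form $\cI(k)\otimes_{\bk[\GL_k]}U$ for $\bk[\GL_k]$-modules $U$, hence induced.

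The one point requiring a little care — and the likeliest source of friction — is checking compatibility of the sections across the $\GL_k$-actions: the section $\psi$ of Lemma~\ref{lem:rank-decomposition}(c) must be $\bk[\GL_k]$-equivariant (in the $k$-variable, before the $Z$-variable is even introduced, one works with $D^d_k(X,\aF^k)$ as a right $\GL_d$- and left $\GL_k$-module) so that applying $\cI(k)\otimes_{\bk[\GL_k]}-$ is legitimate and yields a section of $\VI$-modules. This is immediate from the formula for $\psi$, since precomposition with $s$ commutes with the left $\GL_k$-action on the target $\aF^k$ and the injectivity condition defining the two cases of $\psi$ is $\GL_k$-invariant, but it should be spelled out. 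For the special case $X=0$ one recovers $\Delta^\ell\cI(W)=\Delta^Y\cI(W)$ induced, and the general statement follows with $\ell$ an arbitrary $\VI$-morphism.
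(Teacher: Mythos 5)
Your proposal is correct and matches the paper's argument step for step: reduce to $\cI(d)$ by tensoring, note that $\ell_\star$ preserves the rank decomposition $\Sigma^X\cI(d)(Z)=\bigoplus_k D^d_k(X,Z)$, apply Lemma~\ref{lem:rank-decomposition}(c) summand by summand, and conclude the cokernel is induced. The only cosmetic difference is in the last step: the paper avoids your explicit $\GL_k$-equivariance check by invoking Lemma~\ref{lem:rank-decomposition}(b) and Proposition~\ref{prop:induced-from-d} (a map of $\VI$-modules induced from $k$ automatically has induced cokernel), which is slightly cleaner, but your version is also valid.
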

\begin{proof}
If $f \colon \aF^d \to X+ Z$ is of $X$-rank $k$ then $\tau^Z(\ell) f$ is clearly of $Y$-rank $k$. Thus $\ell_{\star}$ takes the $k$th direct summand of $\Sigma^X \cI(d)(Z)  = \bigoplus_{0 \le k \le d } D^d_k(X, Z)$ to the $k$th direct summand of $\Sigma^Y \cI(d)(Z)  = \bigoplus_{0 \le k \le d } D^d_k(Y, Z)$, and is functorial in $Z$. Thus it suffices to show that the map $\ell_{\star} \colon D^d_k(X, Z) \to D^d_k(Y, Z)$ is split and the cokernel is induced. That it is split is proven in Lemma~\ref{lem:rank-decomposition}(c), and that the cokernel is induced follows from Lemma~\ref{lem:rank-decomposition}(b) and Proposition~\ref{prop:induced-from-d}. This proves the result when $W = \bk[\Hom_{\VB}(\aF^d, -)]$. The general result follows by observing that tensoring preserves split injections.
\end{proof}

The following basic result is easy to establish.

\begin{proposition} 
	\label{prop:sigma-injectivity}
	Let $\ell \in \Hom_{\VI}(X,Y)$ and $M$ be a $\VI$-module. Then \begin{enumerate}
		\item The shift commutes with $\Gamma$. In particular, $h_0(\Sigma^X M) = \max(h_0(M) - \dim X, -1)$.
		\item The kernel of $\Sigma^{\ell}  \colon \Sigma^X M \to \Sigma^Y M$ is a torsion module of degree $h_0(\Sigma^X M)$.  In particular, $\Sigma^{\ell}  \colon \Sigma^X M \to \Sigma^Y M$ is injective if $\dim X > h_0(M)$.
	\end{enumerate}
\end{proposition}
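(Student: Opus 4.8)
The plan is to dispatch (a) first, by computing $\Gamma(\Sigma^X M)$ directly from the definition of a torsion element, and then to bootstrap (b) from (a) together with one elementary observation about how top-degree torsion elements behave under injections.

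For (a) I would prove the equality of submodules $\Gamma(\Sigma^X M) = \Sigma^X\Gamma(M)$ inside $\Sigma^X M$. Unwinding definitions: an element $x \in (\Sigma^X M)(Z) = M(X+Z)$ lies in $\Sigma^X\Gamma(M)$ exactly when it is annihilated by \emph{some} injective linear map out of $X+Z$, while it lies in $\Gamma(\Sigma^X M)$ exactly when it is annihilated by a map of the special shape $\mathrm{id}_X + \varphi$ with $\varphi\colon Z\hookrightarrow W$ injective. One inclusion is immediate since $\mathrm{id}_X+\varphi$ is injective. For the reverse, given an injection $\psi\colon X+Z\hookrightarrow U$ killing $x$, I would extend $\psi$ to an isomorphism $(X+Z)\oplus\aF^{k}\xrightarrow{\ \sim\ }U$; this displays the standard inclusion $X+Z\hookrightarrow X+Z+\aF^{k}$ as a factor of $\psi$, so it kills $x$ too, and that inclusion is literally of the form $\mathrm{id}_X+\varphi$. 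The asserted $h_0$ formula is then bookkeeping: $h_0(\Sigma^X M)=\deg\Sigma^X\Gamma(M)$, and for any $\VB$-module $N$ one has $(\Sigma^X N)_k=N_{k+\dim X}$, whence $\deg\Sigma^X N=\max(\deg N-\dim X,\,-1)$.

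For (b), set $K\coloneq\ker\Sigma^{\ell}$ (take $\dim X<\dim Y$; if $\dim X=\dim Y$ then $\Sigma^{\ell}$ is an isomorphism and $K=0$). First I would show $K$ is torsion: after fixing a splitting $Y=\ell(X)\oplus C$ and reordering the summands of the target, $\ell+\mathrm{id}_Z\colon X+Z\to Y+Z$ becomes a map of the shape $\mathrm{id}_X+\varphi$ with $\varphi\colon Z\hookrightarrow Z\oplus C$ the standard inclusion, so every $x\in K(Z)$ is killed by $(\Sigma^X M)(\varphi)$ and is therefore torsion. Hence $K\subseteq\Gamma(\Sigma^X M)$, giving $\deg K\le h_0(\Sigma^X M)$. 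For the reverse inequality I would invoke (a): assuming $d\coloneq h_0(\Sigma^X M)=\deg\Sigma^X\Gamma(M)\ge 0$, the module $\Gamma(M)$ is nonzero in its top degree, which is $\dim X+d$; so $\Gamma(M)(X+\aF^{d})\neq 0$ and we pick $0\neq x$ there. Since $\Gamma(M)$ is a submodule of $M$ and $\ell+\mathrm{id}_{\aF^{d}}\colon X+\aF^{d}\hookrightarrow Y+\aF^{d}$ is an injection with $\dim(Y+\aF^{d})>\dim X+d=\deg\Gamma(M)$, it carries $x$ into $\Gamma(M)(Y+\aF^{d})=0$; hence $x\in K(\aF^{d})$, so $\deg K\ge d$ and equality holds. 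The ``in particular'' clause then falls out of (a): if $\dim X>h_0(M)$ then $h_0(\Sigma^X M)=-1$, forcing $K=0$.

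I expect the only steps needing genuine care are the coordinate bookkeeping (recognizing $\ell+\mathrm{id}$, after a harmless change of basis, as a map of the shape $\mathrm{id}_X+\varphi$) and the small lemma underlying the lower bound in (b): an element lying in the top nonzero degree of a graded torsion $\VI$-module is annihilated by every injection that strictly increases dimension, because its image lands in a vanishing degree of that torsion module. Both are elementary, and the rest of the argument is routine unwinding of definitions.
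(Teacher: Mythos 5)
Your proof is correct, and the paper does not actually supply one — it declares the proposition ``easy to establish'' and moves on — so you are filling in a gap rather than taking a different route. Both halves of your argument are sound: the identification $\Gamma(\Sigma^X M) = \Sigma^X\Gamma(M)$ follows from exactly the factorization you describe (any injection out of $X+Z$ factors as the standard inclusion into $X+Z+\aF^k$ followed by an isomorphism, so torsion-in-$\Sigma^X M$ and torsion-in-$M$ coincide for elements of $M(X+Z)$), and the $h_0$ bookkeeping is right. For (b), the upper bound via $K\subseteq\Gamma(\Sigma^X M)$ and the lower bound via a nonzero class in the top degree of $\Gamma(M)$ both go through; the ``small lemma'' you isolate (top-degree torsion elements are killed by any dimension-increasing injection) is exactly what is needed.

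One remark worth making explicit: your case split $\dim X<\dim Y$ is not merely a convenience. When $\ell$ is an isomorphism, $\Sigma^\ell$ is an isomorphism and $K=0$, but $h_0(\Sigma^X M)$ need not equal $-1$, so the claim ``$K$ has degree $h_0(\Sigma^X M)$'' as literally stated fails in that degenerate case (the statement should really read ``degree at most $h_0(\Sigma^X M)$, with equality when $\ell$ is not an isomorphism''). Your proof establishes the upper bound unconditionally and the equality only when $\dim X<\dim Y$, which is all the paper ever uses: the ``in particular'' clause (injectivity when $\dim X>h_0(M)$) and its application in Lemma~\ref{lem:newshift-injectivity} rely only on the upper bound. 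So your proof is actually calibrated to what is provable and needed, and it quietly flags a small imprecision in the statement itself.
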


\subsection{The shift and the difference functors II} We define another shift-like functor $\newshift$ which has better formal properties than $\Sigma$. We first set some notation. Let $\cF$ be a flag on a vector space $Z$ given by \[ 0 = Z_0 \subset Z_1 \subset \ldots \subset Z_n = Z.\] 	We call the stabilizer of $\cF$ in $\GL(Z)$ the parabolic subgroup corresponding to $\cF$ and denote it by $\bP(\cF)$. The {\bf unipotent radical} of $\bP(\cF)$ is the kernel of the natural map \[ \bP(\cF) \to  \prod_{i=1}^n \GL(Z_{i}/Z_{i-1})   \]   and is denoted  by $\bU(\cF)$. Fix a maximal flag \[ 0 = X_0 \subset X_1 \subset \ldots \subset X_n = X. \] In particular, $n$ is equal to the dimension of $X$. Set $Z_0 =  0$ and $Z_{i+1} = X_i + Z$ for $i\ge 0$. Denote the unipotent radical corresponding to the flag \[ 0 = Z_0 \subset Z_1 \subset \ldots \subset Z_{n+1} = X + Z \] by $\bU_X(Z)$. Then $\bU_X$ given by $Z  \mapsto \bU_X(Z) \cong Z^{\dim X} \rtimes \bU_X(0)$ is clearly a $\VI$-group, that is, $\bU_X$ is a functor from $\VI$ to groups. This is in contrast with $Z \mapsto \GL(Z)$, which does not define a $\VI$-group. We define $\newshift^X$ on $\ModVI$ (or $\Mod_{\VB}$) by $\newshift^X M = (\Sigma^X M)_{\bU_X}$, that is, \[\newshift^X M(Z) = M(X+Z)_{\bU_X(Z)}.  \] It is not hard to see that if $M$ is a $\VI$-module then $\newshift^X M$ is a $\VI$-module. In fact, all we need to check is that for every $\VI$-morphism $f \colon Z \to Z', a \in M(X+Z)$ and $\sigma \in \bU_X(Z)$ there exists a $\sigma' \in \bU_X(Z')$ such that  $\tau^X(f)_{\star} (\sigma_{\star} a - a) = \sigma'_{\star} \tau^X(f)_{\star} a -  \tau^X(f)_{\star} a $. But one can simply take $\sigma'$ to be $f_{\star} \sigma$ (the last expression makes sense because $\bU_X$ is a $\VI$-group) and check that the equation holds. Thus $\newshift^X \colon \ModVI \to \ModVI$ is a functor. Here we have suppressed the choice of flag on $X$. We drop the superscript $X$ from $\Sigma^X$ (or $\newshift^X$) when $X$ is of dimension 1.

Suppose we are given an $\ell \in \Hom_{\VI}(X,Y)$ and maximal flags of $X$ and $Y$ such that $\ell$ takes the flag on $X$ to an initial segment of the flag on $Y$. Any $\sigma \in \bU_Y(Z)$ stabilizes $\ell(X) + Z$ and hence can be identified with an element of $\bU_X(Z)$. This induces a  surjection $\ell^{\star} \colon \bU_Y \to \bU_X$ of $\VI$-groups. If $M$ is a $\VI$-module then we can make $\bU_Y$ act on $\Sigma^X M$ via $\ell^{\star}$. Moreover, the map $\Sigma^{\ell} \colon \Sigma^X M \to \Sigma^Y M$ is $\bU_Y$-equivariant. We define $\newshift^{\ell} = \Sigma^{\ell}_{\bU_Y}$ and $\newdelta^{\ell} = \Delta^{\ell}_{\bU_Y}$.  Clearly, we have $(\Sigma^X M)_{\bU_Y} = \newshift^X M$. So $\newshift^{\ell}$ is a map from $\newshift^X M$ to $\newshift^Y M$. It is not hard to see that $\newshift^{\ell}$ is a map of $\VI$-modules. When $X = 0$, there is a unique map $\ell \in \Hom_{\VI}(X,Y)$, so in this case we drop the notation $\Sigma^{\ell}$ and simply call the map $M \to \newshift^Y M$ the natural map. We now note down some basic properties of $\newshift$ that we will use.

\begin{lemma}
	\label{lem:newshift-injectivity}
	In the non-describing characteristic, if $\Sigma^{\ell}$ is injective then so is $\newshift^{\ell}$. In particular, $\newshift^{\ell}$ is injective if $\dim X > h_0(M)$.
\end{lemma}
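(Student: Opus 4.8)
The plan is to evaluate everything at a fixed object $Z$ and reduce to the exactness of a coinvariants functor under the non-describing characteristic hypothesis. Recall that $\newshift^X M(Z) = M(X+Z)_{\bU_X(Z)}$, and since $\bU_Y$ acts on $\Sigma^X M$ through the surjection $\ell^{\star}\colon \bU_Y \to \bU_X$ one has $(\Sigma^X M)_{\bU_Y} = \newshift^X M$, as noted above; likewise $\newshift^Y M(Z) = M(Y+Z)_{\bU_Y(Z)}$. So under these identifications the map $\newshift^{\ell}(Z)$ is exactly the map obtained by applying the $\bU_Y(Z)$-coinvariants functor to the $\bU_Y(Z)$-equivariant map $\Sigma^{\ell}(Z)\colon M(X+Z)\to M(Y+Z)$.

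First I would record that $\bU_Y(Z)$ is a finite group whose order is a power of $q$: it is the unipotent radical of a parabolic subgroup of $\GL(Y+Z)$, and concretely $\bU_Y(Z)\cong Z^{\dim Y}\rtimes \bU_Y(0)$ with $\bU_Y(0)$ the unipotent radical of a parabolic in $\GL(Y)$, both of which are $q$-groups. Hence $|\bU_Y(Z)|$ is invertible in $\bk$, the averaging idempotent $e = |\bU_Y(Z)|^{-1}\sum_{\sigma\in\bU_Y(Z)}\sigma \in \bk[\bU_Y(Z)]$ exists, and for any $\bk[\bU_Y(Z)]$-module $N$ the composite $eN\hookrightarrow N\twoheadrightarrow N_{\bU_Y(Z)}$ is an isomorphism. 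Thus $(-)_{\bU_Y(Z)}$ is naturally a direct summand of the identity functor, in particular exact, and therefore preserves injections. Applying it to the injection $\Sigma^{\ell}(Z)$ shows that $\newshift^{\ell}(Z)$ is injective; since $Z$ is arbitrary, $\newshift^{\ell}$ is injective. The last sentence of the lemma then follows by combining this with Proposition~\ref{prop:sigma-injectivity}(b), which gives injectivity of $\Sigma^{\ell}$ as soon as $\dim X > h_0(M)$.

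The computation is routine, and the only points that require a moment's thought are that taking coinvariants along the surjection $\ell^{\star}$ agrees with the coinvariants defining $\newshift$ (the standard fact that $N_G=N_H$ when $G$ acts through a quotient $H$), and that $\bU_Y(Z)$ is a $q$-group, so that ``non-describing characteristic'' is precisely the input that makes these coinvariants functors exact. I do not expect a genuine obstacle here: the statement is really a bookkeeping consequence of Maschke-type semisimplicity for $q$-groups over $\bk$.
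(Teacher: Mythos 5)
Your proof is correct and takes essentially the same route as the paper: the paper's argument is precisely that $|\bU_Y(Z)|$ is invertible in $\bk$ and $\Sigma^{\ell}$ is $\bU_Y$-equivariant, so taking $\bU_Y(Z)$-coinvariants (which is exact by the averaging idempotent) preserves the injection. You have simply spelled out the details that the paper compresses into one sentence, including the observation that $\bU_Y(Z)$ is a $q$-group and the final appeal to Proposition~\ref{prop:sigma-injectivity}(b).
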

\begin{proof} This is clear because the size of the group $\bU_Y(Z)$ is invertible in $\bk$ for each $Z$, and $\Sigma^{\ell}$ is $\bU_Y$-equivariant.
\end{proof} 

The lemma immediately implies the following proposition.

\begin{proposition}
	\label{prop:newshift-injectivity}
	Let $\ell$ be the unique map from $0$ to $X$. In the non-describing characteristic, the kernel of the map $\newshift^{\ell} \colon M \to \newshift^X M$ is torsion. In particular, if $M$ is torsion-free then $ \newshift^{\ell}$ is injective.
\end{proposition}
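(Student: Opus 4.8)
The plan is to establish the ``in particular'' (torsion-free) case first, directly from Lemma~\ref{lem:newshift-injectivity}, and then deduce the general statement by passing to the torsion-free quotient. For the torsion-free case: if $M$ is torsion-free then $\Gamma(M) = 0$, so $h_0(M) = \deg \Gamma(M) = -1$. The structure map $\ell$ in question has source the zero vector space, so $\dim 0 = 0 > -1 = h_0(M)$, and the ``in particular'' clause of Lemma~\ref{lem:newshift-injectivity} (with the source ``$X$'' of that lemma taken to be $0$ and the target ``$Y$'' taken to be our $X$) gives that $\newshift^{\ell}\colon M \to \newshift^X M$ is injective. This is exactly the second assertion of the proposition.

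For the general case I would write $\bar M = M/\Gamma(M)$, which is torsion-free, and let $\pi \colon M \to \bar M$ be the quotient map. Since the assignment $M \mapsto (\newshift^{\ell}\colon M \to \newshift^X M)$ is natural in $M$ (both $\Sigma^{\ell}$ and the subsequent passage to $\bU_X$-coinvariants are functorial), applying naturality to $\pi$ yields a commutative square whose horizontal maps are $\pi$ and $\newshift^X \pi$ and whose vertical maps are the natural maps $M \to \newshift^X M$ and $\bar M \to \newshift^X \bar M$. Setting $K = \ker(\newshift^{\ell}\colon M \to \newshift^X M)$, a diagram chase shows that the composite $K \hookrightarrow M \xrightarrow{\pi} \bar M \to \newshift^X \bar M$ equals $K \hookrightarrow M \to \newshift^X M \xrightarrow{\newshift^X \pi} \newshift^X \bar M$, which is the zero map; since $\bar M \to \newshift^X \bar M$ is a monomorphism by the torsion-free case, the map $K \to \bar M$ is zero, i.e.\ $K \subseteq \ker \pi = \Gamma(M)$. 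Hence $K$ is torsion, which is the first assertion.

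The only step requiring genuine care is the bookkeeping in Lemma~\ref{lem:newshift-injectivity}: its hypothesis ``$\dim X > h_0(M)$'' is about the \emph{source} of the structure map, which here is the zero space, so it amounts precisely to $h_0(M) < 0$, i.e.\ to $M$ being torsion-free, and this is what forces the two-step structure of the argument. No new computation is needed; in particular the combinatorial identity that eventually makes $\newshift$ commute with $\Gamma$ plays no role here. (If one wished to avoid the reduction to the torsion-free quotient, one could instead factor $\newshift^{\ell}$ as the coinvariants projection composed with $\Sigma^{\ell}$ and combine the torsionness of $\ker \Sigma^{\ell}$ from Proposition~\ref{prop:sigma-injectivity}(b) with the averaging splitting of that projection; but the quotient argument is shorter and cleaner.)
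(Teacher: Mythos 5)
Your proof is correct. The only caveat is a cosmetic one about the route you take: the paper introduces this proposition with the phrase ``The lemma immediately implies the following proposition,'' and the intended ``immediate'' derivation is almost certainly the direct one you relegate to your final parenthetical remark. Namely, since $\ell\colon 0\to X$, the group $\bU_X$ acts trivially on $\Sigma^0 M = M$, and $\newshift^{\ell}$ is by definition the map obtained from the $\bU_X$-equivariant map $\Sigma^{\ell}\colon M\to\Sigma^X M$ by passing to $\bU_X$-coinvariants; in the non-describing characteristic coinvariants is an exact functor, so $\ker\newshift^{\ell} = (\ker\Sigma^{\ell})_{\bU_X}$, which is a quotient (in fact all) of the torsion module $\ker\Sigma^{\ell}$ from Proposition~\ref{prop:sigma-injectivity}(b), hence torsion. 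The ``in particular'' then drops out since a torsion submodule of a torsion-free module is zero.

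Your alternative inverts the logical order: you first extract the torsion-free case from the ``in particular'' clause of Lemma~\ref{lem:newshift-injectivity} (your bookkeeping of which space plays the role of ``$X$'' in that lemma is carefully done and correct, and the identity $h_0(M)=-1$ for torsion-free $M$ is the right observation), and then you deduce the general statement by naturality of $\newshift^{\ell}$ applied to $\pi\colon M\to M/\Gamma(M)$. Naturality does hold here, since $\Sigma^{\ell}$ is functorial in $M$ and taking $\bU_X$-coinvariants is a functor, so the square you invoke commutes, and the ensuing diagram chase correctly puts $\ker\newshift^{\ell}$ inside $\Gamma(M)$. This buys you a proof that visibly only uses the stated ``in particular'' part of Lemma~\ref{lem:newshift-injectivity}, at the cost of being a two-step argument rather than a one-liner. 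Either way the content is the same: exactness of coinvariants is what the non-describing hypothesis is doing, and you use it (through the lemma) as well.
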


\begin{proposition}
	\label{prop:decreasing-function}
	$t_0(\newshift^X M) \le t_0(\Sigma^X M) \le  t_0(M)$.
\end{proposition}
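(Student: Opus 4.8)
The plan is to prove the two inequalities separately, reducing everything to the case of induced modules via the right exactness of $\rH_0^{\VI}$. For the second inequality $t_0(\Sigma^X M) \le t_0(M)$: this is already recorded in Corollary~\ref{cor:shift-of-semi-induced-and-presentation}, so in principle one could just cite it. But to keep the section self-contained, I would reprove it as follows. Write $d = t_0(M)$ and take a surjection $\cI(V) \to M$ with $\deg V \le d$ (Proposition~\ref{prop:generation-presentation}(a)). Since $\Sigma^X$ is exact, applying it yields a surjection $\Sigma^X \cI(V) \to \Sigma^X M$. By Proposition~\ref{prop:shift-tensor}(2), $\Sigma^X \cI(V)$ is a direct sum of modules $\cI(k) \otimes_{\bk[\GL_k]} D^d_k(X,\aF^k) \otimes_{\bk[\GL_d]} V(\aF^d)$ with $0 \le k \le d$, hence is induced from degrees $\le d$; in particular $t_0(\Sigma^X \cI(V)) \le d$ by Proposition~\ref{prop:induced-homology-acyclic}. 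A surjection does not raise the degree of generation (apply the right-exact functor $\rH_0^{\VI}$), so $t_0(\Sigma^X M) \le d = t_0(M)$.

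For the first inequality $t_0(\newshift^X M) \le t_0(\Sigma^X M)$: by definition $\newshift^X M = (\Sigma^X M)_{\bU_X}$, so $\newshift^X M$ is a quotient of the $\VI$-module $\Sigma^X M$ (the coinvariants map $\Sigma^X M \to (\Sigma^X M)_{\bU_X}$ is a $\VI$-module surjection, as noted in the construction of $\newshift^X$). Again, since $\rH_0^{\VI}$ is right exact, a surjection of $\VI$-modules induces a surjection on $\rH_0^{\VI}$, so $t_0$ cannot increase: $t_0(\newshift^X M) \le t_0(\Sigma^X M)$.

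Combining the two gives the chain of inequalities. I do not anticipate a genuine obstacle here — the whole statement is a formal consequence of (i) $\Sigma^X$ being exact and sending induced modules to induced modules of no larger generation degree, and (ii) $\newshift^X M$ being a quotient of $\Sigma^X M$, together with the right exactness of $\rH_0^{\VI}$ (equivalently, the Nakayama-type Proposition~\ref{prop:nakayama}(b)). The only point requiring a moment's care is to confirm that the $\bU_X$-coinvariants construction really produces a quotient \emph{$\VI$-module} map $\Sigma^X M \twoheadrightarrow \newshift^X M$ rather than merely a levelwise quotient — but this compatibility with the $\VI$-structure was exactly what was checked when $\newshift^X$ was defined, so it may simply be invoked.
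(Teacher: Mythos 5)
Your proof is correct and follows essentially the same route as the paper: the first inequality comes from the surjection $\Sigma^X M \twoheadrightarrow \newshift^X M$ (coinvariants), and the second is exactly Corollary~\ref{cor:shift-of-semi-induced-and-presentation}. The only difference is that you unwind the proof of the second inequality instead of citing the corollary directly, which is harmless.
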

\begin{proof}
	The first inequality follows from the surjection $\Sigma^X M \to \newshift^X M$. The second is proven in Corollary~\ref{cor:shift-of-semi-induced-and-presentation}.
\end{proof}

\begin{remark} It is not true that
	$\Sigma^X \newshift^Y = \newshift^Y \Sigma^X$. In general, we only have a surjection $\newshift^Y \Sigma^X M \to \Sigma^X \newshift^Y M$. Since we have suppressed the data of the flag on $X + Y$ from $\newshift^{X + Y}$, we will be careful to never interchange $X$ and $Y$. We adopt the convention that an initial segment of the maximal flag on $X+Y$ forms an initial segment of a maximal flag on $Y$ (and not $X$). 
\end{remark}

\begin{proposition} 
\label{prop:iterated-shift}	
We have the following natural isomorphisms:
	\begin{enumerate}
		\item $\Sigma^{X + Y} = \Sigma^Y \Sigma^X$
		\item $\newshift^{X+Y} = \newshift^Y \newshift^X$.
	\end{enumerate} In particular, $\Sigma^X$ is isomorphic to $(\dim X)$-fold iterate of $\Sigma$. The same holds for $\newshift^X$.
\end{proposition}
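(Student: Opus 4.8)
The plan is to reduce everything to elementary linear algebra together with one dimension count for unipotent radicals. Part (a) is immediate: the direct sum $+$ is a (symmetric) monoidal structure on $\aF$-vector spaces, so the associativity constraint supplies a natural isomorphism $(X+Y)+Z \cong X+(Y+Z)$ for all $Z$. Applying $M$ and recalling that $\Sigma^{X+Y}M(Z) = M((X+Y)+Z)$ while $(\Sigma^Y\Sigma^X M)(Z) = \Sigma^X M(Y+Z) = M(X+(Y+Z))$ gives the required isomorphism $\Sigma^{X+Y}\cong\Sigma^Y\Sigma^X$, visibly natural in $M$.

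For part (b) I would first unwind the definitions. Fix the chosen complete flags on $X$ and $Y$; following the convention of the preceding remark, let the chosen flag on $X+Y$ be the flag on $Y$ followed by the chain $Y \subset X_1+Y \subset \cdots \subset X+Y$ obtained from the flag on $X$. Writing everything out, $(\newshift^Y\newshift^X M)(Z) = \big(M(X+Y+Z)_{\bU_X(Y+Z)}\big)_{\bU_Y(Z)}$, where, by the $\VI$-module structure on $\newshift^X M$, an element $\sigma \in \bU_Y(Z) \subseteq \Aut(Y+Z)$ acts on $M(X+Y+Z)_{\bU_X(Y+Z)}$ through $\id_X+\sigma \in \Aut(X+Y+Z)$. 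Since iterated coinvariants compute coinvariants for the subgroup generated (the relevant actions being compatible because $\bU_X$ is a $\VI$-group), one gets $(\newshift^Y\newshift^X M)(Z) = M(X+Y+Z)_G$, where $G \le \Aut(X+Y+Z)$ is generated by $\bU_X(Y+Z)$ and $\{\id_X+\sigma : \sigma\in\bU_Y(Z)\}$; on the other side $(\newshift^{X+Y}M)(Z) = M(X+Y+Z)_{\bU_{X+Y}(Z)}$. So everything reduces to the group-theoretic identity $G = \bU_{X+Y}(Z)$ inside $\Aut(X+Y+Z)$, compatibly with $Z$.

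I would prove this identity as follows. With the chosen flag on $X+Y$, the flag of $X+Y+Z$ defining $\bU_{X+Y}(Z)$ refines the one defining $\bU_X(Y+Z)$, so $\bU_X(Y+Z) \le \bU_{X+Y}(Z)$; and a direct check of the layer-by-layer conditions shows $\id_X + \bU_Y(Z) \le \bU_{X+Y}(Z)$ as well (an element $\id_X+\sigma$ fixes $X$ and $Z$ pointwise and lowers the intermediate layers $X+Y_i+Z$ by one step), hence $G \le \bU_{X+Y}(Z)$. Next, $\bU_X(Y+Z)$ fixes $Y+Z$ pointwise while $\id_X+\bU_Y(Z)$ fixes $X+Z$ pointwise, and $Y+Z$ and $X+Z$ together span $X+Y+Z$, so $\bU_X(Y+Z) \cap (\id_X+\bU_Y(Z)) = 1$. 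Finally, using the formula $\sum_{0\le i<j\le k} d_id_j$ for the dimension of the unipotent radical of the parabolic stabilizing a flag with layer dimensions $d_0,\dots,d_k$, the three relevant flags have layer dimensions $(\dim Z,\,1^{\dim Y},\,1^{\dim X})$, $(\dim Y+\dim Z,\,1^{\dim X})$ and $(\dim Z,\,1^{\dim Y})$, so (using $\binom{\dim X+\dim Y}{2}=\binom{\dim X}{2}+\binom{\dim Y}{2}+\dim X\dim Y$) one gets $\dim\bU_X(Y+Z)+\dim\bU_Y(Z) = \dim\bU_{X+Y}(Z)$. Since these are unipotent groups over $\aF$ with trivial intersection, the injection $\bU_X(Y+Z)\times(\id_X+\bU_Y(Z))\to\bU_{X+Y}(Z)$, $(a,b)\mapsto ab$, has image of size $q^{\dim\bU_X(Y+Z)+\dim\bU_Y(Z)} = q^{\dim\bU_{X+Y}(Z)} = |\bU_{X+Y}(Z)|$, hence is onto; so $\bU_{X+Y}(Z)\subseteq G$ and therefore $G=\bU_{X+Y}(Z)$. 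All identifications are evidently natural in $Z$ (the groups being $\VI$-groups in $Z$) and in $M$, giving the isomorphism of $\VI$-modules $\newshift^Y\newshift^X M \cong \newshift^{X+Y}M$.

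The ``in particular'' clauses then follow by induction on $\dim X$ from (a), respectively (b), using in the $\newshift$ case that $\newshift^X$ is, up to natural isomorphism, independent of the chosen complete flag on $X$ (two choices give unipotent radicals conjugate by an element of $\GL(X)\hookrightarrow\GL(X+Z)$, inducing a natural isomorphism on coinvariants). I expect the one genuinely non-formal step to be the identity $G = \bU_{X+Y}(Z)$ in part (b): once the flags are arranged correctly — which is precisely what the convention in the preceding remark is designed to ensure — it comes down to the triviality of the intersection together with the additivity of the dimensions of the two unipotent radicals, but getting the flag bookkeeping right is the part that needs care.
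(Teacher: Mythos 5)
Your proof is correct and takes essentially the same route as the paper: both reduce part (b) to the semidirect-product decomposition $\bU_{X+Y}(Z) = \bU_X(Y+Z)\rtimes(\id_X+\bU_Y(Z))$, which the paper records compactly as a short exact sequence of $\VI$-groups $1 \to \Sigma^Y\bU_X \to \bU_{X+Y} \to i_X(\bU_Y) \to 1$ and you establish directly via containment, trivial intersection, and a dimension count. The one point worth spelling out, since it underlies your phrase ``iterated coinvariants compute coinvariants for the subgroup generated,'' is that $\id_X+\bU_Y(Z)$ normalizes $\bU_X(Y+Z)$ — this is exactly what makes the $\bU_Y(Z)$-action on the coinvariants well-defined and the double-coinvariants identity valid, and it does follow from the $\VI$-group structure you invoke.
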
 
\begin{proof}
Part (a) is trivial. Note that we have a short exact sequence of $\VI$-groups \[ 1 \to \Sigma^Y\bU_X \to \bU_{X+Y} \to i_X (\bU_Y) \to 1 \] where $i_X(Z) \colon \GL(Y + Z) \to \GL(X + Y+Z)$ is the natural map. Part (b) now follows from \[ \newshift^{X+Y} M =  (\Sigma^{X+Y}M)_{\bU_{X+Y}} = ((\Sigma^{X+Y} M)_{\Sigma^Y \bU_X})_{i_X(\bU_Y)} = (\Sigma^Y \newshift^X M)_{i_X(U_Y)} = \newshift^Y \newshift^X M.  \qedhere \] 
\end{proof}

The following proposition is the most crucial for our purpose.

\begin{proposition}
	\label{prop:categorical-derivation}
	Let $X$ be a vector space of dimension one. Then $\newshift^X$ is a categorical derivation, that is,  we have \[\newshift^X (M \otimes N)  = (\newshift^X M \otimes N) \bigoplus (M \otimes \newshift^X N).\] In particular, $\newshift \cI(V) = \cI(V) \oplus \cI(\newshift V)$ and  $\newdelta \cI(V) = \cI(\newshift V)$.
\end{proposition}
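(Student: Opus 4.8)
The plan is to evaluate both sides at an arbitrary vector space $Z$ and read off the decomposition from the description of the monoidal product. Since $\dim X = 1$ the flag on $X$ is the unique maximal flag $0 \subset X$, so no choice of flag intervenes, and $\bU_X(Z)$ is canonically the unipotent radical of the two-step flag $0 \subset Z \subset X+Z$, which is canonically $\Hom_{\aF}(X,Z)$, acting on $X+Z$ by $v \mapsto v + \varphi(\bar v)$ for $\varphi \in \Hom_{\aF}(X,Z)$, where $\bar v$ denotes the image of $v$ in $X \cong (X+Z)/Z$. On the left-hand side $\newshift^X(M\otimes N)(Z) = \bigl((M\otimes N)(X+Z)\bigr)_{\bU_X(Z)}$, and
\[(M\otimes N)(X+Z) = \bigoplus_{W \le X+Z} M((X+Z)/W) \otimes_{\bk} N(W).\]
I would split this sum according to whether $W \subseteq Z$ or $W \not\subseteq Z$ (equivalently $W + Z = X+Z$, since $\dim\bigl((X+Z)/Z\bigr) = 1$). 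As $\bU_X(Z)$ fixes $Z$ pointwise, this partition of the indexing set is $\bU_X(Z)$-stable, hence descends to a direct sum decomposition of the coinvariants.

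For the terms with $W \subseteq Z$: each such $W$ is $\bU_X(Z)$-fixed, one has $(X+Z)/W \cong X + (Z/W)$ canonically, and $\bU_X(Z)$ acts trivially on $N(W)$ and on $M((X+Z)/W)$ through the surjection $\bU_X(Z) \to \bU_X(Z/W)$; taking coinvariants therefore yields $\bigoplus_{W \le Z} \newshift^X M(Z/W) \otimes_{\bk} N(W) = (\newshift^X M \otimes N)(Z)$. For the terms with $W \not\subseteq Z$: here $W \cap Z$ has codimension one in $W$, and the $\bU_X(Z)$-equivariant assignment $W \mapsto W \cap Z$ sorts these subspaces into exactly one orbit for each subspace $W' \le Z$, with representative $W_0 = X \oplus W'$ whose stabilizer is canonically isomorphic to $\bU_X(W')$ (and to $W' \le Z \cong \bU_X(Z)$). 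By the standard orbit--stabilizer (Shapiro) computation for coinvariants — a block $\bigoplus_{W \in O} M((X+Z)/W) \otimes N(W)$ over a single orbit $O$ has coinvariants $\bigl(M((X+Z)/W_0) \otimes N(W_0)\bigr)_{\mathrm{Stab}(W_0)}$ — and using $(X+Z)/W_0 \cong Z/W'$ with trivial stabilizer action on the $M$-factor, together with $N(W_0) = N(X+W')$ carrying the stabilizer action of $\bU_X(W')$, this block contributes $M(Z/W') \otimes_{\bk} \newshift^X N(W')$; summing over $W' \le Z$ gives $(M \otimes \newshift^X N)(Z)$. Assembling the two blocks produces the asserted decomposition, after which one checks that all identifications are natural in $Z$ for $\VI$-morphisms and equivariant for the residual automorphisms, so that this is an isomorphism of $\VI$-modules (and of $\VB$-modules). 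I expect this orbit-and-stabilizer bookkeeping, together with the naturality verification, to be the main technical point; note that only coinvariants appear, so no hypothesis on the characteristic is used here.

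For the remaining assertions I would take $M = \bA$ and $N = V$ in the derivation identity, using $\cI(V) = \bA \otimes_{\VB} V$. Since $\bA$ is the trivial representation in each degree, $\GL(X+Z)$ — hence $\bU_X(Z)$ — acts trivially on $\bA(X+Z) = \bk$ and every structure map of $\bA$ is the identity, so the canonical map $\bA \to \newshift^X\bA$ is an isomorphism; in particular $\newdelta^X\bA = \coker(\bA \xrightarrow{\sim} \newshift^X\bA) = 0$. Hence $\newshift\cI(V) = \newshift^X\bA \otimes_{\VB} V \oplus \bA \otimes_{\VB} \newshift^X V \cong \cI(V) \oplus \cI(\newshift V)$. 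For $\newdelta$, I would first record, by tracing the canonical map $M \otimes N \to \newshift^X(M \otimes N)$ through the decomposition above, that it takes values in the $W \subseteq Z$ block and there equals $\nabla_M \otimes \id_N$, where $\nabla_M \colon M \to \newshift^X M$ is the natural map; applying the right-exact functor $- \otimes_{\VB} N$ (respectively $M \otimes_{\VB} -$) then gives $\newdelta^X(M \otimes N) \cong \newdelta^X M \otimes N \oplus M \otimes \newshift^X N$, whence $\newdelta\cI(V) \cong \newdelta^X\bA \otimes_{\VB} V \oplus \bA \otimes_{\VB} \newshift^X V = \cI(\newshift V)$. The only point requiring a little care is that the derivation isomorphism is compatible with the $\bA$-module structures — so that the summands are genuinely $\cI$ of $\VB$-modules as $\VI$-modules — which follows from naturality of $\newshift^X$ and of the decomposition with respect to the multiplication $\bA \otimes_{\VB} \bA \to \bA$.
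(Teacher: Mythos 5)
Your proof is correct and follows essentially the same route as the paper's: you evaluate at a vector space, split the indexing set of subspaces $W \le X + Z$ according to whether $W \subseteq Z$, and observe that the second block is a union of $\bU_X(Z)$-orbits indexed by $W' = W \cap Z \le Z$ with representative $X + W'$ and stabilizer $\bU_X(W')$, so that taking coinvariants gives the two desired summands. The only difference is cosmetic: you make the orbit–stabilizer (Shapiro) step explicit and you trace the canonical map $\id \to \newshift^X$ through the decomposition to get $\newdelta\cI(V) = \cI(\newshift V)$, whereas the paper states the orbit decomposition more tersely and disposes of the $\newdelta$ claim by "apply the previous part."
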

\begin{proof} 
	Let $V \le W + X$. Then either $V$ is contained in $W$ and $\bU_X(W)$ acts trivially on $V$, or there is an element $\sigma \in \bU_X(W)$ such that $\sigma V$ is of the form $V' + X$ for some subspace $V'$ of $W$. Moreover, if $\tau V = V'' +X$ for some $V'' \le W$ and $\tau \in \bU_X(W)$ then we must have $V' + X = V'' +X$. This shows that $\sigma^{-1} \tau \in \bU_X(V')$. Thus we have \begin{align*}
	\newshift^X (M\otimes N)(W) &= (M \otimes N)(W + X)_{\bU_X(W)} \\
	&= \left(\bigoplus_{V \le W + X}   M((W+X)/V) \otimes_{\bk} N(V) \right)_{\bU_X(W)}\\
	&=  \left( \bigoplus_{V \le W}   M((W+X)/V) \otimes_{\bk} N(V) \right)_{\bU_X(W)} \\
	&\bigoplus \left( \bigoplus_{V' \le W}   M((W+X)/(V'+X)) \otimes_{\bk} N(V'+X)   \right)_{\bU_X(V')}\\
	&=  \left( \bigoplus_{V \le W}   M(W/V+X)_{\bU_X(W/V)} \otimes_{\bk} N(V) \right) \\
	&\bigoplus \left( \bigoplus_{V' \le W}   M(W/V') \otimes_{\bk} N(V'+X)_{\bU_X(V')}  \right)\\
	&= (\newshift^X M \otimes N)(W) \bigoplus (M \otimes \newshift^X N)(W).
	\end{align*} This completes the proof of the first assertion. For the second assertion, just note that $\cI(V) = \bA \otimes V$ and apply the previous part.
 \end{proof}

We have the following basic observations.

\begin{lemma}
	\label{lem:isomorphism-at-projectives}
	Let $A,B \colon \cC_1 \to \cC_2$ be two functors between Grothendieck categories.  Suppose there is a natural transformation $\Psi \colon A \to B$ such that $\Psi(P)$ is an isomorphism for each projective object $P \in \cC_1$. If $A,B$ are right exact then $\Psi(M)$ is an isomorphism for each $M \in \cC_1$.
\end{lemma}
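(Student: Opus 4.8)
The plan is to reduce the general case to the case of projectives via a standard "derived functor" style argument. First I would take an arbitrary object $M \in \cC_1$ and pick a two-step projective presentation $P_1 \to P_0 \to M \to 0$, which exists since $\cC_1$ is a Grothendieck category (hence has enough projectives). Applying the two right exact functors $A$ and $B$ to this presentation and using naturality of $\Psi$, I obtain a commutative diagram with exact rows
\[
\begin{tikzcd}
A(P_1) \arrow[r] \arrow[d, "\Psi(P_1)"] & A(P_0) \arrow[r] \arrow[d, "\Psi(P_0)"] & A(M) \arrow[r] \arrow[d, "\Psi(M)"] & 0 \\
B(P_1) \arrow[r] & B(P_0) \arrow[r] & B(M) \arrow[r] & 0.
\end{tikzcd}
\]
By hypothesis $\Psi(P_1)$ and $\Psi(P_0)$ are isomorphisms, so in particular the left vertical map is an epimorphism and the middle one is an isomorphism.

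The conclusion then follows from the four lemma (or a direct diagram chase): given a commutative diagram with exact rows in which the two maps to the left of $\Psi(M)$ are an epimorphism and an isomorphism respectively, and the map to the right of $\Psi(M)$ is (trivially) a monomorphism $0 \to 0$, the map $\Psi(M)$ must be an isomorphism. Concretely, surjectivity of $\Psi(M)$ is immediate by chasing $A(P_0) \twoheadrightarrow A(M)$ through the isomorphism $\Psi(P_0)$; injectivity follows because any element of $A(M)$ killed by $\Psi(M)$ lifts to $A(P_0)$, maps into the image of $B(P_1)$, and can be corrected using surjectivity of $\Psi(P_1)$ and the exactness of the top row to show it already dies in $A(M)$.

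Since $\cC_1$ and $\cC_2$ are Grothendieck (hence abelian) categories, all of these standard homological-algebra facts — exactness of right exact functors on the cokernel part of a presentation, the five/four lemma — are available verbatim. I do not expect any genuine obstacle here; the only mild point is to make sure the diagram chase uses only the abelian category structure (or, if one prefers, invokes the embedding theorem or works with generalized elements), rather than literal elements, but for the write-up one may simply cite the four lemma. The "hard part", such as it is, is purely bookkeeping: confirming that $A$ and $B$ being right exact really does make both rows of the displayed diagram exact, which is exactly the definition of right exactness applied to $P_1 \to P_0 \to M \to 0$.
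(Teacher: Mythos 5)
The paper itself does not supply a proof of this lemma (it is stated as a ``basic observation''), so there is no in-text argument to compare against; your proof is the natural and correct one, namely take a projective presentation $P_1\to P_0\to M\to 0$, apply $A$ and $B$, and run the four lemma on the resulting commutative ladder with exact rows. Your diagram chase for injectivity and surjectivity of $\Psi(M)$ is correct, and citing the four lemma is fine.

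There is one genuine error in the justification, however: it is \emph{not} true that a Grothendieck category has enough projectives. Grothendieck categories always have enough injectives (Grothendieck's theorem), but they can fail to have any nonzero projectives at all (e.g.\ sheaves of abelian groups on a nondiscrete space). So ``exists since $\cC_1$ is Grothendieck'' is not a valid reason for the existence of $P_1 \to P_0 \to M \to 0$. In fact, without enough projectives the lemma as literally stated is false: the hypothesis on $\Psi(P)$ would be vacuous, and nothing would force $\Psi(M)$ to be an isomorphism. The intended (and implicitly assumed) hypothesis is that $\cC_1$ has enough projectives, which holds in every application in the paper since $\cC_1 = \ModVI$ and the paper proves this category has enough projectives (via the corollary following Proposition~\ref{prop:projectives}). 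Replacing your parenthetical with an explicit appeal to enough projectives in $\cC_1$ repairs the argument; the rest of the proof stands as written.
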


\begin{lemma}
	\label{lem:factors-at-projectives}
	Let $A,B,C$ be right exact functors between two Grothendieck categories $\cC_1, \cC_2$. Suppose there are natural transformations \[A \xrightarrow{\Psi} B \xrightarrow{\Phi} C\] such that for each projective $P \in \cC_1$,  the composite $A(P) \to B(P) \to C(P)$ vanishes. Then $\Phi$ factors through $\coker(\Psi)$.
\end{lemma}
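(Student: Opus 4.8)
The plan is to reduce the statement to the universal property of cokernels together with a projective resolution argument, in the spirit of the preceding Lemma~\ref{lem:isomorphism-at-projectives}. Recall that to say $\Phi$ factors through $\coker(\Psi)$ means: for every object $M \in \cC_1$, the composite $\Phi(M) \circ \Psi(M) \colon A(M) \to C(M)$ vanishes, so that $\Phi(M)$ descends along the canonical epimorphism $B(M) \to \coker(\Psi(M))$; and this descended map is natural in $M$. Since $\coker(\Psi)$ is itself a right exact functor (being a cokernel of a natural transformation of right exact functors between Grothendieck categories), naturality will be automatic once we check the vanishing pointwise.

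First I would fix an arbitrary $M \in \cC_1$ and choose a presentation by projectives, i.e. an exact sequence $P_1 \xrightarrow{d} P_0 \to M \to 0$ with $P_0, P_1$ projective (such exist because $\cC_1$ is Grothendieck, hence has enough projectives — or one may take a projective presentation directly). Applying the right exact functors $A$, $B$, $C$ yields exact rows, and the natural transformations $\Psi$, $\Phi$ give a commutative ladder connecting the three rows. The hypothesis says $\Phi(P_0) \circ \Psi(P_0) = 0$. Now consider $\Phi(M) \circ \Psi(M) \colon A(M) \to C(M)$. Using the surjection $A(P_0) \to A(M)$ (right exactness of $A$) and the commuting squares, the composite $A(M) \to C(M)$ is determined by the composite $A(P_0) \to A(M) \to C(M)$, which by naturality equals $A(P_0) \to B(P_0) \to C(P_0) \to C(M)$; since $\Phi(P_0)\circ\Psi(P_0) = 0$ this composite is zero, and surjectivity of $A(P_0) \to A(M)$ forces $\Phi(M)\circ\Psi(M) = 0$. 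Hence $\Phi(M)$ factors uniquely through $B(M) \to \coker(\Psi(M)) = (\coker \Psi)(M)$, giving a morphism $\overline{\Phi}(M) \colon (\coker\Psi)(M) \to C(M)$.

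The remaining point is naturality of $\overline{\Phi}$: for a morphism $f \colon M \to M'$ in $\cC_1$, the square relating $\overline{\Phi}(M)$ and $\overline{\Phi}(M')$ commutes after precomposition with the epimorphism $B(M) \to (\coker\Psi)(M)$ — this follows by chasing the naturality squares of $\Phi$ and of the canonical projection $B \to \coker\Psi$ — and since that map is epic, the square itself commutes. This yields the desired factorization $\Phi = \overline{\Phi} \circ (\text{proj})$ through $\coker(\Psi)$. The only mildly delicate step is the first one — transferring the vanishing hypothesis from projectives to arbitrary objects — but it is entirely formal given right exactness and enough projectives; no genuine obstacle arises, and in fact the argument is essentially the cokernel-analogue of the proof of Lemma~\ref{lem:isomorphism-at-projectives}, so one could alternatively deduce it from that lemma applied to $\coker(\Psi) \to C$ after observing both sides vanish on projectives in the appropriate sense.
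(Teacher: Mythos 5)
The paper states this lemma without proof (it is presented as one of the ``basic observations''), so there is no paper argument to compare against; your argument is correct and is the expected one. One parenthetical remark deserves correction: you write that projective presentations exist ``because $\cC_1$ is Grothendieck, hence has enough projectives,'' but Grothendieck abelian categories in general have enough injectives, not necessarily enough projectives (sheaves of abelian groups on a topological space is the standard counterexample). The lemma only has content when $\cC_1$ has enough projectives; this is implicit in the statement and is certainly the case in the paper's application ($\cC_1 = \ModVI$; see the corollary to Proposition~\ref{prop:projectives}), so the substance of your argument is unaffected. A minor streamlining: you do not need a two-step presentation $P_1 \to P_0 \to M \to 0$, since a single projective surjection $P_0 \to M$ already suffices — you only use that $A(P_0) \to A(M)$ is epic, which requires only that $A$ preserve epimorphisms. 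The naturality chase transferring $\Phi(P_0)\Psi(P_0)=0$ to $\Phi(M)\Psi(M)=0$, the pointwise factorization through $\coker(\Psi(M)) = (\coker\Psi)(M)$, and the verification that the induced map is natural (using that $B \to \coker\Psi$ is objectwise epic) are all exactly right.
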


The part (b) of the proposition below is motivated by the footnote in \cite{church-bounding}.

\begin{proposition} 
\label{prop:commuting-pairs}	Let $X$ and $Y$ be vector spaces of dimension one. We have the following equality of functors: \begin{enumerate}
		\item $\Sigma^X \Delta^Y = \Delta^Y \Sigma^X.$
		\item $\rH^{\VI}_0 \newdelta = \newshift \rH^{\VI}_0.$
	\end{enumerate}
\end{proposition}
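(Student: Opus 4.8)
The plan is to prove both equalities by the standard "check on projectives, then extend by right-exactness" technique encoded in Lemmas~\ref{lem:isomorphism-at-projectives} and~\ref{lem:factors-at-projectives}, using the categorical-derivation formula (Proposition~\ref{prop:categorical-derivation}) to handle the projective case. For part (a), both $\Sigma^X \Delta^Y$ and $\Delta^Y \Sigma^X$ are right-exact functors on $\ModVI$: $\Sigma^X$ is exact, and $\Delta^Y$ is a cokernel, hence right-exact. So by Lemma~\ref{lem:isomorphism-at-projectives} it suffices to produce a natural transformation between them and check it is an isomorphism on projectives. The natural transformation is built from the commuting square of shift functors: the maps $\Sigma^{\ell}$ for $\ell$ the canonical $Y$-shift inclusion commute with $\Sigma^X$ on the nose (since $\Sigma^X$ acts only on the "$Z$-variable" and $\Sigma^{\ell}$ on the "$Y$-variable", and by Proposition~\ref{prop:iterated-shift}(a) $\Sigma^{X+Y}=\Sigma^Y\Sigma^X=\Sigma^X\Sigma^Y$), so taking cokernels yields a canonical comparison map $\Sigma^X\Delta^Y M \to \Delta^Y\Sigma^X M$. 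On a projective $P=\cI(V)$, Proposition~\ref{prop:shift-tensor}(2) (or Proposition~\ref{prop:categorical-derivation}) computes both sides explicitly as sums of induced modules built from the $D^d_k$-bimodules, and the comparison map is visibly an isomorphism there; alternatively one observes $\Delta^Y\cI(W)=\cI(\newshift^Y W)$-type formulas and uses that $\Sigma^X$ commutes with the relevant $D$-module bookkeeping. Then Lemma~\ref{lem:isomorphism-at-projectives} finishes.

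For part (b), the target identity is $\rH^{\VI}_0 \newdelta = \newshift \rH^{\VI}_0$ as functors $\ModVI \to \Mod_{\VB}$. Again both sides are right-exact: $\rH^{\VI}_0$ is right-exact, $\newdelta$ is a cokernel hence right-exact, $\newshift$ is exact on $\Mod_{\VB}$. So I need a natural transformation and then a check on projectives. The natural transformation comes from functoriality: applying $\rH^{\VI}_0$ to the defining cokernel sequence $\newshift^0 M \to \newshift^Y M \to \newdelta M \to 0$ gives $\rH^{\VI}_0(M)\to \rH^{\VI}_0(\newshift^Y M)\to \rH^{\VI}_0(\newdelta M)\to 0$, and I must relate $\rH^{\VI}_0\circ\newshift^Y$ to $\newshift^Y\circ \rH^{\VI}_0$ — but there is no general such commutation, so instead I use Lemma~\ref{lem:factors-at-projectives}: on a projective $P=\cI(V)$, Proposition~\ref{prop:categorical-derivation} gives $\newdelta\cI(V)=\cI(\newshift V)$ and Proposition~\ref{prop:induced-homology-acyclic} gives $\rH^{\VI}_0\cI(\newshift V)=\newshift V = \newshift\rH^{\VI}_0\cI(V)$. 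So the composite $\rH^{\VI}_0\newshift^0 P \to \rH^{\VI}_0\newshift^Y P \to \rH^{\VI}_0\newdelta P$ agrees (after identifications) with the structure maps, and the required natural transformation $\newshift\rH^{\VI}_0 \to \rH^{\VI}_0\newdelta$ is constructed via Lemma~\ref{lem:factors-at-projectives} applied to $\newshift\circ(\text{natural map }\rH^{\VI}_0\to\rH^{\VI}_0\Sigma^Y\to\dots)$; once it is an isomorphism on all projectives, Lemma~\ref{lem:isomorphism-at-projectives} upgrades it to an isomorphism on all of $\ModVI$.

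The main obstacle I anticipate is bookkeeping the natural transformations carefully enough that the "check on projectives" is honest — in particular, for part (b), identifying precisely which natural transformation $\newshift\rH^{\VI}_0 \to \rH^{\VI}_0\newdelta$ (or its inverse direction) one feeds into Lemma~\ref{lem:isomorphism-at-projectives}, since $\rH^{\VI}_0$ and $\newshift$ do not commute strictly and only the composite through $\newdelta$ is well-behaved. The computation on projectives themselves is essentially immediate from Propositions~\ref{prop:categorical-derivation} and~\ref{prop:induced-homology-acyclic} for (b), and from Propositions~\ref{prop:shift-tensor} and~\ref{prop:split-injection} for (a); the flag conventions recorded in the remark before Proposition~\ref{prop:iterated-shift} must be respected so that $\newshift^{X+Y}$ is unambiguous, but with $X$ and $Y$ both one-dimensional this is a minor point.
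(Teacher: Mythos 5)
Your plan is correct, and for part (b) it is essentially the paper's argument: apply Lemma~\ref{lem:factors-at-projectives} to the composite $\rH^{\VI}_0 \to \rH^{\VI}_0\newshift \to \newshift\rH^{\VI}_0$, verify the composite vanishes on projectives $\cI(V)$ via the categorical derivation formula (Proposition~\ref{prop:categorical-derivation}) and Proposition~\ref{prop:induced-homology-acyclic}, then upgrade the resulting transformation to an isomorphism with Lemma~\ref{lem:isomorphism-at-projectives}. Two points worth tightening. First, the direction of the transformation is forced, not a matter of choice: since $\rH^{\VI}_0$ is right-exact, $\rH^{\VI}_0\newdelta$ \emph{is} the cokernel of $\rH^{\VI}_0 \to \rH^{\VI}_0\newshift$, and Lemma~\ref{lem:factors-at-projectives} hands you a map \emph{out of} that cokernel, so you obtain $\rH^{\VI}_0\newdelta \to \newshift\rH^{\VI}_0$ and not the reverse; your hedge ``or its inverse direction'' should be dropped. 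Second, for part (a) the detour through projectives is unnecessary. Once you write down the natural commuting square with rows $\Sigma^X \to \Sigma^X\Sigma^Y$ and $\Sigma^X \to \Sigma^Y\Sigma^X$ and vertical isomorphisms supplied by Proposition~\ref{prop:iterated-shift}(a), the cokernels of the two rows are already isomorphic for \emph{every} module $M$, and these cokernels are exactly $\Sigma^X\Delta^Y M$ (using exactness of $\Sigma^X$) and $\Delta^Y\Sigma^X M$. That single observation is the paper's entire proof of (a); there is nothing left to verify on projectives, and Lemma~\ref{lem:isomorphism-at-projectives} is not needed for this part.
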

\begin{proof} 
	The proof of part (a) is identical to \cite[Proposition~1.4 (5)]{djament-vespa}.  We provide a proof sketch here. In the following natural commutative diagram, \begin{figure}[h]
		\[\begin{tikzcd}
		\Sigma^Y \ar{r} \ar{d} & \Sigma^Y \Sigma^X  \ar{d} \\
		\Sigma^Y \ar{r} & \Sigma^X \Sigma^Y
		\end{tikzcd}\]
	\end{figure} the vertical arrows are isomorphisms and so the cokernel of the horizontal maps are also isomorphic. This shows that \[\Sigma^Y \Delta^X = \coker(\Sigma^Y \to \Sigma^Y \Sigma^X) = \coker(\Sigma^Y \to \Sigma^X \Sigma^Y) = \Delta^X \Sigma^Y,\] completing the proof of (a).

		Proof of (b). Composing the natural transformation $\id \to \rH^{\VI}_0$ with $\newshift$ we obtain $\newshift  \to \newshift \rH^{\VI}_0$. Since $\rH^{\VI}_0 \newshift \rH^{\VI}_0 = \newshift \rH^{\VI}_0 $, we obtain a transformation $\rH^{\VI}_0 \newshift  \to \newshift \rH^{\VI}_0$. We shall now apply Lemma~\ref{lem:factors-at-projectives} to the composite \[ \rH^{\VI}_0 \to \rH^{\VI}_0 \newshift  \to \newshift \rH^{\VI}_0. \] To check the hypothesis of the lemma, it is enough to assume that $P =\cI(V)$ where $V$ is concentrated in degree $d$ (Proposition~\ref{prop:projectives}). Evaluating the composite  above at $P$ yields \[V \to V \oplus \newshift V \to \newshift V. \] From degree considerations, hypothesis of Lemma~\ref{lem:factors-at-projectives} is satisfied. Thus we conclude that there is a natural transformation $\rH^{\VI}_0 \newdelta  \to \newshift \rH^{\VI}_0$. By Lemma~\ref{lem:isomorphism-at-projectives} and Proposition~\ref{prop:projectives}, this transformation is an isomorphism. This completes the proof.
\end{proof}

\begin{remark} There does not seem to be an equivalence between $\newshift^X \newdelta^Y$ and  $\newdelta^Y \newshift^X$. This is in contrast with the case of $\FI$-modules.
	\end{remark}

We denote the kernel of the natural transformation $\id \to \newshift^X$ by $\kappa^X$.

\begin{proposition}
	\label{prop:delta-1} In the non-describing characteristic, 	we have $\rL_1 \newdelta^X = \kappa^X$, and $\rL_i \newdelta^X = 0$ for $i > 1$.
\end{proposition}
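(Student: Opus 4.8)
The plan is to derive this from the definitions of $\newdelta^X$ and $\kappa^X$ together with the short exact sequence they sit in, and then reduce the computation of derived functors to the case of induced modules, where everything is explicit by Proposition~\ref{prop:categorical-derivation}. By construction, for the unique map $\ell \colon 0 \to X$ we have an exact sequence
\[ 0 \to \kappa^X M \to M \xrightarrow{\newshift^{\ell}} \newshift^X M \to \newdelta^X M \to 0, \]
which we break into two short exact sequences $0 \to \kappa^X M \to M \to Q \to 0$ and $0 \to Q \to \newshift^X M \to \newdelta^X M \to 0$, where $Q$ is the image of $\newshift^{\ell}$. The functor $\newshift^X$ is exact (it is $\Sigma^X$, which is exact, followed by the coinvariants $(-)_{\bU_X}$, and in the non-describing characteristic the latter is exact since $|\bU_X(Z)|$ is invertible in $\bk$ for every $Z$). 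Hence the right-derived picture collapses: the derived functors of $\newdelta^X$ measure exactly the failure of $\newshift^{\ell}$ to be injective. Writing the long exact sequence for the composite $\newshift^X \to \newdelta^X$ applied to a projective resolution, one gets that $\rL_i \newdelta^X M$ for $i \ge 1$ is computed from $\rL_{i-1}$ of the two-term complex $[M \to \newshift^X M]$, i.e. from the kernel $\kappa^X$ and nothing else in degrees $\ge 2$.

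More carefully, the cleanest route is to compute $\rL_\bullet \newdelta^X$ on projectives. Since $\newshift^X$ is exact, for a projective $P$ the complex $[P \to \newshift^X P]$ has $\rL$-homology $\kappa^X P$ in degree $1$ and $\newdelta^X P$ in degree $0$; but by Proposition~\ref{prop:categorical-derivation} (and Proposition~\ref{prop:projectives}) we may take $P = \cI(V)$, where $\newshift \cI(V) = \cI(V) \oplus \cI(\newshift V)$ and the natural map $\cI(V) \to \newshift\cI(V)$ is the inclusion of the first summand, hence \emph{split injective}. Therefore $\kappa^X P = 0$ and $\rL_i \newdelta^X P = 0$ for all $i \ge 1$ and all projective $P$. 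Now take any $M$ with projective resolution $P_\bullet \to M$. The complex $\newdelta^X P_\bullet$ computes $\rL_\bullet \newdelta^X M$; using that $\newshift^X P_\bullet$ is exact (so $\newshift^X P_\bullet \to \newshift^X M$ is a resolution) and that each $\newdelta^X P_i$ is the cokernel of the split injection $P_i \to \newshift^X P_i$, a diagram chase (or the spectral sequence / mapping cone of $P_\bullet \to \newshift^X P_\bullet$) identifies $\rL_i \newdelta^X M$ with the homology of $[P_\bullet \to \newshift^X P_\bullet]$ in the appropriate degree: this is $\newdelta^X M$ in degree $0$, $\kappa^X M$ in degree $1$, and $0$ in degrees $\ge 2$, precisely because $\newshift^X$ is exact so the only homology of the double complex comes from the kernel of $P_\bullet \to \newshift^X P_\bullet$ at the $M$-level.

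The one point that genuinely uses the non-describing characteristic — and the only real obstacle — is the exactness of $\newshift^X$, equivalently the exactness of $\bU_X$-coinvariants; without invertibility of $q$ in $\bk$ the functor $(-)_{\bU_X}$ is only right exact, the complex $\newshift^X P_\bullet$ need not be a resolution, and extra higher derived terms appear. Granting that, the rest is formal homological algebra. I would organize the write-up as: (1) record the four-term exact sequence and the two short exact sequences splitting it; (2) invoke exactness of $\newshift^X$ in the non-describing characteristic; (3) compute on $P = \cI(V)$ using Proposition~\ref{prop:categorical-derivation} to see $P \to \newshift^X P$ is split injective, hence $\kappa^X P = 0$ and $\newdelta^X P$ is again induced, in particular $\newdelta^X$ is $\rL$-acyclic on projectives above degree $0$; (4) conclude by dimension-shifting / the long exact sequence of $\rL_\bullet \newdelta^X$ applied to $0 \to \kappa^X M \to M \to Q \to 0$ and $0 \to Q \to \newshift^X M \to \newdelta^X M \to 0$, noting $\newshift^X M$ is $\newdelta^X$-acyclic... wait, more simply: from step (3), $\rL_i \newdelta^X = 0$ on projectives for $i \ge 1$, and then the standard comparison of $\rL_\bullet \newdelta^X M$ with the homology of $\newdelta^X P_\bullet$ together with the exact sequence $0 \to \kappa^X P_\bullet \to P_\bullet \to \newshift^X P_\bullet \to \newdelta^X P_\bullet \to 0$ of complexes (with $\kappa^X P_\bullet = 0$ and $\newshift^X P_\bullet$ acyclic) gives $H_1(\newdelta^X P_\bullet) = \kappa^X M$ and $H_i(\newdelta^X P_\bullet) = 0$ for $i \ge 2$, which is the claim.
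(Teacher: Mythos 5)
Your argument is correct and rests on exactly the same two ingredients as the paper's proof: exactness of $\newshift^X$ in the non-describing characteristic, and vanishing of $\kappa^X$ on projective (hence torsion-free) modules. The paper packages this differently — it takes a single two-term presentation $0 \to K \to F \to M \to 0$ with $K$ torsion-free, applies the snake lemma to the map of short exact sequences $0 \to K \to \newshift^X K \to \newdelta^X K \to 0$ and $0 \to F \to \newshift^X F \to \newdelta^X F \to 0$ to get the first claim, and then dimension-shifts (using $\kappa^X K = 0$ since $K$ is torsion-free, via Proposition~\ref{prop:newshift-injectivity}) for $i \ge 2$ — whereas you take a full projective resolution $P_\bullet$, form the short exact sequence of complexes $0 \to P_\bullet \to \newshift^X P_\bullet \to \newdelta^X P_\bullet \to 0$, and read everything off the long exact sequence in homology at once. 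Your packaging is arguably the tidier of the two, as it eliminates the auxiliary object $K$ and the separate dimension-shift step.

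Two small remarks. First, Proposition~\ref{prop:categorical-derivation} is stated only for $\dim X = 1$; to conclude that $P \to \newshift^X P$ is split injective for general $X$ you should either iterate via Proposition~\ref{prop:iterated-shift}, or simply observe (as the paper does) that projectives are torsion-free and invoke Proposition~\ref{prop:newshift-injectivity}. Second, your observation that the split injectivity $\cI(V) \hookrightarrow \newshift\cI(V)$ is characteristic-free (coming from the categorical derivation formula rather than from Lemma~\ref{lem:newshift-injectivity}) is a nice point: it isolates the role of the non-describing characteristic hypothesis to the exactness of $\newshift^X$ alone.
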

\begin{proof}
The proof is the same as that of \cite[Lemma~4.7]{castelnuovo-regularity}, where $\newshift^X$ plays the role of $S$. We provide a proof sketch here. Given a $\VI$-module $M$, we can find a presentation \[ 0 \to K \to F \to M \to 0, \] where $F$ is a projective $\VI$-module, and $K$ is torsion-free. The corresponding long exact sequence for the right exact functor $\newdelta^X$ implies that $\rL_1 \newdelta^X(M) = \ker(\newdelta^X K \to \newdelta^X F)$. Note that $F \to \Sigma^X F$ is injective, as $F$ is torsion-free. By Lemma~\ref{lem:newshift-injectivity}, we conclude that $F \to \newshift^X F$ is injective. Thus we have the following commutative diagram \begin{figure}[h]
	\[\begin{tikzcd}
& K \ar{r} \ar{d} & \newshift^X K \ar{r}   \ar{d} & \newdelta^X K \ar{r} \ar{d} & 0 \\
0  \ar{r}  & 	F \ar{r} & \newshift^X F \ar{r}    & \newdelta^X F \ar{r}  & 0.
	\end{tikzcd}\]
\end{figure} Applying the snake lemma, we see that \[ \ker(\newshift^X K \to \newshift^X F) =0 \to \rL_1 \newdelta^X(M) \to M \to \newshift^X M \to \newdelta^X M \to 0.  \] This shows that $\rL_1 \newdelta^X(M) = \kappa^X(M)$, finishing the proof of the first assertion.  By dimension shifting, we have $\rL_2 \newdelta^X(M) = \rL_1 \newdelta^X(K) = \kappa^X(K)$. Since $K$ is torsion-free, we see that $\rL_2 \newdelta^X(M) = 0$. Since $M$ is arbitrary it follows that $\rL_i \newdelta^X = 0$ for $i > 1$.
\end{proof}

The following lemma is proven in a similar way as \cite[Proposition~1.4 (7)]{djament-vespa}.

\begin{lemma}
\label{lem:djament-composable-morphisms}
Let $M$ be a $\VI$-module, and $X,Y$ be vector spaces. We have an exact sequence of the form: \[\newdelta^Y M \to \newdelta^{X + Y} M \to \newshift^Y \newdelta^X  M \to 0  .\] Moreover, in the non-describing characteristic, this can be extended to \[ 0 \to \kappa^Y M \to \kappa^{X + Y} M \to \newshift^Y \kappa^X M \to \newdelta^Y M \to \newdelta^{X + Y} M \to \newshift^Y \newdelta^X  M \to 0  .\]
\end{lemma}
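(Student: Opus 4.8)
The plan is to deduce both exact sequences from the classical kernel–cokernel (snake-type) exact sequence associated to a composable pair of morphisms. Write $\eta^X\colon\id\to\newshift^X$ for the natural transformation whose kernel is $\kappa^X$ and whose cokernel is $\newdelta^X$. The first step is to factor the natural map $M\to\newshift^{X+Y}M$: using the identification $\newshift^{X+Y}=\newshift^Y\newshift^X$ from Proposition~\ref{prop:iterated-shift} together with the naturality of $\eta^Y$ applied to the morphism $\eta^X_M\colon M\to\newshift^X M$, I would exhibit it as the composite
\[M\xrightarrow{\eta^Y_M}\newshift^Y M\xrightarrow{\newshift^Y(\eta^X_M)}\newshift^Y\newshift^X M=\newshift^{X+Y}M.\]

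The second step applies the kernel–cokernel exact sequence to the composable pair $f\coloneq\eta^Y_M$, $g\coloneq\newshift^Y(\eta^X_M)$: in any abelian category there is a natural six-term exact sequence $0\to\ker f\to\ker(gf)\to\ker g\to\coker f\to\coker(gf)\to\coker g\to0$, obtained from the snake lemma — this is the same device used in \cite[Proposition~1.4(7)]{djament-vespa}. By definition $\ker f=\kappa^Y M$, $\coker f=\newdelta^Y M$, $\ker(gf)=\kappa^{X+Y}M$, and $\coker(gf)=\newdelta^{X+Y}M$. The third step identifies the two remaining terms. The coinvariants functor defining $\newshift^Y$ is right exact, so $\coker g=\coker(\newshift^Y(\eta^X_M))=\newshift^Y(\coker(\eta^X_M))=\newshift^Y\newdelta^X M$, with no restriction on the characteristic; and the tail $\coker f\to\coker(gf)\to\coker g\to0$ of the six-term sequence is exact in any abelian category, so this already yields the first assertion $\newdelta^Y M\to\newdelta^{X+Y}M\to\newshift^Y\newdelta^X M\to0$ over an arbitrary noetherian $\bk$. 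For the extended sequence I would invoke that in the non-describing characteristic $\newshift^Y$ is in fact exact — the relevant coinvariants are with respect to $\bU_Y(Z)$, whose order is a power of $q$ and hence a unit in $\bk$ — so $\ker g=\ker(\newshift^Y(\eta^X_M))=\newshift^Y(\ker(\eta^X_M))=\newshift^Y\kappa^X M$. Substituting all four identifications into the six-term sequence gives exactly the asserted $0\to\kappa^Y M\to\kappa^{X+Y}M\to\newshift^Y\kappa^X M\to\newdelta^Y M\to\newdelta^{X+Y}M\to\newshift^Y\newdelta^X M\to0$.

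I expect the only real obstacle to be the bookkeeping in the first step, not anything conceptual. Because the maximal flag on $X+Y$ has been suppressed from the notation $\newshift^{X+Y}$, I would need to verify — using the convention fixed in the remark before Proposition~\ref{prop:iterated-shift} together with the short exact sequence of $\VI$-groups $1\to\Sigma^Y\bU_X\to\bU_{X+Y}\to i_X(\bU_Y)\to1$ from its proof — that the natural map $M\to\newshift^{X+Y}M$ really coincides, under the identification $\newshift^{X+Y}=\newshift^Y\newshift^X$, with the composite $\newshift^Y(\eta^X_M)\circ\eta^Y_M$ (equivalently, with $\eta^Y_{\newshift^X M}\circ\eta^X_M$; the two agree by naturality of $\eta^Y$, but one must check their common value is indeed the natural map). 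After that, everything is formal: the kernel–cokernel exact sequence plus the (right-)exactness properties of $\newshift^Y$.
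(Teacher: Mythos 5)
Your proposal is correct and takes essentially the same route as the paper: the paper also factors the natural map $M\to\newshift^{X+Y}M$ as $(\newshift^Y\newshift^{\ell'})\circ\newshift^{\ell}$ with $\ell\colon 0\to Y$ and $\ell'\colon 0\to X$, invokes the kernel–cokernel exact sequence of Mac Lane \cite[Exercise 6, \S II.5]{maclane} for composable morphisms, and then uses right-exactness of $\newshift^Y$ to identify $\coker(v)$ (and exactness of $\newshift^Y$ in non-describing characteristic to identify $\ker(v)$). The bookkeeping step you flag — checking that the composite really is $\newshift^{\ell''}$ under the flag conventions and the isomorphism $\newshift^{X+Y}=\newshift^Y\newshift^X$ — is exactly the point the paper asserts without elaboration, so your caution there is reasonable but the step is the expected consequence of Proposition~\ref{prop:iterated-shift} and the convention fixed in the remark preceding it.
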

\begin{proof} Let $\ell \colon 0 \to Y$, $\ell' \colon 0 \to X$ and $\ell'' \colon 0 \to X + Y$ be natural maps. Then we have composable maps $\newshift^{\ell} \colon M \to \newshift^Y M$ and $\newshift^Y \newshift^{\ell'} \colon \newshift^Y M \to \newshift^Y \newshift^X M$, where the composite is $(\newshift^Y \newshift^{\ell'}) \circ \newshift^{\ell} = \newshift^{\ell''}$. Two composable morphisms $u,v$ in an abelian category induce an exact sequence (\cite[Exercise 6, \S II.5]{maclane}) \[0 \to \ker(u) \to \ker(v \circ u) \to \ker(v) \to \coker(u)   \to \coker(v \circ u) \to \coker(v) \to 0.\] Set $u = \newshift^{\ell}$  and $v = \newshift^Y \newshift^{\ell'} $. Since $\newshift^Y$ is right exact we see that $\coker v = \newshift^Y \newdelta^X M$ and the first assertion follows. In non-describing characteristic, $\newshift^Y$ is exact. Thus we have $\ker(v) = \newshift^Y \kappa^X M$. This finishes the proof.
\end{proof}

\begin{corollary} 
	\label{cor:generation-of-newdelta}
Let $X$ and $Y$ be vector spaces, and fix maximal flags on $X$ and $Y$. Let $\ell \in \Hom_{\VI}(X,Y)$ be a map that takes the maximal flag on $X$ to an initial segment of the flag on $Y$. Then $t_0(\newdelta^{\ell} M) < t_0(M)$.
\end{corollary}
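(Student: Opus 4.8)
The plan is to reduce the statement about $\newdelta^{\ell}$ to the special case $X = 0$, where $\newdelta^{\ell}$ is just the difference functor $\newdelta^Y$, and then to prove the degree bound $t_0(\newdelta^Y M) < t_0(M)$ for a one-dimensional $Y$ by induction using the exact sequence from Proposition~\ref{prop:delta-1}, namely
\[ 0 \to \kappa^Y M \to M \to \newshift^Y M \to \newdelta^Y M \to 0. \]
Once the one-dimensional case is in hand, the general $Y$ follows by factoring $\newshift^Y$ (and hence $\newdelta^{\ell}$ via Lemma~\ref{lem:djament-composable-morphisms}) into one-dimensional steps.

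First I would unwind the hypothesis on $\ell$. Write $Y = X' \oplus X$ where $\ell$ identifies $X$ with the initial segment of the flag on $Y$ and $X'$ is spanned by the remaining flag vectors, so that $\newshift^Y = \newshift^{X'} \newshift^X$ by Proposition~\ref{prop:iterated-shift}(b), and under this identification $\newshift^{\ell} \colon \newshift^X M \to \newshift^Y M$ is precisely the natural map $N \to \newshift^{X'} N$ applied to $N = \newshift^X M$. Hence $\newdelta^{\ell} M = \newdelta^{X'} (\newshift^X M)$, and since $t_0(\newshift^X M) \le t_0(M)$ by Proposition~\ref{prop:decreasing-function}, it suffices to prove $t_0(\newdelta^{X'} N) < t_0(N)$ for an arbitrary $\VI$-module $N$ and arbitrary nonzero $X'$ — i.e., the case where $\ell$ is the unique map $0 \to Y$ with $Y \ne 0$. (If $t_0(N) = -1$ then $N = 0$, $\newdelta^{X'} N = 0$, and the strict inequality $-1 < -1$ is vacuous or interpreted correctly; I'd state the bound as: if $t_0(N) = d \ge 0$ then $t_0(\newdelta^{X'}N) \le d - 1$, and $\newdelta^{X'} 0 = 0$.)

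Next, for $Y$ one-dimensional I would argue as follows. Apply $\rH^{\VI}_0$ to the four-term exact sequence above. Breaking it into two short exact sequences $0 \to \kappa^Y M \to M \to I \to 0$ and $0 \to I \to \newshift^Y M \to \newdelta^Y M \to 0$, right-exactness of $\rH^{\VI}_0$ gives that $\rH^{\VI}_0(\newdelta^Y M)$ is a quotient of $\rH^{\VI}_0(\newshift^Y M)$, so $t_0(\newdelta^Y M) \le t_0(\newshift^Y M)$. But the sharper input is Proposition~\ref{prop:commuting-pairs}(b): $\rH^{\VI}_0 \newdelta^Y = \newshift^Y \rH^{\VI}_0$ (here $\newdelta = \newdelta^Y$, $\newshift = \newshift^Y$ for $\dim Y = 1$). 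Therefore $t_0(\newdelta^Y M) = \deg \newshift^Y(\rH^{\VI}_0 M)$. Now $\rH^{\VI}_0 M$ is a $\VB$-module concentrated in degrees $\le d := t_0(M)$, and from the construction $\newshift^Y V(Z) = V(Y + Z)_{\bU_Y(Z)}$, which in degree $n$ only sees $V$ in degree $n+1$; hence $\deg \newshift^Y V \le \deg V - 1$ whenever $\deg V \ge 0$ (and $\newshift^Y V = 0$ if $V = 0$). This gives $t_0(\newdelta^Y M) \le d - 1 < t_0(M)$, as desired.

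Finally, for general $Y$, write $Y = Y_1 \oplus \cdots \oplus Y_m$ as an iterated one-dimensional shift compatible with the flag. Using Proposition~\ref{prop:iterated-shift}(b) and the first assertion of Lemma~\ref{lem:djament-composable-morphisms} repeatedly, $\newdelta^Y N$ surjects onto $\newshift^{Y_m} \cdots \newshift^{Y_2} \newdelta^{Y_1} N$ and more generally sits in exact sequences relating it to the single-step difference functors; combining with Proposition~\ref{prop:decreasing-function} ($\newshift$ does not increase $t_0$) and the one-dimensional case (each $\newdelta^{Y_i}$ strictly decreases $t_0$), one gets $t_0(\newdelta^Y N) < t_0(N)$. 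Concretely I would just induct on $\dim Y$: for $\dim Y \ge 2$ write $Y = Y' \oplus Y_1$ with $\dim Y_1 = 1$, apply Lemma~\ref{lem:djament-composable-morphisms} with "$X$" $= Y'$ and "$Y$" $= Y_1$ to get the surjection $\newdelta^{Y_1} N \to$ (something)$\,\to \newshift^{Y_1}\newdelta^{Y'} N \to 0$ — or the cleaner route, observe directly $\newdelta^Y N$ is built from $\newdelta^{Y_1}(\newshift^{Y'} N)$-type pieces — and conclude $t_0 \le t_0(N) - 1$. The main obstacle is purely bookkeeping: making the flag conventions in Lemma~\ref{lem:djament-composable-morphisms} and Proposition~\ref{prop:iterated-shift} line up so that "$\ell$ takes the flag on $X$ to an initial segment of the flag on $Y$" translates cleanly into the factorization $\newshift^Y = \newshift^{Y'}\newshift^X$ with $\newshift^{\ell}$ the obvious natural map; once that identification is pinned down, the degree estimate is immediate from Proposition~\ref{prop:commuting-pairs}(b).
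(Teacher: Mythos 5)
Your overall strategy matches the paper's exactly: reduce to the case $X = 0$ via a factorization of $\newshift^Y$, reduce to $\dim Z = 1$ via Lemma~\ref{lem:djament-composable-morphisms}, and in the one-dimensional case use $\rH^{\VI}_0 \newdelta = \newshift \rH^{\VI}_0$ to get $t_0(\newdelta M) = \deg\bigl(\newshift \rH^{\VI}_0(M)\bigr) < t_0(M)$. The $\dim Z = 1$ step and the induction via Lemma~\ref{lem:djament-composable-morphisms} are correct and are what the paper does.

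However, there is a real slip in the reduction step, and you half-noticed it yourself in the final paragraph. With the paper's convention (``an initial segment of the maximal flag on $X+Y$ forms an initial segment of a maximal flag on $Y$''), the space $Y$ from the corollary should be written as $Z + X$ with the corollary's $X$ in the \emph{second} slot, so Proposition~\ref{prop:iterated-shift}(b) gives $\newshift^Y = \newshift^X \newshift^{Z}$, and consequently $\newshift^{\ell}$ is $\newshift^X$ applied to the natural map $M \to \newshift^Z M$, whence $\newdelta^{\ell} M = \newshift^X \newdelta^Z M$. You instead wrote $\newshift^Y = \newshift^{X'} \newshift^X$ and identified $\newdelta^{\ell} M$ with $\newdelta^{X'}(\newshift^X M)$, which has the two operations in the opposite order. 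That identification does not follow from Proposition~\ref{prop:iterated-shift}(b), and in light of the remark after that proposition (there is no general commutation of $\newshift$ and $\newdelta$) it is not simply a harmless relabeling. Fortunately the final bound is insensitive to the order: whether you compute $t_0(\newshift^X(\newdelta^Z M)) \le t_0(\newdelta^Z M) < t_0(M)$ (the paper's chain) or $t_0(\newdelta^{Z}(\newshift^X M)) < t_0(\newshift^X M) \le t_0(M)$ (your chain), the conclusion $t_0(\newdelta^{\ell} M) < t_0(M)$ holds, since $\newshift^X$ does not increase $t_0$ and $\newdelta^Z$ strictly decreases it. So the error does not propagate, but as written your identification of $\newdelta^{\ell}$ is incorrect and should be repaired to match the flag convention.
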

\begin{proof} Choose a complement $Z$ of $\ell(X)$ in $Y$. Then the maximal flag on $Y$ will induce a maximal flag on $Z$. We can identify $\ell$ with $\tau^X(\ell')$ where $\ell' \colon 0 \to Z$. This shows that $\newshift^{\ell} = \newshift^X \newdelta^{\ell'} = \newshift^X \newdelta^Z$. Thus by Corollary~\ref{cor:shift-of-semi-induced-and-presentation}, it is enough to show that  $t_0(\newdelta^{Z}M) < t_0(M)$. By the previous lemma, it suffices to prove it in the case when $\dim Z = 1$. But in this case, we have $t_0(\newdelta^Z M) = \deg (\newshift \rH^{\VI}_0(M)) < t_0(M)$ (see Proposition~\ref{prop:commuting-pairs}). This completes the proof.
\end{proof}

 \subsection{Derived saturated objects}
 Our aim here is to show that the semi-induced modules are always derived saturated, and that the converse holds in the non-describing characteristic. We recall that a module $M$ is derived saturated if and only if $\rR \Gamma(M) = 0$ (Proposition~\ref{prop:triangle}). 
 
%
%
 \begin{lemma}
 	\label{lem:Sigma-preserves-Gamma-acyclics}
 	The natural map $\Sigma (\rR^i \Gamma)(M) \to (\rR^i \Gamma) \Sigma M$ is an isomorphism. Equivalently, $\Sigma$ preserves $\Gamma$-acyclic objects. 
 \end{lemma}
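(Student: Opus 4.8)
The plan is to reduce to a statement about injective objects and then exploit the fact that $\Sigma$ has an exact right adjoint (and is itself exact). First I would recall that $\rR^i\Gamma$ is computed by choosing an injective resolution $M \to I^\bullet$ in $\ModVI$, applying $\Gamma$, and taking cohomology. Since $\Sigma = \Sigma^X$ with $\dim X = 1$ is exact, $\Sigma M \to \Sigma I^\bullet$ is again a resolution, and Proposition~\ref{prop:sigma-injectivity}(a) gives $\Gamma \Sigma = \Sigma \Gamma$ on the nose, so applying $\Gamma$ to $\Sigma I^\bullet$ yields $\Sigma(\Gamma I^\bullet)$. Thus the natural comparison map $\Sigma(\rR^i\Gamma)(M) \to (\rR^i\Gamma)(\Sigma M)$ is an isomorphism \emph{provided} $\Sigma I^\bullet$ can be used to compute $(\rR^i\Gamma)(\Sigma M)$, i.e. provided each $\Sigma I^j$ is $\Gamma$-acyclic. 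So the whole statement comes down to: if $I$ is injective in $\ModVI$, then $\Sigma I$ is $\Gamma$-acyclic (equivalently $\rR^i\Gamma(\Sigma I) = 0$ for $i>0$).

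The key step, then, is to show $\Sigma$ preserves $\Gamma$-acyclic objects, and the cleanest route is: $\Sigma$ preserves injectives. This follows because $\Sigma$ admits a left adjoint which is exact. Indeed, the shift functor $\Sigma^X$ is right adjoint to the functor $Z \mapsto$ ``induction along $\tau^X$'' — concretely, precomposition with $\tau^X \colon \VI \to \VI$ has a left adjoint given by a left Kan extension, and because $\tau^X$ sends $\VI$-morphisms to $\VI$-morphisms and the relevant comma categories are well-behaved (the Kan extension is computed by a direct sum over a $\GL$-set, as in Proposition~\ref{prop:shift-tensor}), this left adjoint is exact. A right adjoint of an exact functor preserves injectives (\cite[Proposition~2.3.10]{weibel} dualized, or the standard argument that $\Hom(-, \Sigma I) = \Hom(L(-), I)$ is exact when $L$ is exact and $I$ is injective). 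Hence $\Sigma I$ is injective whenever $I$ is, and in particular $\Gamma$-acyclic by definition of $\rR^i\Gamma$.

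Alternatively — and this may be the argument the paper intends, since it avoids identifying the left adjoint explicitly — one can argue directly that $\Sigma$ preserves $\Gamma$-acyclics using the structure theory already in place. A $\Gamma$-acyclic module sits in the world governed by Corollary~\ref{cor:torsion-have-no-saturation} and Proposition~\ref{prop:triangle}: $M$ is $\Gamma$-acyclic iff $M \xrightarrow{\sim} \rR\bS(M)$ iff $\rR\Gamma(M) = 0$. Given such an $M$, embed it in its injective hull $I$; then $I/M$ again has $\rR\Gamma = 0$ by the long exact sequence (using $\rR^i\Gamma(I) = 0$ for $i \geq 1$ since $I$ is injective, and $\Gamma(I) = 0$ since $\Gamma(M) = 0$ forces the torsion submodule of $I$ to meet $M$ trivially, hence vanish by essentiality). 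Applying $\Sigma$, which is exact and commutes with $\Gamma$, and inducting up the injective resolution, one gets $\rR^i\Gamma(\Sigma M) = 0$ for all $i$ once one knows $\rR^i\Gamma(\Sigma I) = 0$ for injective $I$ — so again everything funnels into the claim about injectives.

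The main obstacle is precisely this claim — that $\Sigma I$ is $\Gamma$-acyclic (ideally, injective) for $I$ injective. Verifying that $\Sigma^X$ has an exact left adjoint requires unwinding the left Kan extension along $\tau^X$: one must check that the comma category $\tau^X \downarrow Z$ decomposes as a disjoint union over the $X$-rank stratification (cf. the $D^d_k(X,Z)$ of Lemma~\ref{lem:rank-decomposition}), so that the Kan extension is a finite direct sum of induction functors $\bk[\Hom_{\VI}(\aF^k,-)] \otimes_{\bk[\GL_k]} (-)$, each of which is exact because $\bk[\Hom_{\VI}(\aF^k,Z)]$ is a free, hence flat (in fact projective), $\bk[\GL_k]$-module by Lemma~\ref{lem:rank-decomposition}(a). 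Once exactness of the left adjoint is in hand the rest is formal. I would therefore structure the proof as: (1) exhibit the exact left adjoint to $\Sigma$; (2) conclude $\Sigma$ preserves injectives, hence $\Gamma$-acyclics; (3) combine with exactness of $\Sigma$ and $\Gamma\Sigma = \Sigma\Gamma$ (Proposition~\ref{prop:sigma-injectivity}(a)) to get the isomorphism $\Sigma(\rR^i\Gamma)(M) \cong (\rR^i\Gamma)(\Sigma M)$ by comparing the two ways of computing $\rR\Gamma(\Sigma M)$.
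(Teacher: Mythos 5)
There is a genuine gap, and it is exactly where you flag the ``main obstacle'': you never actually establish that $\Sigma I$ is $\Gamma$-acyclic for $I$ injective. Your route to this is the claim that $\Sigma^X$ has an \emph{exact} left adjoint, hence preserves injectives. But whether $\Sigma$ preserves injectives of $\ModVI$ is posed in this very paper as an open problem (Question~\ref{ques:adjoint}); the note there says it is known only for $\FI$-modules ($q=1$), via a separate argument in \cite{negative-one}, and is unresolved for $\VI$. So the lemma cannot rest on that claim. The sketch you give for exactness of the left Kan extension along $\tau^X$ also does not hold up: the comma category $\tau^X \downarrow Z$ is not a disjoint union of groupoids --- it has non-invertible morphisms between objects of different rank (an injection $g \colon Y_1 \hookrightarrow Y_2$ with $\dim Y_1 < \dim Y_2$ induces a morphism $(Y_1,f_1) \to (Y_2,f_2)$) --- so the Kan extension is a genuine colimit, not a direct sum, and exactness is not automatic. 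Proposition~\ref{prop:shift-tensor} describes $\Sigma^X \cI(d)$, not the left adjoint of $\Sigma^X$, so it does not help here. Your ``alternative'' argument has the same hole: it still funnels into $\rR^i\Gamma(\Sigma I)=0$ for injective $I$, which is the unproved step. (As a side point, that paragraph also conflates ``$\Gamma$-acyclic,'' which only requires $\rR^i\Gamma(M)=0$ for $i>0$, with ``derived saturated,'' i.e.\ $\rR\Gamma(M)=0$ including $\Gamma(M)=0$; a torsion module is $\Gamma$-acyclic by Lemma~\ref{lem:torsion-injective} but not derived saturated.)

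The paper deliberately sidesteps the injectives question. Following Djament, it inducts on $i$: for injectivity of the comparison map $\Sigma\,\rR^i\Gamma(M) \to \rR^i\Gamma(\Sigma M)$ it does dimension shifting via $0 \to M \to I \to N \to 0$ and the four lemma; for the rest it exploits the exact sequence $0 \to M/\Gamma(M) \to \Sigma^X M \to \Delta^X M \to 0$, the identities $\Sigma\Sigma^X = \Sigma^X\Sigma$ and $\Sigma\Delta^X = \Delta^X\Sigma$ from Proposition~\ref{prop:commuting-pairs}, and the fact that $\Sigma$ is cocontinuous, letting $X$ range so that $\ker(\id \to \Sigma^X)$ exhausts the torsion module $\rR^i\Gamma(M)$. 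That machinery --- not preservation of injectives --- is what makes the argument close. If you want a correct proof you will need to follow that route; as written, your proposal proves the lemma only conditionally on an open question.
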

\begin{proof}
	We follow the argument in \cite[Proposition~A.3]{djament} to prove our assertion. The proof is by induction on $i$. The base case $i=0$ is immediate as $\Sigma$ commutes with $\Gamma$. Suppose that $i > 0$, and that the result has been proven for $j < i$. 
	
	We first   apply a dimension shifting argument to see that the natural map $\Sigma (\rR^i \Gamma)(M) \to (\rR^i \Gamma) \Sigma M$ is injective. To see this, consider any exact sequence \[ 0 \to M \to I \to N \to 0 \] where $I$ is an injective. This yields a commutative diagram	\[
	\begin{tikzcd}
	\Sigma (\rR^{i-1} \Gamma) I \ar{r} \ar{d} & \Sigma (\rR^{i-1} \Gamma)(N) \ar{r} \ar{d} & \Sigma (\rR^i \Gamma)(M) \ar{r} \ar{d} & 0 \ar{d} \\ 
	(\rR^{i-1} \Gamma) \Sigma I \ar{r} & (\rR^{i-1} \Gamma) \Sigma N \ar{r}  & (\rR^i \Gamma) \Sigma M \ar{r}  & (\rR^i \Gamma) \Sigma I
	\end{tikzcd}
	\] whose rows are exact. By induction, the first two vertical arrows are isomorphisms. Thus by the four Lemma, we see that the third vertical arrow is injective. 
	
 By Lemma~\ref{lem:torsion-injective}, we see that $\rR^k \Gamma N =0$ whenever $k >0$ and $N$ is a torsion module. Thus $\rR^i \Gamma (M/\Gamma(M)) = \rR^i \Gamma M$ for any $i>0$.  Given a vector space $X$, we have the following natural exact sequence \[ 0 \to M/\Gamma(M) \to \Sigma^X M \to \Delta^X M \to 0.\] By the corresponding long exact sequence for $\Gamma$, we obtain the following exact sequence \[ \rR^{i-1} \Gamma (\Sigma^X M) \to \rR^{i-1} \Gamma( \Delta^X M) \to \rR^{i} \Gamma ( M) \to \Sigma^X  \rR^{i} \Gamma (M),    \] where the exactness comes from the injectivity of the map $\Sigma^X (\rR^i \Gamma)(M) \to (\rR^i \Gamma) \Sigma^X M$ proved in the previous paragraph. We conclude that \[\ker( \rR^{i} \Gamma(M)  \to (\rR^i \Gamma) \Sigma^X M) = \coker (\rR^{i-1} \Gamma (\Sigma^X M) \to \rR^{i-1} \Gamma( \Delta^X M)).\] Since $\Sigma$ is exact, and commutes with $\Sigma^X$ and $\Delta^X$ (Proposition~\ref{prop:commuting-pairs}), we see that \begin{align*}
 \Sigma \ker( \rR^{i} \Gamma(M)  \to (\rR^i \Gamma) \Sigma^X M)  &=\Sigma  \coker (\rR^{i-1} \Gamma (\Sigma^X M) \to \rR^{i-1} \Gamma( \Delta^X M))\\
 &=  \coker (\rR^{i-1} \Gamma (\Sigma^X \Sigma M) \to \rR^{i-1} \Gamma( \Delta^X \Sigma M)) &\mbox{(by induction)}\\
&= \ker( \rR^{i} \Gamma(\Sigma M)  \to (\rR^i \Gamma) \Sigma^X \Sigma M)
 \end{align*} Thus $\Sigma$ commutes with $\ker(\id \to \Sigma^X) \circ (\rR^i \Gamma)$ for any $X$. Since $X$ is arbitrary and $\Sigma$ is cocontinuous, we see that $\Sigma$ commutes with $\rR^i \Gamma$. This finishes the proof.
\end{proof}

 The following result is motivated by \cite[Proposition~1.1]{djament}.
 
%

 \begin{proposition}
 	\label{prop:induced-is-torsion-acyclic}
 	If $F$ is an induced $\VI$-module, then $\rR^i \Gamma(F) = 0$ for $i \ge 0$.
 \end{proposition}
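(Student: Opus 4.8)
The plan is to reduce to the case of a single induced generator $\cI(d) = \bk[\Hom_{\VI}(\aF^d,-)]$ and then prove the vanishing of local cohomology there. First I would recall that $F = \bigoplus_d \cI(d)\otimes_{\bk[\GL_d]} V(\aF^d)$ as in the alternate description of $\cI(V)$, so by additivity of $\rR^i\Gamma$ it suffices to treat a single $\cI(d)$ (up to the $\GL_d$-coinvariants/invariants bookkeeping, which is harmless since we are in arbitrary characteristic here — note this proposition doesn't even require the non-describing hypothesis). Actually, it is cleanest to directly show $\Gamma(F) = 0$ and that $F$ is $\Gamma$-acyclic: the key point is that $\cI(d)$ is a \emph{torsion-free} $\VI$-module, since for any $f\colon \aF^d \to X$ and any injection $g\colon X \to Y$, the composite $gf$ is again a distinct basis element of $\cI(d)(Y)$, so no basis element is killed, hence no element of $\cI(d)(X)$ is torsion. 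Thus $\Gamma(\cI(d)) = 0$, giving $\Gamma(F) = 0$ and settling $i = 0$.

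For the higher vanishing I would use the shift functor. By Lemma~\ref{lem:Sigma-preserves-Gamma-acyclics}, $\Sigma$ commutes with each $\rR^i\Gamma$; and by Proposition~\ref{prop:shift-tensor}, $\Sigma^X \cI(d)$ is again a direct sum of modules induced from degrees $k \le d$, so the class of induced modules is stable under $\Sigma^X$. The strategy is an induction on the degree $d$ from which $F$ is induced. Given $F = \cI(W)$ induced from $d$, the difference module $\Delta F = \Delta^{\aF} F$ fits in the exact sequence $0 \to F \to \Sigma F \to \Delta F \to 0$ (using torsion-freeness of $F$ so that $F \to \Sigma F$ is injective — this is Proposition~\ref{prop:sigma-injectivity}(b) together with $h_0(F) = -1$). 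By Proposition~\ref{prop:split-injection}, $\Delta F$ is itself an induced module, and I would like to arrange that it is induced from strictly smaller degree so the induction closes. Here one has to be a little careful: $\Sigma^X\cI(d)$ involves summands $\cI(k)$ for all $k \le d$, including $k = d$ itself, so $\Delta F$ is not obviously of smaller degree. The fix is to work instead with $\newdelta$: by Corollary~\ref{cor:generation-of-newdelta} we have $t_0(\newdelta^X F) < t_0(F)$, and by Proposition~\ref{prop:categorical-derivation} $\newdelta\,\cI(V) = \cI(\newshift V)$ is again induced. Since in the non-describing characteristic $\newdelta$ differs from $\Delta$ only by taking coinvariants along the unipotent group $\bU$ whose order is invertible, $\rR^i\Gamma$ commutes with this operation as well, and $\newshift$ commutes with $\Gamma$ and its derived functors just as $\Sigma$ does.

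So the induction runs as follows. Base case: $F$ induced from degree $0$ means $F = \cI(0) \otimes_{\bk} V(0)$, i.e. $F$ is a direct sum of copies of the constant functor $\bk$ (with all injections acting as identity), which is manifestly injective as a $\VI$-module — indeed $\bk = \cI(0)$ is projective by Proposition~\ref{prop:projectives}, and one checks directly it is injective, or uses that $\Hom_{\VI}(-,\bk)$ is exact — so $\rR^i\Gamma(F) = 0$ for $i > 0$ and $\Gamma(F) = 0$ as noted. Inductive step: for $F$ induced from degree $d > 0$, apply $\rR\Gamma$ to $0 \to F \to \newshift F \to \newdelta F \to 0$. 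Since $\newshift F$ is again induced from degree $\le d$ and $\newshift$ commutes with $\rR^i\Gamma$, and $\newdelta F$ is induced from degree $< d$, the long exact sequence together with the inductive hypothesis forces $\rR^i\Gamma(F) = 0$ for all $i \ge 0$; for $i = 0$ we already have it, and for $i \ge 1$ the sequence $\rR^{i-1}\Gamma(\newdelta F) \to \rR^i\Gamma(F) \to \rR^i\Gamma(\newshift F) = \newshift\rR^i\Gamma(F)$ and a cofinality/cocontinuity argument (as at the end of Lemma~\ref{lem:Sigma-preserves-Gamma-acyclics}: $\newshift^X$ kills nothing of positive degree as $X$ grows, since torsion of finitely... ) pins it down — more simply, iterate $\newdelta$ to reduce $d$ to $0$ in finitely many steps.

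The main obstacle I anticipate is \emph{not} a single hard computation but rather getting the bookkeeping of $\Delta$ versus $\newdelta$ exactly right, and in particular justifying that $\rR^i\Gamma$ commutes with the coinvariants defining $\newdelta$ — this is where one genuinely invokes that $|\bU_X(Z)|$ is invertible in $\bk$, i.e. non-describing characteristic, even though the \emph{statement} of the proposition as phrased would also be true with $\Delta$ in place of $\newdelta$ by a slightly different argument using Lemma~\ref{lem:Sigma-preserves-Gamma-acyclics} directly. A cleaner route avoiding $\newdelta$ entirely: since $\Sigma^X \cI(d)$ for $X$ of large dimension contains $\cI(0)$-summands and is a finite direct sum of $\cI(k)$'s, one can simply note that $\rR^i\Gamma(\cI(d))$ is a direct summand of $\rR^i\Gamma(\Sigma^X\cI(d))$ after applying the appropriate splitting, but this gets circular. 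So I would commit to the $\newdelta$-induction above, with the torsion-freeness of induced modules and Lemma~\ref{lem:Sigma-preserves-Gamma-acyclics} (in its $\newshift$ form) as the two load-bearing inputs.
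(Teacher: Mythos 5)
The $i=0$ argument is fine, and you correctly identify the two ingredients that drive the paper's proof --- split injectivity of $F \to \Sigma^X F$ (Proposition~\ref{prop:split-injection}) and commutation of $\Sigma$ with $\rR^i\Gamma$ (Lemma~\ref{lem:Sigma-preserves-Gamma-acyclics}) --- but you then dismiss the direct route as ``circular'' and reach instead for a $\newshift/\newdelta$ induction, which introduces several genuine problems.

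First, the $\newshift/\newdelta$ machinery requires the non-describing characteristic assumption (exactness of coinvariants under $\bU$), whereas this proposition is stated --- and must hold --- without that hypothesis: it feeds into Corollary~\ref{cor:semi-induced-are-derived-saturated} and Corollary~\ref{cor:short-exact-sequence-semi-induced}, both of which are used in the paper's general (arbitrary noetherian $\bk$) arguments. Your proof would quietly weaken a load-bearing lemma. Second, your base case asserts that $\cI(0)=\bk$ is injective in $\ModVI$; this is true in characteristic zero (Proposition~\ref{prop:semi-induced-are-injectives-char0}, whose proof itself relies on the shift theorem, which relies on this very proposition), but it is not obviously true in general, and ``one checks directly'' hides real work. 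Third, even granting the setup, the inductive step doesn't close: from $0 \to F \to \newshift F \to \newdelta F \to 0$ and the induction hypothesis applied to $\newdelta F$ you get an injection $\rR^i\Gamma(F)\hookrightarrow \rR^i\Gamma(\newshift F)$, but $\newshift F$ is induced from the \emph{same} degree $d$, not a smaller one, so you cannot invoke the induction hypothesis there; the vague ``cofinality/cocontinuity'' gesture is where a real argument is needed.

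The route you rejected is in fact the paper's proof, and it is not circular because no induction and no knowledge of $\rR^i\Gamma(\Sigma^X F)$ is required. Concretely: $\Sigma^\ell\colon F \to \Sigma^X F$ is split injective for $F$ induced, so $\rR^i\Gamma(\Sigma^\ell)\colon \rR^i\Gamma(F) \to \rR^i\Gamma(\Sigma^X F)$ is injective. Lemma~\ref{lem:Sigma-preserves-Gamma-acyclics} identifies the target with $\Sigma^X\rR^i\Gamma(F)$, and by naturality the resulting injection $\rR^i\Gamma(F) \to \Sigma^X\rR^i\Gamma(F)$ is the canonical shift map. Since $X$ is arbitrary, $\rR^i\Gamma(F)$ is torsion-free; but every $\rR^i\Gamma$ of anything is torsion (Lemma~\ref{lem:torsion-injective} shows torsion modules have torsion injective resolutions), so $\rR^i\Gamma(F)=0$. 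You never need to compute $\rR^i\Gamma(\Sigma^X F)$; you only need that the map into it is injective. This works for all $i$ simultaneously, requires no hypothesis on $\bk$, and avoids all the difficulties you ran into.
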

 \begin{proof}
%
%
%

We have the following natural commutative diagram: 	\[
\begin{tikzcd}
\rR^i\Gamma (F) \ar{rr}{\Sigma^{\ell}} \ar{rrd}[swap]{\rR^i\Gamma (\Sigma^{\ell})} & {} & \Sigma^X \rR^i\Gamma (F) \ar{d} \\ 
{} & {} &  \rR^i\Gamma (\Sigma^X F)  
\end{tikzcd}
\]where $\ell$ is the map from $0$ to $X$. Since  $\Sigma^{\ell}$, applied to $F$, is split-injective (Proposition~\ref{prop:split-injection}), we see that $\rR^i\Gamma (\Sigma^{\ell})$ is injective. By the previous lemma, the vertical map is an isomorphism. Thus the map $\Sigma^{\ell} \colon \rR^i\Gamma(F) \to \Sigma^X \rR^i\Gamma(F)$ is injective as well.  Since $X$ is arbitrary, we see that $\rR^i\Gamma(F)$ is torsion-free. By definition, $\rR^i\Gamma(F)$ is also a torsion $\VI$-module. Hence $\rR^i\Gamma(F) = 0$.
 \end{proof}
 
 \begin{corollary}
 	\label{cor:semi-induced-are-derived-saturated}
 	Semi-induced modules are derived saturated.
 \end{corollary}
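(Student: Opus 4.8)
The plan is to reduce immediately to the case of induced modules, which was settled in Proposition~\ref{prop:induced-is-torsion-acyclic}, by running the long exact sequence for $\rR\Gamma$ along a semi-induced filtration. So let $M$ be semi-induced and fix a filtration
\[ 0 = M_0 \subset M_1 \subset \cdots \subset M_r = M \]
whose successive quotients $M_j/M_{j-1}$ are induced modules generated in finite degrees. I would prove by induction on $r$ the stronger statement that $\rR^i\Gamma(M) = 0$ for all $i \ge 0$. The base case $r = 1$ is exactly Proposition~\ref{prop:induced-is-torsion-acyclic}, since then $M$ is itself induced.

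For the inductive step, apply $\Gamma$ to the short exact sequence
\[ 0 \to M_{r-1} \to M \to M/M_{r-1} \to 0 \]
and look at the associated long exact sequence in $\rR^{\bullet}\Gamma$. Here $M_{r-1}$ is semi-induced via a filtration of length $r-1$, so $\rR^i\Gamma(M_{r-1}) = 0$ for all $i$ by the inductive hypothesis, while $M/M_{r-1}$ is induced, so $\rR^i\Gamma(M/M_{r-1}) = 0$ for all $i$ by Proposition~\ref{prop:induced-is-torsion-acyclic}. The long exact sequence then sandwiches $\rR^i\Gamma(M)$ between two zero terms for every $i \ge 0$, forcing $\rR^i\Gamma(M) = 0$ and completing the induction.

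Finally, $\rR^i\Gamma(M) = 0$ for every $i$ says precisely that $\rR\Gamma(M) = 0$ in $\rD^{+}(\ModVI)$, which by the exact triangle of Proposition~\ref{prop:triangle} is equivalent to the canonical map $M \to \rR\bS(M)$ being an isomorphism, i.e.\ to $M$ being derived saturated. I do not expect any genuine obstacle: all of the substance is in Proposition~\ref{prop:induced-is-torsion-acyclic}, and the only content of the corollary is that vanishing of derived torsion is stable under the finite extensions out of which a semi-induced module is assembled, which is automatic. It is perhaps worth noting in the write-up that this argument uses no hypothesis on the characteristic, consistent with the remark that the ``easy half'' of Theorem~\ref{intro-semi-induced-saturated} holds unconditionally.
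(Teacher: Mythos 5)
Your proof is correct and is exactly the argument the paper intends — the corollary is stated without proof precisely because it is this standard dévissage along the semi-induced filtration, reducing via the long exact sequence for $\rR\Gamma$ to the induced case handled by Proposition~\ref{prop:induced-is-torsion-acyclic}. Your remark that no characteristic hypothesis is used also matches the paper's comment following Theorem~\ref{intro-semi-induced-saturated}.
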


\begin{corollary}
	\label{cor:short-exact-sequence-semi-induced}
	In a short exact sequence, if two of the objects are semi-induced then so is the third.
\end{corollary}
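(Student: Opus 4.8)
The plan is to run everything through the characterization of Proposition~\ref{prop:ramos-new}: a $\VI$-module generated in finite degrees is semi-induced if and only if it is homology acyclic ($\rH^{\VI}_i = 0$ for $i \ge 1$). I will also use that semi-induced modules are derived saturated (Corollary~\ref{cor:semi-induced-are-derived-saturated}), hence torsion-free. Write the sequence as $0 \to A \to B \to C \to 0$.

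Two of the three cases are straightforward long-exact-sequence sandwiches. If $A$ and $C$ are semi-induced, then $\rH^{\VI}_0(B)$ is an extension of $\rH^{\VI}_0(C)$ by a quotient of $\rH^{\VI}_0(A)$ by right exactness of $\rH^{\VI}_0$, so $t_0(B) < \infty$, and for $i \ge 1$ the term $\rH^{\VI}_i(B)$ sits between $\rH^{\VI}_i(A) = 0$ and $\rH^{\VI}_i(C) = 0$; thus $B$ is homology acyclic and generated in finite degrees, hence semi-induced. If $B$ and $C$ are semi-induced, then for $i \ge 1$ the term $\rH^{\VI}_i(A)$ sits between $\rH^{\VI}_{i+1}(C) = 0$ and $\rH^{\VI}_i(B) = 0$; moreover $\rH^{\VI}_1(C) = 0$ forces $\rH^{\VI}_0(A) \hookrightarrow \rH^{\VI}_0(B)$, so $t_0(A) < \infty$, and again $A$ is semi-induced.

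The substantive case is $A$ and $B$ semi-induced $\Rightarrow$ $C$ semi-induced. Here $C$ is a quotient of $B$, so generated in finite degrees; and applying the long exact sequence for $\rR\Gamma$ to $\rR\Gamma(A) = \rR\Gamma(B) = 0$ shows $C$ is derived saturated, in particular torsion-free. The long exact sequence for $\rH^{\VI}_\bullet$ already gives $\rH^{\VI}_i(C) = 0$ for $i \ge 2$, so the whole problem is to show $\rH^{\VI}_1(C) = \ker\bigl(\rH^{\VI}_0(A) \to \rH^{\VI}_0(B)\bigr) = 0$. I would induct on $t_0(A)$: the case $A = 0$ is trivial, and otherwise, with $d = t_0(A)$ and $A' = A_{\prec d}$, Corollary~\ref{cor:good-filtration-new} gives that $A'$ is semi-induced with $t_0(A') < d$ and $A/A' = \cI(U)$ is induced from $d$; applying the inductive hypothesis to $0 \to A' \to B \to B/A' \to 0$ makes $B' := B/A'$ semi-induced, reducing us to the sequence $0 \to \cI(U) \to B' \to C \to 0$. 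Now $\rH^{\VI}_1(C) = \ker\bigl(U \to \rH^{\VI}_0(B')\bigr)$, and I claim $\cI$ of this kernel is the submodule $\cI(U) \cap B'_{\prec d}$ of $B'$, so that $\rH^{\VI}_1(C) = 0$ is equivalent to $\cI(U) \cap B'_{\prec d} = 0$. Indeed, since $\cI(U)$ is generated in degree $d$, the composite $\cI(U) \hookrightarrow B' \twoheadrightarrow B'/B'_{\prec d}$ factors through the bottom graded piece of the good filtration of the semi-induced module $B'/B'_{\prec d}$, which by Corollary~\ref{cor:good-filtration-new} is the module induced from $d$ on $\rH^{\VI}_0(B')_d$; by Proposition~\ref{prop:d-equivalence} this composite equals $\cI(\phi)$ for some $\phi\colon U \to \rH^{\VI}_0(B')_d$, whence $\cI(U) \cap B'_{\prec d} = \ker\cI(\phi) = \cI(\ker\phi)$ is itself induced from $d$. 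So $B'_{\prec d}$ --- which is semi-induced and generated in degrees $< d$ --- contains the submodule $\cI(\ker\phi)$ induced from $d$, and I want to conclude this submodule is $0$.

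The hard part, and the reason this corollary is not purely formal, is exactly that last step: \emph{a nonzero $\VI$-module induced from degree $d$ admits no embedding into a module generated in degrees $< d$} (equivalently, the degree-$d$ generators of a submodule cannot become decomposable in the ambient module). This fails in a general graded module category --- for instance for graded $\bk[x]$-modules the analogous two-out-of-three statement is already false --- so the proof must exploit something specific to $\VI$. I would establish it by intersecting the putative induced submodule with the good filtration of $B'_{\prec d}$ and using Propositions~\ref{prop:induced-from-d} and~\ref{prop:induced-generated-in-d} to reduce to the case where the ambient module is itself induced from a degree $< d$, and then to the elementary observation that a nonzero module induced from a positive degree has $\VI$-evaluations of unbounded dimension and so cannot embed into a constant functor; the bookkeeping in this reduction is where I expect the real work to lie.
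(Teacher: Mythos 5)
Your dispatch of the first two cases via the long exact sequence in $\rH^{\VI}_\bullet$ matches the paper and is fine. The substance is in the case $A,B$ semi-induced, and here you have a genuine gap at exactly the place you flag.

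Your reduction is correct up to the point where you must show $\cI(\ker\phi)=\cI(U)\cap B'_{\prec d}=0$, i.e.\ that a nonzero module induced from degree $d$ cannot embed as a submodule of the semi-induced module $B'_{\prec d}$ generated in degrees $<d$. The argument you sketch for this does not close. After intersecting with the good filtration of $B'_{\prec d}$ and passing to the first nonzero layer, what you obtain is a nonzero \emph{submodule} $N$ of the induced-from-$d$ module, sitting inside a module $\cI(W)$ induced from some $e<d$ --- but $N$ is only a submodule, not itself induced from $d$, so the ``unbounded growth'' observation no longer applies to it; and the eventual appeal to dimension counting presupposes $\bk$ is a field, whereas this corollary is used and stated (and must hold, since it feeds Theorem~\ref{thm:derived-saturated-are-semi-induced} and the shift theorem) for an arbitrary noetherian $\bk$ with no characteristic hypothesis. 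The $\bk[x]$ example you cite is actually evidence that this cannot be a purely numerical fact: something $\VI$-specific has to enter.

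The missing ingredient is one you have already placed on the table but not used: $C$ is torsion-free. The paper does not pass through the submodule-of-an-induced-module claim at all. Instead it inducts on a global bound $d$ for the degrees of generation of all three modules and shows directly, using torsion-freeness of $N=C$, that $A\cap B_{\prec d}=A_{\prec d}$: an element of $A\cap B_{\prec d}$ outside $A_{\prec d}$ would come from some $y\in B$ of dimension $<d$ with $\overline y\neq 0$ in $C$ but with a pushforward of $\overline y$ vanishing in $C$, contradicting torsion-freeness of $C$. This yields exactness of $0\to A_{\prec d}\to B_{\prec d}\to C_{\prec d}\to 0$, whence by induction $C_{\prec d}$ is semi-induced; the snake lemma on the degree-$\preceq d$ layers then shows $C_{\preceq d}/C_{\prec d}$ is induced from $d$ (Proposition~\ref{prop:induced-from-d}), so $C$ is semi-induced. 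In your notation this same torsion-freeness argument shows directly that $\cI(U)\cap B'_{\prec d}=0$ (any element would be a pushforward from degree $<d$ whose image in $C$ is simultaneously torsion and nonzero), which is the cleanest repair of your last step and does not require a field. You should replace the dimension-counting sketch with this torsion-freeness argument.
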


\begin{proof} {\bf Note: There is a mistake in this published version of the proof which was pointed out to us by Powell. See the Errata below.}
	Let $0 \to L \to M \to N \to 0$ be an exact sequence of modules presented in finite degrees. Then there exists a $d$ such that $L, M,  N$ are generated in degree $\le d$. We proceed by induction on $d$. First suppose that $N$ is semi-induced. In this case, $\rH^{\VI}_1(L) =0$ if and only if $\rH^{\VI}_1(M) = 0$. So the result follows from Proposition~\ref{prop:ramos-new}. Now suppose that $L$ and $M$ are semi-induced. By the previous corollary, $N$ is derived-saturated. In particular, $N$ is torsion-free. We claim that \[0 \to L_{\prec d} \to M_{\prec d} \to N_{\prec d} \to 0\] is an exact sequence. To see this, first note that we have a natural exact sequence \[0 \to L\cap M_{\prec d} \to M_{\prec d} \to N_{\prec d} \to 0, \] and that $ L_{\prec d} \subset L\cap M_{\prec d}$. Now suppose, if possible,  $x$ is in $L\cap M_{\prec d}$ but not in $L_{\prec d}$. Then there exists a $y \in M(\aF^{d-1})$ and a $\VI$-morphism $f$ such that $f_{\star}(y) = x \in L$. Since $x$ is not in $L_{\prec d}$, we see that $y \notin L(\aF^{d-1})$. Let $\ol{x}, \ol{y}$ be the images of $x$ and $y$ in $N$. Then $\ol{y} \neq 0$, but $ f_{\star}(\ol{y}) =\ol{x} = 0$. This contradicts the fact that $N$ is torsion-free, proving the claim.

	By induction, $N_{\prec d}$ is semi-induced. Thus it suffices to show that $N/N_{\prec d}$ is induced from $d$. By applying the snake lemma to the diagram below, 
		\[\begin{tikzcd}
		0 \ar{r}  & L \ar{r} & M \ar{r} & N \ar{r} & 0 \\ 
		0 \ar{r}  & L_{\prec d} \ar{u} \ar{r} & M_{\prec d} \ar{u}  \ar{r} & N_{\prec d} \ar{u}  \ar{r} & 0
		\end{tikzcd}\]
we obtain an exact sequence \[0 \to L_{\preceq d}/L_{\prec d} \to M_{\preceq d}/M_{\prec d} \to N_{\preceq d}/N_{\prec d} \to 0.\] Since the first two objects in this exact sequence are induced from $d$, so is the third (Proposition~\ref{prop:induced-from-d}). This completes the proof.
\end{proof} 

\begin{proof}[{\bf Errata (corrected proof)}]
	The part of the argument in the proof above which shows that ``if $L$ and $M$ are semi-induced then so is $N$'' is incorrect as pointed out to us by Powell. The error above is that $x$ should be a sum of element of the form $f_{\star}(y)$, instead of being equal to one such element. Here we provide a corrected argument due to Powell for that part:
	
Suppose that the injection $L \to M$ is given by $\phi$, and that $L$ and $M$ are generated in degrees $\le d$.	We show by induction on $d$ that $\coker(\phi)$ is semi-induced. If $d=0$, then $L$ and $M$ are induced, and so the claim follows from Proposition~\ref{prop:induced-from-d}. Assume now that $d >0$. The map $\phi$ induces a map $\psi \colon L/L_{\prec d} \to M/M_{\prec d}$ of modules induced from $d$. Thus by Proposition~\ref{prop:induced-from-d}, the kernel of $\psi$ is induced from $d$.

Let $L'$ be the pullback of $\ker(\psi)$ to $L$. Then we have an exact sequence \[ 0 \to L_{\prec d} \to L' \to \ker(\psi) \to 0,  \] and so $L'$ is semi-induced (by the correct part of the argument for the corollary above). Clearly, $\phi$ restricts to an injection $L' \to M_{\prec d}$. Reducing modulo $L_{\prec d}$, we get an injection $\ker(\psi) \to M_{\prec d}/L_{\prec d}$. Since $M_{\prec d}/L_{\prec d}$ generated in degrees $< d$, the lemma below shows that $\ker(\psi) =0$. By induction, the cokernel of the restriction $\phi_{\prec d} \colon L_{\prec d} \to M_{\prec d}$ of $\phi$ is semi-induced.  Now by the snake lemma applied to the diagram 	\[\begin{tikzcd}
0 \ar{r}  & L \ar{r} & M \ar{r} & \coker(\phi) \ar{r} & 0 \\ 
0 \ar{r}  & L_{\prec d} \ar{u} \ar{r} & M_{\prec d} \ar{u}  \ar{r} & \coker(\phi_{\prec d}) \ar{u}  \ar{r} & 0
\end{tikzcd}\]
we obtain an exact sequence \[0 \to L/L_{\prec d} \xrightarrow{\psi} M/M_{\prec d} \to \coker(\phi)/\coker(\phi_{\prec d}) \to 0.\] By Proposition~\ref{prop:induced-from-d}, we see that $\coker(\phi)/\coker(\phi_{\prec d})$ is induced from $d$. Since $\coker(\phi_{\prec d})$ and $\coker(\phi)/\coker(\phi_{\prec d})$ are semi-induced, we see that $\coker(\phi) =N$ is semi-induced as well (by the correct part of the argument for the corollary above). 
\end{proof}	

\begin{lem}[This lemma does not appear in the published version] Let $W$ be a nonzero $\bk[\GL_d]$-module, and let $M$ be a $\VI$-module generated in degrees $< d$. Then there are no injective maps from $\cI(W)$ to $M$.
\end{lem}
\begin{proof}
Without loss of generality, we may assume that $W$ is singly generated by $w \in W$ and $M$ is finitely generated. We prove the assertion by noetherian induction on $\bk$.

Suppose first that $\bk$ is a field. Then $\dim_{\bk} \cI(W)(\aF^n) $ is a polynomial in $q^n$ of degree exactly $d$  (see Lemma~\ref{lem:dimension} as well). On the other hand, $M$ is a quotient of an induced module of the form $\cI(V)$ where $V$ is finitely generated and concentrated in degrees $<d$. And so $\lim_{n \to \infty} \frac{\dim_{\bk} M(\aF^n)}{q^{nd}}  = 0$. Thus the assertion holds over fields.

Next suppose that $\bk$ is an integral domain, and let $K$ be its fraction field. Suppose, if possible, there is an injection $\phi \colon \cI(W) \to M$.  Since $K$ is flat over $\bk$ we see that $K \otimes_{\bk}  \cI(W) \to K \otimes_{\bk}  M$ is an injection. By the noetherian induction (the  field case above), we see that  $K \otimes_{\bk}  \cI(W) =0$. And so every element of $W$ is annihilated by a nonzero element of $\bk$. Let $\fa$ be the annihilator of $w \in W$. Note that $\fa$ in nonzero and annihilates $W$.  We have an injection $\Hom_{\bk}(\bk/\fa, \cI(W)) \to \Hom_{\bk}(\bk/\fa, M)$. Note here that $\Hom_{\bk}(\bk/\fa, \cI(W))  = \cI(\Hom_{\bk}(\bk/\fa, W) )$. By noetherian induction, the assertion holds over $\bk/ \fa$. This implies that $\Hom_{\bk}(\bk/\fa, W) = 0$, and so $W$ in not annihilated by $\fa$. This is a contradiction. Thus the assertion holds over integral domains.

Finally suppose that $\bk$ is not an integral domain. Then there are two nontrivial ideals $\fa, \fb \subset \bk$ such that $\fa \fb =0$. Suppose, if possible, there is an injection $\phi \colon \cI(W) \to M$.  Then $\Hom_{\bk}(\bk/\fa, \cI(W)) \to \Hom_{\bk}(\bk/\fa, M)$ is an injection as well. By noetherian induction, we see that $\Hom_{\bk}(\bk/\fa, W) = 0$. It follows that $w$ is not annihilated by $\fa$. Let $a \in \fa$ be such that $a w \ne 0$, and let $W'$ be the submodule of $W$ generated by $a w$. Since $\fa \fb = 0$, $W'$ is annihilated by $\fb$. We have an injection $\Hom_{\bk}(\bk/\fb, \cI(W')) \to \Hom_{\bk}(\bk/\fb, M)$.  By noetherian induction, we see that $\Hom_{\bk}(\bk/\fb, W') = 0$. It follows that $W'$ is not annihilated by $\fb$, a contradiction. Thus the assertion holds over non-integral domains as well. This completes the proof.
\end{proof}	

\begin{question} 
	\label{conj:semi-induced-intersection}
	Let $A, B, N$ be  semi-induced modules, and assume that $A, B \subset N$. Then is it true that $A \cap B$ is semi-induced?
\end{question}

\subsubsection{The case of non-describing characteristic}
We now assume that we are in the non-describing characteristic and prove the converse of Corollary~\ref{cor:semi-induced-are-derived-saturated}. Along the way, we show that $\newshift$ commutes with $\Gamma$ which, indeed, is a crucial step of our proof.

\begin{lemma}
	\label{lem:maschke}
	Let $V$ be a $\bk[G]$-module, and assume that the size of $G$ is invertible in $\bk$. Let $x$ be an element of $V_G$, and let $\wt{x}$ be a lift of $x$ in $V$. Then \begin{enumerate}
		\item $1/|G|\sum_{\sigma \in G} \sigma \wt{x}$ in another lift of $x$.
		\item $x = 0$ if and only if $\sum_{\sigma \in G} \sigma \wt{x} = 0$.
	\end{enumerate}
\end{lemma}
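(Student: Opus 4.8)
The plan is to realize both statements through the Maschke averaging idempotent $e \coloneq \frac{1}{|G|}\sum_{\sigma \in G}\sigma \in \bk[G]$, which is a well-defined element precisely because $|G|$ is a unit in $\bk$. First I would record its elementary properties: $\tau e = e\tau = e$ for every $\tau \in G$ (since left/right multiplication by $\tau$ permutes $G$), hence $e^2 = e$; and consequently $e(\tau w - w) = ew - ew = 0$ for all $w \in V$ and $\tau \in G$. Writing $U \coloneq \langle \tau w - w : \tau \in G, w \in V\rangle$, so that $V_G = V/U$ with quotient map $\pi \colon V \to V_G$, this says that $e$ annihilates $U$.

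Next I would prove the key intermediate claim that $\pi$ restricts to a bijection $eV \xrightarrow{\sim} V_G$. For surjectivity: given $x \in V_G$ with lift $\wt{x}$, one has $\pi(\sigma\wt{x}) = \pi(\wt{x})$ for each $\sigma$, so averaging (and using that $\frac{1}{|G|}\cdot|G| = 1$ in $\bk$) gives $\pi(e\wt{x}) = \pi(\wt{x}) = x$, with $e\wt{x} \in eV$. For injectivity: if $ev \in \ker\pi = U$, then since $e$ kills $U$ we get $ev = e(ev) = e^2 v = ev$ forces $ev = 0$; more precisely $0 = e(ev) = ev$.

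With this in hand both parts are immediate. Part (a): $\frac{1}{|G|}\sum_{\sigma \in G}\sigma\wt{x} = e\wt{x}$, and we computed $\pi(e\wt{x}) = x$, so it is another lift of $x$. Part (b): $x = \pi(e\wt{x})$ with $e\wt{x} \in eV$, so by injectivity of $\pi|_{eV}$ we have $x = 0$ if and only if $e\wt{x} = 0$, and since $|G|$ is invertible this holds if and only if $\sum_{\sigma \in G}\sigma\wt{x} = 0$.

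I do not expect a genuine obstacle here; the only point requiring a little care is that $V$ is an arbitrary $\bk[G]$-module, not necessarily projective, so one must argue purely via the idempotent identities $e^2 = e$ and $eU = 0$ rather than invoking semisimplicity, and one must use the hypothesis that $|G|$ is a unit at exactly the two places where division by $|G|$ occurs.
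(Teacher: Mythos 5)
Your argument is correct, and it is the canonical proof: average to get the idempotent $e = \tfrac{1}{|G|}\sum_{\sigma}\sigma$, observe $e^2=e$ and $e$ annihilates the augmentation part $U = \ker\pi$, and deduce that $\pi\colon eV \to V_G$ is a bijection, from which both parts follow. The paper does not spell this out (it says only ``This is a standard result''), so there is no alternative route to compare against; your write-up is a complete and accurate proof of exactly the standard fact being invoked.
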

\begin{proof}
	This is a standard result.
\end{proof}

\begin{lemma}
	\label{lem:combinatorial-identity}
	Let $M$ be a torsion-free $\VI$-module, and let $X$ be a vector space. Then $\newshift^X M$ is torsion-free.
\end{lemma}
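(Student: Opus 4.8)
The plan is to reduce to the case $\dim X = 1$ (using $\newshift^{X+Y} = \newshift^Y \newshift^X$ from Proposition~\ref{prop:iterated-shift}(b), so that the one-dimensional case suffices by induction on $\dim X$), and then to analyze directly when a coinvariant class $x \in \newshift M(Z) = M(X+Z)_{\bU_X(Z)}$ can be killed by a $\VI$-morphism. So suppose $\dim X = 1$, fix $Z$, take a nonzero $x \in \newshift M(Z)$, and suppose there is a $\VI$-morphism $g \colon Z \to Z'$ with $g_\star(x) = 0$ in $\newshift M(Z') = M(X+Z')_{\bU_X(Z')}$. Lift $x$ to $\wt{x} \in M(X+Z)$. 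By Lemma~\ref{lem:maschke}(a) we may replace $\wt x$ by its average $\frac{1}{|\bU_X(Z)|}\sum_{\sigma \in \bU_X(Z)} \sigma_\star \wt x$, which is $\bU_X(Z)$-invariant and still lifts $x$; and by Lemma~\ref{lem:maschke}(b), $g_\star(x) = 0$ means precisely that $\sum_{\tau \in \bU_X(Z')} \tau_\star \tau^X(g)_\star \wt x = 0$ in $M(X+Z')$.

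The first key step is to reorganize this sum. Write $\bU_X(Z') \cong (Z')^{\dim X}\rtimes \bU_X(0) = Z' \rtimes \bU_X(0)$ (here $\dim X=1$), and recall that $\tau^X(g)_\star \wt x$ is already $g_\star\bU_X(Z) = $ (a subgroup corresponding to $g(Z) \le Z'$)-invariant because $\wt x$ was $\bU_X(Z)$-invariant and $\bU_X$ is a $\VI$-group. So the sum over $\bU_X(Z')$ collapses (up to a unit, which is where non-describing characteristic is used) to a sum over coset representatives for $\bU_X(Z)$ inside $\bU_X(Z')$; these representatives can be indexed by the linear maps parametrizing how $X$ sits relative to a complement of $g(Z)$ in $Z'$. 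The upshot should be an identity of the form $\sum_{\text{(some combinatorial index set)}} (\text{morphism})_\star \wt x = 0$ in $M(X+Z')$, where each morphism in the sum is an \emph{injective} linear map from $X+Z$ (this is the ``interesting combinatorial identity'' alluded to in the introduction after Theorem~\ref{intro-semi-induced-saturated}).

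The second key step is to extract from this identity a single $\VI$-morphism annihilating $\wt x$, contradicting torsion-freeness of $M$. The natural way: observe that one of the terms in the sum — say the one coming from the ``generic'' choice of complement, i.e.\ the term where $X$ maps to a line in general position — is, up to composing with an automorphism, equal to $\tau^X(g)_\star \wt x$ composed with an invertible reorganization, and show that $M(X+Z')$ splits (as a $\bk$-module, or after a further shift) in a way that isolates this term from the others; then the vanishing of the whole sum forces the generic term, hence $\wt x$ itself after an injection, to be zero. I expect the main obstacle to be exactly this: setting up the bookkeeping of coset representatives for $\bU_X(Z) \subset \bU_X(Z')$ carefully enough that the combinatorial identity is both correct and usable, and then arguing that the vanishing of the sum implies the vanishing of a single summand — this last point is what genuinely needs $|\bU|$ invertible and is false in describing characteristic (consistent with the remark that the theorem fails there). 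An alternative, possibly cleaner route worth trying: enlarge $Z'$ further (replace $Z'$ by $Z' + \aF^N$ for large $N$) so that there is ``enough room'' to choose the complement of $g(Z)$ transverse to everything, reducing the combinatorial identity to a statement one can check by a direct counting argument over $\aF$, and then use that $\wt x$ is torsion in $M$ to conclude $\wt x = 0$, hence $x = 0$.
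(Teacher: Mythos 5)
Your setup matches the paper's: reduce to $\dim X = 1$ via Proposition~\ref{prop:iterated-shift}, lift $x$ to a $\bU_X(Z)$-invariant $\wt x$ using Lemma~\ref{lem:maschke}, and deduce from $g_\star(x)=0$ a relation of the form $\sum_{\sigma \in \bU_X(Y)} \sigma f_\star(\wt x) = 0$ in $M$ (the paper takes $g$ to be a fixed codimension-one inclusion $f\colon Z\to Z+Y$, which suffices since any $\VI$-morphism factors through such inclusions up to automorphism). You are also right that this is where the interesting combinatorics lives and that $q$-invertibility will be needed a second time beyond Maschke. However, there is a genuine gap at the crucial final step, and you are honest enough to flag it yourself.

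Your first plan --- isolating one summand by exhibiting a $\bk$-module splitting of $M(X+Z')$ that separates the terms $\sigma f_\star(\wt x)$ --- is not viable: there is no reason for such a splitting to exist in an arbitrary torsion-free $M$, and indeed if one did exist the non-describing hypothesis would not enter the way it must. Your second plan (stabilize $Z'$ and count) is too vague to assess. The paper's actual idea, which both of your plans miss, is to make the problem \emph{universal}: replace $M$ by the induced module $\cI(W)$ where $W = \bk[\Hom_{\VB}(X'\oplus Z,-)]$ with generator $[\alpha]$, let $N\subset \cI(W)$ be the $\VI$-submodule generated by $\sum_{\sigma\in\bU_X(Y)}\sigma f_\star([\alpha])$, and observe that the map $\psi\colon\cI(W)\to M$ sending $[\alpha]\mapsto \wt x$ factors through $\cI(W)/N$. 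It then suffices to prove $\cI(W)/N$ is torsion, since a torsion module admits no nonzero map to a torsion-free one. This is accomplished by a concrete group-algebra identity inside $\bk[\GL(X+Y+Z)]$: with $S$ a suitable set of $q-1$ automorphisms swapping $X$ and $Y$, one verifies
\[\Bigl(\sum_{\tau\in\bU_Y(X)}\tau - \sum_{\tau\in S}\tau\Bigr)\Bigl(\sum_{\sigma\in\bU_X(Y)}\sigma f_\star([\alpha])\Bigr) = q\, f_\star([\alpha]),\]
so $f_\star([\alpha])\in N$ once $q$ is invertible, which makes the image of $[\alpha]$ (and hence all of $\cI(W)/N$) torsion. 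This is \emph{not} an argument that one summand must vanish; it is an operator identity that reproduces the original element, up to a factor of $q$, from the sum. You correctly located where $q$-invertibility must re-enter, but without passing to the free module and finding the explicit relation the argument cannot be completed.
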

\begin{proof}
	We may assume that $X$ is of dimension one (Proposition~\ref{prop:iterated-shift}). Let $Y$ be another vector space of dimension one. It suffices to show that the map $f_{\star} \colon \newshift^X M(Z) \to \newshift^X M(Z + Y)$ induced by the inclusion $f \colon Z \to Z +Y$ is injective for every $Z$. Suppose $f_{\star}(x) = 0$ for some $x$. By the previous lemma, there is a lift $\wt{x} \in \Sigma^X M(Z) = M(X+Z)$ of $x$ which is invariant with respect to $\bU_X(Z)$.  Since $f_{\star}(x) =0$ and $f_{\star}(\wt{x}) \in \Sigma^X M(Z + Y) = M(X+Z+Y)$ is a lift of $f_{\star}(x)$, the previous lemma tells us that \[\sum_{\sigma \in \bU_X(Y + Z)} \sigma f_{\star}(\wt{x}) = 0. \] But $\bU_X(Y + Z) = \bU_X(Y) \times \bU_X(Z)$ and $\wt{x}$ is invariant with respect to  $\bU_X(Z)$, and so we conclude that \[\sum_{\sigma \in \bU_X(Y)} \sigma f_{\star}(\wt{x}) = 0. \]  
	
	Let $W$ be the $\VB$ module given by $\bk[\Hom_{\VB}(X' \oplus Z, -)]$ where $X'$ is a one-dimensional space. Fix an isomorphism $\alpha \colon X' + Z \to X + Z$. Then $[\alpha]$ is a generator of the $\VI$-module $\cI(W)$.	There is a unique map $\psi \colon \cI(W) \to M$ which takes $[\alpha]$ to $\wt{x}$. Let $N$ be the $\VI$-submodule of $\cI(W)$ generated by $\sum_{\sigma \in \bU_X(Y)} \sigma f_{\star}([\alpha])$. Then the equation at the end of the last paragraph shows that $\psi$ factors through the projection $\cI(W) \to \cI(W)/N$. We claim that $\psi =0$. Since $M$ is torsion-free and $\psi$ factors through  $\cI(W)/N$, it suffices to show that $\cI(W)/N$ is a torsion module. Fix an isomorphism $h \colon Y \to X$. Let $S$ be the collection consisting of $q-1$ automorphisms of $X+Y+Z$ that fix $Z$, send $Y$ to $X$ via $h$, and send $X$ to $Y$ via a nonzero multiple of $h^{-1}$. Then the following equation can be easily verified: \[ (\sum_{\tau \in \bU_Y(X)} \tau - \sum_{\tau \in S} \tau)(\sum_{\sigma \in \bU_X(Y)} \sigma f_{\star}([\alpha])) = q f_{\star}([\alpha]). \] Since $q$ is invertible, the above equation shows that $f_{\star}([\alpha]) \in N$. This shows that $\cI(W)/N$ is torsion, and so $\psi =0$. This implies that $x = 0$, completing the proof. 
\end{proof}

\begin{proposition}
	\label{prop:newshift-commutes-with-Gamma}
	$\newshift$ commutes with $\Gamma$.
\end{proposition}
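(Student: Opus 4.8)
The plan is to show both inclusions $\newshift\Gamma(M)\subseteq\Gamma(\newshift M)$ and $\Gamma(\newshift M)\subseteq\newshift\Gamma(M)$ for an arbitrary $\VI$-module $M$, working one dimension at a time so that by Proposition~\ref{prop:iterated-shift} we may assume $X$ is one-dimensional. The easy inclusion is $\newshift\Gamma(M)\subseteq\Gamma(\newshift M)$: this does not need the non-describing hypothesis. Indeed $\Gamma(M)$ is torsion, and since $\newshift$ is a quotient of $\Sigma$ (which visibly sends torsion modules to torsion modules, as killing an element by some $f_\star$ survives passing to coinvariants), $\newshift\Gamma(M)$ is torsion and maps to $\newshift M$; its image lands in the maximal torsion submodule $\Gamma(\newshift M)$. (One should check the map $\newshift\Gamma(M)\to\newshift M$ has the right image, i.e. that the natural map $\Gamma(M)\to M$ induces on $\newshift$ a monomorphism, but torsion-freeness is not needed here since we only need the image to be torsion; alternatively argue directly that every element of $\newshift\Gamma(M)(Z)$ is represented by a torsion element of $M(X+Z)$.)

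The substance is the reverse inclusion, which is exactly where Lemma~\ref{lem:combinatorial-identity} enters. First I would reduce to a torsion-free statement. Consider the exact sequence $0\to\Gamma(M)\to M\to M/\Gamma(M)\to 0$ with $M/\Gamma(M)$ torsion-free. Applying $\newshift$, which is exact in the non-describing characteristic (Lemma~\ref{lem:newshift-injectivity}, or directly: $\newshift=(\Sigma\,-)_{\bU}$ is exact since $|\bU(Z)|$ is invertible and $\Sigma$ is exact), gives $0\to\newshift\Gamma(M)\to\newshift M\to\newshift(M/\Gamma(M))\to 0$. By Lemma~\ref{lem:combinatorial-identity}, $\newshift(M/\Gamma(M))$ is torsion-free, hence $\Gamma(\newshift(M/\Gamma(M)))=0$. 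Now apply the left-exact functor $\Gamma$ to this short exact sequence: we get $0\to\Gamma(\newshift\Gamma(M))\to\Gamma(\newshift M)\to\Gamma(\newshift(M/\Gamma(M)))=0$, so $\Gamma(\newshift M)=\Gamma(\newshift\Gamma(M))$. But $\Gamma(M)$ is torsion, and $\newshift$ of a torsion module is torsion (as noted above), so $\newshift\Gamma(M)$ is torsion and therefore $\Gamma(\newshift\Gamma(M))=\newshift\Gamma(M)$. Combining, $\Gamma(\newshift M)=\newshift\Gamma(M)$, and chasing through the maps shows this identification is the canonical one, i.e. $\newshift$ commutes with $\Gamma$.

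The main obstacle is entirely packaged into Lemma~\ref{lem:combinatorial-identity} (the combinatorial identity involving the unipotent radicals $\bU_X(Y)$, $\bU_Y(X)$ and the $q-1$ extra automorphisms), which the excerpt has already established; given that lemma, the proposition is a short diagram chase using exactness of $\newshift$ and left-exactness of $\Gamma$. The one subtlety to be careful about is that exactness of $\newshift$ is genuinely used: without the non-describing hypothesis $\newshift$ is only right exact, so the short exact sequence $0\to\newshift\Gamma(M)\to\newshift M\to\newshift(M/\Gamma(M))\to0$ would fail to be left exact and the argument collapses. I would state this dependence explicitly. A clean way to organize the write-up is: (i) $\newshift$ preserves torsion modules; (ii) reduce to $\dim X=1$; (iii) the four-term exact sequence above; (iv) conclude.
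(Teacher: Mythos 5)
Your proof is correct and follows essentially the same route as the paper: the easy inclusion from $\newshift$ preserving torsion, and the reverse inclusion by applying the exact functor $\newshift$ to $0\to\Gamma(M)\to M\to M/\Gamma(M)\to 0$ and invoking Lemma~\ref{lem:combinatorial-identity} to see that $\newshift(M/\Gamma(M))$ is torsion-free. The only stylistic difference is that the paper states the conclusion of the diagram chase directly ($\Gamma\newshift M\subseteq\newshift\Gamma M$ since the quotient is torsion-free) rather than spelling out the application of $\Gamma$ to the short exact sequence, and your reduction to $\dim X=1$ via Proposition~\ref{prop:iterated-shift} is superfluous since $\newshift$ already denotes the one-dimensional shift.
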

\begin{proof}
	Let $M$ be a $\VI$-module, and $X$ be a vector space of dimension one so that $\newshift = \newshift^X$. Since $\newshift$ is exact and $\Gamma M \subset M$, we see that $\newshift \Gamma M \subset \Gamma \newshift M$. For the reverse inclusion, first note that $M/\Gamma M$ is torsion-free. Thus by the previous lemma and the exactness of $\newshift$, we see that \[ \newshift(M/\Gamma M) = (\newshift M)/ (\newshift\Gamma M) \] is torsion-free, and so the torsion part $\Gamma\newshift M$ of $\newshift M$ is contained in $\newshift\Gamma(M)$, completing the proof.
\end{proof}

We now focus on showing that $\newshift$ preserves $\Gamma$-acyclic objects. We need a couple of lemma.

\begin{lemma}[{\cite[Corollaire~A.4]{djament}}] 
\label{lem:differential-criterion}	
	Let $M$ be a $\VI$-modulen, and let $n$ be a non-negative integer. Then the following are equivalent: \begin{enumerate}
		\item $\rR^k \Gamma (M) = 0$ for $0 \le k \le n$.
		\item For each $0 \le  k \le n$ and vector spaces $X_1, \ldots, X_k$, the $\VI$-module $\Delta^{X_1}\Delta^{X_2}\cdots \Delta^{X_k} M$ is torsion-free.
	\end{enumerate}  
\end{lemma}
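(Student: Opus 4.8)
The plan is to prove the equivalence by induction on $n$, with the commutation of the shift past local cohomology (Lemma~\ref{lem:Sigma-preserves-Gamma-acyclics}) as the engine. The base case $n=0$ is a tautology: statement (a) reads $\rR^0\Gamma(M)=\Gamma(M)=0$, and statement (b), for the empty string of vector spaces, reads that $M$ is torsion-free, so the two coincide. For the inductive step I would first record that, for every $n\ge 0$, \emph{both} (a) and (b) contain the clause ``$M$ is torsion-free'' (namely the $k=0$ case of (b), respectively $\rR^0\Gamma(M)=0$). Hence throughout we are free to use, for any vector space $X$, the short exact sequence
\[ 0 \to M \to \Sigma^X M \to \Delta^X M \to 0, \]
which is left exact precisely because $M$ is torsion-free (Proposition~\ref{prop:sigma-injectivity}, taking the source of $\ell$ to be $0$).

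Next I would apply $\Gamma$ to this sequence and rewrite $\rR^i\Gamma(\Sigma^X M)=\Sigma^X\rR^i\Gamma(M)$ via Lemma~\ref{lem:Sigma-preserves-Gamma-acyclics}; by naturality of that isomorphism the connecting map $\rR^i\Gamma(M)\to\Sigma^X\rR^i\Gamma(M)$ becomes the canonical one, which I write $\Sigma^\ell$. This yields the long exact sequence
\[ \cdots \to \Sigma^X\rR^i\Gamma(M) \to \rR^i\Gamma(\Delta^X M) \to \rR^{i+1}\Gamma(M) \xrightarrow{\ \Sigma^\ell\ } \Sigma^X\rR^{i+1}\Gamma(M) \to \cdots, \]
and this is the bridge between the two conditions. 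For (a)$\Rightarrow$(b): if $\rR^i\Gamma(M)=0$ for $0\le i\le n$, then the two outer terms around $\rR^i\Gamma(\Delta^X M)$ vanish in the range $0\le i\le n-1$, forcing $\rR^i\Gamma(\Delta^X M)=0$ there, for every $X$; the inductive hypothesis then gives that each $\Delta^X M$ satisfies (b) at level $n-1$, and since a nonempty iterate $\Delta^{X_1}\cdots\Delta^{X_k}M$ with $1\le k\le n$ is a $(k-1)$-fold difference of $\Delta^{X_k}M$, this together with torsion-freeness of $M$ is exactly (b) for $M$ at level $n$. For (b)$\Rightarrow$(a): condition (b) for $M$ says in particular that each $\Delta^X M$ satisfies (b) at level $n-1$, so by induction $\rR^i\Gamma(\Delta^X M)=0$ for $0\le i\le n-1$; plugging this into the long exact sequence shows that $\Sigma^\ell\colon\rR^k\Gamma(M)\to\Sigma^X\rR^k\Gamma(M)$ is injective for every $X$ and every $1\le k\le n$, i.e.\ $\rR^k\Gamma(M)$ is torsion-free.

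To finish this last direction I would use that $\rR^k\Gamma(M)$ is also a \emph{torsion} module. This is immediate: computing $\rR\Gamma(M)$ from an injective resolution $M\to I^\bullet$ realizes $\rR^k\Gamma(M)$ as a subquotient of the torsion module $\Gamma(I^k)$, and $\Modtors$ is closed under subquotients. A module that is simultaneously torsion and torsion-free is zero, so $\rR^k\Gamma(M)=0$ for $1\le k\le n$; combined with $\rR^0\Gamma(M)=0$ this is (a) for $M$ at level $n$, closing the induction.

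I do not expect a serious obstacle once Lemma~\ref{lem:Sigma-preserves-Gamma-acyclics} is in hand --- that lemma carries all the substantive content, and the rest is bookkeeping (keeping the index ranges straight and invoking the left-exactness that torsion-freeness provides). The one point deserving care is checking that the connecting maps in the long exact sequence are genuinely the canonical maps $\Sigma^\ell$, and not merely abstractly isomorphic to them, since the whole argument turns on the resulting injectivity statement for $\rR^k\Gamma(M)$.
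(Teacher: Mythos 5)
Your proof is correct and takes essentially the same route as the paper's: induction on $n$, with the short exact sequence $0\to M\to\Sigma^X M\to\Delta^X M\to 0$ (available once $M$ is known torsion-free) and Lemma~\ref{lem:Sigma-preserves-Gamma-acyclics} doing all the work, the vanishing of the outer terms in the long exact sequence giving (a)$\Rightarrow$(b), and injectivity of the canonical map $\rR^k\Gamma(M)\to\Sigma^X\rR^k\Gamma(M)$ plus the torsion/torsion-free dichotomy giving (b)$\Rightarrow$(a). The naturality point you flag at the end is indeed the one subtlety, and the paper addresses it with a commutative triangle identifying $\rR^n\Gamma(\Sigma^\ell)$ with $\Sigma^\ell$ post-composed with the canonical isomorphism.
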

\begin{proof}
We prove the assertion by induction on $n$. The base case $n=0$ is trivial. Assume now that $n>0$, and that the assertion holds for smaller values of $n$. 

Suppose first that (b) holds. Then, by induction, $\rR^k \Gamma (M) = 0$ for $0 \le k < n$. In particular, $M$ is torsion free. So for any vector space $X$, we have a short exact sequence: \[0 \to M \to \Sigma^{X} M \to \Delta^{X} M \to 0.  \] By induction, $\rR^k \Gamma(\Delta^{X}M) = 0$ for $0 \le k <n$. Thus the long exact sequence corresponding to the exact sequence above yields that $\rR^n \Gamma(M) \to \rR^n \Gamma(\Sigma^X M)$ is injective. We have the following natural commutative diagram: 	\[
\begin{tikzcd}
\rR^n\Gamma (M) \ar{rr}{\Sigma^{\ell}} \ar{rrd}[swap]{\rR^n\Gamma (\Sigma^{\ell})} & {} & \Sigma^X \rR^n\Gamma (M) \ar{d} \\ 
{} & {} &  \rR^n\Gamma (\Sigma^X M)  
\end{tikzcd}
\]where $\ell$ is the map from $0$ to $X$. Since the vertical map is an isomorphism (Lemma~\ref{lem:Sigma-preserves-Gamma-acyclics}), we conclude that the horizontal map is injective as well. Since this holds for each $X$ and $\rR^n\Gamma (M)$ is a torsion module, we have $\rR^n\Gamma (M) = 0$. Thus (a) holds.

Conversely, suppose that (a) holds. Since $n > 0$, the module $M$ is torsion-free. So for any vector space $X$, we have a short exact sequence: \[0 \to M \to \Sigma^{X} M \to \Delta^{X} M \to 0.  \] The corresponding long exact sequence yields $\rR^{k}\Gamma(\Sigma^{X} M) \cong \rR^k\Gamma(\Delta^X M)$ for $0 \le k <n$. By Lemma~\ref{lem:Sigma-preserves-Gamma-acyclics}, we conclude that $\rR^k\Gamma(\Delta^X M) = 0$ for $0 \le k <n$. Now (b) follows immediately from the induction hypothesis. This completes the proof.
\end{proof}

\begin{lemma}
	\label{lem:new-torsion-free}
	Let $M$ be a $\VI$-module, and let $X, X_1, \ldots, X_k$ be vector spaces. Suppose that the $\VI$-module $\Delta^{X_1}\Delta^{X_2}\cdots \Delta^{X_k} M$ is torsion-free. Then $\Delta^{X_1}\Delta^{X_2}\cdots \Delta^{X_k} \newshift^X M$ is torsion-free.
\end{lemma}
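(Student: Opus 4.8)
The plan is to reduce the statement to a purity assertion about a quotient of a torsion-free module, and then to invoke a combinatorial identity of the same nature as the one proving Lemma~\ref{lem:combinatorial-identity}. First, since $\newshift^X$ is an iterate of one-dimensional shifts (Proposition~\ref{prop:iterated-shift}(b)), it suffices to treat the case $\dim X = 1$: one applies the statement once per factor of a maximal flag on $X$, the conclusion of each application supplying the hypothesis of the next. Write $D \coloneq \Delta^{X_1}\Delta^{X_2}\cdots\Delta^{X_k}$. Since $\Sigma^X$ is exact and $\Sigma^X\Sigma^{X_i} = \Sigma^{X_i}\Sigma^X$, it commutes with every $\Delta^{X_i}$ (cf. Proposition~\ref{prop:commuting-pairs}(a)), so $D\Sigma^X = \Sigma^X D$; and since $DM$ is torsion-free by hypothesis and $\Sigma^X$ preserves torsion-freeness (it commutes with $\Gamma$, Proposition~\ref{prop:sigma-injectivity}(a)), the module $\Sigma^X DM$ is torsion-free.

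Next I would use the $\VI$-module surjection $\Sigma^X M \to (\Sigma^X M)_{\bU_X} = \newshift^X M$ onto $\bU_X$-coinvariants; let $R$ be its kernel, so that $0 \to R \to \Sigma^X M \to \newshift^X M \to 0$ is exact. In the non-describing characteristic, $|\bU_X(Z)|$ is invertible in $\bk$ for every $Z$, so taking $\bU_X$-coinvariants is exact; hence $\newshift^X$ is exact and therefore so is $R$ (nine lemma applied to a short exact sequence of modules). Applying the right-exact functor $D$ to the sequence above and using $D\Sigma^X M = \Sigma^X DM$ yields an exact sequence $DR \to \Sigma^X DM \to D\newshift^X M \to 0$. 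Thus $D\newshift^X M$ is the quotient of the torsion-free module $\Sigma^X DM$ by the image of $DR$, and it is torsion-free precisely when this image is a saturated submodule; explicitly, one must show that if an element $\xi$ of some $(\Sigma^X DM)(Z)$ has a $\VI$-translate lying in the image of $DR$, then $\xi$ itself does.

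This purity statement is the relative version, over $D$, of Lemma~\ref{lem:combinatorial-identity} (which is the case $D = \id$), and establishing it is \textbf{the main obstacle}. I would argue as in the proof of Lemma~\ref{lem:combinatorial-identity}: present $\xi$ through an induced module, observe that after enlarging the underlying space by a copy of the vector space $Y$ realizing the $\VI$-translate the relevant generator becomes decomposable through $R$, and then apply an averaging identity of the form $\bigl(\sum_{\tau\in\bU_Y(X)}\tau - \sum_{\tau\in S}\tau\bigr)\bigl(\sum_{\sigma\in\bU_X(Y)}\sigma(-)\bigr) = q\,(-)$, together with the invertibility of $q$, to pull $\xi$ back into the image of $DR$ over $Z$. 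The care needed beyond Lemma~\ref{lem:combinatorial-identity} is essentially bookkeeping: the groups $\bU_X$ act only on the $X$-directions and commute with the data defining $D$ (the auxiliary spaces $X_1,\dots,X_k$ and the variable $Z$), so the identity, set up inside $\Sigma^X M$ exactly as there, is unaffected by passage to the iterated difference functor $D$. As in Lemma~\ref{lem:combinatorial-identity}, this is the only step of the argument that uses the non-describing characteristic.
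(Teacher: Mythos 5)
Your outline follows the paper's broad strategy: reduce to $\dim X = 1$, present $D\newshift^X M$ as a quotient of the torsion-free module $\Sigma^X DM$ (with $D = \Delta^{X_1}\cdots\Delta^{X_k}$), and prove saturation by adapting the averaging argument of Lemma~\ref{lem:combinatorial-identity}. But the step you dismiss as bookkeeping is the crux, and the justification you offer does not work as stated. The paper opens its proof by recording the key identity
\[ \Delta^{X_1}\cdots\Delta^{X_k}\newshift^X M(Z) \;=\; N(X+Z)_{\bU_X(X_1+\cdots+X_k+Z)}, \qquad N \coloneq \Delta^{X_1}\cdots\Delta^{X_k}M, \]
i.e.\ $D\newshift^X M$ is the coinvariants of $\Sigma^X DM$ under the \emph{enlarged} unipotent group $\bV_X(Z) \coloneq \bU_X(\textstyle\sum_i X_i + Z)$, not under $\bU_X(Z)$. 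In your language: the image of $DR$ at $Z$ is the $\bV_X(Z)$-augmentation submodule of $DM(X+Z)$, which in general properly contains the $\bU_X(Z)$-augmentation submodule. Once this is recorded, the paper's proof is literally Lemma~\ref{lem:combinatorial-identity} rerun with $N$ in place of $M$ and $\bV_X$ in place of $\bU_X$: one takes a $\bV_X(Z)$-invariant lift $\wt{x}$ via Lemma~\ref{lem:maschke} (using that $|\bV_X(Z)|$ is a power of $q$, hence invertible), uses the product decomposition $\bV_X(Y+Z) = \bU_X(Y)\times\bV_X(Z)$ (valid since $\dim X = 1$) to reduce the vanishing of $f_\star(x)$ to $\sum_{\sigma\in\bU_X(Y)}\sigma f_\star(\wt{x}) = 0$, and then applies the same combinatorial identity together with the torsion-freeness of $N$.

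Your sketch never identifies this group. The phrase ``the groups $\bU_X$ act only on the $X$-directions and commute with the data defining $D$'' reads as if you intend to take $\bU_X(Z)$-coinvariants of $DM(X+Z)$; but that is a different module from $D\newshift^X M(Z)$, because the functors $\Delta^{X_i}$ enlarge the ambient space and hence enlarge the unipotent group under which one must pass to coinvariants --- $D$ applied to a $\bU_X$-coinvariant quotient is not the $\bU_X$-coinvariant quotient of $D$. Without pinning down $\bV_X$ and its product decomposition, neither the invariant-lift step (you must know over which finite $p$-group to average) nor the isolation of the $\bU_X(Y)$-sum on which the combinatorial identity acts can be carried out, so the heart of the argument is still missing from the proposal.
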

\begin{proof}
	By Proposition~\ref{prop:commuting-pairs}, we see that $\Delta^{X_1}\Delta^{X_2}\cdots \Delta^{X_k} \Sigma^X M = \Sigma^X \Delta^{X_1}\Delta^{X_2}\cdots \Delta^{X_k} M$. Set $N = \Delta^{X_1}\Delta^{X_2}\cdots \Delta^{X_k} M$, and note that \[\Delta^{X_1}\Delta^{X_2}\cdots \Delta^{X_k} \newshift^X M(Z) = \Sigma^X \Delta^{X_1}\Delta^{X_2}\cdots \Delta^{X_k} M(Z)_{\bU_X(\sum_{i=1}^k X_i + Z)} =  N(X+ Z)_{\bU_X(\sum_{i=1}^k X_i + Z)}.\] Set $\bV_{X}(-) = \bU_X(\sum_{i=1}^k X_i + -)$.  We now follow the proof of Lemma~\ref{lem:combinatorial-identity} closely.
	
We may assume without loss of generality that $X$ is of dimension one. Let $Y$ be another vector space of dimension one. It suffices to show that the map $f_{\star} \colon \Sigma^X N(Z)_{\bV_X(Z)} \to \Sigma^X N(Z + Y)_{\bV_X(Z+Y)}$ induced by the inclusion $f \colon Z \to Z +Y$ is injective for every $Z$. Suppose $f_{\star}(x) = 0$ for some $x$. By Lemma~\ref{lem:maschke}, there is a lift $\wt{x} \in \Sigma^X N(Z) = N(X+Z)$ of $x$ which is invariant with respect to $\bV_X(Z)$.  Since $f_{\star}(x) =0$ and $f_{\star}(\wt{x}) \in \Sigma^X N(Z + Y) = N(X+Z+Y)$ is a lift of $f_{\star}(x)$, Lemma~\ref{lem:maschke} tells us that \[\sum_{\sigma \in \bV_X(Y + Z)} \sigma f_{\star}(\wt{x}) = 0. \] But $\bV_X(Y + Z) = \bU_X(Y) \times \bV_X(Z)$ and $\wt{x}$ is invariant with respect to  $\bV_X(Z)$, and so we conclude that \[\sum_{\sigma \in \bU_X(Y)} \sigma f_{\star}(\wt{x}) = 0. \]  
	
	Let $W$ be the $\VB$ module given by $\bk[\Hom_{\VB}(X' \oplus Z, -)]$ where $X'$ is a one-dimensional space. Fix an isomorphism $\alpha \colon X' + Z \to X + Z$. Then $[\alpha]$ is a generator of the $\VI$-module $\cI(W)$.	There is a unique map $\psi \colon \cI(W) \to N$ which takes $[\alpha]$ to $\wt{x}$. Let $N'$ be the $\VI$-submodule of $\cI(W)$ generated by $\sum_{\sigma \in \bU_X(Y)} \sigma f_{\star}([\alpha])$. Then the equation at the end of the last paragraph shows that $\psi$ factors through the projection $\cI(W) \to \cI(W)/N'$. We claim that $\psi =0$. Since $M$ is torsion-free and $\psi$ factors through  $\cI(W)/N'$, it suffices to show that $\cI(W)/N'$ is a torsion module. This has already been established in the proof of Lemma~\ref{lem:combinatorial-identity}. So $\psi = 0$. This implies that $x=0$, completing the proof.
\end{proof}

\begin{proposition}
	\label{prop:newshift-preserves-gamma-acyclics}
	The functor $\newshift$ preserves $\Gamma$-acyclic objects.
\end{proposition}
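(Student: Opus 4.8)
The plan is to deduce the statement from the two technical inputs already in place: Djament's differential criterion (Lemma~\ref{lem:differential-criterion}) and the torsion-freeness transfer result (Lemma~\ref{lem:new-torsion-free}). Recall that a $\VI$-module $M$ is $\Gamma$-acyclic exactly when $\rR^k\Gamma(M) = 0$ for all $k \ge 0$. Applying Lemma~\ref{lem:differential-criterion} with every choice of $n$, this is equivalent to the assertion that the iterated difference module $\Delta^{X_1}\Delta^{X_2}\cdots\Delta^{X_k} M$ is torsion-free for every $k \ge 0$ and every tuple of vector spaces $X_1,\ldots,X_k$.

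So, first I would take a $\Gamma$-acyclic module $M$ and read off from the criterion that all of its iterated difference modules $\Delta^{X_1}\cdots\Delta^{X_k} M$ are torsion-free. Next I would apply Lemma~\ref{lem:new-torsion-free} with this $M$, with $X$ the fixed one-dimensional space defining $\newshift = \newshift^X$, and with the same tuple $(X_1,\ldots,X_k)$, to conclude that $\Delta^{X_1}\cdots\Delta^{X_k}\newshift^X M$ is torsion-free, again for every $k$ and every tuple. Finally I would feed this back into Lemma~\ref{lem:differential-criterion}, now used in the converse direction and again ranging over all $n$, to obtain $\rR^k\Gamma(\newshift^X M) = 0$ for all $k \ge 0$, i.e.\ that $\newshift M$ is $\Gamma$-acyclic.

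The one small point requiring care is that Lemma~\ref{lem:differential-criterion} only controls $\rR^k\Gamma$ in the range $0 \le k \le n$; but since $n$ there is arbitrary, letting $n$ grow gives the full vanishing, so this is not a genuine obstacle. In fact the serious content — that $\newshift$ does not create torsion, encapsulated in the combinatorial identity behind Lemma~\ref{lem:new-torsion-free} (which rests on Lemma~\ref{lem:combinatorial-identity} and crucially on the non-describing characteristic hypothesis) — has already been dispatched, so the present proposition is essentially a formal payoff: the main difficulty lay in establishing Lemma~\ref{lem:new-torsion-free}, and what remains here is a short deduction from the differential criterion. I would also note that, combining this with Proposition~\ref{prop:iterated-shift}, the same conclusion immediately extends to $\newshift^X$ for $X$ of arbitrary dimension by iteration.
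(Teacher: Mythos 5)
There is a genuine gap, and it begins with your opening sentence. You assert that ``$M$ is $\Gamma$-acyclic exactly when $\rR^k\Gamma(M) = 0$ for all $k \ge 0$.'' That is the definition of \emph{derived saturated} (as in Proposition~\ref{prop:triangle}), not of $\Gamma$-acyclic. An object $M$ is $\Gamma$-acyclic in the standard sense if $\rR^k\Gamma(M) = 0$ for $k > 0$, while $\Gamma(M) = \rR^0\Gamma(M)$ is allowed to be nonzero --- indeed, every torsion module is $\Gamma$-acyclic by Lemma~\ref{lem:torsion-injective}, but essentially none of them are derived saturated. Because Lemma~\ref{lem:differential-criterion} requires vanishing in the range $0 \le k \le n$, \emph{including} $k=0$, it simply does not apply to a $\Gamma$-acyclic module $M$ with $\Gamma(M) \ne 0$: already at $k=0$ the criterion fails, since $M$ itself has torsion. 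So your first application of the differential criterion is illegitimate, and the argument as written proves only that $\newshift$ preserves derived-saturated modules, which is a different (and, for the purposes of the paper, insufficient) statement.

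The repair, which is what the paper does, is to first replace $M$ by $M/\Gamma(M)$. For a $\Gamma$-acyclic $M$, the long exact sequence for $0 \to \Gamma(M) \to M \to M/\Gamma(M) \to 0$ together with Lemma~\ref{lem:torsion-injective} shows that $M/\Gamma(M)$ is derived saturated. Now your chain of deductions --- Lemma~\ref{lem:differential-criterion} forward, Lemma~\ref{lem:new-torsion-free}, Lemma~\ref{lem:differential-criterion} backward --- legitimately yields that $\newshift(M/\Gamma(M))$ is derived saturated. To return from this to a statement about $\newshift M$, you need the fact that $\newshift$ commutes with $\Gamma$ (Proposition~\ref{prop:newshift-commutes-with-Gamma}), which gives $\newshift(M/\Gamma(M)) \cong \newshift M / \Gamma(\newshift M)$; applying Lemma~\ref{lem:torsion-injective} once more shows $\newshift M$ is $\Gamma$-acyclic. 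This extra passage through $M/\Gamma(M)$ is not decoration: it is exactly what bridges the gap between $\Gamma$-acyclicity and derived saturation that your proposal conflates.
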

\begin{proof}
	Let $M$ be a $\Gamma$-acyclic object. By Proposition~\ref{prop:triangle} and Lemma~\ref{lem:torsion-injective}, the $\VI$-module $M/\Gamma(M)$ is derived saturated. By Lemma~\ref{lem:differential-criterion}, for each $k \ge 0$ and vector spaces $X_1, \ldots, X_k$, the $\VI$-module $\Delta^{X_1}\Delta^{X_2}\cdots \Delta^{X_k} (M/\Gamma(M))$ is torsion-free. By the previous lemma, for each $k \ge 0$ and vector spaces $X_1, \ldots, X_k$, the $\VI$-module $\Delta^{X_1}\Delta^{X_2}\cdots \Delta^{X_k} \newshift (M/\Gamma(M))$ is torsion-free. By Lemma~\ref{lem:differential-criterion} again, $\newshift (M/\Gamma(M))$ is derived saturated. Since $\Gamma$ commutes with $\newshift$ (Proposition~\ref{prop:newshift-commutes-with-Gamma}), we see that $\newshift M/\Gamma(\newshift M)$ is derived saturated. By Lemma~\ref{lem:torsion-injective}, $\newshift M$ is $\Gamma$-acyclic, completing the proof.
\end{proof}

The following question is quite natural: 

\begin{question} 
	\label{ques:adjoint}
	Do either of $\Sigma$ or $\newshift$ preserve injective objects?  Note: A positive answer is known in the $q=1$ ($\FI$-modules) case; see \cite{negative-one}.
\end{question}

\begin{lemma}
	\label{lem:new-saturated}
If $M$ is derived saturated, then so are $\newshift M$ and $\newdelta M$.
\end{lemma}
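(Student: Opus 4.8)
The plan is to deduce this from the results just established, namely that $\newshift$ is exact, commutes with $\Gamma$ (Proposition~\ref{prop:newshift-commutes-with-Gamma}), and preserves $\Gamma$-acyclic objects (Proposition~\ref{prop:newshift-preserves-gamma-acyclics}). Recall that $M$ is derived saturated if and only if $\rR\Gamma(M) = 0$ (Proposition~\ref{prop:triangle}), which is the same as saying that $\rR^i\Gamma(M) = 0$ for all $i \ge 0$, i.e.\ $M$ is $\Gamma$-acyclic and $\Gamma(M) = 0$ (equivalently, $M$ is $\Gamma$-acyclic and torsion-free).

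First I would handle $\newshift M$. If $M$ is derived saturated, then in particular $M$ is $\Gamma$-acyclic, so by Proposition~\ref{prop:newshift-preserves-gamma-acyclics} the module $\newshift M$ is $\Gamma$-acyclic. Also $M$ is torsion-free, so $\Gamma(M) = 0$; since $\newshift$ commutes with $\Gamma$ (Proposition~\ref{prop:newshift-commutes-with-Gamma}), we get $\Gamma(\newshift M) = \newshift \Gamma(M) = \newshift(0) = 0$. Hence $\newshift M$ is $\Gamma$-acyclic and torsion-free, i.e.\ $\rR^i\Gamma(\newshift M) = 0$ for all $i$, so $\newshift M$ is derived saturated.

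Next I would handle $\newdelta M$. The key is that $\newdelta$ is the cokernel of the natural map $M \to \newshift M$, and by Proposition~\ref{prop:newshift-injectivity} the kernel $\kappa M$ of this map is torsion; since $M$ is torsion-free, $\kappa M = 0$, so we have a short exact sequence $0 \to M \to \newshift M \to \newdelta M \to 0$. Both $M$ and $\newshift M$ are derived saturated by the above, so in the derived category we have $\rR\Gamma(M) = 0$ and $\rR\Gamma(\newshift M) = 0$; the long exact sequence for $\rR\Gamma$ applied to this short exact sequence then forces $\rR\Gamma(\newdelta M) = 0$, so $\newdelta M$ is derived saturated as well. (Alternatively, one can invoke that derived saturated objects are closed under cones/extensions in a triangle, which follows immediately from the long exact sequence.) There is no real obstacle here — this is a short formal consequence of the three preceding propositions; the only point requiring a moment's care is confirming $\kappa M = 0$ so that the sequence defining $\newdelta$ is genuinely short exact, which is exactly Proposition~\ref{prop:newshift-injectivity}.
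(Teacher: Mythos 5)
Your proof is correct and follows essentially the same route as the paper: both establish that $\newshift M$ is derived saturated using Propositions~\ref{prop:newshift-commutes-with-Gamma} and~\ref{prop:newshift-preserves-gamma-acyclics}, and both handle $\newdelta M$ via the short exact sequence $0 \to M \to \newshift M \to \newdelta M \to 0$ (valid since $M$ is torsion-free, so $\kappa M = 0$ by Proposition~\ref{prop:newshift-injectivity}) and the resulting long exact sequence for $\rR\Gamma$. The only stylistic difference is that the paper compresses the $\newshift M$ case into the single identity $\rR\Gamma\newshift M = \newshift\rR\Gamma M = 0$, whereas you unpack $\Gamma$-acyclicity and torsion-freeness separately; the substance is identical.
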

 \begin{proof}
Since $\newshift$ commutes with $\Gamma$ (Proposition~\ref{prop:newshift-commutes-with-Gamma})  and preserves $\Gamma$-acyclic objects (Proposition~\ref{prop:newshift-preserves-gamma-acyclics}), we have $\rR \Gamma \newshift M = \newshift \rR \Gamma M = 0$. Thus by Proposition~\ref{prop:triangle}, we see that $\newshift M$ is derived saturated. The result about $\newdelta M$ follows from the exact sequence (see Proposition~\ref{prop:newshift-injectivity}) \[ 0 \to M \to \newshift M \to \newdelta M \to 0. \qedhere \] 
 \end{proof}

\begin{lemma}[Non-vanishing coinvariants]
	\label{lem:non-vanishing-coinvariants}
	Suppose $K \le H \le G$ are finite groups. Let $W$ be a $\bk[H]$-module such that $K$ acts trivially on $W$. Then for any $\bk[G]$-submodule $V$ of $\Ind_{H}^G W$, we have $V_K =0 \Longleftrightarrow V=0$.
\end{lemma}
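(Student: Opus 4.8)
The plan is to reduce the statement to a statement about a single group $G$ acting on $\bk[G/K]$-type modules, and then to exhibit an explicit idempotent-like element witnessing that $V_K = 0$ forces $V = 0$. First I would observe that $\Ind_H^G W = \bk[G]\otimes_{\bk[H]}W$, and since $K$ acts trivially on $W$, there is a natural surjection $\bk[G]\otimes_{\bk[K]}W \twoheadrightarrow \Ind_H^G W$; but more usefully, because $K$ acts trivially on $W$, the $K$-coinvariants functor interacts simply with the induction. Concretely, choosing coset representatives $g_1,\dots,g_m$ for $G/H$, every element of $\Ind_H^G W$ is uniquely of the form $\sum_i g_i\otimes w_i$, and the $K$-action permutes (with $H$-twists) these summands. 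The key point is that $V_K = 0$ means exactly that for every $v\in V$, the element $\sum_{\sigma\in K}\sigma v$ lies in the span of $\{\sigma v' - v' : \sigma\in K,\ v'\in V\}$ — equivalently, that the image of $V$ in $(\Ind_H^G W)_K$ is zero. So it suffices to show the composite $V \hookrightarrow \Ind_H^G W \to (\Ind_H^G W)_K$ is injective.

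The main step is therefore to show that $V \cap \ker\bigl(\Ind_H^G W \to (\Ind_H^G W)_K\bigr) = 0$, i.e.\ that $V$ meets the augmentation-type submodule $I_K\cdot \Ind_H^G W$ trivially, where $I_K$ is the augmentation ideal of $\bk[K]$. Here is where I would use that $V$ is a $\bk[G]$-submodule and not merely a $\bk[K]$-submodule: pick any nonzero $v\in V$ and write $v=\sum_i g_i\otimes w_i$ with at least one $w_{i_0}\neq 0$. Acting by a suitable element $g\in G$ permits us to assume $i_0$ corresponds to the identity coset, so $v$ has a nonzero component $1\otimes w$ with $w\neq 0$ in the $K$-trivial "summand" $1\otimes W$ — on which $K$ acts trivially, hence the projection $\Ind_H^G W \to 1\otimes W$ (along the other cosets — but one must be careful since $K$ does not preserve the coset decomposition) is the obstacle. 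The honest statement is: decompose $\Ind_H^G W$ as a $\bk[K]$-module into $K$-orbits of cosets. On the orbit of the trivial coset of size one (fixed by $K$, since $K\le H$ fixes $1\cdot H$), the $K$-action on that summand is trivial, so coinvariants there are the identity. On every other $K$-orbit $\{g_{j_1},\dots,g_{j_r}\}$ of size $r>1$, the summand is $\bk[K]\otimes_{\bk[K_j]}(\cdots)$ and its $K$-coinvariants are $\bigl(\cdots\bigr)_{K_j}$, which need not vanish, but the augmentation-kernel part is controlled. The cleanest route: show that the submodule $\bigl(\bigoplus_{j\neq 0} g_j\otimes W\bigr) \cap V$ cannot contain $v$ with nonzero $1\otimes W$-component, by a support/cancellation argument, and that on the $1\otimes W$ part coinvariants are faithful.

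The hard part will be handling the case $\bk$ not a field and $|K|$ not invertible — without Maschke's lemma the coinvariants functor is badly behaved, and the statement's proof presumably relies only on the fact that $K$ acts trivially on $W$ (so that the relevant summand has the group acting trivially and coinvariants there are the identity map) rather than on any averaging. So I would organize the argument purely combinatorially on the coset decomposition: let $\pi\colon \Ind_H^G W \to 1\otimes W$ be the $\bk$-linear projection killing $g_j\otimes W$ for $g_j\notin H$; note $\pi$ is $K$-equivariant (since $K\le H$ stabilizes the coset $H$, and permutes the others among themselves), hence descends to $(\Ind_H^G W)_K \to (1\otimes W)_K = 1\otimes W$; so if $v\in V$ with $0\neq \pi(v)$, then the image of $v$ in $(\Ind_H^G W)_K$ is nonzero, contradicting $V_K = 0$. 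It remains to show every nonzero $v\in V$ can be moved by the $G$-action to have nonzero $\pi$-image: given $v=\sum_i g_i\otimes w_i\neq 0$, choose $i$ with $w_i\neq 0$ and replace $v$ by $g_i^{-1}v$; then the $H$-coset of the identity appears with coefficient $w_i'\neq 0$ (using that $w_i\mapsto$ a nonzero element under the $\bk[H]$-module structure, i.e.\ invertibility of group elements), so $\pi(g_i^{-1}v)\neq 0$. Since $V$ is $G$-stable, $g_i^{-1}v\in V$, giving the contradiction. Thus $V=0$, and the converse direction $V=0\Rightarrow V_K=0$ is trivial. I expect the only genuinely delicate point is verifying $K$-equivariance of $\pi$ and that "moving $v$ by $g_i^{-1}$" indeed produces a nonzero identity-coset component, both of which are routine once the coset bookkeeping is set up carefully.
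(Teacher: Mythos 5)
Your final argument (the paragraph beginning "The cleanest route" through the end) is essentially the paper's proof: both work with the coset decomposition, observe that the $\bk$-linear projection onto the identity-coset component is $K$-invariant because $K\le H$ fixes that coset and acts trivially on $W$, and then use $G$-transitivity on cosets to move a nonzero element of $V$ to one with nonzero identity-coset component, contradicting $V_K=0$. The only cosmetic difference is that you factor the projection through $(\Ind_H^G W)_K$, whereas the paper applies the "evaluate at $\tau_1$" map directly to a relation $\sigma x=\sum_j(x_j-\sigma_j x_j)$ expressing vanishing in $V_K$; the middle portion of your writeup wanders through some unnecessary orbit-decomposition considerations before settling on this.
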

\begin{proof}
	Let $\cT = \{\tau_1, \ldots, \tau_n\}$ be a full set of representatives in $G$ of the left coset space $G/H$. We assume that $\tau_1 = 1_{G}$. Any element $x \in \Ind_{H}^G W$ can be thought of as a function $x \colon \cT \to W$, and the action of $\sigma \in G$ on $x$ is given by $(\sigma x)(\tau_{n_i}) = h_i x(\tau_i)$ where $h_i \in H$ and $n_i$ are uniquely determined by the equation $\sigma \tau_i = \tau_{n_i} h_i$. As a special case, we note that if $\sigma \in K$, then we have $\sigma = \sigma \tau_1 = \tau_1 h_1 = h_1 \in K$. Since $K$ acts trivially on $W$, we conclude that $(\sigma x)(\tau_1) = \sigma (x(\tau_1)) = x(\tau_1)$.

	Assume now that $V$ is nontrivial.   Let $x \in V$ be a nonzero element. As in the previous paragraph, we think of $x$ as a function from $\cT$ to $W$.  Since $G$ acts transitively on $G/H$, there exists a $\sigma \in G$ such that $\sigma x$ is nonzero on $\tau_1$. Now suppose, if possible, the image of $\sigma x$ in $V_K$ is $0$. Then $\sigma x$ can be written as \[\sigma x = \sum_j (x_j - \sigma_j x_j)\] where $x_j$ are in $\Ind_{H}^G W$, and $\sigma_j$ are in $K$.  By the previous paragraph,  $(x_j - \sigma_j x_j)(\tau_1) =0$ for each $j$. It follows that $(\sigma x)(\tau_1) = 0$, which is a contradiction. This completes the proof.
\end{proof}

\begin{lemma}
	\label{lem:derived-saturated-submodule}
	Let $M$ be a derived saturated submodule of a semi-induced module $P$. Then $t_0(M) \le t_0(P)$.
\end{lemma}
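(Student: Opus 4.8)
\textbf{The plan} is to run an induction on $d\coloneq t_0(P)$, using the difference functor $\newdelta=\newdelta^{\aF^1}$ to drop $d$ and feed the induction, with Lemma~\ref{lem:non-vanishing-coinvariants} as the device that keeps the degree of generation from collapsing. The base case $d=-1$ (i.e. $P=0$, forcing $M=0$) is vacuous. For the step, suppose $e\coloneq t_0(M)>d$; we aim for a contradiction. First I would apply $\newdelta$ to the short exact sequence $0\to M\to P\to P/M\to 0$. Since $M$ and $P$ are derived saturated, the long exact sequence for $\rR\Gamma$ shows $P/M$ is derived saturated, hence torsion-free; therefore $\rL_1\newdelta(P/M)=\kappa(P/M)=0$ (Proposition~\ref{prop:delta-1}, Proposition~\ref{prop:newshift-injectivity}), so $\newdelta$ is exact on this sequence and $\newdelta M\hookrightarrow\newdelta P$. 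Moreover $\newdelta M$ is derived saturated: apply $\rR\Gamma$ to $0\to M\to\newshift M\to\newdelta M\to 0$ and use Lemma~\ref{lem:new-saturated} (equivalently, Proposition~\ref{prop:newshift-preserves-gamma-acyclics} and Proposition~\ref{prop:newshift-commutes-with-Gamma}).

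Next I would analyze the two sides of $\newdelta M\hookrightarrow\newdelta P$. On the ambient side, the canonical filtration $0\subset P_{\preceq 0}\subset\cdots\subset P_{\preceq d}=P$ has torsion-free terms and induced graded pieces $\cI(W_j)$ with $W_j$ in degree $j\le d$, so $\newdelta$ is exact on it and, by Proposition~\ref{prop:categorical-derivation}, $\newdelta P$ is semi-induced with graded pieces $\cI(\newshift W_j)$; since $\newshift W_j$ sits in degree $j-1$, we get $t_0(\newdelta P)\le d-1$. The inductive hypothesis (the ambient now has strictly smaller $t_0$) then gives $t_0(\newdelta M)\le t_0(\newdelta P)\le d-1$. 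On the other side, Proposition~\ref{prop:commuting-pairs}(b) gives $\rH^{\VI}_0(\newdelta M)=\newshift\rH^{\VI}_0(M)$, so $t_0(\newdelta M)=\deg\newshift\rH^{\VI}_0(M)$; evaluating in degree $e-1$, $\newshift\rH^{\VI}_0(M)(\aF^{e-1})=\rH^{\VI}_0(M)(\aF^e)_{\bU}$, where $\bU$ is the unipotent radical of the parabolic of $\GL_e$ of shape $(e-1,1)$. If this coinvariant module is nonzero, then $t_0(\newdelta M)\ge e-1>d-1$, contradicting the previous bound. So everything reduces to showing $\rH^{\VI}_0(M)(\aF^e)_{\bU}\ne 0$.

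This last non-vanishing is exactly where Lemma~\ref{lem:non-vanishing-coinvariants} enters, and it is the step I expect to be the main obstacle. Since $q$ is invertible, $\bU$-coinvariants is exact, so $\rH^{\VI}_0(M)(\aF^e)_{\bU}=M(\aF^e)_{\bU}/M_{\prec e}(\aF^e)_{\bU}$, with $M(\aF^e)$ and $M_{\prec e}(\aF^e)$ both $\GL_e$-submodules of $P(\aF^e)$. As $P$ is semi-induced, $P(\aF^e)$ is an iterated extension of modules of the form $\Ind_{P_j}^{\GL_e}(\mathrm{Inf}\,W_j)$, with $P_j$ the stabilizer of $\aF^j\subseteq\aF^e$ and $\mathrm{Inf}\,W_j$ the inflation of $W_j$ along $P_j\twoheadrightarrow\GL_j$, for $j\le d\le e-1$; because $\bU$ fixes $\aF^{e-1}$ (hence every $\aF^j$ with $j\le e-1$) pointwise, $\bU\le P_j$ acts trivially on $\mathrm{Inf}\,W_j$, so Lemma~\ref{lem:non-vanishing-coinvariants} applied to each graded piece, together with the exactness of $(-)_{\bU}$, yields $V_{\bU}=0\iff V=0$ for every $\GL_e$-submodule $V\le P(\aF^e)$. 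The remaining difficulty is to upgrade this ``$\bU$ detects nonzero submodules'' statement to the fact that the \emph{quotient} $\rH^{\VI}_0(M)(\aF^e)$ has nonzero $\bU$-coinvariants, i.e. that $M_{\prec e}(\aF^e)_{\bU}\subsetneq M(\aF^e)_{\bU}$. Here I would realize $\rH^{\VI}_0(M)(\aF^e)$ as a submodule of an induced module — via the homology long exact sequence of $0\to M\to P\to P/M\to 0$ (which identifies it, for $e>d$, with $\rH^{\VI}_1(P/M)(\aF^e)$), or via a minimal presentation $0\to K\to\cI(\rH^{\VI}_0(M))\to M\to 0$ in which $K$ is again a derived saturated submodule of an induced module of smaller $t_0$, so that the inductive hypothesis forces $\rH^{\VI}_0(K)(\aF^e)=0$ — and then apply Lemma~\ref{lem:non-vanishing-coinvariants} a second time. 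Either route makes the crux a matter of keeping careful track of which parabolics and which point-stabilizers contain $\bU$.
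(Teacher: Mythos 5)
Your high-level plan is sound and closely parallels the paper's argument: induct on $d = t_0(P)$, show $\newdelta M$ is a derived saturated submodule of the semi-induced module $\newdelta P$ with $t_0(\newdelta P) < d$, apply the inductive hypothesis to bound $t_0(\newdelta M) \le d-1$, rewrite $t_0(\newdelta M)$ via $\rH^{\VI}_0 \newdelta = \newshift \rH^{\VI}_0$ as a statement about $\bU$-coinvariants of $\rH^{\VI}_0(M)$, and derive a contradiction from Lemma~\ref{lem:non-vanishing-coinvariants}. This is exactly the skeleton of the paper's Case~2.

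However, the step you flag as ``the main obstacle'' is a genuine gap, and neither of your two repair routes fills it. Lemma~\ref{lem:non-vanishing-coinvariants} applies only to a $\GL$-\emph{submodule} of a single $\Ind_H^G W$ with $K$ acting trivially on $W$; it says nothing about subquotients. You need $\rH^{\VI}_0(M)(\aF^e)$ (a quotient of $M(\aF^e)$) to sit inside such an $\Ind$. Route (a) replaces it by the isomorphic $\rH^{\VI}_1(P/M)(\aF^e)$, but that is the kernel of a map of $\VB$-modules and comes with no natural embedding into an induced $\GL_e$-representation of the required form. Route (b) embeds $K = \ker(\cI(\rH^{\VI}_0(M)) \to M)$ into $\cI(\rH^{\VI}_0(M))$, but $t_0(\cI(\rH^{\VI}_0(M))) = e > d$, so the ambient's degree of generation went \emph{up}, not down, and the outer induction cannot be invoked on this pair. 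So the crux non-vanishing $\rH^{\VI}_0(M)(\aF^e)_{\bU} \ne 0$ is not actually established.

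The paper's device for closing this gap is the one extra idea you are missing. Pass to the image $N \coloneq (M+P_{\prec d})/P_{\prec d}$ inside the top induced quotient $I \coloneq P/P_{\prec d}$, and take $r$ to be the \emph{least} integer $>d$ with $\rH^{\VI}_0(N)_r \ne 0$ (if no such $r$ exists, $N$ is induced by Proposition~\ref{prop:induced-generated-in-d} and one instead applies the induction to $M \cap P_{\prec d} \subset P_{\prec d}$ — this is the paper's Case~1). Minimality of $r$ gives $N_{\prec r} = \cI(N_d)$, whence
\[
\rH^{\VI}_0(N)(\aF^r) \;=\; \bigl(N/\cI(N_d)\bigr)(\aF^r) \;\hookrightarrow\; \bigl(I/\cI(N_d)\bigr)(\aF^r) \;=\; \cI\bigl(I_d/N_d\bigr)(\aF^r) \;=\; \Ind_H^{\GL_r} W,
\]
with $\bU$ acting trivially on $W$ by inflation. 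Lemma~\ref{lem:non-vanishing-coinvariants} now applies to this \emph{submodule}, and right exactness of $\rH^{\VI}_0$ together with exactness of $\bU$-coinvariants transports the non-vanishing from $\rH^{\VI}_0(N)(\aF^r)_{\bU}$ to $\rH^{\VI}_0(M)(\aF^r)_{\bU}$, giving $t_0(\newdelta M) \ge r-1 > d-1$ and the desired contradiction. In short: you should work with $N$ (a submodule of the single induced module $I$) rather than $M$ (a submodule of the semi-induced $P$), and with the \emph{minimal} $r > d$ rather than $e = t_0(M)$, since it is minimality that makes $N_{\prec r}$ an honest induced-from-$d$ module and hence makes the needed embedding available.
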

\begin{proof} 	We proceed by induction on $d = t_0(P)$. Denote the induced module $P/ P_{\prec d}$ by $I$ and its submodule $(M + P_{\prec d})/P_{\prec d}$ by $N$. Suppose first that $N$ is an induced submodule of $I$. In this case, we have $t_0(N) \le t_0(I) = d$. Using the exact sequence \[ 0 \to M \cap P_{\prec d} \to M \to N \to 0 \] we see that $M \cap P_{\prec d}$ is a derived saturated submodule of $P_{\prec d}$. By induction, we have $t_0(M \cap P_{\prec d}) \le d-1$, and it follows that $t_0(M) \le d = t_0(P)$. Thus we can assume that $N$ is not an induced module. In this case, there exists an $r > d$ such that $\rH^{\VI}_0(N)_r$ is nonzero. Pick the least such $r$. We claim that $\rH^{\VI}_0(N)(\aF^r)$ is a $\bk[\GL(\aF^r)]$-submodule of $\cI(\rH^{\VI}_0(I/N)_d)(\aF^r)$.  To see this, let $N'$ be the submodule of $\cI(W)$ generated by $N_d$. By Proposition~\ref{prop:induced-generated-in-d}, we have $N' = \cI(N_d)$. By minimality of $r$, we have $N' = N_{\prec r} \subset N$. The claim now follows from the following: \begin{align*}
	\rH^{\VI}_0(N)(\aF^r) &= (N/N_{\prec r})(\aF^r)\\
	&= (N/\cI(N_d))(\aF^r) \\
	&\subset (I/\cI(N_d))(\aF^r) \\
	& = \cI((I/N)_d)(\aF^r) & \mbox{(By Proposition~\ref{prop:induced-from-d})}\\
	& = \cI(\rH^{\VI}_0(I/N)_d)(\aF^r).
	\end{align*}

	Let $A + B + X$ be a decomposition  of $\aF^r$ such that $\dim A = d$ and $\dim X = 1$. Set $W = \rH^{\VI}_0(I/N)(A)$. Clearly, $W$ is a $\bk[\GL(A)]$-module. Let $H$ be the subgroup of $\GL(\aF^r)$ that stabilizes $A$. There is a natural surjection $\phi \colon H \to \GL(A)$. We let $H$ act on $W$ via this surjection. Since $\bU_X(A + B)$ lies in the kernel of $\phi$, we see that $\bU_X(A+B)$ acts trivially on $W$. We also have \[\cI(\rH^{\VI}_0(I/N)_d)(\aF^r) = \Ind_{H}^{\GL(\aF^r)} W. \] By the previous lemma, we conclude that $(\newshift \rH^{\VI}_0(N))_{r-1}$ is nonzero. Since $\rH^{\VI}_0$ is right exact, it follows that $(\newshift \rH^{\VI}_0(M))_{r-1}$ is nonzero. By Proposition~\ref{prop:commuting-pairs}, we see that $t_0(\newdelta M) \ge r-1 > d-1$. But by Lemma~\ref{lem:new-saturated}, $\newdelta M$ is a derived saturated submodule of $\newdelta P$, which contradicts the inductive hypothesis. This contradiction completes the proof.
\end{proof}

The following argument is motivated by \cite[Proposition~2.9]{periodicity}.	

\begin{proposition}
	\label{prop:induced-resolution-semi-induced}
	Let $M$ be a module generated in finite degrees.   If $M$ is derived saturated then it admits a resolution $F_{\bullet} \to M$  of length at most $t_0(M) + 1$ where each $F_i$ is an induced module generated in finite degrees.
\end{proposition}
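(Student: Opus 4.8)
The plan is to induct on $d\coloneq t_0(M)$; when $d=-1$ we have $M=0$ and there is nothing to prove. For the inductive step I would build a \emph{canonical} induced resolution of $M$. Set $K_0\coloneq M$ and recursively $F_i\coloneq\cI(\rH^{\VI}_0(K_i))$ with the Nakayama surjection $F_i\twoheadrightarrow K_i$ (Proposition~\ref{prop:nakayama}, Proposition~\ref{prop:HI}), and $K_{i+1}\coloneq\ker(F_i\to K_i)$. Applying the long exact sequence of $\rR^{\bullet}\Gamma$ to $0\to K_{i+1}\to F_i\to K_i\to 0$, together with $\rR^{\bullet}\Gamma(F_i)=0$ (Proposition~\ref{prop:induced-is-torsion-acyclic}) and $\rR^{\bullet}\Gamma(M)=0$, shows inductively that each $K_i$ is derived saturated, in particular torsion-free; Lemma~\ref{lem:derived-saturated-submodule} then gives $t_0(K_{i+1})\le t_0(F_i)=t_0(K_i)\le d$, so each $F_i$ is induced and generated in finitely many degrees and the construction makes sense. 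It therefore suffices to show that it terminates by step $d+1$, i.e.\ that $K_{d+1}$ is induced: then $0\to K_{d+1}\to F_d\to\cdots\to F_0\to M\to 0$ is the required resolution.

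To prove termination I would descend along the derivative $\newdelta=\newdelta^X$ with $\dim X=1$. Since every $K_i$ is torsion-free we have $\kappa^X(K_i)=0$ (Proposition~\ref{prop:newshift-injectivity}), so by Proposition~\ref{prop:delta-1} the functors $\rL_{\ge 1}\newdelta$ vanish on all three terms of $0\to K_{i+1}\to F_i\to K_i\to 0$, and applying $\newdelta$ yields short exact sequences $0\to\newdelta K_{i+1}\to\newdelta F_i\to\newdelta K_i\to 0$; moreover $\newdelta F_i=\newdelta\cI(\rH^{\VI}_0 K_i)=\cI(\newshift\rH^{\VI}_0 K_i)=\cI(\rH^{\VI}_0\newdelta K_i)$ by Proposition~\ref{prop:categorical-derivation} and Proposition~\ref{prop:commuting-pairs}(b). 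In other words, $\newdelta$ carries the canonical resolution of $M$ to the canonical resolution of $\newdelta M$. Now $\newdelta M$ is again derived saturated (Lemma~\ref{lem:new-saturated}) and $t_0(\newdelta M)=\deg\newshift\rH^{\VI}_0(M)<d$, because $\newshift$ strictly lowers the degree of a nonzero graded $\bk$-module. Hence the inductive hypothesis applies to $\newdelta M$: its canonical resolution stops by step $\le d$, so $\newdelta K_d$ is induced and therefore $\newdelta K_{d+1}$, its next canonical syzygy, is zero.

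It remains to deduce that $K_{d+1}$ — derived saturated and annihilated by $\newdelta^X$ for $\dim X=1$ — is induced; I expect this to be the main obstacle. Using Lemma~\ref{lem:djament-composable-morphisms} one first promotes $\newdelta^X K_{d+1}=0$ to $\newdelta^W K_{d+1}=0$ for \emph{every} $W$, equivalently $K_{d+1}\xrightarrow{\ \sim\ }\newshift^W K_{d+1}$ for all $W$ (Proposition~\ref{prop:newshift-injectivity}); the real content is then to pass from this ``$\newshift$-periodicity'' of a torsion-free module to the conclusion that it lies in the image of $\cI$. This is exactly where the non-describing-characteristic hypotheses are indispensable — exactness of $\newshift$, the fact that $\newshift$ (hence also $\newdelta$) carries induced modules to induced modules (Proposition~\ref{prop:categorical-derivation}, Proposition~\ref{prop:split-injection}), and the combinatorial identity underlying Lemma~\ref{lem:combinatorial-identity} (equivalently, the structure of $\VB$-modules killed by $\newshift$) — and I would isolate ``a derived saturated module on which $\newdelta$ vanishes is induced'' as a separate lemma. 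Finally, in the base instance $d=0$ no such lemma is needed: there $K_1\subseteq F_0$ is derived saturated and generated in degree $\le 0$, hence induced by Proposition~\ref{prop:induced-generated-in-d}, giving a resolution of length $\le 1$.
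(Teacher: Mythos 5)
Your proposal takes a genuinely different route from the paper, and the route has a gap that you yourself flag: the statement ``a derived saturated module generated in finite degrees with $\newdelta K=0$ is induced.'' This is not available at this point in the paper, and I do not see how to prove it without circularity. The most natural argument would be ``$\newdelta^W K=0$ for all $W$ gives $K\cong\newshift^n K$ for all $n$, and for $n\gg 0$ the right-hand side is semi-induced by the shift theorem.'' But Theorem~\ref{thm:shift-theorem} uses Theorem~\ref{thm:derived-saturated-are-semi-induced}, which in turn uses the present proposition, so that path is circular. Nor does the structure of $\VB$-modules $V$ with $\newshift V=0$ save you: such $V$ need not be concentrated in degree $0$ (take $V$ supported in degree $n\ge 2$ with $V(\aF^n)$ having no invariants for the unipotent radical of the $(1,n-1)$-parabolic — e.g.\ a cuspidal representation), so $\newdelta K=0$ does not force $\rH^{\VI}_0(K)$ into a single degree, and Proposition~\ref{prop:induced-generated-in-d} does not apply directly.

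The paper avoids this entirely by choosing a sharper induction variable. It inducts on $d-r$, where $d=t_0(M)$ and $r$ is the least degree in which $\rH^{\VI}_0(M)$ is nonzero, rather than on $d$ alone. The point is that the kernel $K_1$ of the Nakayama surjection vanishes in degrees $\le r$ (since $M$ and $F_0$ both vanish below degree $r$ and agree in degree $r$), so $r(K_1)>r(M)$; and $t_0(K_1)\le t_0(F_0)\le d$ by Lemma~\ref{lem:derived-saturated-submodule}; so $d-r$ strictly decreases and the recursion terminates in at most $t_0(M)+1$ steps. (The paper takes $F_0=\bigoplus_{k\le d}\cI(M_k)$ rather than your $\cI(\rH^{\VI}_0(M))$, but either choice has this vanishing property.)

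Your ``canonical resolution'' is essentially correct and your use of Lemma~\ref{lem:derived-saturated-submodule} to control $t_0(K_{i+1})$ is exactly right; what is missing is only the observation that $r$ strictly increases along the resolution. Concretely: since $K_i$ vanishes below degree $r(K_i)$, so does $F_i=\cI(\rH^{\VI}_0(K_i))$, and in degree $r(K_i)$ the Nakayama map $F_i(\aF^{r(K_i)})\to K_i(\aF^{r(K_i)})$ is an isomorphism, so $K_{i+1}$ vanishes in degrees $\le r(K_i)$. Combined with $t_0(K_{i+1})\le t_0(K_i)\le d$, this gives $r(K_{d})=d$ whenever $K_d\ne 0$, hence $\rH^{\VI}_0(K_d)$ is concentrated in degree $d$, hence $F_d$ is induced from degree $d$, hence $K_{d+1}\subseteq F_d$ with $t_0(K_{d+1})\le d$ is induced from degree $d$ by Proposition~\ref{prop:induced-generated-in-d} — which is precisely the argument you already gave in the case $d=0$. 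If you replace the $\newdelta$-descent and the missing lemma with this $d-r$ bookkeeping, your proof closes up and becomes essentially the paper's.
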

\begin{proof}	 Let $d = t_0(M)$, and let $r$ be the least number such that $\rH^{\VI}_0(M)$ is non-trivial in degree $r$. We prove  by induction on $d-r$ that there is a resolution $F_{\bullet} \to M$ of length at most $d - r+1$. Let $F_0 = \bigoplus_{0 \le k \le d} \cI(V_k)$ where $V_k = M_k$. We note that    $\rH^{\VI}_0(M)_r = V_r = \rH^{\VI}_0(F_0)_r$ and $\rH^{\VI}_0(M)_k = 0=\rH^{\VI}_0(F_0)_k $ for $k<r$. By construction, $t_0(F_0) \le d$ and there is a surjection $\psi \colon F_0 \to M$. Clearly, we have $\rH^{\VI}_0(\ker(\psi))_k =0$ for $k \le r$. Since both $M$ and $F_0$ are derived saturated, we see that $\ker(\psi)$ is derived saturated as well. By the previous lemma, $t_0(\ker(\psi)) \le d$. Thus by induction on $d-r$,  $\ker(\psi)$ admits a resolution of the desired format. We can append $F_0$ to this resolution to get a resolution of $M$, completing the proof.
\end{proof}

\begin{theorem}
	\label{thm:derived-saturated-are-semi-induced}
Assume that we are in the non-describing characteristic. Let $M$ be a module generated in finite degrees. Then $M$ is derived saturated  if and only if it is semi-induced.
\end{theorem}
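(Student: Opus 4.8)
The plan is to deduce the theorem from results already established; the genuinely hard work is contained in Proposition~\ref{prop:induced-resolution-semi-induced}. The ``only if'' direction is immediate and needs no hypothesis on the characteristic: semi-induced modules are derived saturated by Corollary~\ref{cor:semi-induced-are-derived-saturated}. So I would devote the proof to the ``if'' direction, where the non-describing characteristic assumption actually enters — it is used in Proposition~\ref{prop:induced-resolution-semi-induced} via Lemma~\ref{lem:new-saturated}, which in turn rests on Propositions~\ref{prop:newshift-commutes-with-Gamma} and~\ref{prop:newshift-preserves-gamma-acyclics}.

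So suppose $M$ is generated in finite degrees and derived saturated. By Proposition~\ref{prop:induced-resolution-semi-induced}, $M$ admits a finite resolution
\[ 0 \to F_n \to F_{n-1} \to \cdots \to F_1 \to F_0 \to M \to 0 \]
in which each $F_i$ is an induced module generated in finite degrees. I would then prove, by induction on the length $n$ of such a resolution, that \emph{every} $\VI$-module admitting one is semi-induced. When $n=0$ the resolution exhibits $M \cong F_0$ as induced, hence semi-induced. When $n \ge 1$, put $K = \ker(F_0 \to M) = \im(F_1 \to F_0)$; truncating the resolution yields an induced resolution $0 \to F_n \to \cdots \to F_1 \to K \to 0$ of length $n-1$ whose terms are generated in finite degrees, so $K$ is semi-induced by the inductive hypothesis. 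The short exact sequence $0 \to K \to F_0 \to M \to 0$ then has two semi-induced terms, so $M$ is semi-induced by Corollary~\ref{cor:short-exact-sequence-semi-induced}.

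The one point needing a little care is that Corollary~\ref{cor:short-exact-sequence-semi-induced} is phrased for modules presented in finite degrees. This is harmless: by Theorem~\ref{intro:noetherianity} every module occurring above is finitely generated — being a submodule or quotient of one of the finitely generated $F_i$ — and a finitely generated $\VI$-module with finitely generated first syzygy is presented in finite degrees, so the hypothesis of Corollary~\ref{cor:short-exact-sequence-semi-induced} is automatically met at each stage of the induction. I do not anticipate a real obstacle here: the substantive statement — that a derived saturated module is assembled from induced pieces — is precisely Proposition~\ref{prop:induced-resolution-semi-induced}, and the theorem is just its combination with the closure of semi-induced modules under extensions.
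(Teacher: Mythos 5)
Your proposal is correct and matches the paper's own proof: the paper also deduces the ``only if'' direction from Corollary~\ref{cor:semi-induced-are-derived-saturated} and the ``if'' direction by combining Proposition~\ref{prop:induced-resolution-semi-induced} with Corollary~\ref{cor:short-exact-sequence-semi-induced}, exactly as you do via induction on the length of the induced resolution. Your remark about verifying the finite-presentation hypothesis of Corollary~\ref{cor:short-exact-sequence-semi-induced} is a welcome extra level of care that the paper leaves implicit.
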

\begin{proof}
	Corollary~\ref{cor:semi-induced-are-derived-saturated} shows that semi-induced modules are derived saturated. The other implication follows from the previous proposition and Corollary~\ref{cor:short-exact-sequence-semi-induced}.
\end{proof}

An $\FI$-module analog of the result above has been proven in \cite[Theorem~A.9]{djament}.

\subsection{The shift theorem}
Here we assume that $\bk$ is a noetherian ring. The following theorem independently proven by Putman-Sam \cite{putman-sam} and Sam-Snowden \cite{catgb}  is crucial for our purpose.

\begin{theorem}[{\cite{putman-sam, catgb}}]
	\label{thm:noetherianity}
	The category of $\VI$-modules over a noetherian ring is locally noetherian. In particular, if $M$ is a finitely generated $\VI$-module over $\bk$ then $\Gamma(M)$ is supported in finitely many degrees. 
\end{theorem}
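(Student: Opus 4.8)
The statement that $\ModVI$ is locally noetherian is the theorem of Putman--Sam \cite{putman-sam} and Sam--Snowden \cite{catgb} resolving the Lannes--Schwartz conjecture, so strictly nothing new is required; I will sketch the route I would take, namely the Gr\"obner-theoretic one of \cite{catgb} (Putman--Sam instead argue via central stability and the high connectivity of an associated semisimplicial complex). Every finitely generated $\VI$-module is a quotient of a finite direct sum of the representables $\cI(d) = \bk[\Hom_{\VI}(\aF^d,-)]$, because every finitely generated $\VB$-module is a quotient of a finite sum of the $\bk[\GL_d]$'s and the functor $\cI$ is exact; since quotients and finite direct sums of noetherian objects are noetherian, it suffices to prove that each $\cI(d)$ is a noetherian object over an arbitrary noetherian $\bk$.

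The heart is combinatorial. I would encode each morphism $\aF^d \to \aF^m$ of $\VI$ by a normal form: process the standard basis of $\aF^d$ one vector at a time and record, at each step, the coordinates in $\aF^m$ of its image relative to the part of the image already built. This presents $\Hom_{\VI}(\aF^d,-)$ via finite words over a finite alphabet --- finiteness of the alphabet uses $|\aF| = q < \infty$ in an essential way. One then puts a lexicographic-type linear order on these words and checks two properties: it is a well-order, and the associated ``divisibility'' partial order (a word being a $\VI$-compatible subword of another) has no infinite antichain. The second is a Higman/Dickson-type well-quasi-ordering statement. Together they say $\VI$ is (quasi-)Gr\"obner, and the general machinery of \cite{catgb} then promotes ``the generating set of any submodule of $\cI(d)$ lies in a Noetherian poset'' to ``$\cI(d)$ is a noetherian object'', uniformly over all noetherian coefficients; a final small check --- property (F), that a fixed object receives only finitely many morphisms from $\aF^d$ up to the relevant equivalence --- lets one transfer back from the auxiliary word category to $\VI$.

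For the ``in particular'' clause, grant local noetherianity. If $M$ is finitely generated then $\Gamma(M) \subseteq M$ is a submodule of a noetherian object, hence finitely generated, say by $x_1, \ldots, x_r$ with $x_i \in \Gamma(M)(V_i)$ and $(f_i)_\star x_i = 0$ for some injection $f_i \colon V_i \to W_i$. For any injection $g \colon V_i \to U$ with $\dim_{\aF} U \ge \dim_{\aF} W_i$ we can factor $g = h f_i$ for some injection $h$ (map $f_i(V_i)$ onto $g(V_i)$ by the induced isomorphism and extend over a complement of $f_i(V_i)$ in $W_i$, which embeds into a complement of $g(V_i)$ in $U$ since $\dim U - \dim V_i \ge \dim W_i - \dim V_i$), hence $g_\star x_i = 0$. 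So the submodule generated by $x_i$ vanishes on every $U$ with $\dim_{\aF} U \ge \dim_{\aF} W_i$, and therefore $\Gamma(M)$ vanishes in all degrees $\ge \max_i \dim_{\aF} W_i$.

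The single genuinely hard step is the well-quasi-ordering input: arranging the auxiliary word category and an admissible order on it so that ``well-ordered'' and ``Noetherian poset'' hold at once, with the bookkeeping needed to encode $\aF_q$-linear injections as words without creating infinite antichains. The rest --- the reduction to representables, the transfer along the forgetful functor, and the support bound for $\Gamma(M)$ --- is formal.
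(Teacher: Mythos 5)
The paper does not prove this theorem---it is cited directly to Putman--Sam and Sam--Snowden---so there is nothing to compare for the main noetherianity statement, and your Gr\"obner-theoretic sketch is a fair pr\'ecis of the argument in \cite{catgb}. Your derivation of the ``in particular'' clause is the standard one and is correct: $\Gamma(M)$ is finitely generated by noetherianity, each torsion generator $x_i$ is killed by some $f_i\colon V_i\to W_i$, and your factorization lemma (any $g\colon V_i\to U$ with $\dim U\ge\dim W_i$ factors through $f_i$) shows the submodule generated by $x_i$ vanishes in degrees $\ge\dim W_i$, so $\Gamma(M)$ is supported in degrees $<\max_i\dim W_i$.
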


We now state and prove our main theorem (an $\FI$-module analog has been proven by the author in \cite[Theorem~A]{nagpal}). 

\begin{theorem}[The shift theorem]
	\label{thm:shift-theorem}
	Assume that we are in the non-describing characteristic, and let $M$ be a finitely generated $\VI$-module. Then the following holds: \begin{enumerate}
	\item $\newshift^n M$ and $\Sigma^n M$ are semi-induced for large enough $n$.
	\item There exists a finite length complex $I^{\bullet}$ supported in non-negative degrees with the following properties \begin{itemize}
		\item $I^0 = M$.
		\item $I^i$ is semi-induced for $i > 0$.
		\item $I^n = 0$ for $n > t_0(M) + 1$.
		\item $\rH^i(I^{\bullet})$ is supported in finitely many degrees for each $i$.
	\end{itemize}
\end{enumerate}
\end{theorem}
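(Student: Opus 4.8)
The plan is to run a single induction on $d=t_0(M)$: at stage $d$ one first proves part~(a) for $\newshift$, then part~(b) (using part~(a) at stage $d$ and part~(b) at stages $<d$), and finally part~(a) for $\Sigma$ is deduced, for all $M$ at once, from part~(b). I begin with a reduction. Since $M$ is finitely generated, $\Gamma(M)$ is supported in finitely many degrees (Theorem~\ref{thm:noetherianity}), so $\newshift^{n}\Gamma(M)=0$ for $n$ large; as $\newshift$ commutes with $\Gamma$ (Proposition~\ref{prop:newshift-commutes-with-Gamma}), $\newshift^{n}M$ is torsion-free for $n$ large, and because $\newshift$ preserves finite generation (Proposition~\ref{prop:decreasing-function} and Theorem~\ref{thm:noetherianity}) and $\newshift^{a+b}=\newshift^{a}\newshift^{b}$ (Proposition~\ref{prop:iterated-shift}), in proving part~(a) for $\newshift$ we may replace $M$ by such a shift and so assume $M$ torsion-free. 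The base case $d\le 0$ is immediate: a torsion-free $\VI$-module generated in degree $\le 0$ is a constant functor, hence of the form $\cI(W)$, hence induced.

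For the inductive step of part~(a) (so $d\ge1$, $M$ torsion-free), consider the exact sequence $0\to M\to\newshift M\to\newdelta M\to 0$ (Proposition~\ref{prop:newshift-injectivity}), where $\newshift=\newshift^{X}$ with $\dim X=1$. By Proposition~\ref{prop:commuting-pairs}, $t_0(\newdelta M)=d-1<d$, so by induction $\newshift^{m}\newdelta M$ is semi-induced, hence derived saturated (Corollary~\ref{cor:semi-induced-are-derived-saturated}), for all $m\ge m_0$. Since $\newshift$ is exact, commutes with $\Gamma$, and preserves $\Gamma$-acyclic objects (Proposition~\ref{prop:newshift-preserves-gamma-acyclics}), one has $\rR\Gamma(\newshift^{m}N)=\newshift^{m}\rR\Gamma(N)$ for every $N$. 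Applying $\newshift^{m}$ to the long exact sequence in $\rR\Gamma$ attached to the displayed sequence and using $\rR^{i}\Gamma(\newshift^{m}\newdelta M)=0$ for $m\ge m_0$, we find the natural maps $\newshift^{m}\rR^{i}\Gamma(M)\to\newshift^{m+1}\rR^{i}\Gamma(M)$ are isomorphisms for all $i$ and all $m\ge m_0$. Now each $\rR^{i}\Gamma(M)$ is a torsion module and $\newshift$ preserves torsion, so $\newshift^{m_0}\rR^{i}\Gamma(M)\cong\colim_{m}\newshift^{m}\rR^{i}\Gamma(M)$; this colimit vanishes, because evaluated on a space $Z$ it equals $\colim_{m}\rR^{i}\Gamma(M)(X_m+Z)_{\bU_{X_m}(Z)}$ for an exhausting chain of spaces $X_m$, and $\colim_{m}\rR^{i}\Gamma(M)(X_m+Z)=0$ as $\rR^{i}\Gamma(M)$ is torsion, a vanishing inherited by the coinvariants quotient. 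Hence $\newshift^{m_0}\rR^{i}\Gamma(M)=0$ for all $i$, so $\rR\Gamma(\newshift^{m_0}M)=\newshift^{m_0}\rR\Gamma(M)=0$, that is, $\newshift^{m_0}M$ is derived saturated and so semi-induced (Theorem~\ref{thm:derived-saturated-are-semi-induced}). Applying $\newshift^{m}$ to the displayed sequence, $\newshift^{m}M$ and $\newshift^{m+1}M$ are semi-induced together whenever $\newshift^{m}\newdelta M$ is (Corollary~\ref{cor:short-exact-sequence-semi-induced}), so $\newshift^{m}M$ is semi-induced for all $m\ge m_0$; the statement for $\Sigma$ will follow later.

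For part~(b) (induction on $d$ again; the case $M=0$ is trivial), use part~(a) to choose a space $Y$ with $\newshift^{Y}M$ semi-induced, and consider the exact sequence $0\to\kappa^{Y}M\to M\to\newshift^{Y}M\to\newdelta^{Y}M\to 0$. Here $\kappa^{Y}M$ is torsion (Proposition~\ref{prop:newshift-injectivity}) and a submodule of the finitely generated $M$, hence supported in finitely many degrees, and $t_0(\newdelta^{Y}M)<d$ by Corollary~\ref{cor:generation-of-newdelta}. By the inductive hypothesis, $\newdelta^{Y}M$ admits a complex $J^{\bullet}$ as in~(b), of length $\le t_0(\newdelta^{Y}M)+1\le d$, with $J^{0}=\newdelta^{Y}M$, the $J^{i}$ semi-induced for $i\ge1$, and cohomology supported in finitely many degrees. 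Now splice: put $I^{0}=M$, $I^{1}=\newshift^{Y}M$, $I^{i}=J^{i-1}$ for $i\ge2$, with differential $I^{1}\to I^{2}$ the composite $\newshift^{Y}M\to\newdelta^{Y}M=J^{0}\to J^{1}$. Then $I^{0}=M$; each $I^{i}$ with $i\ge1$ is semi-induced (for $i=1$ by the choice of $Y$); $I^{i}=0$ for $i>t_0(\newdelta^{Y}M)+2\le d+1$; and one computes $\rH^{0}(I^{\bullet})=\kappa^{Y}M$ and $\rH^{i}(I^{\bullet})=\rH^{i-1}(J^{\bullet})$ for $i\ge1$, all supported in finitely many degrees. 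This proves~(b).

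It remains to prove part~(a) for $\Sigma$. Applying $\rR\Gamma$ to the triangle $\sigma_{\ge1}I^{\bullet}\to I^{\bullet}\to M\to$, in which $\sigma_{\ge1}I^{\bullet}$ is the subcomplex of $I^{\bullet}$ with $I^{0}$ replaced by $0$ (a bounded complex of semi-induced, hence $\Gamma$-acyclic, modules, so with $\rR\Gamma=0$), and using that $I^{\bullet}$ has torsion cohomology (whence $\rR\bS(I^{\bullet})=0$ by treating each cohomology object via Corollary~\ref{cor:torsion-have-no-saturation}, and so $\rR\Gamma(I^{\bullet})\simeq I^{\bullet}$ by Proposition~\ref{prop:triangle}), we conclude $\rR\Gamma(M)\simeq I^{\bullet}$; in particular each $\rR^{i}\Gamma(M)\cong\rH^{i}(I^{\bullet})$ is supported in finitely many degrees and vanishes for $i>d+1$. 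Since $\Sigma$ commutes with $\rR^{i}\Gamma$ (Lemma~\ref{lem:Sigma-preserves-Gamma-acyclics}), $\rR\Gamma(\Sigma^{n}M)=\Sigma^{n}\rR\Gamma(M)=0$ for $n$ large, so $\Sigma^{n}M$ is derived saturated, hence semi-induced (Theorem~\ref{thm:derived-saturated-are-semi-induced}). The main obstacle is the colimit step in the inductive proof of part~(a) for $\newshift$: a priori one has no finiteness control on the modules $\rR^{i}\Gamma(M)$ (their finite generation only emerges afterwards, from part~(b)), so one cannot simply claim that a large shift annihilates them; the point is instead to extract from the inductive hypothesis applied to $\newdelta M$ that the natural maps $\newshift^{m}\rR^{i}\Gamma(M)\to\newshift^{m+1}\rR^{i}\Gamma(M)$ eventually become isomorphisms, and then to observe that a torsion module is killed by passing to the colimit of its iterated $\newshift$'s.
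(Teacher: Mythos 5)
Your proof is correct, and its overall architecture (induct on $t_0$, first handle $\newshift$, then splice to build $I^{\bullet}$, then deduce $\Sigma$) matches the paper's. There is one genuinely different and noteworthy step. To show $\newshift^Y\rR^i\Gamma(M)=0$, the paper first establishes a uniformity claim -- that a \emph{single} $Y$ (independent of $\dim X$) makes $\newshift^Y\newdelta^X M$ semi-induced for all $X$ -- via a bootstrapping argument through Lemma~\ref{lem:djament-composable-morphisms}, and then invokes a technical lemma asserting that $\newshift^Y N=0$ whenever $N$ is torsion and $\newshift^Y\newshift^{\ell_X}$ is injective for every $X$. You sidestep both by (i) reducing at the outset to torsion-free $M$, (ii) working with $\newdelta=\newdelta^X$ only for $\dim X=1$, thereby getting from the inductive hypothesis that the transition maps $\newshift^m\rR^i\Gamma(M)\to\newshift^{m+1}\rR^i\Gamma(M)$ are eventually isomorphisms, and (iii) observing that $\colim_m\newshift^m T=0$ whenever $T$ is torsion. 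Step (iii) encapsulates the same idea as the paper's technical lemma (torsion elements die under sufficiently large shifts), but your packaging eliminates the need for the uniformity claim, which is a real simplification. Two small remarks. First, both you and the paper tacitly use that the connecting maps in the long exact sequence correspond, under $\rR^i\Gamma\circ\newshift\cong\newshift\circ\rR^i\Gamma$, to the natural maps $\newshift^m T\to\newshift^{m+1}T$; this is true (clear at $i=0$, and it propagates because $\newshift$ is exact and commutes with cohomology of complexes) but deserves a sentence. Second, your base case (``a torsion-free $\VI$-module generated in degree $\le 0$ is constant'') is correct but not quite immediate -- one has to note that the kernel $K\subset\cI(V)$ satisfies $K(Z)\subseteq K(Z')$ and that torsion-freeness of the quotient forces equality -- though the paper's induction effectively starts at $M=0$ and never needs such a statement. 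Your final step for $\Sigma$ runs a derived-category argument through the triangle $\sigma_{\ge1}I^{\bullet}\to I^{\bullet}\to M\to$, whereas the paper shifts $I^{\bullet}$ until it becomes exact and applies Corollary~\ref{cor:short-exact-sequence-semi-induced} along the syzygies; both are fine, and yours anticipates Corollary~\ref{cor:shift-complex}.
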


We need a lemma.

\begin{lemma}
	Let  $Y$ be fixed vector space, and $N$ be a torsion $\VI$-module. For a vector space $X$, let $\ell_X$ denote the map from $0$ to $X$.  If $\newshift^Y\newshift^{\ell_X} \colon \newshift^Y N \to \newshift^Y \newshift^X N$ is an injection for all $X$ then $\newshift^Y N = 0$.
\end{lemma}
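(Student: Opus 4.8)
The plan is to show directly that $\newshift^Y N$ has no nonzero elements, exploiting torsion of $N$ to manufacture, for a putative nonzero element, a vector space $X$ for which $\newshift^Y\newshift^{\ell_X}$ annihilates it. So suppose for contradiction that there is a vector space $Z$ and a nonzero class $x\in\newshift^Y N(Z)=N(Y+Z)_{\bU_Y(Z)}$.

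The first step is a concrete description of the morphism $\newshift^Y\newshift^{\ell_X}$. Unwinding definitions, $\newshift^{\ell_X}\colon\id\to\newshift^X$ is, at a module $M$ and a vector space $W$, the composite $M(W)\to M(X+W)\to M(X+W)_{\bU_X(W)}$ whose first arrow is induced by the standard inclusion $W\hookrightarrow X+W$. Postcomposing with the functor $\newshift^Y$ and evaluating at $Z$ amounts to taking $W=Y+Z$ and then passing to $\bU_Y(Z)$-coinvariants; using the extension of $\VI$-groups $1\to\Sigma^Y\bU_X\to\bU_{X+Y}\to i_X(\bU_Y)\to 1$ from the proof of Proposition~\ref{prop:iterated-shift} to merge the two rounds of coinvariants, one finds that $\newshift^Y\newshift^{\ell_X}$ at level $Z$ is the map
\[ N(Y+Z)_{\bU_Y(Z)}\longrightarrow N(X+Y+Z)_{\bU_{X+Y}(Z)} \]
induced by the standard inclusion $\iota_X\colon Y+Z\hookrightarrow X+(Y+Z)$ followed by the coinvariants quotient. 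In particular, for any lift $\tilde x\in N(Y+Z)$ of $x$, the image $\newshift^Y\newshift^{\ell_X}(x)$ is the class of $(\iota_X)_\star(\tilde x)$ in $N(X+Y+Z)_{\bU_{X+Y}(Z)}$.

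I would then fix a lift $\tilde x\in N(Y+Z)$ of $x$. Since $N$ is a torsion $\VI$-module, $\tilde x$ is a torsion element, so there is an injective linear map $f\colon Y+Z\to V$ with $f_\star(\tilde x)=0$. Choosing a complement $X$ of $\im f$ in $V$, we may identify the codomain of $f$ with $X+Y+Z$ so that $f$ itself becomes the standard inclusion $\iota_X\colon Y+Z\hookrightarrow X+(Y+Z)$; since $N$ carries isomorphisms to isomorphisms, $f_\star(\tilde x)=0$ then reads $(\iota_X)_\star(\tilde x)=0$ in $N(X+Y+Z)$. By the description of the previous step, $\newshift^Y\newshift^{\ell_X}(x)=0$ for this $X$. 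But $\newshift^Y\newshift^{\ell_X}$ is injective by hypothesis, so $x=0$, contradicting our choice. Hence $\newshift^Y N=0$.

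The only step with real content is the concrete description of $\newshift^Y\newshift^{\ell_X}$: one must match the formal natural transformation with an honest $\VI$-morphism-induced map on $N$, keep the flag conventions on $X$, $Y$ and $X+Y$ consistent (as in the remark following Proposition~\ref{prop:decreasing-function}), and correctly compose the two stages of coinvariants through the group extension above. Everything after that is a one-line application of the torsion hypothesis, and — somewhat pleasantly — the argument uses no assumption on the characteristic.
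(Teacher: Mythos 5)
Your proof is correct and takes essentially the same approach as the paper: lift $x$ to $\tilde x\in N(Y+Z)$, use the torsion hypothesis to find an injection out of $Y+Z$ killing $\tilde x$, arrange for it (after a choice of complement) to be the canonical inclusion into some $X+Y+Z$, and conclude that $\newshift^Y\newshift^{\ell_X}$ kills $x$, contradicting injectivity. The only stylistic difference is that the paper phrases the torsion condition as ``for every injection $f\colon Y+Z\to X+Y+Z$'' (which follows because $\GL(X+Y+Z)$ acts transitively on such injections) whereas you directly normalize the one witnessing injection to the standard inclusion — both versions are fine.
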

\begin{proof} Suppose, if possible, $\newshift^Y N(Z)$ is nontrivial for some vector space $Z$, and pick a nonzero element $x \in \newshift^Y N(Z)$. Let $\wt{x}$ be a lift of $x$ in $N(Y+Z)$. Since $N$ is torsion, there is a vector space $X$ such that for every linear injection $f \colon Y+Z \to X +Y+ Z$ the induced map $f_{\star} \colon N(Y+Z) \to N(Y+X+Z)$ takes $\wt{x}$ to zero. But this shows that $\newshift^Y\newshift^{\ell_X} $ takes $x$ to zero, contradicting the injectivity hypothesis. This completes the proof.
\end{proof}

\begin{proof}[Proof of Theorem~\ref{thm:shift-theorem}]
	We first prove that $\newshift^n M$ is semi-induced for large enough $n$. We do this by induction on $t_0(M)$. By Theorem~\ref{thm:noetherianity}, $h_0(M) < \infty$. Let $X$ be a non-trivial vector space. Then the cokernel $\newdelta^X M$ of $M \to \newshift^X M$ satisfies $t_0(\newdelta^X M) < t_0(M)$ (Corollary~\ref{cor:generation-of-newdelta}). Moreover, the kernel $K = \kappa^X(M)$ of $M \to \newshift^X M$ is a torsion-module supported in degrees $\le h_0(M)$ (Lemma~\ref{lem:newshift-injectivity}). 
	
	We claim that $\newshift^Y \newdelta^X M$ is semi-induced for large enough $Y$ which is independent of $\dim X$. To see this, suppose that $X$ is of dimension $g$. Since $t_0(\newdelta^X M) < t_0(M)$, the induction hypothesis implies that there exists a number $k_g$ such that $\newshift^Y \newdelta^X M$ is semi-induced whenever the dimension of $Y$ is larger than $k_g$. Pick a $t$ larger than $h_0(M)$ and $k_1$, and assume that the dimension of $Y$ is at least $t$.  Then $\newshift^Y K = \newshift^Y \kappa^X(M) = 0$, and so Lemma~\ref{lem:djament-composable-morphisms} yields the following exact sequence \[ 0 \to \newdelta^Y M \to \newdelta^{X+Y} M \to \newshift^Y \newdelta^X M \to 0. \] Now suppose $X$ is of dimension 1. Then the last term in this exact sequence is semi-induced as $t > k_1$. We conclude that $\newshift^{Y'} \newdelta^Y M$ is semi-induced if and only if $\newshift^{Y'} \newdelta^{Y+X} M$ is semi-induced (Corollary~\ref{cor:short-exact-sequence-semi-induced}). In other words, we may assume $k_{t+1} = k_t$ for any $t> \max(h_0(M), k_1)$.  Thus if $Y$ is of dimension larger than $h_0(M)$ and $k_i$ for $1 \le i \le \max(h_0(M), k_1) +1$, then $\newshift^Y \newdelta^X M$ is semi-induced for all $X$. This proves the claim.

	Let $Y$ be large enough such that $\newshift^Y \newdelta^X M$ is semi-induced for all $X$, and assume that the dimension of $Y$ is larger than $h_0(M)$. Then $\newshift^Y K = \newshift^Y \kappa^X(M) = 0$, and so we have an exact sequence: \[0 \to \newshift^Y M \to \newshift^Y \newshift^X M \to \newshift^Y \newdelta^X M \to 0.  \]   By Corollary~\ref{cor:semi-induced-are-derived-saturated} and Proposition~\ref{prop:triangle}, we see that $\rR \Gamma (\newshift^Y \newdelta^X M) = 0$. Thus by the exact sequence above, we conclude that $\rR^i \Gamma(\newshift^Y M) \cong \rR^i \Gamma(\newshift^Y \newshift^X M)$ where the isomorphism is given by $\rR^i \Gamma(\newshift^Y \newshift^\ell)$ where $\ell \colon 0 \to X$ is the unique map. By Proposition~\ref{prop:newshift-commutes-with-Gamma}, we see that \[\rR^i \Gamma(\newshift^Y \newshift^\ell -) = \newshift^Y \newshift^\ell  \rR^i\Gamma( - ).\] 	This shows that the map \[\newshift^Y \newshift^{\ell} \colon \newshift^Y \rR^i \Gamma(M) \to  \newshift^Y \newshift^X \rR^i\Gamma(M) \] is an isomorphism for each $X$. The previous lemma implies that $\newshift^Y \rR^i \Gamma(M) =0$. Thus $\rR^i\Gamma(  \newshift^Y M) =0$ for all $i$ (Proposition~\ref{prop:newshift-commutes-with-Gamma}). By Proposition~\ref{prop:triangle} and Theorem~\ref{thm:derived-saturated-are-semi-induced}, $\newshift^Y M$ is semi-induced. Thus $\newshift^n M$ is semi-induced for large $n$ (see Proposition~\ref{prop:iterated-shift}).

To prove that $\Sigma^n M$ is semi-induced for large enough $n$ we need part (b), which we now prove by induction on $t_0(M)$. Let $Y$ be a vector space such that $\newshift^Y M$ is semi-induced, and $\ell \colon 0 \to Y$ be the unique map. Set $I^0 = M$, $I^1 = \newshift^Y M$ where the map $I^0 \to I^1$ is $\newshift^{\ell}$. The cokernel of this map is $\newdelta^{\ell} M$. We have $t_0(\newdelta^{\ell} M) < t_0(M)$ (Corollary~\ref{cor:generation-of-newdelta}). By induction, there is a complex $J^{\bullet}$ of length at most $t_0(M)$ with $J^0 =  \newdelta^{\ell} M$, $J^i$ semi-induces for $i > 0$, and $\rH^i(J^{\bullet})$ finitely supported for each $i$. Now set $I^{i} = J^{i-1}$ for $i \ge 2$, and note that we can naturally append these to $I^0 \to I^1$ to get a complex $I^{\bullet}$. Clearly, this $I^{\bullet}$ has all the required properties. This proves part (b). 
	
Finally, we show that $\Sigma^n M$ is semi-induced for large enough $n$. For this let $I^{\bullet}$ be the complex as in part (b). Let $n$ be large enough such that $\deg \rH^i(I^{\bullet}) < n$ for all $i$. By construction, $\Sigma^n I^{\bullet}$ is exact and $\Sigma^n I^{i}$ are semi-induced for $i>0$ (shift of a semi-induced module is semi-induced; Corollary~\ref{cor:shift-of-semi-induced-and-presentation}). By Corollary~\ref{cor:short-exact-sequence-semi-induced}, $\Sigma^n I^0 = \Sigma^n M$ is semi-induced. This completes the proof.  
\end{proof}

\begin{remark}
	\label{rem:functoriality-shift-theorem} The proof of part (b) above shows that if $M \to N$ is a map of finitely generated $\VI$-modules then we can find complexes $I^{\bullet}$ and $J^{\bullet}$ for $M$ and $N$ respectively (with all the properties as mentioned in part (b)) and a natural map $I^{\bullet} \to J^{\bullet}$ extending the map $M \to N$.
\end{remark}

\begin{remark}
It is easy to see that the shift theorem together with Corollary~\ref{cor:short-exact-sequence-semi-induced} imply that $\ModVI$ is locally noetherian. Since we have only used Corollary~\ref{intro-cor-noeth} in our proof, it follows that Theorem~\ref{intro:noetherianity} is equivalent to its corollary. 
\end{remark}

\section{Some consequences of the shift theorem}

\label{sec:consequences}

Unless otherwise mentioned, we assume that we are in the non-describing characteristic, and that $\bk$ is noetherian.

\subsection{Stable degree and the $q$-polynomiality of dimension}
We define the {\bf stable degree} of a $\VI$-module $M$, denoted $\delta(M)$, by \[\delta(M) \coloneq \inf_{n \ge 0} t_0(\newshift^n M). \]  This is an invariant associated to $\VI$-module with several useful properties that we prove below. An invariant with the same name, but for $\FI$-modules, is discussed in \cite[\S 2]{linearranges}.

\begin{proposition}
	\label{prop:stable-degree}
	Let $M$ be a finitely generated module. We have the following: \begin{enumerate}
		\item  If $M$ is semi-induced, then $\delta(M) = t_0(M)$.
		\item  $\delta(M)$ is the common value of $t_0(\newshift^n M)$ for $n \gg 0 $. 
		\item  $\delta(M)$ is the common value of $t_0(\Sigma^n M)$ for $n \gg 0 $. 
		\item  $\delta(M) = \delta(\Sigma^n M) = \delta(\newshift^n M)$ for any $n \ge 0$.
		\item $\delta(M) \le t_0(M)<\infty$.
		\item  If $ 0 \to L \to M \to N \to 0$ is a short exact sequence, $\delta(M) = \max (\delta(L), \delta(N)).$
		\item If $K$  is a subquotient of $M$,  $\delta(K) \le \delta(M).$
		\item If $T$ is a torsion submodule of $M$, then $\delta(M/T) = \delta(M)$.
		\item The cokernel $\newdelta^X M$ of the natural map $M \to \newshift^X M$  satisfies $\delta(\newdelta^X M) < \max(\delta(M), 0)$.
	\end{enumerate}
\end{proposition}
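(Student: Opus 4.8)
The plan is to establish the nine items roughly in the order (e), (b), the $\newshift$-half of (d), then (a), (f), (g), (h), then (c) together with the $\Sigma$-half of (d), and finally (i). The workhorse for the easy items is the following observation: by Proposition~\ref{prop:decreasing-function} together with Proposition~\ref{prop:iterated-shift}(b), $t_0(\newshift^{n+1}M)=t_0(\newshift\,\newshift^n M)\le t_0(\newshift^n M)$, so $n\mapsto t_0(\newshift^n M)$ is a non-increasing sequence of integers bounded below by $-1$; hence it is eventually constant and its eventual value is its infimum. This is exactly (b). Then (e) is immediate since $\delta(M)\le t_0(\newshift^0 M)=t_0(M)$, which is finite because $M$ is finitely generated over the noetherian ring $\bk$ (Theorem~\ref{thm:noetherianity}). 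Finally $\delta(\newshift^X M)=\delta(M)$ is a reindexing: $\delta(\newshift^X M)=\inf_n t_0(\newshift^n\newshift^X M)=\inf_{m\ge\dim X}t_0(\newshift^m M)=\delta(M)$, using the monotonicity just noted and Proposition~\ref{prop:iterated-shift}(b).

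Next I would dispose of the items that use only exactness of $\newshift$ and the shift theorem. For (a), let $P$ be semi-induced with $d=t_0(P)$; the top graded piece $Q^d=P/P_{\prec d}$ of its natural filtration is induced from $d$ (Corollary~\ref{cor:good-filtration-new}) and nonzero, its $\VI$-homology being $\rH^{\VI}_0(P)_d\ne 0$ (Proposition~\ref{prop:ramos-new}). Applying the exact functor $\newshift^n$ to $P\twoheadrightarrow Q^d$ and using that $\newshift^n\cI(W)$ contains $\cI(W)$ as a direct summand when $W$ is concentrated in degree $d$ (iterate Proposition~\ref{prop:categorical-derivation}), we get $t_0(\newshift^n P)\ge t_0(\newshift^n Q^d)=d$; combined with $t_0(\newshift^n P)\le d$ this gives $t_0(\newshift^n P)=d$ for all $n$, hence $\delta(P)=t_0(P)$. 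Running the same argument with $\Sigma^X$, now using $\Sigma^X\cI(W)=\bigoplus_{k\le d}\cI(k)\otimes_{\bk[\GL_k]}D^d_k(X,\aF^k)\otimes W$ (Proposition~\ref{prop:shift-tensor}) and that $D^d_d(X,\aF^d)$ is a nonzero free $\bk[\GL_d]$-module (Lemma~\ref{lem:rank-decomposition}), shows moreover that $t_0(\Sigma^X P)=t_0(P)$ for every semi-induced $P$; I will reuse this in (c). For (f), given $0\to L\to M\to N\to 0$, the modules $L,N$ are again finitely generated (Theorem~\ref{thm:noetherianity}); apply the exact functor $\newshift^n$ and take $n$ so large that $\newshift^n L,\newshift^n M,\newshift^n N$ are all semi-induced (Theorem~\ref{thm:shift-theorem}), hence have vanishing $\rH^{\VI}_1$ (Proposition~\ref{prop:induced-homology-acyclic}); the long exact sequence in $\VI$-homology collapses to a short exact one, giving $t_0(\newshift^n M)=\max(t_0(\newshift^n L),t_0(\newshift^n N))$, and letting $n\to\infty$ yields (f). Part (g) follows since a submodule and a quotient each have $\delta$ bounded by (f). For (h), a finitely generated torsion submodule $T$ is supported in finitely many degrees (Theorem~\ref{thm:noetherianity}), so $\newshift^n T=0$ for $n$ large, whence $\newshift^n M\cong\newshift^n(M/T)$ for $n$ large and $\delta(M)=\delta(M/T)$.

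The crux is (c), and this is where the shift theorem does the real work. Fix $Y$ large enough that $\newshift^Y M$ is semi-induced (Theorem~\ref{thm:shift-theorem}); then $t_0(\newshift^Y M)=\delta(M)$ by (b). The natural map $M\to\newshift^Y M$ has torsion kernel (Proposition~\ref{prop:newshift-injectivity}), so applying the exact functor $\Sigma^n$ with $n$ large enough that $\Sigma^n M$ is semi-induced — hence derived saturated (Corollary~\ref{cor:semi-induced-are-derived-saturated}) and in particular torsion-free — kills the kernel (a shift of a torsion module is torsion), producing an embedding $\Sigma^n M\hookrightarrow\Sigma^n\newshift^Y M$ of $\Sigma^n M$ into the semi-induced module $\Sigma^n\newshift^Y M$ (Corollary~\ref{cor:shift-of-semi-induced-and-presentation}). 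Lemma~\ref{lem:derived-saturated-submodule} now gives $t_0(\Sigma^n M)\le t_0(\Sigma^n\newshift^Y M)=t_0(\newshift^Y M)=\delta(M)$, the middle equality being the semi-induced case of (a) proved above; and $t_0(\Sigma^n M)\ge t_0(\newshift^n M)=\delta(M)$ for $n$ large by Proposition~\ref{prop:decreasing-function} and (b). Hence $t_0(\Sigma^n M)=\delta(M)$ for all large $n$, and since $t_0(\Sigma^n M)$ is non-increasing (Corollary~\ref{cor:shift-of-semi-induced-and-presentation}) this is its common eventual value, which is (c). Then $\delta(\Sigma^X M)=\delta(M)$ follows from (c) by reindexing the $\Sigma$-sequence (Proposition~\ref{prop:iterated-shift}(a)) just as for $\newshift$. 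I expect this step to be the main obstacle: the point is to set up a squeeze in which Lemma~\ref{lem:derived-saturated-submodule} applies, which forces $\Sigma^n M$ to be simultaneously a submodule of a semi-induced module and itself derived saturated, and there seems to be no way to arrange both without invoking the shift theorem.

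Finally (i). By (h) we may replace $M$ by $M/\Gamma(M)$ and assume $M$ torsion-free: if $M$ is torsion then $\newdelta^X M$ is torsion and $\delta(\newdelta^X M)=-1<0$, and in general a snake-lemma argument (using Proposition~\ref{prop:newshift-injectivity} and Proposition~\ref{prop:sigma-injectivity}, so that $\kappa^X$ vanishes on the torsion-free quotient) shows that this replacement alters $\newdelta^X M$ only by a torsion submodule, hence does not change $\delta(\newdelta^X M)$ by (h). Next, for a splitting $X=X'+X''$ with $\dim X''=1$, Lemma~\ref{lem:djament-composable-morphisms} supplies an exact sequence $\newdelta^{X'}M\to\newdelta^X M\to\newshift^{X'}\newdelta^{X''}M\to 0$, whence $\delta(\newdelta^X M)\le\max(\delta(\newdelta^{X'}M),\delta(\newdelta^{X''}M))$ by (f), (g) and (d); induction on $\dim X$ reduces (i) to the case $\dim X=1$. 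For $\dim X=1$, $\newshift^n$ commutes with $\newdelta^X$ up to natural isomorphism (both are built from iterated $\newshift$), so $t_0(\newshift^n\newdelta^X M)=t_0(\newdelta^X\newshift^n M)=\deg\newshift^X\rH^{\VI}_0(\newshift^n M)$ by Proposition~\ref{prop:commuting-pairs}; taking $n$ large so that $\newshift^n M$ is semi-induced with $t_0(\newshift^n M)=\delta(M)$, the $\VB$-module $\rH^{\VI}_0(\newshift^n M)$ lies in degrees $\le\delta(M)$, so its shift $\newshift^X\rH^{\VI}_0(\newshift^n M)$ lies in degrees $\le\delta(M)-1$ (and is zero when $\delta(M)\le 0$, using that $M\ne 0$ torsion-free forces $\delta(M)\ge 0$). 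Therefore $\delta(\newdelta^X M)=\inf_n t_0(\newshift^n\newdelta^X M)\le\delta(M)-1<\max(\delta(M),0)$, which is (i).
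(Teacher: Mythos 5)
Parts (a)--(h) are correct and follow essentially the paper's route, with two harmless variations: in (a) you argue via the surjection onto the top graded piece $Q^d$ rather than via homology-acyclicity of the whole filtration, and in (c) you invoke Lemma~\ref{lem:derived-saturated-submodule} to bound $t_0(\Sigma^n M)$ where the paper reaches the same estimate through Corollary~\ref{cor:short-exact-sequence-semi-induced}, Proposition~\ref{prop:ramos-new} and part~(a). Both versions of the squeeze use the same key maneuver, namely the injection $\Sigma^n M\hookrightarrow\Sigma^n\newshift^a M$ into a semi-induced module supplied by the shift theorem.

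Part (i) has a genuine gap. You assert that $\newshift^n$ commutes with $\newdelta^X$ up to natural isomorphism ``since both are built from iterated $\newshift$,'' but this is precisely what the paper disclaims in the remark following Proposition~\ref{prop:commuting-pairs}: \emph{``There does not seem to be an equivalence between $\newshift^X\newdelta^Y$ and $\newdelta^Y\newshift^X$. This is in contrast with the case of $\FI$-modules.''} Concretely, $\newshift^n\newdelta^X M$ and $\newdelta^X\newshift^n M$ are cokernels of two maps $\newshift^n M\to M(X+\aF^n+{-})_{\bU}$ where the unipotent $\bU$ stabilizes different maximal flags on $X+\aF^n$; although the two coinvariant spaces are abstractly isomorphic via conjugation by a flag-changing element $g$, that $g$ swaps $X$ with a line in $\aF^n$ and so does not fix the image of $\newshift^n M$, so the conjugation does not intertwine the two cokernels. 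Your reduction via Lemma~\ref{lem:djament-composable-morphisms} to $\dim X=1$ is fine, but the one-dimensional case is left resting on this unjustified (and, per the paper, likely false) commutativity. The paper's proof avoids it entirely: it first establishes (i) for semi-induced $M$ directly from Corollary~\ref{cor:generation-of-newdelta} and part~(a), then for general $M$ applies the derived functors of $\newdelta^X$ to $0\to M/\kappa^Y(M)\to\newshift^Y M\to\newdelta^Y M\to 0$ (with $\newshift^Y M$ semi-induced), uses Proposition~\ref{prop:delta-1} to identify $\rL_1\newdelta^X$ as $\kappa^X$ (hence torsion), and pushes the inequality down through parts (g) and (h). You would need to replace your commutativity step with an argument of this kind.
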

\begin{proof}
	Part (a): First suppose that $M=\cI(V)$ is induced. From the equalities $\newshift \cI(V) = \cI(V) \oplus \cI(\newshift V)$ (Proposition~\ref{prop:categorical-derivation}) and $t_0(\cI(V)) = \deg V$, we see that $\delta(M) = t_0(\newshift^n M) = t_0(M)$. Since induced modules are acyclic with respect to  $\rH^{\VI}_0$ (Proposition~\ref{prop:ramos-new}) and $\newshift$ is exact, we conclude that the result holds for semi-induced modules as well. 
	
	Parts (b), (c), (d) and (e): Since $t_0(\newshift^n M)$ is a decreasing function of $n$ (Proposition~\ref{prop:decreasing-function}), we see that $\delta(M) = \delta(\newshift^n M)$ for any $n$. By the shift theorem (Theorem~\ref{thm:shift-theorem}) and part (a), we conclude that $\delta(M)$ is the common value of  $t_0(\newshift^n M)$ for $n \gg 0 $. Let $a$ be large such that $\newshift^a M$ is semi-induced and $n$ be large such that $\Sigma^n M$ is semi-induced (use the shift theorem again). Then we have an injection $\Sigma^n M \to \Sigma^n \newshift^a M$. By Corollary~\ref{cor:short-exact-sequence-semi-induced}, Proposition~\ref{prop:ramos-new} and part (a), we see that $t_0(\Sigma^n M) \le t_0(\Sigma^n \newshift^a M)  =  \delta(\newshift^a M) = \delta(M)$. Conversely, since we also have $t_0(\Sigma^n M) \ge t_0(\newshift^n M)$, we see that part (c) holds. Part (d) follows from (b) and (c) once we note that $t_0(\Sigma^n M)$ and $t_0(\newshift^n M)$ are decreasing functions of $n$ (Proposition~\ref{prop:decreasing-function}). Part (e) is trivial from this discussion.
	
	Parts (f), (g) and (h): Choose $n$ large enough that  $\Sigma^n L$, $\Sigma^n M$, and $\Sigma^n N$ are semi-induced. Since semi-induced modules are homology-acyclic, we have a short exact sequence \[0\to \rH^{\VI}_0(\Sigma^n L)\to \rH^{\VI}_0(\Sigma^n M)\to \rH^{\VI}_0(\Sigma^n N)\to 0.\] Thus, $t_0(\Sigma^n M)=\max(t_0(\Sigma^n L),t_0(\Sigma^n L))$, which implies the claim in light of part (c).  Part (g) is a consequence of part (f). For part (h), note that $T$ is supported in finitely many degrees (Theorem~\ref{thm:noetherianity}). By part (d), $\delta(T) =0$. Part (h) now follows from Part (f).
	
Part (i): First suppose that $M$ is semi-induced. Then by Corollary~\ref{cor:short-exact-sequence-semi-induced}, $\newdelta^X M$ is semi-induced. By Corollary~\ref{cor:generation-of-newdelta},  we see that $t_0(\newdelta^X M) < t_0(M)$. By part (a), we conclude that $\delta(\newdelta^X M) < \delta(M)$. Thus the result holds for semi-induced modules. Now suppose
that M is not semi-induced. Let $Y$ be large so that $\newshift^Y M$ is semi-induced. We have an exact sequence \[0 \to M/\kappa^Y(M) \to \newshift^Y M \to \newdelta^Y M \to 0. \] Applying $\newdelta^X$, we obtain the following exact sequence: \[ (\rL_1 \newdelta^X)(\newdelta^Y M) \to \newdelta^X(M/\kappa^Y(M)) \to \newdelta^X \newshift^Y M \to \newdelta^X \newdelta^Y M \to 0. \] The first term of this sequence is torsion (Proposition~\ref{prop:delta-1}). Thus by parts (g) and (h), we see that \[\delta(\newdelta^X(M/\kappa^Y(M)) ) \le \delta(\newdelta^X \newshift^Y M) < \delta(\newshift^Y M) = \delta(M).\] Now consider the exact sequence \[  \newdelta^X \kappa^Y(M) \to \newdelta^X M \to \newdelta^X(M/\kappa^Y(M)) \to 0.  \] Since the first term is torsion, we conclude that $\delta(\newdelta^X M) = \delta(\newdelta^X(M/\kappa^Y(M)) ) < \delta(M)$. This completes the proof.
\end{proof}

\begin{corollary}
	\label{cor:shift-complex-degrees} Let $I^{\bullet}$ be the complex as in Theorem~\ref{thm:shift-theorem}. Then we may assume that $t_0(I^1) = \delta(M)$, and $t_0(I^{i}) \le \delta(M) - i \le t_0(M) - i$ for $i > 1$.
\end{corollary}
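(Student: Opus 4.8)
The plan is to run the construction from the proof of Theorem~\ref{thm:shift-theorem}(b), tracking the stable degree $\delta$ via Proposition~\ref{prop:stable-degree}, after first disposing of torsion. First I would reduce to the case that $M$ is torsion-free: since $\Gamma(M)$ is supported in finitely many degrees (Theorem~\ref{thm:noetherianity}), Proposition~\ref{prop:stable-degree}(h) gives $\delta(M/\Gamma(M))=\delta(M)$; and given a complex $J^{\bullet}$ for $M/\Gamma(M)$ of the type produced in Theorem~\ref{thm:shift-theorem}(b) and meeting the asserted bounds, one obtains such a complex for $M$ by replacing its degree-$0$ term $M/\Gamma(M)$ with $M$ and the first differential with the composite $M\to M/\Gamma(M)\to J^{1}$: the new $\rH^{0}$ is an extension of the finitely supported module $\rH^{0}(J^{\bullet})$ by $\Gamma(M)$, hence finitely supported, while $J^{\ge 1}$ and all higher cohomology are unchanged.

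Assuming $M$ torsion-free, I would build $I^{\bullet}$ by induction on $t_{0}(M)$. If $M$ is semi-induced, take $I^{\bullet}=[\,M\xrightarrow{\ \id\ }M\to 0\to\cdots\,]$; then $t_{0}(I^{1})=t_{0}(M)=\delta(M)$ by Proposition~\ref{prop:stable-degree}(a), and the higher terms vanish. Otherwise choose $Y$ large enough that $\newshift^{Y}M$ is semi-induced (Theorem~\ref{thm:shift-theorem}); since $M$ is torsion-free, the natural map $M\to\newshift^{Y}M$ is injective (Proposition~\ref{prop:newshift-injectivity}), so set $I^{0}=M$, $I^{1}=\newshift^{Y}M$, and $N\coloneq\newdelta^{Y}M=\coker(M\to I^{1})$. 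Because $\newshift^{Y}M$ is semi-induced, Proposition~\ref{prop:stable-degree}(a),(d) gives $t_{0}(I^{1})=\delta(\newshift^{Y}M)=\delta(M)$. Since $t_{0}(N)<t_{0}(M)$ (Corollary~\ref{cor:generation-of-newdelta}), the inductive hypothesis furnishes a complex $J^{\bullet}$ for $N$; setting $I^{i}=J^{i-1}$ for $i\ge 2$, with $I^{1}\to I^{2}$ the composite $I^{1}\to N=J^{0}\to J^{1}$, one gets $I^{\bullet}$ exactly as in Theorem~\ref{thm:shift-theorem}(b), with $\rH^{0}(I^{\bullet})$ torsion and $\rH^{i}(I^{\bullet})=\rH^{i-1}(J^{\bullet})$ for $i\ge 1$.

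For the degree bounds, Proposition~\ref{prop:stable-degree}(i) gives $\delta(N)<\max(\delta(M),0)$, hence $\delta(N)\le\delta(M)-1$. Thus for $i\ge 3$ the inductive bound on $J^{\bullet}$ yields $t_{0}(I^{i})=t_{0}(J^{i-1})\le\delta(N)-(i-1)\le\delta(M)-i$, and $\delta(M)-i\le t_{0}(M)-i$ by Proposition~\ref{prop:stable-degree}(e). The one delicate point — and what I expect to be the main obstacle — is the bound at $i=2$: the recursion as written only gives $t_{0}(I^{2})=t_{0}(J^{1})=\delta(N)\le\delta(M)-1$, and one must squeeze out the last degree. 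This is only an issue when $\delta(\newdelta^{Y}M)$ attains its maximum $\delta(M)-1$, which forces the ``top'' of $M$ to be semi-induced; the cleanest instance is $M\subseteq P$ with $P$ semi-induced and $P/M$ torsion, so that $t_{0}(P)=\delta(P)=\delta(M)$, $\newshift^{Y}M\cong\newshift^{Y}P$ for large $Y$, and $N$ sits in $0\to P/M\to N\to\newdelta^{Y}P\to 0$ with $\delta(\newdelta^{Y}P)=\delta(M)-1$. In such a situation one replaces $I^{1}=\newshift^{Y}M$ by the more economical $I^{1}=P$, so that $\coker(M\to I^{1})=P/M$ is torsion and the recursion terminates at $I^{2}=0$; for general torsion-free $M$ one makes the analogous choice of $I^{1}$ (a semi-induced module with $t_{0}=\delta(M)$ into which $M$ injects with cokernel of stable degree $\le\delta(M)-2$, built from the saturation $\bS(M)$ and its deviation from being semi-induced). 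Verifying that this optimized first step still yields semi-induced terms, finitely supported cohomology, and the length bound of Theorem~\ref{thm:shift-theorem}(b) is the technical heart; everything else is formal from Proposition~\ref{prop:stable-degree}.
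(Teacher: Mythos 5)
Your analysis follows exactly the construction the paper's one-line proof is pointing to, and you have correctly isolated the problem: the recursion from Theorem~\ref{thm:shift-theorem}(b) yields $t_0(I^2) = t_0(J^1) = \delta(\newdelta^Y M)$, and Proposition~\ref{prop:stable-degree}(i) only gives $\delta(\newdelta^Y M) \le \delta(M)-1$. Tracing this through (as you do), the recursion produces $t_0(I^i)\le\delta(M)-(i-1)$ for $i\ge 1$, one worse at every $i\ge 2$ than the stated $\delta(M)-i$. There is no reason for $\delta(\newdelta^Y M)\le\delta(M)-2$ in general --- for an induced $\cI(d)$ one has $\newdelta\cI(d)$ induced from degree $d-1$, so $\delta$ drops by exactly one --- and I do not see how the construction in the paper, or the one-sentence proof of the corollary, supplies the extra unit. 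In other words, you have found a genuine off-by-one in the corollary's statement as proved by the paper's construction.

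The repair you sketch does not close the gap. You propose taking $I^1$ to be a semi-induced module of degree $\delta(M)$ into which $M$ embeds with $\delta(\coker)\le\delta(M)-2$, built from the saturation $\bS(M)$. But $\bS(M)$ is only guaranteed to be finitely generated and torsion-free, not semi-induced: by Theorem~\ref{thm:finiteness-local-cohomology}(d), $\rR^j\Gamma(\bS(M))\cong\rR^j\Gamma(M)$ for $j\ge 2$, which need not vanish, so $\bS(M)$ fails the derived-saturation criterion of Theorem~\ref{thm:derived-saturated-are-semi-induced} in general. Your ``cleanest instance'' ($M\subset P$ semi-induced with torsion quotient) is precisely the case where $\bS(M)=P$ is semi-induced, and it does not generalize. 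The honest conclusion is that the recursion gives $t_0(I^i)\le\delta(M)-(i-1)$, and this weaker bound is what should be recorded; note that it already suffices for both places the corollary is invoked (it gives $I^i=0$ for $i>\delta(M)+1$, as needed in Theorem~\ref{thm:finiteness-local-cohomology}(a), and $t_0(I^i)\le\delta(M)$ for $i\ge 1$, as needed in Theorem~\ref{thm:rep-stability}). You correctly located the gap; strengthening the construction to recover the paper's exact inequality is not justified by the tools at hand.
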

\begin{proof}  This follows from the construction of $I^{\bullet}$ and the properties of the stable degree.
\end{proof}

\begin{lemma}
	\label{lem:dimension}
Assume that $\bk$ is a field. Let $\cI(V)$ be a module induced from $d$. Then \[\dim_{\bk} \cI(V)(\aF^n) = \frac{(q^n - 1)(q^n - q)\cdots(q^n - q^{d-1})}{(q^d - 1)(q^d - q)\cdots(q^d - q^{d-1})}\dim_{\bk} V(\aF^d) \] for every $n \ge 0$. In particular, there is a polynomial $P \in \bQ[X]$ such that $\dim_{\bk} \cI(V)(\aF^n) = P(q^n)$ for every $n \ge 0$.
\end{lemma}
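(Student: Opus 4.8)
The plan is to reduce the computation to counting injective linear maps and then to the elementary fact that free modules over a group algebra base-change well. Recall from the discussion preceding Proposition~\ref{prop:HI} that
\[
\cI(V) = \bigoplus_{e \ge 0} \cI(e) \otimes_{\bk[\Aut(\aF^e)]} V(\aF^e),
\]
and that $\cI(d) = \bk[\Hom_{\VI}(\aF^d,-)]$ by the Yoneda lemma. Since $V = V_d$ is supported in degree $d$, every summand with $e \ne d$ vanishes, so evaluating at $\aF^n$ gives
\[
\cI(V)(\aF^n) = \bk[\Hom_{\VI}(\aF^d,\aF^n)] \otimes_{\bk[\GL_d]} V(\aF^d),
\]
where $\GL_d = \Aut(\aF^d)$ acts on the set $\Hom_{\VI}(\aF^d,\aF^n)$ of injective $\aF$-linear maps by precomposition. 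Thus it suffices to analyze this $\bk[\GL_d]$-module.

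First I would count $\#\Hom_{\VI}(\aF^d,\aF^n)$: choosing the images of a fixed basis of $\aF^d$ one vector at a time, the $i$-th basis vector may go to any vector outside the span of the previous $i-1$ images (a subspace of size $q^{i-1}$), so $\#\Hom_{\VI}(\aF^d,\aF^n) = (q^n-1)(q^n-q)\cdots(q^n-q^{d-1})$; in particular this is $0$ exactly when $n < d$, consistently with $\Hom_{\VI}(\aF^d,\aF^n) = \emptyset$ in that range. Next, the precomposition action of $\GL_d$ on $\Hom_{\VI}(\aF^d,\aF^n)$ is free, since $f\circ\sigma = f$ with $f$ injective forces $\sigma = \id$. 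Hence $\bk[\Hom_{\VI}(\aF^d,\aF^n)]$ is a free $\bk[\GL_d]$-module of rank
\[
r \coloneq \frac{\#\Hom_{\VI}(\aF^d,\aF^n)}{\#\GL_d} = \frac{(q^n-1)(q^n-q)\cdots(q^n-q^{d-1})}{(q^d-1)(q^d-q)\cdots(q^d-q^{d-1})},
\]
with $r = 0$ when $n < d$, using $\#\GL_d = (q^d-1)(q^d-q)\cdots(q^d-q^{d-1})$.

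Since $\bk[\GL_d] \otimes_{\bk[\GL_d]} V(\aF^d) = V(\aF^d)$ — an identity needing no hypothesis on $\bk$ — a free $\bk[\GL_d]$-module of rank $r$ tensored with $V(\aF^d)$ over $\bk[\GL_d]$ is $V(\aF^d)^{\oplus r}$, so $\cI(V)(\aF^n) \cong V(\aF^d)^{\oplus r}$ as $\bk$-modules and $\dim_\bk \cI(V)(\aF^n) = r\,\dim_\bk V(\aF^d)$, which is the stated formula. For the polynomiality claim, set
\[
P(X) = \frac{(X-1)(X-q)\cdots(X-q^{d-1})}{(q^d-1)(q^d-q)\cdots(q^d-q^{d-1})}\,\dim_\bk V(\aF^d);
\]
the numerator is a degree-$d$ polynomial in $X$ with integer coefficients, so $P \in \bQ[X]$, and $\dim_\bk \cI(V)(\aF^n) = P(q^n)$ for every $n \ge 0$. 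The whole argument is routine; the only point deserving a moment's care is the degenerate range $n < d$, where all the displayed identities reduce to $0 = 0$.
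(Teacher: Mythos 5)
Your proof is correct and follows exactly the route the paper intends: it starts from the identity $\cI(V)(\aF^n) = \bk[\Hom_{\VI}(\aF^d,\aF^n)] \otimes_{\bk[\GL_d]} V(\aF^d)$, which is the single thing the paper's proof cites, and then fills in the routine counting (free $\GL_d$-action on injections, orbit count, freeness of the permutation module) that the paper leaves to the reader.
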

\begin{proof} This easily follows from the equality $\cI(V)(\aF^n) = \bk[\Hom_{\VI}(\aF^d, \aF^n)] \otimes_{\bk[\GL_d]} V(\aF^d)$.
\end{proof}

\begin{theorem}[$q$-polynomiality of dimension]
	\label{thm:q-polynomiality-of-dimension}
Assume that $\bk$ is a field. Let $M$ be a finitely generated $\VI$-module. Then there exists a polynomial $P$ of degree $\delta(M)$ such that $\dim_{\bk} M(\aF^n) = P(q^n)$ for $n \gg 0$.
\end{theorem}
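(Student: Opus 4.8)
The plan is to use the shift theorem to reduce to semi-induced modules, where $\dim_\bk N(\aF^m)$ can be read off directly from the formula in the lemma above.

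First I would handle the semi-induced case. If $N$ is semi-induced and generated in finite degrees, then by Corollary~\ref{cor:good-filtration-new} it carries a finite filtration whose graded pieces $Q^i$ are induced from $i$, say $Q^i = \cI(V_i)$ with $V_i$ a $\bk[\GL_i]$-module. Using additivity of $\dim_\bk$ on short exact sequences of finite-dimensional vector spaces and the lemma preceding the theorem,
\[ \dim_\bk N(\aF^m) = \sum_i \dim_\bk Q^i(\aF^m) = \sum_i \frac{(q^m-1)(q^m-q)\cdots(q^m-q^{i-1})}{(q^i-1)(q^i-q)\cdots(q^i-q^{i-1})}\,\dim_\bk V_i(\aF^i), \]
which is a polynomial in $q^m$; call it $Q_N(q^m)$. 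The $i$-th summand is a polynomial of degree exactly $i$ in $q^m$ with positive leading coefficient whenever $V_i \neq 0$, so the top-degree terms do not cancel and $\deg Q_N = \max\{i : V_i \neq 0\}$. Since $\delta(Q^i) = t_0(Q^i) = i$ when $V_i \neq 0$ (Proposition~\ref{prop:stable-degree}(a) and Proposition~\ref{prop:induced-homology-acyclic}), and $\delta(N) = \max_i \delta(Q^i) = t_0(N)$ (Proposition~\ref{prop:stable-degree}(a),(f)), we get $\deg Q_N = \delta(N) = t_0(N)$.

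Next I would pass to a shift of $M$. By Theorem~\ref{thm:shift-theorem}(a) there is an $n$ with $N \coloneq \Sigma^n M$ semi-induced. Applying the first step to $N$ yields a polynomial $Q$ of degree $\delta(N) = \delta(M)$ (Proposition~\ref{prop:stable-degree}(d)) with $\dim_\bk M(\aF^{n+m}) = \dim_\bk(\Sigma^n M)(\aF^m) = Q(q^m)$ for every $m \ge 0$. Since $q^m = q^{-n}q^{n+m}$, the polynomial $P(X) \coloneq Q(q^{-n}X)$, still of degree $\delta(M)$, satisfies $\dim_\bk M(\aF^N) = P(q^N)$ for all $N \ge n$, which is the desired conclusion.

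The substantive input is the shift theorem itself, which I am taking as given; granting it, the remaining work is bookkeeping, and the only points that need care are the positivity of the leading coefficients of the Gaussian-binomial factors (so that the highest-degree term of $Q_N$ really comes from $i = t_0(N)$ and survives) and the innocuous change of variable $q^{N}\leftrightarrow q^{N-n}$. If one prefers to avoid that change of variable, one can instead invoke the finite complex $I^\bullet$ of Theorem~\ref{thm:shift-theorem}(b): for $n \gg 0$ its cohomology vanishes on $\aF^n$, so $\dim_\bk M(\aF^n) = \sum_{i \ge 1}(-1)^{i-1}\dim_\bk I^i(\aF^n)$, and the first step combined with Corollary~\ref{cor:shift-complex-degrees} (which gives $t_0(I^1) = \delta(M)$ and $t_0(I^i) < \delta(M)$ for $i \ge 2$) produces a polynomial of degree $\delta(M)$ directly.
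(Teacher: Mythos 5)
Your proof is correct and follows essentially the same route as the paper: shift by a large $a$ to reach the semi-induced case, filter by degree, and sum the Gaussian-binomial dimensions of the induced graded pieces. You are a bit more careful than the paper on two minor points it leaves implicit — that the positive leading coefficients of the Gaussian binomials force $\deg P = \delta(M)$ rather than merely $\le \delta(M)$, and that the substitution $q^n \mapsto q^{n-a}$ is really a change of variable $P(X) \mapsto P(q^{-a}X)$ — but these are refinements of the same argument, not a different one.
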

\begin{proof}
	Let $a$ be large enough such that $N \coloneq \Sigma^a M$ is semi-induced. By Proposition~\ref{prop:stable-degree}, we have $t_0(N) = \delta(M)$. Set $d = \delta(M)$. By Corollary~\ref{cor:good-filtration-new}, $N_{\preceq i}/N_{\prec i}$ is induced from $i$, and $N_{\preceq d}/N_{\prec d}$ is nonzero. By the previous lemma, there exists a polynomial $P$ such that $\dim_{\bk} N(\aF^n) = P(q^n)$ for every $n \ge 0$. This shows that $\dim_{\bk} M(\aF^n) = P(q^{n-a})$ for $n \ge a$, completing the proof.
\end{proof}

\begin{remark} The least $a$ such that $\Sigma^a M$ is semi-induced is exactly equal to $h^{\max}(M) + 1$ where \[h^{\max}(M) = \max_{i \ge 0} h^i(M) \] is the maximum of all local cohomology degrees. This follows easily from Theorem~\ref{thm:derived-saturated-are-semi-induced}, and the fact that $\Gamma$ commutes with $\Sigma$. We shall prove in the next section that $h^i(M)=0$ for $i > \delta(M) + 1$. Thus in the proof above, we have $\dim_{\bk}M(\aF^n) = P(q^n)$ for $n > \max_{0 \le i \le \delta(M) + 1} h^i(M) = h^{\max}(M)$.
\end{remark}

\subsection{Finiteness of local cohomology and regularity} 
\label{subsec:finiteness-regularity}
Let $\rD$ be the full triangulated subcategory of the bounded derived category $\rD^{\rb}(\ModVI)$ consisting of those objects that are represented by finite complexes with finitely generated cohomologies. 

\begin{proposition} 
	\label{prop:D}
	Let $M$ be an object of $\rD$. Then \begin{enumerate}
		\item $\rR \Gamma (M)$ is in $\rD$ and can be represented by a finite complex of finitely generated torsion modules.
		
		\item $\rR \bS (M)$ is in $\rD$ and can be represented by a finite complex of finitely generated induced modules.
		
		\item $\rR^i \Gamma(M)$ is finitely generated for each $i$ and vanishes if $i \gg 0$. 
		\item There is an exact triangle \[\rR\Gamma(M) \to M \to \rR \bS(M) \to.  \]
	\end{enumerate}
\end{proposition}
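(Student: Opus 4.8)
The plan is to reduce parts (a), (b), (c) to the case where $M$ is a single finitely generated $\VI$-module and then run that module through the shift theorem; part (d) is just Proposition~\ref{prop:triangle} applied to $M\in\rD\subset\rD^{+}(\ModVI)$.

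First I would carry out the reduction. By definition every $M\in\rD$ has finitely generated cohomology modules $\rH^i(M)$, and via the canonical truncation triangles $\tau_{<i}M\to\tau_{\le i}M\to\rH^i(M)[-i]\to$ it lies in the smallest triangulated subcategory of $\rD^{\rb}(\ModVI)$ containing these modules. Both $\rR\Gamma$ and $\rR\bS$ are triangulated functors, and each of the three object-classes in play --- the objects of $\rD$, the objects representable by a finite complex of finitely generated torsion modules, and the objects representable by a finite complex of finitely generated induced modules --- is a triangulated subcategory: it is closed under shifts, and closed under mapping cones because torsion modules (resp.\ induced modules) are closed under finite direct sums and finite generation passes to subquotients. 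So it suffices to prove (a), (b), (c) for $M=N$ a finitely generated $\VI$-module placed in degree $0$.

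Given such an $N$, I would invoke Theorem~\ref{thm:shift-theorem}(b) to get a finite complex $I^{\bullet}$ with $I^0=N$, with $I^i$ finitely generated semi-induced for $i>0$, with $I^n=0$ for $n>t_0(N)+1$, and with each $\rH^i(I^{\bullet})$ supported in finitely many degrees, hence (Theorem~\ref{thm:noetherianity}) a finitely generated torsion module. The stupid truncation of $I^{\bullet}$ yields a short exact sequence of complexes $0\to I^{\ge1}\to I^{\bullet}\to N\to0$, hence a distinguished triangle $I^{\ge1}\to I^{\bullet}\to N\to I^{\ge1}[1]$. Two observations finish the computation. Since each $I^i$ with $i>0$ is semi-induced, hence derived saturated with $\rR\Gamma(I^i)=0$ (Corollary~\ref{cor:semi-induced-are-derived-saturated}), and the objects killed by $\rR\Gamma$ form a triangulated subcategory, $\rR\Gamma(I^{\ge1})=0$; by Proposition~\ref{prop:triangle} this also gives $\rR\bS(I^{\ge1})\cong I^{\ge1}$. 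On the other hand $I^{\bullet}$ has torsion cohomology, so (building it from the $\rH^i(I^{\bullet})[-i]$ by cones) it is representable by a complex of torsion modules, whence $\rR\Gamma(I^{\bullet})\cong I^{\bullet}$ and $\rR\bS(I^{\bullet})=0$ by Corollary~\ref{cor:torsion-have-no-saturation}. Applying $\rR\Gamma$ and $\rR\bS$ to the triangle now gives $\rR\Gamma(N)\cong I^{\bullet}$ and $\rR\bS(N)\cong I^{\ge1}[1]$.

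These two identifications yield everything. For (a) and (c): $\rR\Gamma(N)\cong I^{\bullet}$ is a finite complex with finitely generated torsion cohomology, so it belongs to $\rD$; canonical truncation once more presents it as a finite complex of finitely generated torsion modules; and $\rR^i\Gamma(N)=\rH^i(I^{\bullet})$ is finitely generated and vanishes for $i>t_0(N)+1$. For (b): $\rR\bS(N)\cong I^{\ge1}[1]$ is already a bounded complex of finitely generated semi-induced modules, supported in cohomological degrees $0,\dots,t_0(N)$; to upgrade ``semi-induced'' to ``induced'' I would replace each term by a finite resolution by finitely generated induced modules (Proposition~\ref{prop:induced-resolution-semi-induced}, available since semi-induced modules are finitely generated and derived saturated) and pass to the total complex of the resulting Cartan--Eilenberg resolution, which stays bounded because each termwise resolution has length at most $t_0+1$. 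I do not expect a genuine obstacle: the only points needing care are this Cartan--Eilenberg step and the verification that the three object-classes are triangulated, and the real content is supplied entirely by the shift theorem and by Corollaries~\ref{cor:semi-induced-are-derived-saturated} and~\ref{cor:torsion-have-no-saturation}.
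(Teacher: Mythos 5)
Your proof is correct in substance, and it reaches the same triangle $T\to M\to F\to$ that the paper builds, but by a different organization. You reduce to the case of a single finitely generated module via canonical truncations and the triangulated structure, and then obtain the triangle from the stupid truncation $I^{\ge 1}\to I^{\bullet}\to N\to$ of the complex supplied by Theorem~\ref{thm:shift-theorem}(b); the paper instead invokes the functoriality of that complex (Remark~\ref{rem:functoriality-shift-theorem}) together with \cite[Lemma~2.3]{regthm} to produce $T\to M\to F\to$ directly for every $M\in\rD$, without a prior reduction. Your route is more self-contained but shifts the technical burden onto the claim that ``objects representable by a finite complex of finitely generated torsion modules'' is a \emph{triangulated} subcategory. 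That claim is not as immediate as you make it sound: closure under cones requires realizing a derived-category morphism between two such complexes as a chain map through a complex of torsion modules, which ultimately rests on assumption~(*) (Lemma~\ref{lem:injective-hull-new}) to identify $\Hom$ and $\Ext$ in $\ModVI^{\tors}$ with those in $\ModVI$. This is exactly the kind of detail the paper delegates to the cited lemma in \cite{regthm}, so the two approaches carry the same hidden overhead. Your Cartan--Eilenberg step for part~(b) is fine and matches the paper's one-line appeal to Proposition~\ref{prop:induced-resolution-semi-induced}. One small point worth making explicit: to know $\rH^i(I^{\bullet})$ is finitely generated (not just finitely supported), you should note that each $I^i$ is generated in finite degrees and hence finitely generated by Theorem~\ref{thm:noetherianity}, so its cohomology, being a subquotient, is finitely generated in the locally noetherian category $\ModVI$.
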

\begin{proof}
	 By the shift theorem (Theorem~\ref{thm:shift-theorem}) and Remark~\ref{rem:functoriality-shift-theorem}, we have an exact triangle of the form \[T \to M \to F \to \] in $\rD$ such that $T$ is represented by a finite complex of finitely generated torsion modules and $F$ is represented by a finite complex of finitely generated semi-induced modules (see \cite[Lemma~2.3]{regthm} for more details). By Proposition~\ref{prop:induced-resolution-semi-induced}, $F$ is quasi-isomorphic to a finite complex of finitely generated induced modules. By Corollary~\ref{cor:torsion-have-no-saturation} and Proposition~\ref{prop:induced-is-torsion-acyclic}, we have $\rR \Gamma(T) \cong T$ and $\rR \Gamma(F) = 0$. Thus by applying $\rR \Gamma$ to the triangle above yields $T \cong \rR \Gamma(M)$. By Corollary~\ref{cor:torsion-have-no-saturation},  Theorem~\ref{thm:derived-saturated-are-semi-induced}, we see that $\rR \bS(T) = 0$ and $\rR \bS(F) \cong F$. Thus by applying $\rR \bS $ to the triangle above yields $\rR \bS(M) \cong F$. The proof is now complete by Proposition~\ref{prop:triangle}.
\end{proof}

%

The $\FI$-module analog of the theorem below has been studied in \cite{symc1}.

\begin{theorem}[Finiteness of local cohomology]
	\label{thm:finiteness-local-cohomology}
	Let $M$ be a finitely generated $\VI$-module. Then $\rR \Gamma(M)$ and $ \rR \bS(M)$ are objects of $\rD$ and are supported in non-negative degrees. Moreover, we have the following \begin{enumerate}
		\item $\rR^i \Gamma(M) = 0$ if   $i > \delta(M) + 1$.
		\item $\rR^i \bS(M) = 0$ if $i > \delta(M)$.
		\item We have an exact sequence $0 \to \Gamma(M) \to M \to \bS(M) \to \rR^1 \Gamma(M) \to 0$.
		\item $\rR^{i+1} \Gamma(M) \cong \rR^i \bS(M)$ for $i \ge 1$. 
	\end{enumerate} 
\end{theorem}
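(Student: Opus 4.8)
The plan is to regard the finitely generated $\VI$-module $M$ as a complex concentrated in cohomological degree $0$, feed it to Proposition~\ref{prop:D}, and then extract the stated refinements from the long exact cohomology sequence of the resulting triangle together with the explicit complexes produced by the shift theorem. Since $M$ is finitely generated it lies in $\rD$, so Proposition~\ref{prop:D} already gives that $\rR\Gamma(M)$ and $\rR\bS(M)$ lie in $\rD$ and fit into an exact triangle $\rR\Gamma(M)\to M\to\rR\bS(M)\to$ whose first two arrows are the canonical maps $\Gamma(M)\to M$ and $M\to\bS(M)$ in degree $0$ (Proposition~\ref{prop:triangle}); that both objects are supported in non-negative cohomological degrees is automatic, since $\Gamma$ and $\bS$ are left exact and $M$ is a single module.

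The next step is to pin down the cohomological width. By Theorem~\ref{thm:shift-theorem}(b) there is a finite complex $I^{\bullet}$ with $I^{0}=M$, with each $I^{i}$ ($i\ge 1$) finitely generated and semi-induced, with finitely supported (hence torsion) cohomology, and, by Corollary~\ref{cor:shift-complex-degrees}, with $I^{i}=0$ for $i>\delta(M)+1$. As in the proof of Proposition~\ref{prop:D}, the brutal truncation $\sigma_{\ge 1}I^{\bullet}$ (keeping only the terms in degrees $\ge 1$) is a subcomplex of $I^{\bullet}$ with quotient $M$, and the associated triangle rotates to $I^{\bullet}\to M\to(\sigma_{\ge 1}I^{\bullet})[1]\to I^{\bullet}[1]$. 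Since $I^{\bullet}$ has torsion cohomology while $(\sigma_{\ge 1}I^{\bullet})[1]$ is a complex of semi-induced modules (quasi-isomorphic, by Proposition~\ref{prop:induced-resolution-semi-induced}, to one of finitely generated induced modules), the identification in the proof of Proposition~\ref{prop:D} gives $\rR\Gamma(M)\cong I^{\bullet}$, concentrated in degrees $0,\dots,\delta(M)+1$, and $\rR\bS(M)\cong(\sigma_{\ge 1}I^{\bullet})[1]$, concentrated in degrees $0,\dots,\delta(M)$. In particular $\rR^{i}\bS(M)=0$ for $i>\delta(M)$, which is part (b).

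The remaining parts come from the long exact cohomology sequence of the triangle, using $\rH^{j}(M)=0$ for $j\ne 0$, $\rH^{0}(M)=M$, $\rR^{0}\Gamma=\Gamma$ and $\rR^{0}\bS=\bS$. Around degree $0$ it reads $0\to\Gamma(M)\to M\to\bS(M)\to\rR^{1}\Gamma(M)\to 0$, which is part (c) (with the honest saturation maps, by the previous paragraph); and for $j\ge 1$ the flanking terms $\rH^{j}(M)$ and $\rH^{j+1}(M)$ vanish, giving $\rR^{j}\bS(M)\xrightarrow{\ \sim\ }\rR^{j+1}\Gamma(M)$, which is part (d). Part (a) then follows formally: for $i>\delta(M)+1$ one has $i-1>\delta(M)$ with $i-1\ge 1$, so $\rR^{i}\Gamma(M)\cong\rR^{i-1}\bS(M)=0$ by (d) and (b), while the degenerate case in which $M$ is finitely supported torsion is covered directly by Lemma~\ref{lem:torsion-injective}. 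I expect no serious obstacle here, since the substantive work is already packaged in the shift theorem and Proposition~\ref{prop:D}; the one point needing attention is the cohomological-degree bookkeeping — keeping the shift $(\sigma_{\ge 1}I^{\bullet})[1]$ and the estimates of Corollary~\ref{cor:shift-complex-degrees} consistent with the ranges $\delta(M)$ and $\delta(M)+1$ — together with verifying that the triangle's connecting maps really are the canonical ones, so that part (c) is the assertion about the genuine saturation map.
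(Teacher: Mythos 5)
Your proposal is correct and takes essentially the same route as the paper: identify $\rR\Gamma(M)$ with the shift-theorem complex $I^{\bullet}$ and $\rR\bS(M)$ with the (shifted) brutal truncation of $I^{\bullet}$ by semi-induced terms, then read off the remaining assertions from the long exact cohomology sequence of the triangle in Proposition~\ref{prop:triangle}. The paper's own proof is terser—getting (a) directly from the length bound on $I^\bullet$ and declaring (b)--(d) immediate from Proposition~\ref{prop:triangle}—whereas you get (b) first and recover (a) from (b) and (d), but the underlying ingredients and logical content are the same.
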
 
\begin{proof}Let $I = I^{\bullet}$ be the complex as in the shift theorem (Theorem~\ref{thm:shift-theorem}). Then $I$ is supported in non-negative degrees and $I^i = 0$ if $i > \delta(M) + 1$ (see Proposition~\ref{prop:stable-degree} part (i) and the construction of $I^{\bullet}$). We may take $T$, as in the proof of Proposition~\ref{prop:D}, to be equal (or quasi-isomorphic; see \cite[Lemma~2.3]{regthm}) to $I$. This shows that part (a) holds. The rest is immediate from Proposition~\ref{prop:triangle}.	
\end{proof}

\begin{corollary}
	\label{cor:shift-complex}
	Let $I^{\bullet}$ be the complex as in Theorem~\ref{thm:shift-theorem}. Then $\rR^i \Gamma(M) = \rH^i(I^{\bullet})$.
\end{corollary}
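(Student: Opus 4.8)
The plan is to show that the complex $I^{\bullet}$, regarded as an object of $\rD^{+}(\ModVI)$, is canonically isomorphic to $\rR\Gamma(M)$; the corollary then follows by passing to cohomology. First I would peel off the degree-zero term of $I^{\bullet}$ using the stupid truncation, which gives a short exact sequence of complexes
\[ 0 \to \sigma_{\ge 1}I^{\bullet} \to I^{\bullet} \to M[0] \to 0, \]
where $\sigma_{\ge 1}I^{\bullet} = [\,I^1 \to I^2 \to \cdots\,]$ is placed in degrees $\ge 1$ and $M[0]$ is $M$ in degree $0$. Rotating the associated distinguished triangle produces a triangle $I^{\bullet} \to M \to \sigma_{\ge 1}I^{\bullet}[1] \to$ in $\rD^{+}(\ModVI)$.

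Next I would apply $\rR\Gamma$. Each term of $\sigma_{\ge 1}I^{\bullet}$ is semi-induced, hence derived saturated (Corollary~\ref{cor:semi-induced-are-derived-saturated}), so $\rR\Gamma$ kills each term; since $\sigma_{\ge 1}I^{\bullet}$ is bounded, the hypercohomology spectral sequence $E_1^{p,q} = \rR^q\Gamma(I^p) \Rightarrow \rR^{p+q}\Gamma(\sigma_{\ge 1}I^{\bullet})$ has all terms zero, so $\rR\Gamma(\sigma_{\ge 1}I^{\bullet}) = 0$. The triangle above then yields a canonical isomorphism $\rR\Gamma(I^{\bullet}) \xrightarrow{\ \sim\ } \rR\Gamma(M)$. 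On the other hand, every $\rH^q(I^{\bullet})$ is supported in finitely many degrees (Theorem~\ref{thm:shift-theorem}), hence is a torsion module, so Lemma~\ref{lem:torsion-injective} gives $\rR^p\Gamma(\rH^q(I^{\bullet})) = \rH^q(I^{\bullet})$ for $p=0$ and $=0$ for $p>0$. Plugging this into the second hypercohomology spectral sequence $E_2^{p,q} = \rR^p\Gamma(\rH^q(I^{\bullet})) \Rightarrow \rR^{p+q}\Gamma(I^{\bullet})$ forces degeneration at $E_2$, so $\rR^i\Gamma(I^{\bullet}) \cong \rH^i(I^{\bullet})$. Chaining the two isomorphisms gives $\rR^i\Gamma(M) \cong \rH^i(I^{\bullet})$, the assertion.

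I would also mention a route that sidesteps spectral sequences: since all cohomology of $I^{\bullet}$ is torsion, the good-truncation filtration shows $I^{\bullet}$ is quasi-isomorphic to a bounded complex of torsion $\VI$-modules, whence $\rR\Gamma(I^{\bullet}) \cong I^{\bullet}$ directly by Corollary~\ref{cor:torsion-have-no-saturation}; this is essentially the identification of the torsion complex representing $\rR\Gamma(M)$ (see the proofs of Proposition~\ref{prop:D} and Theorem~\ref{thm:finiteness-local-cohomology}) with $I^{\bullet}$, together with the isomorphism $\rR\Gamma(M) \cong$ that complex. Either way, there is no substantive obstacle here: all the mathematical content lies in the shift theorem and the elementary vanishing statements (Corollaries~\ref{cor:semi-induced-are-derived-saturated} and~\ref{cor:torsion-have-no-saturation}, Lemma~\ref{lem:torsion-injective}), and what remains is routine derived-category bookkeeping — chiefly checking degeneration of the two spectral sequences and that the isomorphisms are induced by the canonical maps.
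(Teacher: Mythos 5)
Your argument is correct and matches the paper's intent, just spelled out in more explicit derived-category detail. The paper obtains this corollary as an observation on the proof of Theorem~\ref{thm:finiteness-local-cohomology}: the stupid truncation $\sigma_{\ge 1}I^{\bullet}[1]$ plays the role of the semi-induced complex $F$ and $I^{\bullet}$ plays the role of the torsion complex $T$ in the triangle $T \to M \to F \to$ from Proposition~\ref{prop:D}, so that $\rR\Gamma(M) \cong T \simeq I^{\bullet}$ in the derived category. Your main route makes this precise with two hypercohomology spectral sequences; your ``alternative route'' is essentially the paper's own reading. One small caution on that alternative: the claim that a bounded complex with torsion cohomology is necessarily quasi-isomorphic to a complex of torsion terms is a fact about the Serre subcategory $\Modtors$ that deserves a reference or a short argument (it holds here because assumption (*) lets one build injective resolutions inside $\Modtors$, or by the good-truncation induction you allude to), but it is not needed: the triangle argument plus the second spectral sequence already give $\rR\Gamma(I^{\bullet}) \cong I^{\bullet}$ and hence the corollary, without ever producing such a torsion replacement. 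Your use of Corollary~\ref{cor:semi-induced-are-derived-saturated} for $\rR\Gamma(\sigma_{\ge 1}I^{\bullet})=0$ and Lemma~\ref{lem:torsion-injective} for the $E_2$ degeneration is exactly where the substance lies, and both citations are correct.
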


\begin{lemma}
	\label{lem:steinberg-complex}
	There is a resolution of the $\VI$-module $\bk = \bA/\bA_+$ of the form $\cI(\St_{\bullet})  \to \bk \to 0$, where $\St_{d}$ denote  the Steinberg representation of $\GL_d$.
\end{lemma}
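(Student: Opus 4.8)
The plan is to realize the desired resolution as the $E^1$-page of a spectral sequence attached to the Tits building. For a vector space $V$, write $\mathcal{T}(V)$ for its Tits building, i.e.\ the order complex of the poset of proper nonzero subspaces of $V$, and recall that the Steinberg representation is $\St(V) = \widetilde{H}_{\dim V - 2}(\mathcal{T}(V);\bk)$; by the Solomon--Tits theorem $\mathcal{T}(V)$ is homotopy equivalent to a wedge of $(\dim V - 2)$-spheres, so this is its only nonvanishing reduced homology group (with the usual conventions $\St(0) = \St(\aF^1) = \bk$). For a vector space $X$ let $\widehat{\mathcal{T}}(X)$ denote the order complex of the poset of \emph{all} nonzero subspaces of $X$, now including $X$ itself. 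Since this poset has a maximum element, $\widehat{\mathcal{T}}(X)$ is contractible for $X \neq 0$, and $X \mapsto \widehat{\mathcal{T}}(X)$ is a functor from $\VI$ to simplicial complexes (a $\VI$-morphism carries a subspace to its image), so the augmented simplicial chain complex $C_\bullet \coloneq \widetilde{C}_\bullet(\widehat{\mathcal{T}}(X);\bk)$ is a complex of $\VI$-modules. It suffices to produce from it an exact complex of $\VI$-modules with $d$-th term $\cI(\St_d)$ augmenting onto $\bk$.

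First I would filter $C_\bullet$ by letting $F_p$ be spanned by the empty flag together with all flags whose largest subspace has dimension $\le p$; this is a subcomplex, since the simplicial differential only deletes subspaces from a flag. The associated graded has a transparent description: a flag with largest element a fixed $d$-dimensional subspace $V$ is exactly the datum of a (possibly empty) flag of proper nonzero subspaces of $V$, so
\[
F_d/F_{d-1} \;\cong\; \bigoplus_{\dim V = d} \widetilde{C}_{\bullet - 1}(\mathcal{T}(V);\bk),
\]
the reduced chain complex of $\mathcal{T}(V)$ shifted up by one homological degree, its degree-zero term being the one-vertex flag $\{V\}$, i.e.\ $\widetilde{C}_{-1}(\mathcal{T}(V)) = \bk$. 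By Solomon--Tits the homology of $F_d/F_{d-1}$ is therefore concentrated in a single homological degree, where it equals $\bigoplus_{\dim V = d} \St(V)$.

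Consequently the spectral sequence of the filtration has $E^1$-page concentrated on a single line, so the only possibly nonzero differential is $d^1$; these $d^1$'s assemble into a complex of $\VI$-modules whose $d$-th term is the $\VI$-module $X \mapsto \bigoplus_{\dim V = d,\ V \subseteq X} \St(V)$. I would then check that this $\VI$-module is exactly $\cI(\St_d)$: unwinding $\cI(\St_d) = \bk[\Hom_{\VI}(\aF^d,-)] \otimes_{\bk[\GL_d]} \St_d$ and partitioning the injections $\aF^d \hookrightarrow X$ according to their image gives precisely $\bigoplus_{\dim V = d} \St(V)$, compatibly with the $\VI$-structure; and for $d = 0$ this $\VI$-module is the constant functor $\bk$, namely $\bA$, so the bottom of the complex is the augmentation $\bA \twoheadrightarrow \bk = \bA/\bA_+$.

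Finally, since $\widehat{\mathcal{T}}(X)$ is contractible for $X \neq 0$ (and $\widehat{\mathcal{T}}(0)$ contributes only the degree-zero term, which maps isomorphically onto $\bk(\aF^0)$), the total complex $C_\bullet$ is acyclic; hence $E^\infty = 0$, so $E^2 = 0$, so the $d^1$-complex is exact with homology the $\VI$-module $\bk = \bA/\bA_+$ in degree zero. This is the claimed resolution $\cI(\St_\bullet) \to \bk \to 0$. The real work here is bookkeeping rather than any deep input: one must track the homological shift so that the $E^1$-line genuinely \emph{is} the complex $\cI(\St_\bullet)$, verify the $\VI$-functoriality of the entire package (it is filtration-preserving because a $\VI$-morphism preserves dimensions of subspaces), and handle the empty-flag/augmentation term and the edge cases $\dim V \le 1$; the only external ingredient is the Solomon--Tits computation of the homology of the Tits building.
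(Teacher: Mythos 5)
Your argument is correct and is essentially the same Tits-building filtration argument that the paper delegates to Charney (filter $\widetilde{C}_\bullet(\widehat{\mathcal{T}}(X))$ by the dimension of the top element of a flag, identify $E^1_{p,-1}$ with $\cI(\St_p)$ via Solomon--Tits, and read off exactness from contractibility of $\widehat{\mathcal{T}}(X)$). The only wording slip is ``the total complex $C_\bullet$ is acyclic'': at $X=0$ one has $\widehat{\mathcal{T}}(0)=\emptyset$ with $\widetilde{H}_{-1}=\bk$, and this non-acyclicity at the single object $\aF^0$ is precisely what makes the homology of the $E^1$-line the $\VI$-module $\bk=\bA/\bA_+$ rather than zero.
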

\begin{proof}We refer the reader to \cite[pg 7]{charney} where an argument for split Steinberg representation is given. The argument for the Steinberg representation is similar. 
\end{proof}

\begin{lemma}
	\label{lem:regularity-of-A-plus}
	Let $M$ be a finitely generated torsion module, and suppose $\deg M = d$. Then $t_i(M) - i \le d$ for all $i \ge 0$. 
\end{lemma}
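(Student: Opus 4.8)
The plan is to compute the $\VI$-homology $\rH^{\VI}_i(M)=\Tor_i^{\bA}(\bk,M)$ directly from the Steinberg resolution of Lemma~\ref{lem:steinberg-complex}, thereby reducing the assertion to a degree count on an explicit complex of $\VB$-modules.

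First I would note that $\cI(\St_\bullet)\to\bk$ is in fact a resolution of $\bk$ by \emph{flat} $\bA$-modules. Indeed, for any $\VI$-module $M'$, associativity of the tensor products (with $M'$ restricted to a $\VB$-module) gives $\cI(\St_j)\otimes_{\bA}M'=(\bA\otimes_{\VB}\St_j)\otimes_{\bA}M'\cong\St_j\otimes_{\VB}M'$, and the functor $\St_j\otimes_{\VB}-$ is exact since each $\St_j$ is a free $\bk$-module (by the explicit description of the Steinberg representation). Hence the flat resolution $\cI(\St_\bullet)\to\bk$ computes
\[ \rH^{\VI}_i(M)\;=\;\Tor_i^{\bA}(\bk,M)\;\cong\;\rH_i\bigl(\St_\bullet\otimes_{\VB}M\bigr), \]
the complex whose term in homological degree $j$ is $\St_j\otimes_{\VB}M$, with differentials induced from those of $\cI(\St_\bullet)$.

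Next comes the degree estimate. Because $\St_j$ is supported in degree $j$, for each $n$ we have
\[ (\St_j\otimes_{\VB}M)(\aF^n)\;=\;\bigoplus_{\substack{X\le\aF^n\\ \dim X=n-j}}\St_j(\aF^n/X)\otimes_{\bk}M(X), \]
and if $n-j>d=\deg M$ then every subspace $X$ occurring has $\dim X>d$, so $M(X)=0$ and the sum vanishes. Thus $\deg(\St_j\otimes_{\VB}M)\le j+d$. Since $\rH_j$ of a complex is a subquotient of its $j$-th chain module, we conclude
\[ t_j(M)=\deg\rH^{\VI}_j(M)=\deg\rH_j\bigl(\St_\bullet\otimes_{\VB}M\bigr)\le\deg(\St_j\otimes_{\VB}M)\le j+d \]
for all $j\ge 0$, which is exactly $t_i(M)-i\le d$.

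The step I expect to require the most care is the first one: verifying that the Steinberg complex --- a priori only a resolution of $\bk$ by induced, not projective, modules --- genuinely computes $\Tor^{\bA}(\bk,-)$. This reduces to the flatness of each $\cI(\St_j)$ over $\bA$, i.e.\ to the $\bk$-freeness of $\St_j$ together with the identification $\cI(V)\otimes_{\bA}M\cong V\otimes_{\VB}M$. Once that is in place, the remainder is the elementary degree bound above, valid over any noetherian $\bk$ with no characteristic hypothesis.
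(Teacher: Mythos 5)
Your proof is correct and follows essentially the same route as the paper: use the Steinberg resolution $\cI(\St_\bullet)\to\bk$ to compute $\rH^{\VI}_i(M)=H_i(\St_\bullet\otimes_{\VB}M)$, then bound $\deg(\St_i\otimes_{\VB}M)\le d+i$ using that $\St_i$ is concentrated in degree $i$. The one place you are actually more careful than the paper is the justification that this resolution computes $\Tor^{\bA}_\bullet(\bk,M)$: the paper cites homology-acyclicity of induced modules (Proposition~\ref{prop:ramos-new}), i.e.\ acyclicity for $\bk\otimes_{\bA}-$, whereas the relevant property for using a resolution of $\bk$ to compute a balanced $\Tor$ is flatness of each $\cI(\St_j)$ over $\bA$, which is exactly what you supply via the identification $\cI(\St_j)\otimes_{\bA}-\cong\St_j\otimes_{\VB}-$ and the $\bk$-freeness of $\St_j$.
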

\begin{proof}
Since induced modules are homology-acyclic (Proposition~\ref{prop:ramos-new}), the previous lemma implies that	$\rH^{\VI}_i(M) = \Tor_i^{\bA}(\bk,  M) = \rH_i(\cI(\St_{\bullet})\otimes_{\bA} M)$. Clearly, $\cI(\St_{i})\otimes_{\bA} M = \St_i \otimes_{\VB} M$ is supported in degrees $\le d + i$. The result follows.
\end{proof}

For a finitely generated $\VI$-module $M$, let $r(M) = \max_{0 \le i \le \delta(M) + 1}(h^i(M) +i)$. The following argument is based on \cite[Corollary~2.5]{regthm}.

\begin{theorem}[Finiteness of regularity]
	\label{thm:regularity} Let $M$ be a finitely generated $\VI$-module. Then $t_i(M) -i \le r(M)$ for all $i >0$. In particular, $M$ has finite Castelnuovo-Mumford regularity.
\end{theorem}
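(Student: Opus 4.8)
The plan is to relate $M$ to its ``shift complex'' $I^{\bullet}$ of Theorem~\ref{thm:shift-theorem}(b) inside $\rD^{\rb}(\ModVI)$. Recall its salient features: $I^{0}=M$, each $I^{i}$ with $i>0$ is semi-induced (hence $\rH^{\VI}_{q}(I^{i})=0$ for $q>0$ by Proposition~\ref{prop:ramos-new}), the complex is finite, and $\rH^{i}(I^{\bullet})=\rR^{i}\Gamma(M)$ (Corollary~\ref{cor:shift-complex}), which by Proposition~\ref{prop:D} and Theorem~\ref{thm:finiteness-local-cohomology} is a finitely generated torsion module of degree $h^{i}(M)$, vanishing unless $0\le i\le\delta(M)+1$. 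Write $\sigma_{\ge 1}I^{\bullet}=(I^{1}\to I^{2}\to\cdots)$, placed in cohomological degrees $\ge 1$; the stupid-truncation short exact sequence of complexes $0\to\sigma_{\ge 1}I^{\bullet}\to I^{\bullet}\to I^{0}\to 0$ gives a distinguished triangle $\sigma_{\ge 1}I^{\bullet}\to I^{\bullet}\to M\to(\sigma_{\ge 1}I^{\bullet})[1]$ in $\rD^{\rb}(\ModVI)$. Applying the hyper-derived functor $\rH^{\VI}_{\bullet}=\operatorname{Tor}^{\bA}_{\bullet}(\bk,-)$ and reading off the long exact sequence yields, for every $i$,
\[ t_{i}(M)=\deg\rH^{\VI}_{i}(M)\le\max\bigl(\deg\rH^{\VI}_{i}(I^{\bullet}),\ \deg\rH^{\VI}_{i-1}(\sigma_{\ge 1}I^{\bullet})\bigr). \]

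First I would bound $\deg\rH^{\VI}_{i}(I^{\bullet})$. The hyper-homology spectral sequence whose $E_{2}$-page is $\rH^{\VI}_{q}(\rH^{p}(I^{\bullet}))=\rH^{\VI}_{q}(\rR^{p}\Gamma(M))$ converges (the complex is finite) to $\rH^{\VI}_{q-p}(I^{\bullet})$, so every composition factor of $\rH^{\VI}_{i}(I^{\bullet})$ is a subquotient of some $\rH^{\VI}_{i+p}(\rR^{p}\Gamma(M))$ with $0\le p\le\delta(M)+1$. Since $\rR^{p}\Gamma(M)$ is a finitely generated torsion module of degree $h^{p}(M)$, Lemma~\ref{lem:regularity-of-A-plus} gives $\deg\rH^{\VI}_{i+p}(\rR^{p}\Gamma(M))\le(i+p)+h^{p}(M)\le i+r(M)$, whence $\deg\rH^{\VI}_{i}(I^{\bullet})\le i+r(M)$.

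Next I would show the second term is zero when $i\ge 1$. Here I would instead run the hyper-homology spectral sequence of $\sigma_{\ge 1}I^{\bullet}$ with first page $E_{1}^{p,q}=\rH^{\VI}_{q}(I^{p})$ ($p\ge 1$): because each $I^{p}$ is semi-induced, this page is concentrated in the row $q=0$, so it degenerates and $\rH^{\VI}_{n}(\sigma_{\ge 1}I^{\bullet})$ equals the $(-n)$-th cohomology of the complex $\rH^{\VI}_{0}(I^{1})\to\rH^{\VI}_{0}(I^{2})\to\cdots$ sitting in degrees $\ge 1$; in particular it vanishes for every $n\ge 0$. Thus $\rH^{\VI}_{i-1}(\sigma_{\ge 1}I^{\bullet})=0$ for $i\ge 1$, and combining with the previous paragraph, $t_{i}(M)\le i+r(M)$, i.e.\ $t_{i}(M)-i\le r(M)$ for all $i>0$. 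Finiteness of the Castelnuovo--Mumford regularity follows at once, since $t_{0}(M)<\infty$ (Proposition~\ref{prop:stable-degree}(e)) and $r(M)<\infty$ (Theorem~\ref{thm:finiteness-local-cohomology}).

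The main obstacle I expect is purely organizational: fixing the indexing and convergence of the two hyper-homology spectral sequences, and, crucially, making the estimate run over $0\le p\le\delta(M)+1$ rather than the a priori larger range $0\le p\le t_{0}(M)+1$, so that the bound really is $r(M)$; the rest is routine d\'evissage, along the lines of \cite[Corollary~2.5]{regthm}.
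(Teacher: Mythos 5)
Your proof is correct and follows the paper's argument in all essentials: you work with the concrete shift complex $I^{\bullet}$ of Theorem~\ref{thm:shift-theorem}(b) and its stupid truncation, whereas the paper works with the abstract triangle $\rR\Gamma(M)\to M\to\rR\bS(M)\to$ of Proposition~\ref{prop:D}, but since $I^{\bullet}$ represents $\rR\Gamma(M)$ (Corollary~\ref{cor:shift-complex}) these are the same triangle up to rotation. The key steps—running the hyper-homology spectral sequence on the cohomologies $\rR^{p}\Gamma(M)$ with $0\le p\le\delta(M)+1$ and invoking Lemma~\ref{lem:regularity-of-A-plus}, and observing that the semi-induced tail in positive cohomological degrees contributes nothing to $\rH^{\VI}_{i}$ for $i>0$—are exactly those the paper leaves partly implicit (citing \cite[Corollary~2.5]{regthm}).
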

\begin{proof}
By Theorem~\ref{thm:finiteness-local-cohomology} and the previous lemma, we see that $t_i(\rR \Gamma(M) ) - i \le r(M)$. Since $\rR \bS(M)$ is supported in non-negative cohomological degrees (which we think of as non-positive homological degrees), we conclude that $t_i(\rR \bS(M)) = 0$ for $i > 0$ (Proposition~\ref{prop:ramos-new}). The exact triangle $\rR \Gamma(M) \to M \to \rR \bS(M) \to $ of Proposition~\ref{prop:D} implies that $t_i(M) \le \max( t_i(\rR \Gamma(M) ), t_i(\rR \bS(M)) )$. Thus for $i > 0$, we obtain $t_i(M) - i \le r(M)$. This completes the proof.
\end{proof}

\subsection{Representation stability in characteristic zero} 
\label{sec:rep-stability}
In this section, we assume that $\bk$ is an algebraically closed field of characteristic 0. We first recall a parametrization of irreducible representations of $\GL_n$ over $\bk$, we follow \cite[\S 9]{zelevinsky}. Let $\cC_n$ be the isomorphism classes of cuspidal representations (irreducible representations which cannot be obtained via a parabolic induction) of $\GL_n$, and set $\cC = \sqcup_{n \ge 1} \cC_n$. If $\rho \in \cC_n$, we set $|\rho| = n$. Let $\cP$ be the set of partitions. Given a partition  $\lambda$, we set $|\lambda| = n$ if $\lambda$ is a partition of $n$. Given a function $\bmu \colon \cC \to \cP$, we set $|\bmu| = \sum_{x \in \cC} |x| |\bmu(x)|$.  The isomorphism classes of irreducible representations of $\GL_n$ are in bijection with the set of functions $\bmu$ satisfying $|\bmu| = n$. We fix an irreducible representation $M_{\bmu}$ corresponding to each partition function $\bmu$.

Let $\iota \in \cC_1$ be the trivial representation of $\GL_1$. For a partition function $\bmu$ with $\bmu(\iota) = \lambda$, we define another partition function $\bmu[n]$ by \[ \bmu[n](\rho) = \begin{cases}
(n - |\bmu|, \lambda_1, \lambda_2, \ldots) &\mbox{if } \rho = \iota \\
\bmu(\rho) &\mbox{if } \rho \neq \iota.
\end{cases} \] This definition makes sense only if $n \ge |\bmu| + \lambda_1$. 

Let \[M_0 \xrightarrow{\phi_0} M_1 \xrightarrow{\phi_1} M_2 \xrightarrow{\phi_2}  \cdots \] be a sequence such that each $M_n$ is a $\bk[\GL_n]$-module and each $\phi_n$ is $\GL_n$-equivariant. Following \cite{gan-watterlond-VI} which, in turn, is based on \cite{rep-stability}, we  call such a sequence {\bf representation stable of degree $d$ starting at $N$} if the following three conditions are satisfied for every $n \ge N$: \begin{enumerate}
	\item[(RS1)] {\bf Injectivity}: The map $\phi_n \colon M_n \to M_{n+1}$ is injective.
	\item[(RS2)] {\bf Surjectivity}: The $\GL_{n+1}$ orbits of $\phi_n(M_n)$ span all of $M_{n+1}$.
	\item[(RS3)] {\bf Multiplicities}: There is a decomposition \[M_n = \bigoplus_{\bmu} M_{\bmu[n]}^{\oplus c(\bmu)}  \] where the multiplicities $0 \le c(\bmu) < \infty$ do not depend on $n$, and $c(\bmu) = 0$ if $|\bmu| >d$.	
\end{enumerate} 

We now prove and improve \cite[Theorem~1.6]{gan-watterlond-VI}. 

\begin{theorem}[Representation stability]
	\label{thm:rep-stability}
	Let $M$ be a finitely generated $\VI$-module. Denote $M(\aF^n)$ by $M_n$, and let $\phi_n \colon M_n \to M_{n+1}$ be the map induced by the natural inclusion $\aF^n \hookrightarrow \aF^{n+1}$. Then the sequence \[M_0 \xrightarrow{\phi_0} M_1 \xrightarrow{\phi_1} M_2 \xrightarrow{\phi_2}  \cdots \] is representation stable of degree $\delta(M)$ starting at $N \coloneq \max(h^{\max}(M) +1, 2t_0(M)) $.
\end{theorem}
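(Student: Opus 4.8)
The plan is to verify the three conditions (RS1)–(RS3) separately, relying on the shift theorem and the structure of semi-induced modules. For (RS1), the kernel of $\phi_n$ is the degree-$n$ part of $\kappa^X M$ (with $\dim X = 1$), which is torsion supported in degrees $\le h_0(M) = h^0(M) \le h^{\max}(M)$ by Proposition~\ref{prop:newshift-injectivity} and its $\Sigma$-analog Proposition~\ref{prop:sigma-injectivity}; so $\phi_n$ is injective once $n > h^{\max}(M)$, which is implied by $n \ge N$. For (RS2), the statement that the $\GL_{n+1}$-orbit of $\phi_n(M_n)$ spans $M_{n+1}$ is precisely the statement that $M$ is generated in degrees $\le n$, i.e. $t_0(M) \le n$; since $N \ge 2t_0(M) \ge t_0(M)$, this holds for $n \ge N$.

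The heart of the argument is (RS3), the multiplicity stability. The strategy is to reduce to the semi-induced case via the shift theorem: pick $a = h^{\max}(M)+1$, so that $N_0 \coloneq \Sigma^a M$ is semi-induced (by the remark following Theorem~\ref{thm:q-polynomiality-of-dimension}, $a$ is exactly the least shift making $M$ semi-induced) and $M(\aF^n) = N_0(\aF^{n-a})$ for $n \ge a$. By Corollary~\ref{cor:good-filtration-new}, $N_0$ has a finite filtration with graded pieces $Q^i$ induced from $i$, $0 \le i \le \delta(M)$, and since $\bk$ has characteristic zero these filtrations split after evaluation, so it suffices to analyze the decomposition of an induced module $\cI(V)(\aF^m)$ into irreducibles for a single $V$ supported in degree $i \le \delta(M)$. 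Here $\cI(V)(\aF^m) = \bk[\Hom_{\VI}(\aF^i,\aF^m)] \otimes_{\bk[\GL_i]} V(\aF^i) = \Ind_{P}^{\GL_m}(V \boxtimes \bk)$ where $P$ is the stabilizer of $\aF^i \subset \aF^m$ and $\bk$ is the trivial module of the complementary $\GL_{m-i}$-factor inflated along the unipotent radical; this is a Harish-Chandra (parabolic) induction of $V$ from $\GL_i \times \GL_{m-i}$, tensored with the trivial representation of $\GL_{m-i}$. Using Zelevinsky's combinatorics of the partition-function parametrization (as in \cite[\S 9]{zelevinsky}), one checks that for $m$ large the irreducible constituents of this parabolic induction are exactly the $M_{\bmu[m]}$ for $\bmu$ ranging over a fixed finite set determined by the constituents of $V$ (their partition functions only modified in the $\iota$-coordinate by padding a large first row), each appearing with a multiplicity independent of $m$, and that $|\bmu| \le i \le \delta(M)$; the constraint that $\iota$ is the trivial character is what forces the stabilization of the first row. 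Summing over the finitely many graded pieces $Q^i$ and translating $m = n-a$ gives (RS3) with $c(\bmu) = 0$ for $|\bmu| > \delta(M)$, valid once $n-a$ is large enough that all the parabolic inductions have stabilized — and one must check the threshold is $\le N$, which is why $N$ is taken to be $\max(h^{\max}(M)+1, 2t_0(M))$: the $2t_0(M)$ term guarantees enough room for the padding argument (one needs $n - a \ge |\bmu| + \lambda_1$ for all relevant $\bmu$, and $|\bmu|, \lambda_1 \le t_0(M)$ while $a \le$ something controlled, so $n \ge 2t_0(M)$ suffices together with $n > h^{\max}(M)$).

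I expect the main obstacle to be the bookkeeping in (RS3): getting the stabilization range down to the explicit bound $N = \max(h^{\max}(M)+1, 2t_0(M))$ rather than just ``$n \gg 0$''. This requires being careful that (i) the filtration of $\Sigma^a M$ by induced pieces really has all generation degrees $\le \delta(M) \le t_0(M)$, (ii) the parabolic induction $\Ind_P^{\GL_m}$ of each fixed irreducible constituent of each $Q^i(\aF^i)$ has its Zelevinsky decomposition stabilize as soon as the ``trivial-character first row'' $n - a - |\bmu|$ is at least $\lambda_1$, and (iii) these two numerical facts combine to the stated $N$. The injectivity and surjectivity conditions, by contrast, are essentially immediate from the torsion bounds and the definition of generation degree.
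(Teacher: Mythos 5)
Your handling of (RS1) and (RS2) is correct and agrees with the paper's proof. The proof of (RS3), however, has a genuine gap: when you replace $M(\aF^n)$ with $(\Sigma^a M)(\aF^{n-a})$, you silently change the acting group from $\GL_n$ to $\GL_{n-a}$. The identification $M(\aF^n) = (\Sigma^a M)(\aF^{n-a})$ holds as $\bk$-modules, and it is equivariant only for the copy of $\GL_{n-a}$ inside $\GL_n$ that stabilizes the complementary $a$-dimensional summand; but (RS3) is a statement about the decomposition of $M(\aF^n)$ as a $\GL_n$-module, and restricting to $\GL_{n-a}$ loses exactly the structure you need. Your parabolic-induction/Pieri computation then describes $(\Sigma^a M)(\aF^{n-a})$ as a $\GL_{n-a}$-module, which is representation stability for the $\VI$-module $\Sigma^a M$, not for $M$. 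Note also that if $h^{\max}(M) \ge 0$ then $M$ itself is not semi-induced, so you cannot hope to argue about $M(\aF^n)$ directly this way.

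The paper circumvents this by using part (b) of the shift theorem: there is a finite complex $I^{\bullet}$ of $\VI$-modules, supported in non-negative degrees, with $I^0 = M$, each $I^i$ semi-induced for $i>0$ and generated in degrees $\le \delta(M) - (i-1)$, and $\rH^i(I^{\bullet}) = \rR^i\Gamma(M)$. For $n > h^{\max}(M)$ the complex $I^{\bullet}(\aF^n)$ of $\GL_n$-modules is exact, so the multiplicity of each irreducible in $M(\aF^n)$ is the alternating sum of its multiplicities in the $I^i(\aF^n)$, $i \ge 1$ --- all of which are genuine $\GL_n$-modules since each $I^i$ is a $\VI$-module evaluated at $\aF^n$ itself, not at a shifted argument. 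That keeps the group fixed throughout, and then Pieri's formula (Gan--Watterlond Lemma~2.8, which is essentially the Zelevinsky/parabolic-induction computation you sketch) applies term by term. So your analysis of induced modules is fine; it is the reduction from $M$ to the semi-induced case that needs to go through the complex $I^\bullet$ rather than through a single large shift followed by re-indexing.
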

\begin{proof}
	Since $h^0(M) < N$, we see that (RS1) holds. Similarly, $t_0(M) \le N$ implies that (RS2) holds. Now we prove (RS3). Let $I^{\bullet}$ be the complex as in Theorem~\ref{thm:shift-theorem}. Then $I^{\bullet}(\aF^n)$ is exact if $n > h^{\max}(M)$ (Corollary~\ref{cor:shift-complex}). Since $I^0 = M$, it suffices to prove (RS3) for $I^i$ for each $i > 0$. We may also assume that $t_0(I^1) = \delta(M)$, and $t_0(I^{i}) \le \delta(M) - i \le t_0(M) - i$ for $i > 1$ (Corollary~\ref{cor:shift-complex-degrees}). Thus it suffices to show (RS3) for a semi-induced module generated in degrees $\le \delta(M)$. By Proposition~\ref{prop:projectives}, every semi-induced module is induced in characteristic zero. Thus we are reduced to showing (RS3) for a finitely generated induced module generated in degrees $\le \delta(M)$. This follows from Pieri's formula (see \cite[Lemma~2.8]{gan-watterlond-VI}), completing the proof.
\end{proof}

\subsection{Classification of indecomposable injectives in characteristic zero}

We first classify torsion-free injectives in the proposition below. We repeatedly use the fact that in characteristic zero, every induced module is projective (Proposition~\ref{prop:projectives}), and so every semi-induced module is, in fact, induced.

\begin{proposition}
	\label{prop:semi-induced-are-injectives-char0} Every induced (and hence semi-induced) $\VI$-module is injective in $\ModVI$. A torsion-free injective $\VI$-module is induced.
\end{proposition}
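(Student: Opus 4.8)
The plan is to prove the two assertions in turn, using the formalism of local cohomology and saturation together with the characteristic-zero fact that semi-induced equals induced equals projective.

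For the first assertion, I would show that every induced module $\cI(V)$ is injective. Since $\bk$ has characteristic zero, $\Mod_{\VB} \cong \prod_{n\ge 0}\Mod_{\bk[\GL_n]}$ is semisimple, so every $\VB$-module $V$ is both projective and injective. The functor $\cI = \bA\otimes_{\VB}-$ is left adjoint to the exact functor $\Psi_{\downarrow}$, hence preserves projectives (Proposition~\ref{prop:projectives}), giving that $\cI(V)$ is projective. To get injectivity, I would dualize: the restriction functor $\Psi_{\downarrow}$ also admits a \emph{right} adjoint (a coinduction functor), and one checks that in characteristic zero this right adjoint agrees with $\cI$ up to the canonical identification of $V$ with its dual (the relevant groups $\GL_n$ have invertible order, so invariants equal coinvariants). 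Concretely, the adjunction $\Hom_{\ModVI}(N,\cI(V)) \cong \Hom_{\Mod_{\VB}}(\Psi_{\downarrow}N, V')$ for suitable $V'$ — obtained by playing off the $\otimes_{\VB}$-duality against the $\GL_n$-semisimplicity — exhibits $\cI(V)$ as the image of an injective $\VB$-module under a right adjoint to an exact functor, hence injective (\cite[Proposition~2.3.10]{weibel} applied on the injective side). An alternative, perhaps cleaner route: it suffices to treat $\cI(d) = \bk[\Hom_{\VI}(\aF^d,-)]$, and then observe that $\Hom_{\ModVI}(N,\cI(d))$ can be computed as a space of $\GL_d$-coinvariants of $N(\aF^d)^{\vee}$-type data, which is exact in $N$ because $\abs{\GL_d}$ is invertible. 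Since semi-induced modules are finite extensions of induced ones, and the class of injectives is closed under finite direct sums and (being the kernels/cokernels computation) under extensions in this semisimple-fiber situation — more precisely, a semi-induced module in characteristic zero is actually a direct sum of induced modules since induced modules are projective hence the defining filtration splits — semi-induced modules are injective too.

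For the second assertion, let $I$ be a torsion-free injective $\VI$-module. First I would note $I$ need not be finitely generated, so some care is needed, but the key structural input is the exact triangle $\rR\Gamma(I)\to I\to\rR\bS(I)\to$ of Proposition~\ref{prop:triangle}. Since $I$ is torsion-free, $\Gamma(I)=0$; and since $I$ is injective, $\rR^i\Gamma(I) = \Gamma(I^i)$-type vanishing for $i>0$ by the argument of Lemma~\ref{lem:torsion-injective} (an injective resolution of $I$ is $I$ itself in degree zero), so $\rR\Gamma(I)=0$, i.e.\ $I$ is derived saturated. Now I want to conclude $I$ is induced. For finitely generated $I$ this is immediate from Theorem~\ref{thm:derived-saturated-are-semi-induced} (semi-induced $=$ induced in char.\ zero). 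For the general case, I would argue that $I$, being injective, is a summand of a product/sum of indecomposable injectives, reduce to the indecomposable case, and then use that an indecomposable torsion-free derived-saturated module over a locally noetherian category is determined by its "generic part" $\rT(I)$ together with derived saturation — so $I = \bS(I) = \rS\rT(I)$, and one identifies $\rS\rT(\cI(d))$ with $\cI(d)$ itself. Alternatively and more directly: embed $I$ into an induced (hence injective) module $\cI(\Psi_{\downarrow}I)$ via the counit, which is a monomorphism since $I$ is torsion-free (the counit is always epi, but on a torsion-free module the obstruction to mono is torsion); then injectivity of $I$ splits this inclusion, so $I$ is a summand of an induced module, and a summand of an induced module in characteristic zero is induced (since $\Psi_{\downarrow}$ and $\cI$ and $\rH_0^{\VI}$ let one transport the splitting down to $\Mod_{\VB}$, where everything is semisimple — cf.\ the proof of the second assertion of Proposition~\ref{prop:projectives}).

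The main obstacle I anticipate is the injectivity of $\cI(V)$ itself: unlike projectivity, which is handed to us for free by the left-adjoint argument, injectivity requires genuinely using the non-describing (here, characteristic zero) hypothesis, either through an explicit coinduction/duality computation or through the saturation machinery. Once injectivity of induced modules is in hand, the torsion-free injective $\Rightarrow$ induced direction is a short diagram-chase using the counit $\cI\Psi_{\downarrow}I\to I$, its mono-ness on torsion-free modules (Proposition~\ref{prop:newshift-injectivity}-type reasoning, or directly), and the splitting afforded by injectivity, followed by the descent of split summands through $\rH_0^{\VI}$ as in Proposition~\ref{prop:projectives}.
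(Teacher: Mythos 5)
Your proposal has a genuine gap in both halves, and both gaps stem from the same misconception: you try to derive injectivity of $\cI(V)$ by formal adjunction/semisimplicity, whereas the paper's proof relies essentially on the shift theorem.

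For the first assertion, the claim that the right adjoint of $\Psi_{\downarrow}$ ``agrees with $\cI$ up to duality'' is false. Working out the adjunction
\[
\Hom_{\Mod_{\VB}}(\Psi_{\downarrow}M,\,V)\;=\;\prod_{d}\Hom_{\bk[\GL_d]}(M(\aF^d),V_d)
\]
shows that the coinduction functor sends $V$ to $\prod_d \dualI(V_d)$, where $\dualI(E)=\Hom_{\bk[\GL_d]}(\bk[\Hom_{\VI}(-,\aF^d)],E)$ is exactly the construction the paper uses to classify \emph{torsion} injectives (Proposition~\ref{prop:injective-adjunctions}). This is supported in degrees $\le d$ and is nothing like $\cI(V)$, whose dimensions grow. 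So ``$\cI$ is the image of an injective under a right adjoint'' is not available. Your alternative --- that $\Hom_{\ModVI}(N,\cI(d))$ is ``$\GL_d$-coinvariants of $N(\aF^d)^\vee$-type data, hence exact in $N$'' --- is not backed by any actual formula; a map $N\to\cI(d)$ depends on all of $N$, not just on $N(\aF^d)$. The paper instead proves $\Ext^1(Q,\cI(W))=0$ for finitely generated $Q$ by showing that any injection $\cI(W)\hookrightarrow M$ splits: choose $X$ large enough that $\Sigma^X M$ is semi-induced (the shift theorem), note that $\cI(W)\to\Sigma^X M$ is still injective with semi-induced (hence projective in characteristic zero) cokernel by Corollary~\ref{cor:short-exact-sequence-semi-induced}, split there, and compose with $\Sigma^{\ell}$; then Baer's criterion extends this to arbitrary modules. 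This shift-theorem input is exactly what your argument is missing.

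For the second assertion, the ``counit'' $\cI\Psi_{\downarrow}I\to I$ runs \emph{out of} the induced module, not into it; there is no natural monomorphism $I\to\cI(\Psi_{\downarrow}I)$, torsion-free or not, so the splitting-a-summand step cannot get started. The paper instead uses the shift theorem again to embed $I$ into a direct sum $J$ of induced modules (via the injective maps $I\to\Sigma^n I$ for torsion-free $I$ and the fact that shifts become semi-induced), splits that embedding by injectivity of $I$, truncates to $I_{\preceq d}\hookrightarrow J_{\preceq d}$ to land in a situation covered by Theorem~\ref{thm:derived-saturated-are-semi-induced}, and passes to the colimit. Your closing observation that a summand of an induced module is induced (by descending the splitting through $\rH_0^{\VI}$, as in Proposition~\ref{prop:projectives}) is correct and matches the paper's reasoning, but it only helps once you have the right embedding to split.
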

\begin{proof} Let $\cI(W)$ be a finitely generated induced module. Note that $\VI$-modules form a locally noetherian category (Theorem~\ref{thm:noetherianity}), and so any direct sum of injective modules is injective. Since any induced module is a direct sum of finitely generated induced modules, it suffices to show that  $\cI(W)$ is injective. 
	
We start by showing that $\Ext^1(Q, \cI(W)) =0$ for any finitely generated module $Q$. This is equivalent to showing that any short exact sequence of the form \[ 0 \to \cI(W) \to M \to Q \to 0 \] splits.  Thus it suffices to show that any injection $f \colon \cI(W) \to M$ admits a section whenever $M$ is finitely generated. Let $X$ be a vector space of large enough dimension so that $\Sigma^X M$ is semi-induced (Theorem~\ref{thm:shift-theorem}). Let $\ell \colon 0 \to X$ be the unique map. Exactness of $\Sigma^X$ and the commutativity of the diagram below 
	\[
	\begin{tikzcd}
	\Sigma^X \cI(W) \ar{r}{\Sigma^{X}(f)} &  \Sigma^X M\\ 
	\cI(W) \ar{u}{\Sigma^{\ell}} \ar{r}{f} & M \ar{u}{\Sigma^{\ell}} 
	\end{tikzcd}
	\]
	shows that $\Sigma^{\ell} f \colon \cI(W) \to \Sigma^{X} M$ is injective. By Corollary~\ref{cor:short-exact-sequence-semi-induced}, the cokernel of $\Sigma^{\ell} f$ is semi-induced. By Proposition~\ref{prop:projectives} and the characteristic 0 hypothesis, every semi-induced module is projective. Hence $\Sigma^{\ell} f$ admits a section $s$. Then $s\Sigma^{\ell}$ is a section of $f$, as required.

Next, let  $M \subset N$ be arbitrary $\VI$-modules, and $\Phi \colon M \to \cI(W)$ be an arbitrary map. We will show that $\Phi$ extends to $N$ which finishes the proof of the first assertion. We follow the proof of  Baer's criterion as in \stacks{0AVF}. By Zorn's lemma, it suffices to show that if $M \subsetneq N$ then $\Phi$ extends to a submodule $M' \subset N$ which properly contains $M$. For this, pick an $x \in N \setminus M$, and let $M'$ be the submodule of $N$ generated by $M$ and $x$. Then $x \in N(\aF^d)$ for some $d$. Note that \[Q \coloneq \{f \in \cI(d) \colon fx \in M \}\] is a $\VI$-submodule of $\cI(d)$. By the previous paragraph, we have $\Ext^1(\cI(d)/Q, \cI(W)) =0$. Thus the map $\psi \colon Q \to \cI(W)$ given by $f \mapsto \Phi(fx)$  extends to a map $\wt{\psi} \colon \cI(d) \to \cI(W)$. Now consider the map $\wt{\Psi} \colon M \oplus \cI(d) \to \cI(W)$ given by \[ (y, f) \mapsto \Phi(y)- \wt	{\psi}(f). \] The kernel of this map contains the kernel of the natural map $M \oplus \cI(d) \to N$ given by $(y, f) \mapsto y + fx$. Thus $\wt{\Psi}$ factors through a map $\Psi \colon M' \to \cI(W)$. It is easy to check that this map extends $\Phi$. This concludes the proof of the first assertion.

	Let $I$ be an arbitrary torsion-free injective module. Then by the shift theorem, $I$ embeds into a direct sum $J$ of induced modules. Since $I$ is injective, the embedding $I \to J$ splits. This shows that the injection $I_{\preceq d} \to J_{\preceq d}$ is split as well, and so $I_{\preceq d}$ is injective and torsion-free. It follows that $\rR \Gamma(I_{\preceq d}) = 0$, and so $I_{\preceq d}$ must be derived saturated. Thus $I_{\preceq d}$ is induced (Theorem~\ref{thm:derived-saturated-are-semi-induced}). since colimits are exact and $I = \varinjlim_d I_{\preceq d}$, we see that $I$ is a direct sum of induced modules, concluding the proof of the second assertion. 
\end{proof}

We now classify torsion injectives. For this we do not need any assumption on $\bk$ (noetherianity is still needed but the non-describing characteristic assumption is not needed).  So assume that $\bk$ is an arbitrary noetherian ring. Let $\cV$ be a monoidal category. Given two functors $F_1 \colon \cC \to \cV$ and  $F_2 \colon \cC^{\op} \to \cV$  there is a natural notion of a tensor product $F_1 \otimes_{\cC} F_2$ (we refer the readers to  \cite{tensor-hom} for details). More explicitly, if $\cC = \VI$ and $\cV = (\Mod_{\bk}, \otimes_{\bk})$, then $F_1 \otimes_{\VI} F_2$ is given by the following $\bk$-module \[ \left( \bigoplus_{X \in \Obj(\VI) }	F_1(X) \otimes_{\bk} F_2(X) \right)/  \langle f_{\star}(v) \otimes w - v \otimes f^{\star}(w) \colon f \in \Hom_{\VI}(X,Y), v \in F_1(X), w \in F_2(Y) \rangle.  \]

The following lemma is elementary.

\begin{lemma}
	\label{lem:ideal-tensor}
	$\bk[\Hom_{\VI}(-, \aF^d)]$ is a projective $\VI^{\op}$-module. Moreover, for any $\VI$-module $N$, we have \[\bk[\Hom_{\VI}(-, \aF^d)] \otimes_{\VI} N = N(\aF^d).\]
\end{lemma}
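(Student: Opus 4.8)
The plan is to establish the two assertions independently; both are formal. For the projectivity statement, I would apply the $\bk$-linear Yoneda lemma: for every $\VI^{\op}$-module $P$ there is a natural isomorphism $\Hom_{\Mod_{\VI^{\op}}}(\bk[\Hom_{\VI}(-,\aF^d)], P) \cong P(\aF^d)$, sending a natural transformation $\eta$ to $\eta_{\aF^d}(\id_{\aF^d})$. Since the evaluation functor $P \mapsto P(\aF^d)$ is exact on $\Mod_{\VI^{\op}}$ (kernels and cokernels of $\VI^{\op}$-modules are computed pointwise), the composite functor $\Hom_{\Mod_{\VI^{\op}}}(\bk[\Hom_{\VI}(-,\aF^d)], -)$ is exact, which is exactly the assertion that $\bk[\Hom_{\VI}(-,\aF^d)]$ is a projective $\VI^{\op}$-module.

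For the tensor product identity I would write down an explicit pair of mutually inverse maps. Unwinding the definition of $\otimes_{\VI}$, the $\bk$-module $\bk[\Hom_{\VI}(-,\aF^d)] \otimes_{\VI} N$ is the quotient of $\bigoplus_{X \in \Obj(\VI)} \bk[\Hom_{\VI}(X, \aF^d)] \otimes_{\bk} N(X)$ by the relations $(g \circ f) \otimes w \sim g \otimes f_{\star}(w)$, one for each $\VI$-morphism $f \colon X \to Y$, each $g \in \Hom_{\VI}(Y, \aF^d)$, and each $w \in N(X)$ (so $g \circ f \in \Hom_{\VI}(X, \aF^d)$ and $f_{\star}(w) \in N(Y)$). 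I would define $\Phi \colon \bk[\Hom_{\VI}(-,\aF^d)] \otimes_{\VI} N \to N(\aF^d)$ on generators by $g \otimes w \mapsto g_{\star}(w)$ for $g \colon X \to \aF^d$, $w \in N(X)$, and $\Psi \colon N(\aF^d) \to \bk[\Hom_{\VI}(-,\aF^d)] \otimes_{\VI} N$ by $u \mapsto \id_{\aF^d} \otimes u$.

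The verifications are short. The map $\Phi$ respects the defining relations because $(g \circ f)_{\star}(w) = g_{\star}(f_{\star}(w))$, so it is well defined; $\Psi$ is visibly $\bk$-linear. One has $\Phi(\Psi(u)) = (\id_{\aF^d})_{\star}(u) = u$. For the other composite, take a generator $g \otimes w$ with $g \colon X \to \aF^d$ and $w \in N(X)$; applying the defining relation with $f = g$ and with the element of $\Hom_{\VI}(\aF^d, \aF^d)$ taken to be $\id_{\aF^d}$ gives $g \otimes w = (\id_{\aF^d} \circ g) \otimes w \sim \id_{\aF^d} \otimes g_{\star}(w)$, so $\Psi(\Phi(g \otimes w)) = \id_{\aF^d} \otimes g_{\star}(w) = g \otimes w$; since generators span, $\Psi \circ \Phi = \id$. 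Hence $\Phi$ and $\Psi$ are mutually inverse, proving $\bk[\Hom_{\VI}(-,\aF^d)] \otimes_{\VI} N = N(\aF^d)$.

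There is no genuine difficulty here; the only points needing attention are the bookkeeping of variance — $\bk[\Hom_{\VI}(-,\aF^d)]$ is contravariant in its argument while $N$ is covariant, which fixes the shape of the coend relations above — and the observation that the single relation coming from $f = g$ is precisely what collapses $\Psi \circ \Phi$ to the identity. Alternatively, the identity $\bk[\Hom_{\VI}(-,\aF^d)] \otimes_{\VI} N = N(\aF^d)$ is an instance of the co-Yoneda (density) formula, but spelling out the two maps keeps the argument self-contained.
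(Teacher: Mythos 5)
The paper gives no proof at all—it simply labels the lemma ``elementary'' and moves on—so there is nothing to compare against. Your argument is correct and is exactly the standard one: the $\bk$-linear Yoneda lemma identifies $\Hom_{\Mod_{\VI^{\op}}}(\bk[\Hom_{\VI}(-,\aF^d)], -)$ with evaluation at $\aF^d$, which is exact, giving projectivity; and the explicit mutually inverse maps $\Phi(g\otimes w)=g_{\star}(w)$, $\Psi(u)=\id_{\aF^d}\otimes u$ realize the co-Yoneda (density) isomorphism, with the only point worth checking being that the single coend relation with $f=g$ collapses $\Psi\circ\Phi$ to the identity. One small cosmetic remark: the paper's displayed formula for $\otimes_{\VI}$ has $F_1$ covariant and $F_2$ contravariant, whereas the lemma places the contravariant factor first; you implicitly and correctly swap to match, but it would be worth one line noting that the tensor is symmetric in its two arguments so the order is immaterial.
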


Let $E$ be a $\bk[\GL_d]$-module. We denote by $\dualI(E)$ the $\VI$-module given by \[\Hom_{\bk[\GL_d]}(\bk[\Hom_{\VI}(-, \aF^d)], E).\] $\dualI(E)$ is clearly a torsion $\VI$-module (note that $\dualI(E)(Y) = 0$ for $Y \succ \aF^d$).

\begin{proposition}
	\label{prop:injective-adjunctions}
	For any  $\bk[\GL_d]$-module, we have \[\Hom_{\ModVI}(M, \dualI(E)) = \Hom_{\bk[\GL_d]}(M(\aF^d), E).\] In particular, if $E$ is an injective $\bk[\GL_d]$-module then $\dualI(E)$ is an injective $\VI$-module.
\end{proposition}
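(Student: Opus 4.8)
The plan is to exhibit the claimed adjunction directly from the definition of $\dualI(E)$ as a Hom-functor, and then to deduce the injectivity statement as a formal consequence. First I would unwind the definition: a $\VI$-module map $M \to \dualI(E)$ assigns to each vector space $Y$ a $\bk$-linear map
\[
M(Y) \longrightarrow \Hom_{\bk[\GL_d]}\bigl(\bk[\Hom_{\VI}(Y,\aF^d)],\, E\bigr),
\]
naturally in $Y$. By the ordinary tensor--hom adjunction over $\bk$ (together with the $\GL_d$-equivariance packaged into the target), such data is the same as a $\bk$-linear map $M(Y) \otimes_{\bk} \bk[\Hom_{\VI}(Y,\aF^d)] \to E$ that is $\GL_d$-equivariant, again natural in $Y$. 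Summing over $Y$ and imposing the $\VI$-naturality relation is, by the explicit description of $\otimes_{\VI}$ recalled just before Lemma~\ref{lem:ideal-tensor}, exactly a $\GL_d$-equivariant map out of $\bk[\Hom_{\VI}(-,\aF^d)] \otimes_{\VI} M$. By Lemma~\ref{lem:ideal-tensor} this tensor product is canonically $M(\aF^d)$ as a $\bk[\GL_d]$-module, so the whole thing is $\Hom_{\bk[\GL_d]}(M(\aF^d), E)$. This gives the stated natural isomorphism; I would note at each step that the bijection is visibly natural in $M$ (and in $E$), which is all that is needed.

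A cleaner way to organize the same computation, which I would probably prefer in the writeup, is to observe that $\dualI(E)$ is the composite of two right adjoints. The evaluation functor $\mathrm{ev}_d \colon \ModVI \to \Mod_{\bk[\GL_d]}$, $M \mapsto M(\aF^d)$, is naturally isomorphic to the functor $\bk[\Hom_{\VI}(-,\aF^d)] \otimes_{\VI} (-)$ by Lemma~\ref{lem:ideal-tensor}; since $\bk[\Hom_{\VI}(-,\aF^d)]$ is a (projective, hence flat) $\VI^{\op}$-module, this functor is a left adjoint, and its right adjoint is precisely $E \mapsto \Hom_{\bk[\GL_d]}(\bk[\Hom_{\VI}(-,\aF^d)], E) = \dualI(E)$, by the standard tensor--hom adjunction for functor categories over $\VI$ (reference \cite{tensor-hom}). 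So $\mathrm{ev}_d$ and $\dualI$ are an adjoint pair, which is exactly the displayed formula.

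For the final sentence: suppose $E$ is an injective $\bk[\GL_d]$-module. Then $\Hom_{\ModVI}(-, \dualI(E)) \cong \Hom_{\bk[\GL_d]}(\mathrm{ev}_d(-), E)$ is a composite of the exact functor $\mathrm{ev}_d$ (evaluation at a single object is exact in a functor category) with the exact functor $\Hom_{\bk[\GL_d]}(-,E)$ (exact by injectivity of $E$), hence is exact; therefore $\dualI(E)$ is injective in $\ModVI$. I do not anticipate a real obstacle here: the content is entirely the bookkeeping of the tensor--hom adjunction over the category $\VI$ and the identification in Lemma~\ref{lem:ideal-tensor}. The one point to be careful about is getting the $\GL_d$-equivariance to thread correctly through the tensor--hom manipulation — i.e. making sure that the relation defining $\otimes_{\VI}$ matches the naturality-plus-equivariance constraint on the other side — but this is routine once the right description of $\bk[\Hom_{\VI}(-,\aF^d)] \otimes_{\VI} M$ from the paragraph before Lemma~\ref{lem:ideal-tensor} is used.
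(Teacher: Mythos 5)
Your proposal is correct and follows essentially the same route as the paper: tensor--hom adjunction over $\VI$ combined with the identification $\bk[\Hom_{\VI}(-,\aF^d)] \otimes_{\VI} M \cong M(\aF^d)$ from Lemma~\ref{lem:ideal-tensor}, with injectivity then a formal consequence of $M \mapsto \Hom_{\bk[\GL_d]}(M(\aF^d),E)$ being exact. Your reorganization as the adjunction $\mathrm{ev}_d \dashv \dualI$ is a tidy repackaging, but it is the same computation.
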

\begin{proof} By the tensor-hom adjunction, we have \begin{displaymath}
	\begin{split}
	\Hom_{\ModVI}(M , \dualI(E)) &= \Hom_{\ModVI}(M , \Hom_{\bk[\GL_d]}(\bk[\Hom_{\VI}(-, \aF^d)], E))\\
	&= \Hom_{\bk[\GL_d]}(\bk[\Hom_{\VI}(-, \aF^d)] \otimes_{\VI} M , E)\\
	&= \Hom_{\bk[\GL_d]}(M(\aF^d), E)
	\end{split}
	\end{displaymath} where the last equality follows from the previous lemma. If $E$ is injective, the functor given by \[M \mapsto  \Hom_{\bk[\GL_d]}(M(\aF^d) , E)\] is exact, and hence $\dualI(E)$ is injective. 
\end{proof}

For a $\VI$-module $M$, we denote the maximal submodule supported in degrees $\le d$ by $M^{\preceq d}$.

\begin{proposition} 
	\label{prop:d-th-part-is-injective}
	Suppose $I = \dualI(E)$. Then $I(\aF^d) \cong E$. Moreover, $I^{\prec d} = 0$ and $I^{\preceq d} = I$.
\end{proposition}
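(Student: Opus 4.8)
The plan is to compute $I(\aF^d)$ directly from the definition $\dualI(E) = \Hom_{\bk[\GL_d]}(\bk[\Hom_{\VI}(-,\aF^d)], E)$, evaluated at $X = \aF^d$. Here $\Hom_{\VI}(\aF^d, \aF^d) = \GL_d$, so $\bk[\Hom_{\VI}(\aF^d,\aF^d)] = \bk[\GL_d]$ as a $\bk[\GL_d]$-module (via the right action, say), and hence $\Hom_{\bk[\GL_d]}(\bk[\GL_d], E) \cong E$ as $\bk$-modules, compatibly with the $\GL_d = \Aut_{\VI}(\aF^d)$-action. Alternatively, and more cleanly, I can invoke Proposition~\ref{prop:injective-adjunctions}: taking $M = \cI(d) = \bk[\Hom_{\VI}(\aF^d,-)]$ gives $\Hom_{\ModVI}(\cI(d), \dualI(E)) = \Hom_{\bk[\GL_d]}(\cI(d)(\aF^d), E)$, while the Yoneda lemma gives $\Hom_{\ModVI}(\cI(d), \dualI(E)) = \dualI(E)(\aF^d)$ as $\GL_d$-representations, and $\cI(d)(\aF^d) = \bk[\GL_d]$; combining these yields $I(\aF^d) \cong \Hom_{\bk[\GL_d]}(\bk[\GL_d], E) \cong E$.

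For the second assertion, I observe that $\dualI(E)(Y) = 0$ whenever $\dim Y > d$, since $\Hom_{\VI}(Y, \aF^d) = \emptyset$ in that case (there is no injective linear map from a larger space to a smaller one), so $\bk[\Hom_{\VI}(Y,\aF^d)] = 0$ and any $\bk[\GL_d]$-module map out of $0$ is trivial. Thus $I$ is supported in degrees $\le d$, which immediately gives $I^{\preceq d} = I$ (the maximal submodule supported in degrees $\le d$ is all of $I$). For $I^{\prec d} = 0$: a submodule $N \subseteq I$ supported in degrees $\le d-1$ satisfies $N(\aF^d) = 0$; I must show $N$ is then forced to be zero. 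Since $N$ is a submodule of the torsion module $I$ and $N(Y) = 0$ for $\dim Y \ge d$, the only potentially nonzero values are in degrees $< d$; but any element $x \in N(Y)$ with $\dim Y < d$ would, by the definition of $\dualI(E)$ as a Hom-functor, be a $\bk[\GL_d]$-equivariant assignment that is determined by and maps compatibly with the structure maps — and the key point is that $N$ being a $\VI$-submodule means $x$ propagates forward, but all forward targets in degree $\ge d$ are zero; the concrete check is that for any $x \in I(Y)$, $Y \prec \aF^d$, the submodule $\langle x\rangle$ generated by $x$ has a nonzero value at $\aF^d$ unless $x = 0$, because $I(Y) = \Hom_{\bk[\GL_d]}(\bk[\Hom_{\VI}(Y,\aF^d)], E)$ and a morphism out of the free module $\bk[\Hom_{\VI}(Y,\aF^d)]$ that is nonzero is detected on some basis element $g \in \Hom_{\VI}(Y,\aF^d)$, so $g_\star(x) \ne 0$ in $I(\aF^d)$.

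The main obstacle is the last point — showing $I^{\prec d} = 0$, i.e. that $\dualI(E)$ has no nonzero submodule supported entirely below degree $d$. The natural argument is the "detection on a basis element" computation just sketched: unwind $I(Y) = \Hom_{\bk[\GL_d]}(\bk[\Hom_{\VI}(Y,\aF^d)], E)$, note $\bk[\Hom_{\VI}(Y,\aF^d)]$ is a free $\bk$-module (in fact free as a $\bk[\GL_d]$-module when $Y \prec \aF^d$, by Lemma~\ref{lem:rank-decomposition}-type reasoning, since $\GL_d$ acts freely on injections $Y \hookrightarrow \aF^d$ for $\dim Y < d$), so a nonzero $\phi \in I(Y)$ is nonzero on some $g \in \Hom_{\VI}(Y,\aF^d)$, and then the $\VI$-module structure gives $g_\star(\phi) \in I(\aF^d)$ equal (up to the identification) to the value $\phi(g)$, which is nonzero. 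Hence any $x \ne 0$ in $I(Y)$ generates a submodule with nonzero degree-$d$ part, so it cannot lie in $I^{\prec d}$; therefore $I^{\prec d} = 0$.
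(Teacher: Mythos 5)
Your proof is correct and follows essentially the same route as the paper: compute $I(\aF^d)$ from the definition, note $I(Y)=0$ for $\dim Y>d$ to get $I^{\preceq d}=I$, and for $I^{\prec d}=0$ observe that a nonzero $\phi\in I(Y)$ must be nonzero on some basis element $g\in\Hom_{\VI}(Y,\aF^d)$, so that $g_{\star}(\phi)\neq 0$ in $I(\aF^d)$ (the paper proves the slightly stronger statement that \emph{every} transition map $g_{\star}$ with $Y\preceq\aF^d$ is injective, by extending $h$ along $g$, but your version suffices). One small slip: your parenthetical claim that $\GL_d$ acts freely on $\Hom_{\VI}(Y,\aF^d)$ for $\dim Y<d$ is false (the stabilizer of an injection $f$ is the nontrivial group of automorphisms fixing $\im f$ pointwise), but this is irrelevant to the argument, which only uses freeness of $\bk[\Hom_{\VI}(Y,\aF^d)]$ as a $\bk$-module.
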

\begin{proof} Clearly, $I(\aF^d) = \Hom_{\bk[\GL_d]}(\bk[\GL_d], E) \cong E$. For the second statement, it suffices to show that if $\Psi$ is a nonzero element of $I(X)$, then $g_{\star}(\Psi)$ is nonzero for any $g \in \Hom_{\VI}(X, Y)$ with $Y \preceq \aF^d$. So suppose $\Psi \in I(X) =  \Hom_{\bk[\GL_d]}(\bk[\Hom_{\VI}(X, \aF^d)], E)$. If $\Psi$ is nonzero then there exists an $h \in \Hom_{\VI}(X,\aF^d)$ such that $\Psi(h) \neq 0$. Let $f \in \Hom_{\VI}(X, \aF^d)$ be such that $fg = h$. Now $(g_{\star} (\Psi))(f) = \Psi(fg) = \Psi(h) \neq 0$. Thus $g_{\star} (\Psi)$ is nonzero completing the proof.
\end{proof}

A {\bf principal injective} of type $d$ is a $\VI$-module of the form $\dualI(E)$ where $E$ is an injective $\bk[\GL_d]$-module. By Proposition~\ref{prop:d-th-part-is-injective}, the degree $d$ part of a principal injective of type $d$ is an injective $\bk[\GL_d]$-module.

\begin{lemma} 
	\label{lem:embed-into-principal-injective}
	Let $M$ be a $\VI$-module. Then $M^{\preceq d}/M^{\prec d}$ injects into a principal injective $I$ of type $d$. In fact, if $E$ is the injective hull of $M^{\preceq d}(\aF^d)$ as a $\bk[\GL_d]$-module, then we may take $I = \dualI(E)$. 
\end{lemma}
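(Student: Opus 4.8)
The plan is to produce the desired embedding directly from the adjunction of Proposition~\ref{prop:injective-adjunctions} and then check injectivity by a support argument. Write $N = M^{\preceq d}/M^{\prec d}$. First I would record two elementary facts about $N$. Since $N$ is a quotient of $M^{\preceq d}$, it is supported in degrees $\le d$. Secondly, $N^{\prec d} = 0$: note that $M^{\prec d}$ is also the maximal submodule of $M^{\preceq d}$ supported in degrees $< d$, so any submodule of $N$ supported in degrees $< d$ lifts to a submodule $M'$ of $M^{\preceq d}$ with $M^{\prec d} \subseteq M'$ and $M'(\aF^d) = 0$; being supported in degrees $\le d$ and vanishing in degree $d$, $M'$ is in fact supported in degrees $< d$, hence $M' \subseteq M^{\prec d}$, and the original submodule of $N$ is zero. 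In particular $M^{\prec d}(\aF^d) = 0$, so $N(\aF^d) = M^{\preceq d}(\aF^d)$.

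Next I would invoke the injective hull: since $\bk[\GL_d]$-modules form a module category, $N(\aF^d)$ has an injective hull $E$, giving a $\bk[\GL_d]$-linear inclusion $j \colon N(\aF^d) \hookrightarrow E$. By Proposition~\ref{prop:injective-adjunctions} this corresponds to a morphism $\phi \colon N \to \dualI(E)$ of $\VI$-modules, and — using naturality of the adjunction isomorphism together with the identification $\dualI(E)(\aF^d) \cong E$ from Proposition~\ref{prop:d-th-part-is-injective} — the map $\phi$ evaluated at $\aF^d$ is precisely $j$, hence injective in degree $d$. Finally, let $K = \ker \phi \subseteq N$. Then $K$ is supported in degrees $\le d$ (as a submodule of $N$) and $K(\aF^d) = 0$ (as $\phi$ is injective in degree $d$), so $K$ is supported in degrees $< d$; by the second fact above $K = 0$. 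Thus $\phi$ embeds $M^{\preceq d}/M^{\prec d}$ into $I := \dualI(E)$, which is a principal injective of type $d$ since an injective hull is an injective $\bk[\GL_d]$-module.

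I do not anticipate a genuine obstacle here; the proof is essentially a formal consequence of the two preceding propositions. The points requiring care are purely bookkeeping: the precise meaning of "maximal submodule supported in degrees $\le d$" versus "$< d$" and the interplay between the two, and spelling out that the $\VI$-morphism $\phi$ obtained from the adjunction literally restricts to $j$ in degree $d$ (rather than merely to some injection), which is what makes the support argument go through.
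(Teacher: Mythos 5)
Your proof is correct and follows essentially the same route as the paper: adjunction gives $\phi\colon N \to \dualI(E)$ identified with $j$ in degree $d$ via Proposition~\ref{prop:d-th-part-is-injective}, and then injectivity in degree $d$ forces global injectivity. The only cosmetic difference is that you argue via the kernel being supported in degrees $<d$ and hence contained in $N^{\prec d}=0$, whereas the paper phrases the same fact as injectivity of $\ell_{\star}\colon N(X)\to N(\aF^d)$; these are equivalent.
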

\begin{proof} Let $N = M^{\preceq d}/M^{\prec d}$. Then $N$ is supported in degree $\le d$, and by definition of $N$,  $\ell_{\star} \colon N(X) \to N(\aF^d)$ is injective for any $X$ and any $\ell \in \Hom_{\VI}(X, \aF^d)$. Thus if $f \colon N \to I$ is a map, then $f$ is injective if and only if $f(\aF^d) \colon N(\aF^d) \to I(\aF^d)$ is injective. Now let $\iota \colon N(\aF^d) \to E$ be the injective-hull of $N(\aF^d) = M^{\preceq d} (\aF^d)$ as a $\bk[\GL_d]$-module. Then by Proposition~\ref{prop:injective-adjunctions}, $\iota$ induces a map $\iota_{\star} \colon N \to \dualI(E)$. By our previous argument, it suffices to show that it is injective in degree $d$. But in degree $d$, this map is given by the image of $\iota$ under the natural adjunction isomorphism $\Hom_{\bk[\GL_d]}(N(\aF^d), E) \to  \Hom_{\bk[\GL_d]}(N(\aF^d), \dualI(E)(\aF^d))$ (see Proposition~\ref{prop:d-th-part-is-injective}) and hence is injective. 
\end{proof}

\begin{proposition} 
	\label{prop:torsion-fp-embeds-into-torsion-injective}
	Suppose $M$ is supported in degrees $\le d$. Let $E_k$ be the injective-hull of $M^{\preceq k}(\aF^k)$ as a $\bk[\GL_k]$-module. Then $M$ embeds into the injective module $\bigoplus_{k \le d} \dualI(E_k)$.
\end{proposition}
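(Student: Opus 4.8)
The plan is to build the embedding inductively along the torsion filtration
\[
0 = M^{\prec 0} \subseteq M^{\preceq 0} \subseteq M^{\preceq 1} \subseteq \cdots \subseteq M^{\preceq d} = M,
\]
where the final equality holds because $M$ is supported in degrees $\le d$, and where $M^{\prec k} = M^{\preceq k-1}$ for $k \ge 1$. The graded pieces are $G_k \coloneq M^{\preceq k}/M^{\prec k}$ for $0 \le k \le d$ (with $G_0 = M^{\preceq 0}$, since $M^{\prec 0} = 0$). The point is that each $\dualI(E_k)$ is injective in $\ModVI$, so one can assemble the embeddings of the $G_k$ given by Lemma~\ref{lem:embed-into-principal-injective} into a single embedding of $M$.

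First I would record that for each $k$ the submodule $M^{\preceq k}$ of $M$ satisfies $(M^{\preceq k})^{\preceq k} = M^{\preceq k}$ and $(M^{\preceq k})^{\prec k} = M^{\prec k}$: indeed $M^{\preceq k}$ is already supported in degrees $\le k$, and any submodule of $M$ supported in degrees $< k$ is a fortiori supported in degrees $\le k$, hence contained in $M^{\preceq k}$. Applying Lemma~\ref{lem:embed-into-principal-injective} to $M^{\preceq k}$, with the role of $d$ there played by $k$, therefore yields an injection $\iota_k \colon G_k \hookrightarrow \dualI(E_k)$ in which $E_k$ is precisely the injective hull of $(M^{\preceq k})^{\preceq k}(\aF^k) = M^{\preceq k}(\aF^k)$ as a $\bk[\GL_k]$-module. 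Since $E_k$ is an injective $\bk[\GL_k]$-module, Proposition~\ref{prop:injective-adjunctions} shows $\dualI(E_k)$ is injective in $\ModVI$, and hence so is every finite direct sum $\bigoplus_{j\le k}\dualI(E_j)$.

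Next I would construct, by induction on $k$, an injection $\phi_k \colon M^{\preceq k} \hookrightarrow \bigoplus_{j=0}^{k}\dualI(E_j)$. Take $\phi_0 = \iota_0$. For the inductive step, injectivity of $\bigoplus_{j\le k}\dualI(E_j)$ lets us extend $\phi_k$ along the inclusion $M^{\preceq k}\hookrightarrow M^{\preceq k+1}$ to a (not necessarily injective) map $g_{k+1}\colon M^{\preceq k+1}\to\bigoplus_{j\le k}\dualI(E_j)$; also let $h_{k+1}$ be the composite $M^{\preceq k+1}\twoheadrightarrow G_{k+1}\xrightarrow{\iota_{k+1}}\dualI(E_{k+1})$, and set $\phi_{k+1} = (g_{k+1},h_{k+1})$. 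If $x\in M^{\preceq k+1}$ lies in $\ker\phi_{k+1}$, then $h_{k+1}(x)=0$; since $\iota_{k+1}$ is injective this forces $x\in M^{\prec k+1} = M^{\preceq k}$, and then $g_{k+1}(x) = \phi_k(x) = 0$ forces $x = 0$. So $\phi_{k+1}$ is injective, and taking $k = d$ gives the required embedding $M = M^{\preceq d}\hookrightarrow\bigoplus_{j\le d}\dualI(E_j)$ into an injective module. I do not expect a genuine obstacle here: this is the routine dévissage of a finitely filtered object whose successive quotients embed into injectives. The only step needing a moment's care is the identification in the second paragraph that guarantees Lemma~\ref{lem:embed-into-principal-injective} applied to $M^{\preceq k}$ produces the injective hull of $M^{\preceq k}(\aF^k)$ and not of a larger module.
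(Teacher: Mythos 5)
Your proof is correct and takes essentially the same route as the paper: filter $M$ by the $M^{\preceq k}$, embed each graded piece $M^{\preceq k}/M^{\prec k}$ into $\dualI(E_k)$ via Lemma~\ref{lem:embed-into-principal-injective}, and assemble these by an inductive extension argument (which is exactly what the paper's invocation of the horseshoe lemma packages). The only cosmetic difference is that you apply Lemma~\ref{lem:embed-into-principal-injective} to $M^{\preceq k}$ and then check that this recovers the injective hull of $M^{\preceq k}(\aF^k)$; one can instead apply the lemma directly to $M$ with $d$ replaced by $k$, which gives the same conclusion without the extra verification.
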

\begin{proof} If a module is supported in degree $\le d$, then it admits a filtration with modules of the form $M^{\preceq k}/M^{\prec k}$ with $k \le d$. The proposition now follows from Lemma~\ref{lem:embed-into-principal-injective} and the horseshoe lemma.
\end{proof}

\begin{proposition} 
A direct sum of injectives is injective. If $M$ is any torsion module and $E_k$ is the injective-hull of $M^{\preceq k}(\aF^k)$ as a $\bk[\GL_d]$-module, then $M$ embeds into the injective module $\bigoplus_{k \ge 0} \dualI(E_k)$.
\end{proposition}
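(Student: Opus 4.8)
The plan is to prove the two assertions in turn, reducing everything to statements already established. For the first assertion---that a direct sum $\bigoplus_{i\in\Lambda} I_i$ of injective $\VI$-modules is again injective---I would invoke the fact that $\ModVI$ is locally noetherian (Theorem~\ref{thm:noetherianity}); in a locally noetherian Grothendieck category, arbitrary direct sums of injectives are injective. Concretely, to check the lifting property against an injection $A\hookrightarrow B$ with $A,B$ finitely generated (which suffices by a standard colimit/Zorn argument in a locally noetherian category), one observes that any map $A\to\bigoplus_i I_i$ has finitely generated, hence noetherian, image, so it factors through a finite subsum $\bigoplus_{i\in F} I_i$, which is injective as a finite direct sum of injectives; the lift then exists there and composes with the inclusion of the finite subsum. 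This is the same reasoning already used implicitly in the proof of Proposition~\ref{prop:semi-induced-are-injectives-char0}.

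For the second assertion, let $M$ be an arbitrary torsion $\VI$-module. For each $k\ge 0$ let $E_k$ be the injective hull of $M^{\preceq k}(\aF^k)$ as a $\bk[\GL_k]$-module (which exists and is a finitely presented/injective $\bk[\GL_k]$-module when $M$ is finitely generated, but in general we just need injective hulls over $\bk[\GL_k]$, which exist since $\Mod_{\bk[\GL_k]}$ is Grothendieck). By Proposition~\ref{prop:injective-adjunctions}, each $\dualI(E_k)$ is an injective $\VI$-module, so by the first assertion $\bigoplus_{k\ge 0}\dualI(E_k)$ is injective. It remains to produce an embedding $M\hookrightarrow \bigoplus_{k\ge 0}\dualI(E_k)$. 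Since $M$ is torsion, $M=\varinjlim_d M^{\preceq d}$, and each $M^{\preceq d}$ is supported in degrees $\le d$. For each $d$, Proposition~\ref{prop:torsion-fp-embeds-into-torsion-injective} gives an embedding $M^{\preceq d}\hookrightarrow\bigoplus_{k\le d}\dualI(E'_k)$ where $E'_k$ is the injective hull of $(M^{\preceq d})^{\preceq k}(\aF^k)$; but $(M^{\preceq d})^{\preceq k}=M^{\preceq k}$ for $k\le d$ since $M^{\preceq k}\subseteq M^{\preceq d}$, so $E'_k=E_k$ and we get compatible embeddings $M^{\preceq d}\hookrightarrow\bigoplus_{k\le d}\dualI(E_k)\subseteq\bigoplus_{k\ge 0}\dualI(E_k)$.

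The one point requiring a little care is the compatibility of these embeddings as $d$ varies, i.e.\ that the square
\[
\begin{tikzcd}
M^{\preceq d} \ar[r,hook] \ar[d,hook] & \bigoplus_{k\le d}\dualI(E_k) \ar[d,hook] \\
M^{\preceq d+1} \ar[r,hook] & \bigoplus_{k\le d+1}\dualI(E_k)
\end{tikzcd}
\]
commutes (up to replacing the horizontal maps by suitable choices). This is where I expect the main, though still routine, obstacle to lie: one must either check that the embeddings built via Lemma~\ref{lem:embed-into-principal-injective} and the horseshoe lemma can be chosen compatibly, or---cleaner---argue directly that the adjunction map $M\to\dualI(E_k)$ induced by the projection-to-$\rH_0$ composed with the inclusion $M^{\preceq k}(\aF^k)\hookrightarrow E_k$ is already canonical, so that the sum $M\to\bigoplus_{k\ge 0}\dualI(E_k)$ of these canonical maps is well-defined on all of $M$; then injectivity is checked degree by degree, using that on an element $x\in M(\aF^n)$ surviving to $M^{\preceq n}/M^{\prec n}$, the component into $\dualI(E_n)$ is nonzero by the injectivity in degree $n$ established in Lemma~\ref{lem:embed-into-principal-injective}, and that every nonzero $x\in M(\aF^n)$ maps to something nonzero in some $M^{\preceq k}/M^{\prec k}$ with $k\le n$ (take $k$ minimal with $x\notin M^{\prec k}$ after pushing forward appropriately). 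Assembling these observations gives the desired embedding, completing the proof.
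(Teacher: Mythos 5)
Your proposal matches the paper's argument: the first assertion by the standard locally-noetherian fact, the second by writing $M=\varinjlim_d M^{\preceq d}$ and appealing to Proposition~\ref{prop:torsion-fp-embeds-into-torsion-injective}. The compatibility issue you flag is real and the paper's proof glosses over it, so it is worth saying why it is harmless. The embeddings of Proposition~\ref{prop:torsion-fp-embeds-into-torsion-injective} are built by running the horseshoe argument up the filtration $M^{\preceq 0}\subset M^{\preceq 1}\subset\cdots$: at stage $d$, one takes the short exact sequence $0\to M^{\preceq d-1}\to M^{\preceq d}\to M^{\preceq d}/M^{\prec d}\to 0$, extends the already-chosen $\phi_{d-1}\colon M^{\preceq d-1}\hookrightarrow\bigoplus_{k\le d-1}\dualI(E_k)$ across the inclusion (possible because the target is injective), and pairs the extension with the map $M^{\preceq d}\twoheadrightarrow M^{\preceq d}/M^{\prec d}\hookrightarrow\dualI(E_d)$ of Lemma~\ref{lem:embed-into-principal-injective}. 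The resulting $\phi_d$ restricts to $\phi_{d-1}$ on $M^{\preceq d-1}$ by construction (the new component vanishes on $M^{\preceq d-1}$), so compatibility comes for free and the colimit map is an injection by exactness of filtered colimits. Your alternative, summing the canonical adjunction maps $M\to\dualI(E_k)$ directly, also works and is arguably cleaner as it avoids choices altogether; either route is a correct filling-in of the detail the paper leaves implicit.
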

\begin{proof}
	It is a standard fact that in a locally noetherian category a direct sum of injectives is an injective. Thus the first statement follows (Theorem~\ref{thm:noetherianity}). Now let $M$ be a torsion module. Then $M = \varinjlim_{d} M^{\preceq d}$ is a filtered colimit of modules supported in finitely many degrees. Since $\ModVI$ is a Grothendieck category, filtered colimits are exact. Hence the result follows from Proposition~\ref{prop:torsion-fp-embeds-into-torsion-injective}.
\end{proof}

\begin{proposition} 
	\label{prop:classification-of-torsion-injectives}
	A torsion module is injective in $\ModVI^{\tors}$ if and only if it is isomorphic to a direct sum of principal injectives. In particular, a torsion module is injective in $\ModVI^{\tors}$ if and only if it is injective in $\ModVI$. 
\end{proposition}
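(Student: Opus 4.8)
The plan is to upgrade the embedding of the previous proposition to an \emph{essential} embedding; once that is done the classification is immediate, and the ``in particular'' clause comes for free. First the easy direction: a direct sum of principal injectives is injective in $\ModVI$, since each summand $\dualI(E)$ is injective there by Proposition~\ref{prop:injective-adjunctions} and arbitrary direct sums of injectives are injective in the locally noetherian category $\ModVI$ (Theorem~\ref{thm:noetherianity}); it is therefore also injective in the full, subobject‑closed subcategory $\Modtors$. Conversely, anything injective in $\ModVI$ is automatically injective in $\Modtors$. So everything reduces to showing that a torsion module $M$ which is injective in $\Modtors$ is a direct sum of principal injectives -- such an $M$ will then be injective in $\ModVI$ as well, which gives the ``in particular'' statement.

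So let $M$ be torsion and injective in $\Modtors$, and let $E_d$ be the injective hull of the $\bk[\GL_d]$-module $M^{\preceq d}(\aF^d)$. By the previous proposition there is an embedding $\iota \colon M \hookrightarrow I \coloneq \bigoplus_{d \ge 0}\dualI(E_d)$, and $I$ is torsion and (by the easy direction) injective in $\Modtors$. If $\iota$ is essential, then the splitting $I = \iota(M)\oplus M'$ provided by injectivity of $M$ in $\Modtors$ forces $M'=0$, so $M \cong I$ is a direct sum of principal injectives, as desired. Thus the whole proof reduces to essentiality of $\iota$.

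To check essentiality, fix $0 \neq V \subseteq I$ and pick $0 \neq v \in V(Z)$. The $\VI$-submodule $\langle v \rangle \subseteq V$ generated by $v$ is finitely generated, hence noetherian, hence supported in finitely many degrees by Theorem~\ref{thm:noetherianity}; let $d$ be the largest such degree. Unwinding the definition of $\dualI$ with the help of Proposition~\ref{prop:d-th-part-is-injective} one gets $I^{\preceq d} = \bigoplus_{d' \le d}\dualI(E_{d'})$, so $I^{\preceq d}(\aF^d) = E_d$, and moreover $\iota$ sends the degree-$d$ part $M^{\preceq d}(\aF^d)$ into the summand $E_d = \dualI(E_d)(\aF^d)$ via the inclusion $M^{\preceq d}(\aF^d)\hookrightarrow E_d$ (the components into $\dualI(E_{d'})$ with $d' > d$ vanish because the image of $M^{\preceq d}$ there would be a submodule supported in degrees $< d'$, contradicting $\dualI(E_{d'})^{\prec d'} = 0$; those with $d' < d$ vanish at $\aF^d$ since $\Hom_{\VI}(\aF^d,\aF^{d'}) = \emptyset$). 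Choosing an injection $h \colon Z \to \aF^d$ with $h_{\star} v \neq 0$ -- possible since $\langle v \rangle(\aF^d)\neq 0$ is spanned by such pushforwards -- and using $\langle v \rangle \subseteq I^{\preceq d}$, we see that $h_{\star} v$ is a nonzero element $w \in E_d$. By essentiality of $M^{\preceq d}(\aF^d)$ in its injective hull $E_d$, some $\sigma \in \bk[\GL_d]$ satisfies $\sigma w \in M^{\preceq d}(\aF^d)\setminus\{0\}$, and then $\iota(\sigma w) = \sigma w = \sigma\cdot(h_{\star} v) \in V(\aF^d)$, so $V \cap \iota(M) \neq 0$. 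I expect the main obstacle to be precisely this bookkeeping -- pinning down the degree-$d$ parts of $I$ and the behaviour of $\iota$ on $M^{\preceq d}$ -- while the rest is formal. (Alternatively, one could combine the previous proposition with the decomposition of an injective object of the locally noetherian category $\Modtors$ into indecomposables, after checking that every indecomposable torsion injective is a principal injective.)
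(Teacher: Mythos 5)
Your proof is correct but takes a genuinely different route from the paper. The paper constructs the same embedding $f\colon I\hookrightarrow J=\bigoplus_k\dualI(E_k)$, uses injectivity of $I$ in $\Modtors$ to split $f$, then observes that each filtration quotient $(\coker f)^{\preceq k}/(\coker f)^{\prec k}$ evaluated at $\aF^k$ vanishes (because $I^{\preceq k}(\aF^k)$ is then an injective essential submodule of $E_k$, hence all of $E_k$), and invokes Nakayama to kill the cokernel. You instead prove the embedding is \emph{essential}, so that $I$ is an injective hull of $M$ in $\Modtors$; since $M$ is already injective, the embedding must be onto. Your essentiality argument is the clean conceptual route, and the degree bookkeeping you do (locating $\langle v\rangle$ inside $I^{\preceq d}=\bigoplus_{k\le d}\dualI(E_k)$, pushing forward to degree $d$ where $I^{\preceq d}(\aF^d)=E_d$, and invoking essentiality of $M^{\preceq d}(\aF^d)$ in its injective hull $E_d$) is exactly what is needed and is carried out correctly.

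Two small remarks on points you should be explicit about if writing this up. First, the argument uses the (slightly stronger) form of the embedding proposition in which the component of $\iota$ into $\dualI(E_d)$, restricted to $M^{\preceq d}(\aF^d)$ and evaluated at $\aF^d$, is the chosen injective-hull inclusion $M^{\preceq d}(\aF^d)\hookrightarrow E_d$; this is what the horseshoe/colimit construction produces but is not verbatim the statement of the proposition. (The paper's proof quietly relies on essentially the same fact, so this is not a defect of your approach.) Second, the phrase ``finitely generated, hence noetherian, hence supported in finitely many degrees'' should route through $\Gamma$: a finitely generated \emph{torsion} module equals $\Gamma$ of itself, and Theorem~\ref{thm:noetherianity} says $\Gamma$ of a finitely generated module is supported in finitely many degrees.
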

\begin{proof} By the previous proposition, a direct sum of principal injectives is injective. Let $I$ be a torsion injective. Then by the previous proposition again, $I$ admits an embedding $f \colon I \to J \coloneq  \bigoplus_{k \ge 0} \dualI(E_k)$ where $E_k$ is the injective-hull of $I^{\preceq k}(\aF^k)$ as a $\bk[\GL_k]$-module. Since $I$ is injective in $\ModVI^{\tors}$, $f$ admits a section $s$. This implies that $I^{\preceq k}/I^{\prec k}$ is a direct summand of $J^{\preceq k}/J^{\prec k} = \dualI(E_k)$. Thus $(I^{\preceq k}/I^{\prec k})(\aF^k) = I^{\preceq k}(\aF^k)$ is a direct summand of $\dualI(E_k)(\aF^k) = E_k$. Since a direct summand of injective module is injective, we see that $I^{\preceq k}(\aF^k)$ is injective, and hence is equal to its injective hull $E_k$. Thus if $K = \coker(f)$, then $(K^{\preceq k}/K^{\prec k})(\aF^k) = 0$ for each $k$. By the nakayama lemma, $K=0$. This shows that $f$ is an isomorphism, completing the proof.
\end{proof}

We are now ready to prove our main theorem on classification of indecomposable injectives. Note that the $\FI$-module analog of this result is proved in \cite[Theorem~4.3.4]{symc1}.

\begin{theorem}[Classification of indecomposable injectives] Assume that $\bk$ is a field of characteristic zero. Every injective is a direct sum of a torsion-free  injective and a torsion injective. Moreover, we have the following: \begin{enumerate}
		\item 	The set of torsion-free indecomposable injectives consists of modules of the form $\cI(E)$ where $E$ (or, more precisely, $E(\aF^d)$) is an irreducible $\bk[\GL_d]$-module for some $d$.
		
			\item 	The set of torsion indecomposable injectives consists of modules of the form $\dualI(E)$ where $E$ is an irreducible  $\bk[\GL_d]$-module for some $d$.
	\end{enumerate}
\end{theorem}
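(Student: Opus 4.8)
The plan is to bootstrap entirely from results already in hand: Lemma~\ref{lem:torsion-injective} for the torsion/torsion-free splitting, Proposition~\ref{prop:semi-induced-are-injectives-char0} for the torsion-free part, and Proposition~\ref{prop:classification-of-torsion-injectives} for the torsion part, using Maschke's theorem to cut the $\bk[\GL_d]$-level data down to irreducibles. First I would establish the splitting: given an injective $\VI$-module $I$, Lemma~\ref{lem:torsion-injective} says $\Gamma(I)$ is again injective in $\ModVI$, so the inclusion $\Gamma(I) \hookrightarrow I$ splits, say $I = \Gamma(I) \oplus I'$. A direct summand of an injective is injective, so $I'$ is injective; and since $\Gamma$ is additive and is the identity on $\Gamma(I)$, applying it to the decomposition forces $\Gamma(I') = 0$, so $I'$ is torsion-free. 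Hence $I$ is the direct sum of the torsion injective $\Gamma(I)$ and the torsion-free injective $I'$.

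For the torsion-free indecomposables, Proposition~\ref{prop:semi-induced-are-injectives-char0} tells us a torsion-free injective is induced, say $\cI(W)$, and we have $\cI(W) = \bigoplus_{d} \cI(d) \otimes_{\bk[\GL_d]} W(\aF^d)$. In characteristic zero $W(\aF^d)$ is a direct sum of irreducible $\bk[\GL_d]$-modules, so $\cI(W)$ is a direct sum of modules $\cI(E)$ with $E$ supported in a single degree $d$ and $E(\aF^d)$ irreducible; conversely each such $\cI(E)$ is torsion-free (Proposition~\ref{prop:induced-is-torsion-acyclic}) and injective (Proposition~\ref{prop:semi-induced-are-injectives-char0}). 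It remains to see that $\cI(E)$ is indecomposable. If $\cI(E) = A \oplus B$, apply $\rH_0^{\VI}$; since $\rH_0^{\VI}\cI(E) \cong E$ (Proposition~\ref{prop:induced-homology-acyclic}) we get $E = \rH_0^{\VI}(A) \oplus \rH_0^{\VI}(B)$, and irreducibility of $E$ forces one summand, say $\rH_0^{\VI}(B)$, to vanish; then $B = 0$ by Nakayama (Proposition~\ref{prop:nakayama}). This identifies the torsion-free indecomposable injectives as exactly the $\cI(E)$ with $E(\aF^d)$ irreducible.

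For the torsion indecomposables, Proposition~\ref{prop:classification-of-torsion-injectives} says a torsion injective is a direct sum of principal injectives $\dualI(F)$ with $F$ an injective --- hence, in characteristic zero, arbitrary --- $\bk[\GL_d]$-module. Writing $F$ as a direct sum of irreducibles and using that $\dualI$ commutes with direct sums (it is $\Hom_{\bk[\GL_d]}$ out of $\bk[\Hom_{\VI}(-,\aF^d)]$, which is finitely generated over $\bk[\GL_d]$ in each degree), we see every torsion injective is a direct sum of $\dualI(E)$ with $E$ irreducible; each $\dualI(E)$ is injective by Proposition~\ref{prop:injective-adjunctions} (every module is injective in characteristic zero). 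For indecomposability, if $\dualI(E) = A \oplus B$, then $A$ and $B$ are torsion submodules, and by Proposition~\ref{prop:d-th-part-is-injective} we have $\dualI(E)(\aF^d) = E$ while $\dualI(E)^{\prec d} = 0$ and $\dualI(E)$ is supported in degrees $\le d$. Irreducibility of $E$ forces, say, $B(\aF^d) = 0$, whence $B$ is supported in degrees $< d$, so $B \subseteq \dualI(E)^{\prec d} = 0$. Thus the torsion indecomposable injectives are exactly the $\dualI(E)$ with $E$ irreducible.

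The proof is essentially an assembly of the cited structural results, so there is no single hard step; the point that needs the most care is the passage between $\VI$-level direct-sum decompositions and $\bk[\GL_d]$-level semisimple decompositions --- namely that $\cI(-)$ and $\dualI(-)$ are additive in the required sense and that indecomposability of $\cI(E)$ or $\dualI(E)$ genuinely reduces to irreducibility of the relevant $\bk[\GL_d]$-module. Each of these follows quickly from Propositions~\ref{prop:induced-homology-acyclic}, \ref{prop:nakayama} and \ref{prop:d-th-part-is-injective}, so the only real inputs beyond bookkeeping are Lemma~\ref{lem:torsion-injective} and Propositions~\ref{prop:semi-induced-are-injectives-char0} and \ref{prop:classification-of-torsion-injectives}.
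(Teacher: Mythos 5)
Your proof is correct and follows exactly the same route as the paper, which simply cites Lemma~\ref{lem:torsion-injective} for the torsion/torsion-free splitting, Proposition~\ref{prop:semi-induced-are-injectives-char0} for part (a), and Proposition~\ref{prop:classification-of-torsion-injectives} for part (b). You spell out the indecomposability arguments (via $\rH_0^{\VI}$ and Nakayama for $\cI(E)$, and via $I^{\prec d}=0$ from Proposition~\ref{prop:d-th-part-is-injective} for $\dualI(E)$) that the paper leaves implicit, and these details are accurate.
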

\begin{proof}
	In light of Lemma~\ref{lem:torsion-injective}, every injective is a direct sum of a torsion injective and a torsion-free injective. Part (a) follows from Proposition~\ref{prop:semi-induced-are-injectives-char0}, and part (b) follows from Proposition~\ref{prop:classification-of-torsion-injectives}.
\end{proof}

\subsection{Finiteness of injective dimension in characteristic zero}

\begin{lemma}
	Let $M$ be a finitely generated torsion module. Then $M$ has finite injective dimension.
\end{lemma}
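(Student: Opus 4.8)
The plan is to reduce the injective dimension of a finitely generated torsion module to the injective dimension of modules of the form $\dualI(E)$ for $E$ a $\bk[\GL_d]$-module, and then to control the latter using the Steinberg-type resolution of $\bk$ from Lemma~\ref{lem:steinberg-complex}. First I would use Proposition~\ref{prop:torsion-fp-embeds-into-torsion-injective}: since $M$ is finitely generated torsion it is supported in degrees $\le d$ for some $d$, and Proposition~\ref{prop:d-th-part-is-injective} together with a filtration argument lets one build an injective resolution of $M$ whose terms are finite direct sums of principal injectives $\dualI(E)$ with $E$ ranging over $\bk[\GL_k]$-modules, $k \le d'$ for some bound $d'$ depending on $M$. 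One must check the syzygies stay finitely generated and torsion (using Theorem~\ref{thm:noetherianity}) so that the process can be iterated, and that the degree of support does not increase — this is where I expect to lean on the explicit description $\dualI(E)(Y)=0$ for $Y \succ \aF^d$ and on Lemma~\ref{lem:embed-into-principal-injective}.

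Next I would bound the injective dimension of a single principal injective $\dualI(E)$. Since $\bk$ is a field of characteristic zero and $\GL_d$ is finite, every $\bk[\GL_d]$-module $E$ is injective as a $\bk[\GL_d]$-module, so by Proposition~\ref{prop:injective-adjunctions} $\dualI(E)$ is already injective in $\ModVI$ — injective dimension zero. Thus the only real content is controlling how many steps it takes to resolve $M$ by such objects, i.e. showing the resolution built in the previous paragraph terminates. The key is that each syzygy, being torsion and finitely generated, is supported in one fewer ``top degree'' once we quotient by the part living in the top degree, so after at most $d+1$ steps (where $d$ bounds the support of $M$) the process stops. More precisely, I would argue by induction on $\deg M$: write $M^{\preceq d}(\aF^d) = M(\aF^d)$ (the top piece), embed $M$ into $\bigoplus_{k\le d}\dualI(E_k)$ as in Proposition~\ref{prop:torsion-fp-embeds-into-torsion-injective} with $E_k$ the injective hull of $M^{\preceq k}(\aF^k)$, and observe that the cokernel is again finitely generated torsion but supported in degrees $\le d$ with strictly smaller top multiplicity — or, cleaner, reduce first to the case $M = M^{\preceq d}/M^{\prec d}$ via the finite filtration, handle that case directly (it embeds into a single $\dualI(E)$ with $E$ the injective hull of $M(\aF^d)$, hence into an injective, with torsion cokernel of strictly smaller degree), and conclude by induction on $d$.

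The main obstacle is bookkeeping: ensuring that at each stage the relevant quotients remain finitely generated (so Theorem~\ref{thm:noetherianity} applies and injective hulls of the degree-$k$ pieces are themselves nice) and that the ``complexity'' strictly decreases so the induction actually terminates with a finite bound. The characteristic-zero hypothesis does real work here only through the statement that $\bk[\GL_d]$ is semisimple, which makes every principal injective genuinely injective in $\ModVI$; without it one would merely get a resolution by modules whose injective dimension one would have to control separately. Concretely, I would state and prove: \emph{if $M$ is finitely generated torsion with $\deg M = d$, then $M$ has injective dimension at most $d+1$} (or some such explicit bound), by the induction just sketched, and then note the lemma as stated follows immediately.
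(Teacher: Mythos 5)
Your ``cleaner'' route in the second paragraph is exactly the paper's proof: induct on $d = \deg M$, use the exact sequence $0 \to M^{\prec d} \to M \to M^{\preceq d}/M^{\prec d} \to 0$ to reduce to the top graded piece, embed that piece into $\dualI(E)$ with $E = (M^{\preceq d}/M^{\prec d})(\aF^d)$ (injective because $\bk[\GL_d]$ is semisimple in characteristic zero), and apply induction to the cokernel which lives in strictly smaller degree. The proposal is correct and follows the paper's argument, modulo some initial hedging toward a more elaborate resolution that you then correctly discard.
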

\begin{proof}
	We prove the assertion by induction on $d = h^0(M)$. We have an exact sequence \[0 \to M^{\prec d} \to M \to M^{\preceq d}/M^{\prec d} \to 0. \] Since $h^0(M^{\prec d} ) <d$, the induction hypothesis implies that $ M^{\prec d}$ has finite injective dimension. By the horseshoe lemma, it suffices to prove that $M^{\preceq d}/M^{\prec d}$ has finite injective dimension. For that, let $E = M^{\preceq d}/M^{\prec d}(\aF^d)$. Since we are in characteristic zero, $E$ is an injective $\bk[\GL_d]$-module. As in Lemma~\ref{lem:embed-into-principal-injective}, there is an embedding $\iota \colon M^{\preceq d}/M^{\prec d} \to \dualI(E)$ which induces an isomorphism in degree $d$. This shows that $h^0(\coker (\iota)) < d$. By induction, $\coker (\iota)$ has finite injective dimension. Since $\dualI(E)$ is injective, we conclude that $M^{\preceq d}/M^{\prec d} $ has finite injective dimension, concluding the proof.
\end{proof}

The $\FI$-module analog of the following result is proved in \cite[Theorem~4.3.1]{symc1}.

\begin{theorem}[Finiteness of injective dimension]
	Every finitely generated $\VI$-module has finite injective dimension.
\end{theorem}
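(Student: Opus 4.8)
The plan is to reduce the statement about an arbitrary finitely generated $\VI$-module to the two special cases that we already understand: torsion modules and semi-induced (hence, in characteristic zero, induced and therefore injective) modules. The engine for this reduction is the shift theorem (Theorem~\ref{thm:shift-theorem}), which furnishes a finite complex $I^{\bullet}$ with $I^0 = M$, with $I^i$ semi-induced for $i>0$, with $I^n = 0$ for $n > t_0(M)+1$, and with each cohomology $\rH^i(I^{\bullet})$ supported in finitely many degrees. Since every semi-induced module is induced in characteristic zero (Proposition~\ref{prop:projectives}) and every induced module is injective (Proposition~\ref{prop:semi-induced-are-injectives-char0}), the modules $I^i$ for $i>0$ have injective dimension zero, and the previous lemma says each $\rH^i(I^{\bullet})$ is a finitely generated torsion module, hence of finite injective dimension.

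First I would set up the necessary homological algebra to pass from these two facts to a bound on the injective dimension of $M$. The cleanest route is a spectral-sequence (or iterated-mapping-cone) argument: filtering the complex $I^{\bullet}$ by its ``stupid'' truncations gives, via the exact triangles relating truncations, that the injective dimension of any term of a finite complex is controlled by the injective dimensions of its cohomologies and of its (shifted) terms. Concretely, one runs induction on the length of $I^{\bullet}$. Alternatively, and perhaps more transparently, one can argue directly: let $Z^i = \ker(I^i \to I^{i+1})$ and $B^i = \im(I^{i-1}\to I^i)$, so that $\rH^i(I^{\bullet}) = Z^i/B^i$. Starting from the top of the complex and working downward, each $Z^i$ fits in a short exact sequence with $I^i$ (which is injective) and $B^{i+1}$, and each $B^i$ fits in a short exact sequence with $Z^i$ and $\rH^i(I^{\bullet})$; using the horseshoe lemma together with the fact that injective dimension is subadditive in short exact sequences (if two of the three terms have finite injective dimension, so does the third), one deduces by downward induction that every $Z^i$ and $B^i$, and finally $M = Z^0$, has finite injective dimension.

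The main obstacle is really just bookkeeping: one must be careful that the complex $I^{\bullet}$ is genuinely finite (this is guaranteed, $I^n = 0$ for $n > t_0(M)+1$) and that the ``input'' injective-dimension bounds are uniform enough to survive the finitely many extension steps. There is no delicate point beyond this, since the two nontrivial inputs --- that induced modules are injective in characteristic zero, and that finitely generated torsion modules have finite injective dimension --- have already been established. I would therefore write the proof as: invoke Theorem~\ref{thm:shift-theorem} to obtain $I^{\bullet}$; note $I^i$ is injective for $i>0$ by Propositions~\ref{prop:projectives} and~\ref{prop:semi-induced-are-injectives-char0}; note each $\rH^i(I^{\bullet})$ is finitely generated torsion (by local noetherianity, Theorem~\ref{thm:noetherianity}, together with the fact that it is finitely supported) and hence of finite injective dimension by the previous lemma; and then conclude by the downward induction on cocycles and coboundaries sketched above that $M$ itself has finite injective dimension.
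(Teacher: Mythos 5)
Your proof is correct and takes essentially the same approach as the paper: both reduce to the two inputs that semi-induced modules are injective in characteristic zero and that finitely generated torsion modules have finite injective dimension, using the shift theorem's complex $I^{\bullet}$. The only cosmetic difference is that the paper invokes the exact triangle $\rR\Gamma(M) \to M \to \rR\bS(M) \to$ of Proposition~\ref{prop:D} (which is itself built from $I^{\bullet}$), whereas you unpack this at the chain level via the downward induction on cocycles and coboundaries.
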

\begin{proof}
	Let $M$ be a finitely generated $\VI$-module. By Proposition~\ref{prop:D}, there exists an exact triangle \[X \to M \to F \to \] where $X$ is a finite length complex of finitely generated torsion modules and $F$ is a finite length complex of finitely generated semi-induced modules. In characteristic zero, every semi-induced module is injective. Thus is suffices to show that every finitely generated torsion module has finite injective dimension. But this is the content of the previous lemma. This finishes the proof.	
\end{proof}

\end{document}